\newcommand{\normmm}[1]{{\left\vert\kern-0.25ex\left\vert\kern-0.25ex\left\vert #1 
   \right\vert\kern-0.25ex\right\vert\kern-0.25ex\right\vert}}
\newcommand{\beq}{\begin{equation}}
\newcommand{\eeq}{\end{equation}}
\newcommand{\beqs}{\begin{equation*}}
\newcommand{\eeqs}{\end{equation*}}
\newcommand{\ben}{\begin{eqnarray}}
\newcommand{\een}{\end{eqnarray}}
\newcommand{\beno}{\begin{eqnarray*}}
\newcommand{\eeno}{\end{eqnarray*}}
\renewcommand{\div}{{\rm div}\,}
\newcommand{\Supp}{{\rm Supp}\,}
\newcommand{\Rmnum}[1]{\uppercase\expandafter{\romannumeral #1} }
 \numberwithin{equation}{section}
\newtheorem{thm}{Theorem}[section]
\newtheorem{lem}[thm]{Lemma}
\newtheorem{prop}[thm]{Proposition}
\newtheorem{rmk}[thm]{Remark}
\newtheorem{cor}[thm]{Corollary}
\newtheorem{definition}{Definition}
\def \La {\Lambda\big(\f{1}{c_0},\cA_{m,t}\big)}
\def\curl{\mathop{\rm curl}\nolimits}
\def\il{|\!|\!|}
\def \d {\mathrm {d}}
\def\cA{{\mathcal A}}
\def\cE{{\mathcal E}}
\def\cH{{\mathcal H}}
\def\cL{{\mathcal L}}
\def\cN{{\mathcal N}}
\def\cO{{\mathcal O}}
\def\cT{{\mathcal T}}
\def\cW{{\mathcal W}}
\def\cZ{{\mathcal Z}}
\let\f=\frac
\def \p {\partial}
\def\mR {\mathbb{R}}
\def\ep{\varepsilon}
\def \pt {\partial_{t}}
\def\si {\sigma}
\def\na{\nabla}
\def\izt{\int_0^t}
\def \lesim {\lesssim}
\def \bn {\textbf{n}}
\def  \bN {\textbf{N}}
\title{Uniform regularity 
for the  compressible Navier-Stokes system  with low Mach number in bounded domains.}
\author{Nader Masmoudi, Fr\'ed\'eric Rousset, Changzhen Sun*}
\address{Department of Mathematics, New York University Abu Dhabi, Saadiyat Island, P.O. Box
129188, Abu Dhabi, United Arab Emirates; Courant Institute of Mathematical Sciences, New York University, 251 Mercer Street, New York, NY 10012, USA.}
\email{masmoudi@cims.nyu.edu}
\address{Universit\'e Paris-Saclay,  CNRS, Laboratoire de Math\'ematiques d'Orsay (UMR 8628),  91405 Orsay Cedex, France}
\email{frederic.rousset@universite-paris-saclay.fr, changzhen.sun@universite-paris-saclay.fr* }
\begin{document}
\maketitle
\begin{abstract}
We establish  uniform with respect to the Mach number   regularity estimates for the isentropic compressible Navier-Stokes system  in smooth domains with  Navier-slip  condition on the boundary in the general case of ill-prepared initial data. To match the boundary layer effects
 due to the fast oscillations and the ill-prepared initial data assumption, we prove  uniform estimates in an anisotropic functional framework
  with only one normal derivative close to the boundary. This allows  to prove the local existence of a strong solution  on  a time interval   independent of the Mach number and to justify the incompressible limit  through a simple compactness argument.
\end{abstract}
\quad \textbf{Keywords:} uniform regularity, low Mach number limit, fast oscillation, boundary layer

\section{Introduction}
 In this paper, we consider the following scaled isentropic compressible Navier-Stokes system $(CNS)_{\varepsilon}$ 
\beq \label{NSG}
 \left\{
\begin{array}{l}
 \displaystyle\pt \rho^{\varepsilon} +\div( \rho^{\varepsilon} u^\varepsilon)=0,\\
 \displaystyle\pt ( \rho^{\varepsilon} u^{\varepsilon})+\div(\rho^{\varepsilon}u^{\varepsilon}\otimes u^{\varepsilon} )-\div\mathcal{L}u^{\varepsilon}+
\f{\nabla P(\rho^{\varepsilon})}{\ep^2}=0,  \qquad \text{$(t,x)\in \mathbb{R}_{+}\times \Omega $} \\
 \displaystyle u^{\ep}|_{t=0} =u_0^{\varepsilon} ,\rho|_{t=0}=\rho_0^{\varepsilon},\\
\end{array}
\right.\\
\eeq
where $\Omega\subset \mathbb{R}^3$
is a smooth  bounded domain, $\rho^{\ep}(t,x)$ and $u^{\ep}(t,x)$ are  the density and the velocity of the fluid respectively, $P(\rho)$ is the  pressure 
which is  a given smooth function of the density that satisfies
$\f{\d P}{\d \rho}>0,$ for $\rho>0$. 
The viscous stress tensor takes the form:
$$\mathcal{L}u^{\varepsilon}=2\mu\mathbb{S}u^{\varepsilon}+\lambda \div u^{\varepsilon} \text{Id},\quad \mathbb{S} u^{\varepsilon}=\frac{1}{2}(\nabla u^{\varepsilon}+\nabla^{t}u^{\varepsilon}).$$ 
Here,  
$\mu,\lambda$ are viscosity parameters that are assumed to be constant and to satisfy the condition: $\mu>0, 2\mu+3\lambda>0.$ The parameter $\ep$ is the scaled Mach number which  is  assumed  small, that is $\ep\in(0,1]$. 

Since we are considering the system  in a domain with boundaries, 
we shall supplement the system \eqref{NSG}
with the Navier-slip boundary condition
\begin{equation}\label{bdyconditions}
u^{\ep}\cdot \bn=0, \quad
 \Pi(\mathbb{S} u^{\ep} \bn)+a\Pi u^{\ep}=0\quad
  \text{on }  {\partial{\Omega}}
\end{equation}
where $\bn$ is the unit outward normal vector and  $a$ is a constant related to a slip length
(our analysis also holds if $a$ is a smooth function).  We use the notation $\Pi f$ for  the tangential part of a  vector $f,$
$\Pi f^{\ep}=f^{\ep}-(f^{\ep}\cdot \bn)\cdot \bn.$

The aim of this paper is to study the uniform regularity (with respect to $\ep$) and the low Mach number limit of system
\eqref{NSG}.
Formally, due to the stiff term  $\frac{\nabla P(\rho^{\varepsilon})}{\varepsilon^2},$ the pressure (and hence the density  $\rho^{\varepsilon}$) is expected to tend to a constant state.  One thus expects to obtain in the limit a solution  to the following incompressible Navier-Stokes system:
  \beq \label{INS}
 \left\{
\begin{array}{l}
 \bar{\rho}(\displaystyle\pt u^0+\div(u^{0}\otimes u^{0} ))-\Delta u^{0}+
\nabla \pi=0,  \\
 \displaystyle \div u^0=0, \qquad\qquad\qquad\qquad\qquad \text{$(t,x)\in \mathbb{R}_{+}\times \Omega $} \\
 \displaystyle u^0|_{t=0} =u_0^{0},\\
\displaystyle
u^0\cdot\bn=0, \quad
 \Pi(\mathbb{S} u^{0} \bn)
 +a\Pi u^{0}=0\quad
  (t,x)\in \mR_{+}\times {\partial{\Omega}}.
\end{array}
\right.
\eeq
This limit process is therefore frequently referred to as the incompressible limit.

The rigorous  justification of this limit process has been studied extensively 
in  different contexts depending on the generality of the system (isentropic or non-isentropic), the type of the system (Navier-Stokes or Euler), the type of solutions (strong solutions or weak solutions), the properties of the domain  (whole space, torus or bounded domain with various boundary conditions), as well as the type of the initial data considered (well-prepared or ill-prepared). 
Roughly speaking, in the case of the compressible Euler system, one proves first that the local strong solution exists
on an interval of time independent of the Mach number, and then  compactness arguments are developed to pass to the limit.
In the case of the compressible Navier-Stokes system, one can either try to use the same approach as
for the inviscid case (prove the existence of a strong solution on an interval of time independent of the Mach number and
then try to pass to the limit) or try to pass to the limit directly  
 from global weak solutions.  Both approaches have been used in domains without boundaries (whole space or torus), 
 nevertheless when a boundary is present the question of uniform regularity for general data is more subtle, as we shall
 see below,  
 and has not been addressed.
 
More precisely, the mathematical justification  of the  low Mach number limit was initiated by Ebin \cite{MR431261}, Klainerman-Majda \cite{MR615627,MR668409} for \textit{local strong solutions} of compressible fluids (Navier-Stokes or Euler), in the whole space with  well-prepared data ($\div u_0^{\varepsilon}=\cO(\varepsilon), \nabla P_0^{\varepsilon}=\cO(\varepsilon^2)$) and later, by Ukai \cite{MR849223} for ill-prepared data ($\div u_0^{\varepsilon}=\cO(1),\nabla P_0^{\varepsilon}=\cO(\varepsilon)$). 
In the latter case, there are acoustic waves of amplitude $1$ and frequency $\ep^{-1}$ in the system.
These works were extended by several authors in different settings. For instance, 
one can refer to \cite{MR2211706,MR1917042,Mr1834114,MR1946548} 
 for  the non-isentropic  system 
 and  ill-prepared initial data whenever the domain is the whole space or the  torus, and also
\cite{MR2812710,MR834481}
 for  bounded domains with well-prepared initial data.  Uniform (in Mach number) regularity  estimates for the non-isentropic Euler equations in a bounded domain are established in \cite{MR2106119}. The low Mach number limit of \textit{weak solutions} for the viscous fluid system \eqref{NSG} was studied by  Lions and the first author \cite{MR1628173},  \cite{MR1710123}  where the convergence of the global weak solutions of the isentropic Navier-Stokes system towards a  solution of the incompressible system is established. The result holds for ill-prepared initial data and several different domains (whole space, torus and bounded domain with suitable boundary conditions).  In general, for ill-prepared data, one can only obtain weak convergence in time, nevertheless, 
by using the dispersion of acoustic waves in the whole space,  Desjardins and Grenier \cite{MR1702718} could get  local strong convergence.
There are also many other related works, one can see for example \cite{MR1308856,MR3803773,MR1886005,MR3563240,MR2575476,MR2338352,MR4011035,MR918838,MR4134150}. For more exhaustive information, one can
refer for example to the well-written survey papers by 
 Alazard \cite{MR2425022}, Danchin \cite{MR2157145}, Feireisl \cite{MR3916821}, Gallagher \cite{MR2167201},
 Jiang-Masmoudi \cite{MR3916820},
 Schochet \cite{MR3929616}.

 Let us focus now more specifically on the study of the low Mach limit  of  the isentropic compressible
Navier-Stokes $(CNS)_{\ep}$ system in domains with boundaries with \textit{ill-prepared} initial data, which is more related to the interest of the current paper.
As mentioned above, Lions and Masmoudi \cite{MR1628173} studied the convergence of 
\textit{weak solutions} to $(CNS)_{\ep}$ in  bounded domains with Navier-slip boundary condition.  Later on, for low Mach limit in bounded domains with Dirichlet boundary condition, the authors in \cite{MR1697038,MR3281954}
noticed that, under some geometric assumption on the domain,
the acoustic waves are damped in a boundary layer so that local in time  strong convergence ($L_{t,x}^2$) holds.
 Recently, this result is extended by Feireisl et al \cite{MR2425024} and Xiong \cite{MR3810835} to the case of Navier-slip boundary conditions with 
$a$ of the order   $\varepsilon^{-\frac{1}{2}}$. In this case, 
 the boundary layer effect is comparable to  the one in  the Dirichlet case. One can also refer to \cite{MR2601340,MR2575476,MR2832163}
for the justification of convergence
in unbounded domains by using the local  energy decay for the acoustic system.  Without one of the above properties of the domain, strong convergence 
does not hold for ill-prepared data. 

In the current paper,  our aim is to obtain   
 uniform (with respect to $\varepsilon$) high order regularity estimates
for $(CNS)_{\ep}$ in bounded domains with ill-prepared initial data,  in order to get the existence of a local strong solution on a time interval independent of $\ep$. 
  There are only a few papers addressing this issue. In \cite{MR3240080}, the authors establish 
 uniform global (for small data)  $H^2$ estimates  under a (very) \textit{well-prepared} initial data assumption, namely the 
 second time derivative of the velocity needs to be uniformly bounded initially. 
For   ill-prepared initial data, the situation is more subtle and a uniform $H^2$ estimate, even locally in time, 
 cannot be expected. 
 Indeed, at leading order,  after linearization and symmetrization, the system \eqref{NSG} becomes:
 \begin{equation}\label{lineartoy}
     \partial_t U^{\ep} + \frac{1}{\varepsilon}L U^{\ep}-\left(\begin{array}{c}
           0 \\
        \div\mathcal{L}u^{\ep}
     \end{array}\right)
     =0,\qquad L=\left(\begin{array}{cc}
         0 & \div  \\
        \nabla  & 0
     \end{array}\right),\quad U=(\sigma^{\ep},u^{\ep})\in \mathbb{R}\times \mathbb{R}_{+}^3.
 \end{equation}
 Due to the presence of the diffusion term as well as the singular  linear term, a boundary layer correction to  the  highly oscillating acoustic
 waves appear and create unbounded  high order normal derivatives of the velocity. 
 Note that here, we do not start from a small viscosity problem, nevertheless, at the scale $\tau=t/\ep$ of the acoustic
 waves the system \eqref{lineartoy} behaves like a small viscosity perturbation of the acoustic system.
 For example,  in the easiest case where the boundary is flat (for example $\Omega=\mathbb{R}_{+}^3$), 
 we expect the following expansion of the  solutions to \eqref{lineartoy} involving boundary layers
 \beq\label{formalapp}
 \left\{
 \begin{array}{l}
      \sigma^{\ep}(t,x)=\sigma_0^{I}(\f{t}{\ep},t,x)+\ep^{\f{3}{2}}\sigma^B(\f{t}{\ep},t,x,\f{z}{\sqrt{\ep}})+ \cdots,\\
       u^{\ep}(t,x)=u_0^I(\f{t}{\ep},t,x)+\sqrt{\ep}\left(
\begin{array}{l}
      u_{1,\tau}^B(\f{t}{\ep},t,x,\f{z}{{\sqrt\ep}})\\
        \qquad\quad 0
\end{array}
\right)+\ep u_{2}^B(\f{t}{\ep},t,x,\f{z}{{\sqrt\ep}}) + \cdots 
 \end{array}
 \right.
\eeq
where $x= (y,z)$, $z>0$, 
  which suggests that $\|u_{\tau}\|_{L_t^2H^1}, \| u_3^{\ep}\|_{L_t^2H^2},\|\sigma^{\ep}\|_{{L_t^2H^3}}$ can be uniformly bounded whereras $\|\pt(\sigma^{\ep},u^{\ep})\|_{L_{t,x}^2}$
  and $\|\partial_{z}^2 u_{\tau}^{\ep}\|_{L_{t,x}^2}$ will blow up as $\ep$ tends to 0.
 
 
 In order to get uniform high order estimates, we shall thus need to use a functional framework based on 
 conormal Sobolev spaces that minimize the use of normal derivatives close to the boundary
in the spirit of    \cite{MR2885569}, \cite{MR3590375}.
 Nevertheless, note that here we have to handle simultaneously the fast oscillations in time and a boundary layer effect
 so that the difficulties and the analysis will be  different from the 
  ones  in \cite{MR3485413,MR3419883} where   compressible slightly viscous fluids are considered.
  Indeed, the energy estimates for  conormal derivatives 
  cannot be easily obtained since  for example tangential vector fields do not commute with the singular part of the system, 
 while   in order to include ill-prepared data, it will be impossible  to get uniform estimates for high order  time derivatives
 as it is done in  \cite{MR3485413,MR3419883} in the study of the inviscid limit.
 We shall explain more these two difficulties below after the introduction of the various norms used in this paper.


 \subsection{ Conormal Sobolev spaces and notations. }
To define the conormal Sobolev norms,  we take a finite set of generators of vector fields 
that are tangent to the boundary of $\Omega$: $Z_{j}(1\leq j\leq M)$. 
Due to the appearance in \eqref{formalapp} of the 'fast scale' variable $\f{t}{\ep},$ it is also necessary to involve the scaled  time  derivative
$Z_0=\varepsilon \partial_t.$ 
We set 
$$Z^I=Z_0^{\alpha_0}\cdots Z_{M}^{{\alpha_M}}, \quad I=(\alpha_0,\alpha_1,\cdots \alpha_M)\in \mathbb{N}^{M+1} $$ Note that $Z^I$ contains not only spatial derivatives but also the scaled time  derivative $\ep\pt.$
 We introduce the following Sobolev conormal spaces: for $p=2$ or $+\infty,$
$$L_t^pH_{co}^{m}=\{f\in L^{p}\big([0,t],L^2(\Omega)\big),Z^{I}f\in L^{p}\big([0,t],L^2(\Omega)\big),|I|\leq m\},$$
equipped with the norm:
\beq\label{not1}
\|f\|_{L_t^pH_{co}^m}=\sum_{|I|\leq m}\|Z^I f\|_{L^{p}([0,t],L^2(\Omega))},
\eeq
where $|I|=\alpha_0+\cdots\alpha_M.$
For the space modeled on  $L^{\infty}$, we shall use the following notation for the norm:
\beq
\interleave f\interleave_{m,\infty,t}
=\sum_{|I|\leq m}\|Z^I f\|_{L^{\infty}([0,t]\times\Omega)}.
\eeq
Since the number of time derivatives and spatial conormal derivatives need sometimes  to be distinguished,   we shall also use  the notation:
\beq\label{not3}
\|f\|_{L_t^p\mathcal{H}^{j,l}}=\sum_{I=(k,\tilde{I}), k\leq j,|\tilde{I}|\leq l}\|Z^I f\|_{L^{p}([0,t],L^2(\Omega))}
\eeq
and to simplify, we will use  $\mathcal{H}^{j}=\mathcal{H}^{j,0}.$
To measure pointwise regularity at a given time $t$ (in particular also with $t=0$), we shall use the semi-norms
 \beq
 \label{normfixt}\|f(t)\|_{H_{co}^m}=
 \sum_{|I|\leq m}\|(Z^I f)(t)\|_{L^2(\Omega)}, \quad \|f(t)\|_{\mathcal{H}^{j,l}}=\sum_{I=(k,\tilde{I}),k\leq j,|\tilde{I}|\leq l}\|Z^I f(t)\|_{L^2(\Omega)}.
 \end{equation}
Finally, to measure regularity  along the boundary, we use 
\beq\label{bdynorm}
  |f|_{L_t^p\tilde{H}^s(\partial\Omega)}=\sum_{j=0}^{[s]}|(\varepsilon\partial_t)^j f|_{L^p([0, t], {H}^{s-j}(\partial\Omega))}. 
  \eeq

Let us recall, how the vector fields $Z_{j}$, $1 \leq j \leq M$ can be defined. We consider  
  $\Omega\in \mathbb{R}^3$   a smooth domain (the following construction and our results are  actually valid as long as the boundary of $\Omega$ 
can be covered by a finite number of charts), therefore, there exists 
a  covering such that :
 \begin{equation}\label{covering1}
     \Omega\subset\Omega_0\cup_{i=1}^{N} \Omega_i,\quad  \Omega_0\Subset \Omega, \quad \Omega_i\cap \partial \Omega\neq \varnothing, 
 \end{equation}
and  $\Omega_{i}\cap \Omega$ is the graph of  a smooth  function $z=\varphi_i(x_1,x_2)$.

In $\Omega_{0}$, we just take the vector fields $\partial_{k}$, $k=1, \, 2, \, 3$.
 To define  appropriate  vector  fields near the boundary, we use the local coordinates  in each  $\Omega_{i}:$ 
 \begin{equation}\label{local coordinates}
 \begin{aligned}
 \Phi_{i}:\quad &
 (- \delta_{i}, \delta _{i}) \times (0, \epsilon_{i})
 \rightarrow \Omega_{i}\cap\Omega\\
 &\qquad\quad(y,z)^{t}\rightarrow \Phi_{i}(y,z)=(y, \varphi_i(y)+z)^{t}
 \end{aligned}
 \end{equation}
 and we define the vector fields (up to some smooth cut-off functions compactly supported in $\Omega_{i}$) as :
 \begin{equation}\label{spatialvectorinlocal}
Z_{k}^{i}=\partial_{y^k}=\partial_k+\partial_k\varphi_i\partial_3, \quad k=1,2\qquad Z_{3}^{i}=\phi(z)
(\partial_1\varphi_1\partial_1+\partial_2\varphi_1\partial_2-\partial_3),
\end{equation}
  where $\phi(z)=\frac{z}{1+z},$ and  $\partial_k, k=1,2,3$ are   the derivations with respect to the original  coordinates of $\mathbb{R}^3.$ 
  
  We shall denote by $\bn$ the unit outward normal to the the boundary.
  In each  $\Omega_{i}$, we can extend   it to $\Omega_{i}$   by setting
  $$\bn(\Phi_i(y,z))=\frac{1}{|\textbf{N}|}\textbf{N}, \quad \textbf{N}(\Phi_i(y,z))=(\partial_1\varphi_i(y),\partial_2\varphi_i(y),-1)^{t}.$$
 In the same way, the projection on vector fields tangent to the boundary, 
   $$\Pi=\text{Id}-\bn\otimes \bn$$
   can be extended in $\Omega_{i}$ by using the extension of $\bn$.

Let us observe that by identity
\beqs
\Pi(\p_{\bn}u)=\Pi((\nabla u)\bn)=2\Pi(\mathbb{S}u )-\Pi ((D u)\bn)
\eeqs
with $[(\nabla u) \bn]_i=\sum_{j=1}^3 \bn_j \p_j u_i, [(D u) \bn]_i=\sum_{j=1}^3  \p_i u_j \bn_j,$ 
the boundary conditions
\eqref{bdyconditions}
 can be reformulated as:
 \begin{equation}\label{bdryconditionofu}
    u\cdot \bn|_{\partial\Omega}=0, \quad \Pi (\partial_{\bn} u)=\Pi[ -2a u+(D \bn)u]
 \end{equation}
 where $[(D\bn) u]_i=\sum_{j=1}^3  \p_i \bn_j u_j.$ 

\subsection{Main results and strategy of the proof}

Let us introduce the new unknown
$$\sigma^{\ep}=\f{P(\rho^{\ep})-P(\bar{\rho})}{\ep},$$
where $\overline{\rho}$ is a positive  constant state, 
we can rewrite  the system \eqref{NSG} into the following form which is more convenient to perform energy estimates:
  \beq \label{NS1}
 \left\{
\begin{array}{l}
 \displaystyle g_1(\ep\si^{\ep})\big(\pt \si^{\varepsilon} +u^{\ep}\cdot \na\si^{\ep}\big)+\f{\div u^\varepsilon}{\ep}=0,\\
 \displaystyle g_2\big(\ep \si^{\ep})(\pt   u^{\varepsilon}+u^{\varepsilon} \cdot \na u^{\varepsilon} \big)-\div \mathcal{L} u^{\varepsilon}+
\f{\nabla \si^{\varepsilon}}{\ep}=0,  \qquad \text{$(t,x)\in \mathbb{R}_{+}\times \Omega $} \\
 \displaystyle u^\ep|_{t=0} =u_0^{\varepsilon} ,\si^{\ep}|_{t=0}=\si_0^{\varepsilon}.\\
 \end{array}
 \right.
\eeq
where the scalar functions $g_{1}$, $g_{2}$ are defined by 
\beq\label{defofg12}
g_2(s)=\rho^\ep
=P^{-1}(\bar{P}+
s), \quad g_1(s)=(\ln g_2)' (s); \quad s>-\bar{P}=-P(\bar{\rho}).
\eeq
In order to establish uniform energy estimates, 
we shall use the following quantity 
\beqs\label{defcalNmt}
\mathcal{N}_{m,T}(\si^{\ep},u^{\ep})=\mathcal{E}_{m, T}(\si^{\ep},u^{\ep})+\mathcal{A}_{m,T}(\si^{\ep},u^{\ep})
\eeqs
where $\mathcal{E}_{m,T}$ contains  $L^2$ (in space)   type quantities
\begin{multline}
\label{defcalEmt}
\mathcal{E}_{m,T}(\si^{\varepsilon},u^{\varepsilon})=\|(\sigma^{\ep},u^{\ep})\|_{L_T^{\infty}\cH^m}+\|\na(\si^{\ep},u^{\ep})\|_{L_T^{\infty}H_{co}^{m-2}\cap L_T^{2}H_{co}^{m-1}} \\
+ \varepsilon \big(\|(\si^{\ep},u^{\ep})\|_{L_{T}^{\infty}H_{co}^{m}}+\|\nabla(\sigma^{\ep},u^{\ep})\|_{L_T^{\infty}H_{co}^{m-1}}+\|\nabla^2 u^{\ep}\|_{L_T^{\infty}H_{co}^{m-2}}\big)+\ep\|\nabla^2 \sigma\|_{L_T^{\infty}L^2}, 
\end{multline}
and $\mathcal{A}_{m,T}$ involves $L^\infty$ (in space and time) type quantities
\begin{multline}
\label{defcalAmt}
 \mathcal{A}_{m,T}(\sigma^{\ep},u^{\ep}) =\il \nabla u^{\ep}\il_{0,\infty,T}+\il(\nabla \sigma^{\ep},\div u^{\ep},\varepsilon^{\frac{1}{2}}\nabla u)\il_{[\frac{m-1}{2}],\infty,T}+\il(\sigma^{\ep},u^{\ep})\il_{[\frac{m+1}{2}],\infty,T}\\ 
+\varepsilon\il\nabla u^{\ep}\il_{[\frac{m+1}{2}],\infty,T} +\varepsilon\il(\sigma^{\ep},u^{\ep})\il_{[\frac{m+3}{2}],\infty,T}.
\end{multline}
Note that the norms involved in the above definitions are defined in \eqref{not1}-\eqref{not3}. See also Remarks \ref{rmkenergy-1}, \ref{rmkenergy-2} and \ref{rmkLinfty} for the comments on the norms appearing in $\cE_{m,T}$ and $\cA_{m,T}$.

Before stating our main result, we introduce the following definition.
\begin{definition}[Compatibility conditions]
We say that $(\sigma_0^{\ep},u_0^{\ep})$ satisfy  the compatibility conditions up to order $m$ if:
\beqs
(\varepsilon\partial_t)^{j}u^{\ep}\big|_{t=0}\cdot \bn
=0, \qquad \Pi \big[\mathbb{S}\big((\varepsilon\partial_t)^{j}u^{\ep}|_{t=0}\big)\bn\big]=-a\Pi \big[(\varepsilon\partial_t)^{j}u|_{t=0}\big] \quad\text{on} \quad \partial\Omega, j=0,1\cdots m-1.
\eeqs
\end{definition}
Note that the restriction of the time derivatives of the solution at the  initial time can be expressed  inductively by using the equations. For example, we have
$$(\varepsilon\partial_t u^{\varepsilon})(0)=\frac{1}{\rho_0^{\varepsilon}}(-\varepsilon u_0^{\ep}\cdot\nabla u_0^{\ep}+\ep\div \mathcal{L}u_{0}^{\ep}-\nabla\sigma_0^{\ep}).$$
We thus define the  admissible space for  initial data as 
\begin{multline*}
Y_{m}=\bigg\{(\sigma^{\ep}_0,u_0^{\ep})\in H^2(\Omega)^4, \quad 
  Y_{m}^{\ep}(\sigma^{\ep}_0,u^{\ep}_0)<+\infty,
 \\ (\sigma_{0}^{\ep},u_0^{\ep}) \text{ satisfy  the compatibility  conditions up to order $m$}
\bigg\}
\end{multline*}
where
\begin{equation}\label{initialnorm}
\begin{aligned}
 Y_{m}(\sigma^{\ep}_0,u^{\ep}_0)&=: \ep \|(\sigma^{\ep}_0,u^{\ep}_0)\|_{H^2(\Omega)}+
 \| (\si^{\ep},u^{\ep})(0)\|_{H^m_{co}} +  \| \nabla (\si^{\ep},u^{\ep})(0)\|_{H^{m-1}_{co}}
 \\& \qquad\qquad\qquad\qquad\qquad +\sum_{|I|\leq [\frac{m-1}{2}]}\|Z^{I}(\na\si^{\ep},\na u^{\ep})(0)\|_{L^{\infty}(\Omega)}
 \end{aligned}
\end{equation}
by using our notation \eqref{normfixt}.

The following is our main  uniform regularity result:
\begin{thm}[Uniform estimates]\label{thm1}

Given an integer $m\geq 6$ and a $C^{m+2}$ smooth bounded domain $\Omega$.
Consider a family of  initial data such that  $(\sigma_0^{\ep},u_0^{\ep})\in Y_{m},$
and  $$\sup_{\ep\in(0,1]}Y_{m}(\sigma_0^{\ep},u_0^{\ep})<+\infty,$$
$$-\bar{c}\bar{P}\leq \ep \sigma_0^{\ep}(x)\leq {\bar{P}}/\bar{c}, \quad \forall x\in\Omega, \ep\in (0,1],$$
where $0<\bar{c}<{1}/{4}$ is a fixed constant, $\bar{P}=P(\bar{\rho}$).
There exist $\ep_0\in (0, 1]$ and $T_{0}>0,$ such that, for any $0< \ep\leq \ep_0$, the
system \eqref{NS1},\eqref{bdyconditions} has a unique solution $(\si^{\ep},u^{\ep})$ 
which satisfies:
 \beq\label{epsigmaLinfty}
 -2{\bar{c}}\bar{P}\leq \ep\sigma^{\ep}(t,x)\leq 2\bar{P}/\bar{c},\quad  \forall (t,x)\in [0,T_0]\times\Omega, 
 \eeq
 and 
 \beq\label{bddcN}
 \sup_{\ep \in (0, \ep_{0}]}\mathcal{N}_{m,T_0}(\si^{\ep},u^{\ep})< +\infty.
 \eeq
\end{thm}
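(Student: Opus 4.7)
The plan is to obtain Theorem~\ref{thm1} by a standard continuation argument based on a closed a priori estimate for the composite functional $\cN_{m,T}=\cE_{m,T}+\cA_{m,T}$. For each fixed $\ep>0$, a local strong solution with $\cN_{m,T}<\infty$ exists on a possibly $\ep$-dependent interval by classical theory for \eqref{NS1} with Navier-slip boundary conditions, and uniqueness follows from standard $L^2$ difference estimates given the $\cA_{m,T}$ control. The task reduces to proving a uniform estimate of the form
\[
\cN_{m,T}(\si^\ep,u^\ep)\le \Lambda\bigl(Y_m(\si_0^\ep,u_0^\ep)\bigr)+T\,\Lambda\bigl(\cN_{m,T}(\si^\ep,u^\ep)\bigr),
\]
together with the pointwise bound \eqref{epsigmaLinfty}, so that a bootstrap yields a common time of existence $T_0$ and a common $\ep_0$.

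The heart of the proof is the conormal $L^2$-type energy estimate for $Z^I(\si^\ep,u^\ep)$ with $|I|\le m$. I would test \eqref{NS1} against $g_1(\ep\si^\ep)Z^I\si^\ep$ and $g_2(\ep\si^\ep)Z^I u^\ep$, so that the principal singular operator $\f{1}{\ep}L$ produces no uncontrolled contribution at leading order. The key structural difficulty, emphasized in the introduction, is that the tangential fields $Z_j$ do not commute with $\f{1}{\ep}\na$ and $\f{1}{\ep}\div$; the scaled time derivative $Z_0=\ep\pt$, by contrast, does commute, which is why it is included in the set of generators. The commutators that arise are bilinear: one factor is a singular quantity of the form $\f{\na\si^\ep}{\ep}$ or $\f{\div u^\ep}{\ep}$, and the other is a lower-order conormal expression. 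Crucially, both of these singular factors are actually $O(1)$: the mass equation gives $\f{\div u^\ep}{\ep}=-g_1(\ep\si^\ep)(\pt\si^\ep+u^\ep\cdot\na\si^\ep)$, and the momentum equation yields an analogous identity for $\f{\na\si^\ep}{\ep}$. This is exactly why $\cA_{m,T}$ must contain $\il(\na\si^\ep,\div u^\ep)\il_{[\f{m-1}{2}],\infty,T}$ while $\cE_{m,T}$ keeps $\na\si^\ep$, not $\f{\na\si^\ep}{\ep}$, in its conormal spaces. The Navier condition \eqref{bdryconditionofu} combined with $u^\ep\cdot\bn=0$ ensures that the boundary integrals generated by integration by parts against $\f{1}{\ep}L$ vanish.

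Normal derivatives are then recovered from the equations. Projecting the momentum equation tangentially and normally and using \eqref{bdryconditionofu} as Robin-type data, one reads off $\p_\bn^2 u^\ep$ and $\div u^\ep$ as elliptic consequences of the already-controlled time and tangential derivatives; the dangerous $\f{\na\si^\ep}{\ep}$ on the right-hand side is again replaced by a nonsingular expression via the mass equation. Iterating gives the estimates on $\na(\si^\ep,u^\ep)$ in $L_T^\infty H_{co}^{m-2}\cap L_T^2 H_{co}^{m-1}$ and the $\ep$-weighted higher norms appearing in $\cE_{m,T}$. The $L^\infty$ quantities in $\cA_{m,T}$ are then deduced by anisotropic Sobolev embedding tailored to the conormal framework, where the half-integer indices $[\f{m\pm 1}{2}]$ and the prefactors $\ep,\ep^{\f12}$ in \eqref{defcalAmt} are precisely dictated by the scalings of the boundary-layer ansatz \eqref{formalapp}. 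The density bound \eqref{epsigmaLinfty} is obtained by writing the mass equation as a transport equation for $\rho^\ep=g_2(\ep\si^\ep)$ and applying Gronwall with $\|\div u^\ep\|_{L^\infty}$ controlled by $\cA_{m,T}$.

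The main obstacle is closing the first step uniformly in $\ep$ under the ill-prepared data assumption: one cannot invoke uniform bounds on $\pt^k(\si^\ep,u^\ep)$ for $k\ge 1$, since $\pt(\si^\ep,u^\ep)$ itself blows up as $\ep\to 0$. This forces the intricate design of $\cN_{m,T}$ with its mixed $L^2/L^\infty$ ingredients, half-integer splittings, and carefully tuned $\ep$-weights; the algebraic bookkeeping must match the boundary-layer scaling \eqref{formalapp} exactly so that every commutator and every boundary-layer error term is absorbed by the parabolic dissipation from $\div\mathcal{L}u^\ep$, by the transport damping of $g_1\pt\si^\ep$, and by a small factor of $T$. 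Once this bookkeeping is aligned, the estimate closes for $T$ small and $\ep_0$ chosen so that $\ep_0\sup_\ep Y_m$ keeps $\ep\si^\ep$ within the physical range where $g_1,g_2$ are smooth and bounded away from zero.
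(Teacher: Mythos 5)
Your overall architecture—local existence for fixed $\ep$ from classical theory, a closed a priori estimate of the form $\cN_{m,T}\le C\,Y_m^2+(T+\ep)^{1/2}\Lambda(\cN_{m,T})$, and a bootstrap in $T$ and $\ep$ to get a uniform $T_0$ and $\ep_0$, plus the Lagrangian/Gronwall argument for \eqref{epsigmaLinfty}—is exactly the structure of Section 5 of the paper and is fine. The gap is in the a priori estimate itself, and it is a serious one.

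You propose to close the conormal energy estimate for $Z^I(\sigma^\ep,u^\ep)$, $|I|\le m$, directly, arguing that the singular commutator terms of type $\f{1}{\ep}[Z^I,\nabla]\sigma^\ep$, $\f{1}{\ep}[Z^I,\div]u^\ep$ involve factors $\f{\nabla\sigma^\ep}{\ep}$, $\f{\div u^\ep}{\ep}$ that "are actually $O(1)$" because the equations give $\f{\div u^\ep}{\ep}=-g_1(\pt\sigma^\ep+u^\ep\cdot\nabla\sigma^\ep)$ and similarly for $\f{\nabla\sigma^\ep}{\ep}$. This is false in the ill-prepared regime: $\pt\sigma^\ep=\f{1}{\ep}(\ep\pt\sigma^\ep)=O(1/\ep)$ since only $\ep\pt\sigma^\ep$ is uniformly bounded, so the right-hand side of your identity is itself $O(1/\ep)$. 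Equivalently, $\nabla\sigma^\ep$ and $\div u^\ep$ are $O(1)$ but their $\ep^{-1}$ rescalings are genuinely singular, and substituting the equations merely shifts the singularity onto $\pt\sigma^\ep$ or $\pt u^\ep$ without removing it. This is precisely the obstruction the paper flags in Remark 1.4 and in the introduction, and it is why the direct estimate (Lemma 3.3 in the paper) yields only $\ep$-weighted bounds $\ep\|(\sigma,u)\|_{L^\infty_TH^m_{co}}$ for spatial conormal derivatives, with uniform bounds only for pure $(\ep\pt)^j$ derivatives (which commute exactly with $\nabla,\div$). Your claim that the Navier boundary condition kills the boundary integrals from $\f{1}{\ep}L$ is also only correct for pure $(\ep\pt)^j$: for spatial $Z^I$, the term $\f{1}{\ep}Z^I\sigma\,(Z^Iu\cdot\bn)|_{\p\Omega}=\f{1}{\ep}Z^I\sigma\,[Z^I,\bn]u|_{\p\Omega}$ does not vanish.

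Because of this, the proposal lacks the mechanism that actually makes the estimates close: the Leray decomposition $u^\ep=\nabla\Psi^\ep+v^\ep$, in which $v^\ep=\mathbb{P}u^\ep$ solves the non-oscillating parabolic problem \eqref{eqofv0} and can be estimated directly in $L^\infty_TH^{m-1}_{co}\cap L^2_TH^{m}_{co}$ (the diffusion being essential for the $L^2_T$ gradient bound); the iterative "trade one spatial derivative for one $\ep\pt$" recovery of $(\nabla\sigma^\ep,\div u^\ep)$ in Step 3 (your sketch of this part is closer to the truth, but it recovers $\nabla\sigma^\ep,\div u^\ep$ themselves, not the rescaled quantities); and, crucially, the estimate of $\|\nabla u^\ep\|_{L^\infty_T H^{m-2}_{co}}$ via the vorticity $\omega^\ep\times\bn$, which requires a lifting of the non-homogeneous Dirichlet boundary data through the heat-kernel Green's function (Lemmas 3.9--3.11)—one cannot perform a naive energy estimate on the modified vorticity because the equation for it still carries a $\f{1}{\ep}\nabla^\perp\sigma^\ep$ term. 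Finally, the $L^\infty_{t,x}$ quantities $\il\nabla u^\ep\il_{0,\infty,T}$ and $\il\nabla\sigma^\ep\il_{[\f{m-1}{2}],\infty,T}$ cannot be deduced from anisotropic Sobolev embedding alone (which would cost more conormal derivatives than are available), and the paper obtains them via the maximum principle for the vorticity equation and the explicit damped-transport formula for $\p_\bn\sigma^\ep$ along characteristics. Without these ingredients the estimate does not close, and the misidentification of the singular factors as $O(1)$ is the root cause of the missing structure.
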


Let us begin with a few comments about the above  assumptions and our result.
\begin{rmk}
In view of \eqref{epsigmaLinfty}, there exists $c_0\in(0,1],$ such that:
\beqs
 c_0\leq \rho^{\ep}(t,x)=g_2(\ep\sigma)\leq {1}/{c_0}\quad \forall (t,x)\in [0,T_0]\times\Omega
\eeqs
Moreover, as a consequence of \eqref{bddcN},
the following uniform estimates hold:
$$
 \begin{aligned}
 \sup_{\ep \in (0, \ep_{0}]}\big(
 \|(\sigma^{\ep},u^{\ep})\|_{
 L_{T_0}^{\infty}H_{co}^{m-1}\cap L_{T_0}^2H_{co}^{m}}+\|\nabla(\sigma^{\ep},u^{\ep})\|_{L_{T_0}^{\infty}H_{co}^{m-2}\cap L_{T_0}^2H_{co}^{m-1}}
 +\il \nabla (\sigma^{\ep},u^{\ep})\il_{0,\infty,t}
 \big)<+\infty,
 \end{aligned}
$$
in particular, we have a uniform  estimate for  $ \| \nabla (\sigma^{\ep},u^{\ep}) \|_{L^\infty([0, T_{0}] \times \Omega)}.$
\end{rmk}
 \begin{rmk}
 Because of the compatibility conditions, the assumption
 $\sup_{\ep\in(0,1]}Y_{m}(\sigma_0^{\ep},u_0^{\ep})<+\infty$ imposes that  the data are  prepared (in the sense that it may depend on $\ep$) on the boundary.
 Nevertheless, this is  compatible with the fact that  
  $$ (\div u^{\ep},\nabla\sigma^{\ep})=\mathcal{O}(1)$$ in the domain
  and thus ill-prepared data in the usual sense. 
 Indeed,  note that  $Y_{m}$  clearly contains   smooth functions which vanish identically near the boundary. 
  This kind of compatibility conditions also appears in the study of the incompressible limit of the Euler system
  in bounded domains
  \cite{MR2106119}.
 \end{rmk}

\begin{rmk}\label{rmkenergy-1}
The control of the weighted time derivatives  $(\ep\pt)^k$ up to highest order $k=m:$ 
$\|(\sigma^{\varepsilon},u^{\varepsilon})\|_{L_{T}^{\infty}\cH^{m}}$ is available
since time derivation commute with the space derivation. Moreover,  
\beq\label{hiddenes}
\|(\sigma^{\ep}, u^{\ep})\|_{L_T^{\infty}H_{co}^{m-1}\cap L_T^2H_{co}^m}\lesssim \cE_{m,T}(\sigma^{\ep}, u^{\ep}).
\eeq
 In other words, we can control the highest number of  derivatives in the  $L_{t}^2L_x^2$ norm but lose the uniform  control of the highest space conormal derivatives  in $L_t^{\infty}L_x^2$.  This is due to the bad commutation properties of the space conormal derivatives
 with the singular part of the system.
\end{rmk}
\begin{rmk}\label{rmkenergy-2}
The solution constructed in Theorem \ref{thm1} is a strong solution in the sense that
for $\ep >0$ fixed $(\sigma^\ep, u^\ep) \in L^\infty([0, T_{0}], H^1 \times H^2)$, $u^\ep \in L^2([0, T_{0}], H^3)$. 
 Note that we further have a  uniform control of the  $L_t^{\infty}H^{m-1}\cap L_t^2H^{m}$ norms in every compact set  in the interior of the domain. Nevertheless, due to  boundary layer effects (see \eqref{formalapp}), we cannot expect  uniform estimates for higher order normal derivatives near the boundary.
\end{rmk}

By combining the previous result with a compactness argument, we get the following convergence result:
\begin{thm}[Convergence]\label{convergence}
Under the assumptions of Theorem \ref{thm1},  let $(\sigma^\ep, u^\ep)$ the solution defined on $[0, T_{0}]$
 given by Theorem \ref{thm1} and assume that $u_{0}^\ep$ converges strongly in $L^2(\Omega)$ to  some $u_{0}^0$
  when $\ep$ tends to zero.
  Then, as $\ep$ tends to zero,  $\rho^\ep$ (defined by \eqref{defofg12}) converges to $\overline \rho$ in $L^{\infty}( [0, T_{0}] \times \Omega)$ and 
 $u^{\varepsilon} $ converges in $L_{w}^2([0,T_0], L^2(\Omega))$ (weak convergence in time) to  $u^0$ 
 such that 
  \beq\label{additional-regularity}
  u^0\in  L_{T_0}^{\infty}\cH^{0,m-1}\cap L_{T_0}^2\cH^{0,m},\quad \nabla u^0\in L_{T_0}^2\cH^{0,m-1}\cap L^{\infty}([0,T_0]\times\Omega).
  \eeq
Moreover,  $u^0$ is  the (unique in this class)  weak  solution to the incompressible Navier-Stokes system with Navier boundary condition  \eqref{INS}.
 \end{thm}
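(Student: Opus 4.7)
The plan is to use the uniform bounds from Theorem~\ref{thm1} to extract weak limits and then identify the limit as the weak solution of \eqref{INS}. The main obstacle will be handling the persistent acoustic oscillations in the convection term, since in a bounded domain with Navier-slip boundary condition and ill-prepared data, the acoustic part of $u^\varepsilon$ does not converge strongly.

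From $\sup_\varepsilon \mathcal{N}_{m,T_0}(\sigma^\varepsilon, u^\varepsilon) < +\infty$, I extract a subsequence such that $u^\varepsilon$ converges weakly to some $u^0$ in the relevant topologies; the regularity \eqref{additional-regularity} of $u^0$ then follows from lower semi-continuity. The density convergence $\rho^\varepsilon \to \bar\rho$ in $L^\infty([0,T_0]\times\Omega)$ is immediate since $\rho^\varepsilon = g_2(\varepsilon\sigma^\varepsilon)$ and $\sigma^\varepsilon$ is uniformly bounded in $L^\infty$ thanks to $\mathcal{A}_{m,T_0}$. Next, I rewrite the mass equation as $\div u^\varepsilon = -g_1(\varepsilon\sigma^\varepsilon)(Z_0\sigma^\varepsilon + \varepsilon u^\varepsilon\cdot\nabla\sigma^\varepsilon)$ with $Z_0=\varepsilon\partial_t$; an integration by parts in time against any smooth compactly supported test function produces an $O(\varepsilon)$ contribution, forcing $\div u^0 = 0$ in the sense of distributions. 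The boundary conditions $u^0\cdot\bn = 0$ and $\Pi(\mathbb{S}u^0\bn) + a\Pi u^0 = 0$ on $\partial\Omega$ pass to the limit by continuity of the trace operator on $H^1(\Omega)$.

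The crux is passing to the limit in the weak formulation of the momentum equation tested against any smooth divergence-free field $\phi$ with $\phi\cdot\bn = 0$ on $\partial\Omega$. The singular pressure term $\int_\Omega \frac{\nabla\sigma^\varepsilon}{\varepsilon}\cdot\phi\,\d x$ vanishes identically by integration by parts, the viscous term passes to the limit by weak convergence of $\nabla u^\varepsilon$, and only the convection term $\int \rho^\varepsilon u^\varepsilon\otimes u^\varepsilon : \nabla\phi$ needs further work. To handle it, I would establish strong compactness of the Leray-projected sequence $\mathbb{P}(\rho^\varepsilon u^\varepsilon)$ by an Aubin-Lions argument (here $\mathbb{P}$ is the Leray projector onto divergence-free fields with vanishing normal trace): the spatial bound $\mathbb{P}(\rho^\varepsilon u^\varepsilon) \in L^\infty_{T_0} H^1(\Omega)$ follows from the uniform control of $\nabla u^\varepsilon$ in $L^\infty_{T_0} H_{co}^{m-2}$ together with $\rho^\varepsilon \in L^\infty$, while applying $\mathbb{P}$ to the momentum equation kills the singular gradient term, yielding $\partial_t\mathbb{P}(\rho^\varepsilon u^\varepsilon) \in L^\infty_{T_0} H^{-1}(\Omega)$ uniformly. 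Consequently $\mathbb{P}u^\varepsilon \to u^0$ strongly in $L^2_{T_0} L^2(\Omega)$, which handles the contributions to $u^\varepsilon \otimes u^\varepsilon$ involving at least one factor of $\mathbb{P}u^\varepsilon$. The residual purely acoustic quadratic piece $\int \mathbb{Q}u^\varepsilon\otimes\mathbb{Q}u^\varepsilon : \nabla\phi$, with $\mathbb{Q} = \mathrm{Id}-\mathbb{P}$, is the technical heart of the passage to the limit: following the analysis of Lions-Masmoudi~\cite{MR1628173} in bounded domains with Navier-slip conditions, this oscillatory contribution is a gradient modulo negligible terms in the weak limit, and hence vanishes when paired with $\nabla\phi$ subject to $\div\phi = 0$ and $\phi\cdot\bn = 0$.

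Finally, uniqueness of $u^0$ in the class \eqref{additional-regularity} follows from its Lipschitz regularity $\nabla u^0 \in L^\infty([0,T_0]\times\Omega)$ via standard energy estimates for the incompressible Navier-Stokes system with Navier-slip boundary conditions, once the initial datum $u^0|_{t=0} = u_0^0$ is identified using the strong convergence $u_0^\varepsilon \to u_0^0$ in $L^2(\Omega)$. Uniqueness then upgrades the subsequential convergence to convergence of the whole family.
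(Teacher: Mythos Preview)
Your proposal is essentially correct, but it takes a heavier route than the paper for the key step—the passage to the limit in the quadratic convection term.

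You propose to test the conservative momentum equation against a divergence-free field $\phi$ and then invoke the Lions--Masmoudi oscillation analysis to show that the purely acoustic contribution $\int \mathbb{Q}u^{\varepsilon}\otimes\mathbb{Q}u^{\varepsilon}:\nabla\phi$ converges to a gradient. That argument is valid, but it requires importing the spectral machinery of \cite{MR1628173}. The paper avoids this entirely by working instead with the equation \eqref{reformulation}$_3$ for $v^{\varepsilon}=\mathbb{P}u^{\varepsilon}$ in non-conservative form. There the convection term is $g_2\,u^{\varepsilon}\cdot\nabla u^{\varepsilon}$; after peeling off the $O(\varepsilon)$ piece $(g_2-\bar\rho)(\partial_t u^{\varepsilon}+u^{\varepsilon}\cdot\nabla u^{\varepsilon})$ (handled by rewriting it via the velocity equation as terms that integrate by parts to $O(\varepsilon)$), one is left with $\bar\rho\,u^{\varepsilon}\cdot\nabla u^{\varepsilon}$. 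Splitting $u^{\varepsilon}=v^{\varepsilon}+\nabla\Psi^{\varepsilon}$, the purely acoustic piece becomes $\bar\rho\,\nabla\Psi^{\varepsilon}\cdot\nabla\nabla\Psi^{\varepsilon}=\tfrac{\bar\rho}{2}\nabla|\nabla\Psi^{\varepsilon}|^{2}$, an \emph{exact} gradient that is simply absorbed into the pressure $\nabla\pi^{\varepsilon}$—no oscillation lemma needed. The remaining cross term $\bar\rho\,\nabla\Psi^{\varepsilon}\cdot\nabla v^{\varepsilon}$ is dealt with by one integration by parts (using $\partial_{\bn}\Psi^{\varepsilon}|_{\partial\Omega}=0$), which converts it into terms of the form $(\div u^{\varepsilon})(v^{\varepsilon}\cdot\psi)$ and $(\nabla\Psi^{\varepsilon}\cdot\nabla\psi)\cdot v^{\varepsilon}$, each a product of a weakly vanishing factor and a strongly convergent one. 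The difference with your formulation is precisely the extra term $(\div u^{\varepsilon})\nabla\Psi^{\varepsilon}$ arising from $\div(\nabla\Psi^{\varepsilon}\otimes\nabla\Psi^{\varepsilon})$, which is the piece that forces you into Lions--Masmoudi.

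A secondary point: your claim that the Navier boundary condition $\Pi(\mathbb{S}u^{0}\bn)+a\Pi u^{0}=0$ ``passes to the limit by continuity of the trace operator on $H^{1}$'' is not quite right, since this condition involves $\nabla u^{0}$ whose trace is not controlled by the uniform $H^{1}$ bound alone. The paper handles this by passing to the limit directly in the weak formulation \eqref{incom-EI}, where the boundary term coming from the viscous part is rewritten via \eqref{bdrycon-v} as $\Pi(-2a u^{\varepsilon}+(D\bn)u^{\varepsilon})$ on $\partial\Omega$—involving only traces of $u^{\varepsilon}$ itself, which do pass to the limit. Finally, the paper applies Aubin--Lions directly to $v^{\varepsilon}=\mathbb{P}u^{\varepsilon}$ (using $\partial_t v^{\varepsilon}\in L^{2}_{t}H^{-1}$ from \eqref{reformulation}$_3$) rather than to $\mathbb{P}(\rho^{\varepsilon}u^{\varepsilon})$; both work, but the former is slightly cleaner.
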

 Note that $L_{T_0}^2\cH^{0,m}$ is defined in \eqref{not3} and involves only  spatial conormal derivatives.
 \begin{rmk}
Due to the absence of uniform estimate for the second order  normal derivatives and thus also  for the strong
trace of the normal derivative, $u^0$  has to be interpreted as the weak solution to \eqref{INS} in the following usual sense: for any $\psi\in C^{\infty}([0,T_0]\times\ \overline{\Omega})$ with
 $\div \psi =0, \psi\cdot\bn|_{\p\Omega}=0,$ the following identity holds: for every  $0<t\leq T_0,$
 \beq\label{incom-EI}
 \begin{aligned}
 &\bar{\rho}\int_{\Omega}(u^0\cdot\psi)(t,\cdot) \ \d x+\mu\iint_{Q_{t}}\nabla u^0\cdot\nabla\psi \ \d x\d s+\bar{\rho}\iint_{Q_t}(u^0\cdot\nabla u^0)\cdot\psi  \ \d x\d s\\
 &=\bar{\rho}\int_{\Omega}(u^0_{0}\cdot\psi)(0,\cdot)\ \d x+\bar{\rho}\iint_{Q_t}u^0\cdot\pt\psi \ \d x\d s+\mu\int_0^t\int_{\p\Omega}\Pi(-2a u^0+(D\bn) u^0)\cdot\psi \ \d S_y \d s.
 \end{aligned}
 \eeq
 where $Q_t=[0,t]\times\Omega$ and $\d S_y$ denotes the surface measure of $\p\Omega.$
 \end{rmk}
\begin{rmk}
The convergence is weak in the  time variable due to the lack of uniform estimate for $\p_t(\sigma^{\ep},u^{\ep}).$ 
 This cannot be improved since 
in our bounded domain setting, there is no large time dispersion effect for the acoustic waves, 
and since  because of our Navier boundary conditions with fixed slip length, there is no damping in the boundary layers
of the acoustic waves.

Note that  when $\ep$ tends to zero, we have convergence of the whole family $u^\ep$ and not only of subsequences
 due to the uniqueness for the limit system at this level of regularity.

\end{rmk}

We shall  now  explain the main difficulties and the main strategies in order to 
 prove Theorem \ref{thm1}.
As already mentioned the main feature of our problem  is the presence of both fast time oscillations
and a boundary layer in space. These two aspects are well-understood when they occur separately, but in 
order to handle them simultaneously some new ideas will be  needed.
On the one hand, concerning the inviscid limit problem, one controls \cite{MR2885569, MR3485413, MR3419883} the high order tangential derivatives by direct energy estimates, and then uses the vorticity to control the normal derivatives. 
Nevertheless, for the system with low Mach number, even the tangential derivative estimates are not easy to get, since the spatial tangential derivatives do not commute with  $\nabla,\div$,  defined with the standard derivations in $\mR^3$,  and thus create singular
commutators.
 Without this a priori knowledge on the tangential derivatives, the  estimate of  the vorticity cannot be performed as in  
 \cite{MR2885569} \cite{MR3590375} because of the consequent lack of information on its trace on the boundary.
On the other hand, for the compressible Euler system  with low Mach number, uniform high regularity estimates are established
for example  in 
\cite{MR2106119}. One can 
get uniform  $H^s (s>{5}/{2})$ estimates by using  first $\ep \partial_{t}$ derivatives and then
recover space derivatives 
by using the equations to estimate the divergence of the velocity and the gradient of the pressure and  a direct energy estimates for the
vorticity which  solves a transport equation with a characteristic vector field.
Here,  in the case of viscous fluids, we face again the fact that  the estimates  of the vorticity are challenging due to the lack of information on its trace   on the boundary at this stage. 

In order to get the missing  information, we shall first  use the Leray projection (the precise  definition \eqref{def-projection} is in Section 3) to split the velocity into a compressible part and an incompressible part: $u^{\ep}=\nabla\Psi^{\ep}+v^{\ep}.$ On the one hand, the compressible part $\nabla\Psi^{\ep}$ of the velocity can be controlled by $\div u^{\ep}$ thanks  to standard elliptic theory
and hence by using the mass conservation equation and the energy estimates for $\ep \partial_{t}$ derivatives.
On the other hand,  the incompressible part $v^{\ep}$ solves, up to the control of non-local commutators,   a convection-diffusion equation without  oscillations,  and thus one can use direct  energy estimates to get a control of 
$\|v^{\ep}\|_{L_t^{\infty}H_{co}^{m-1}}$ and $\|\nabla v^\ep\|_{L^2_{t}H^{m-1}_{co}}$. Note that we cannot estimate the maximal number of derivatives $m$ 
  due to  the lack of structure of the coupling terms involving  the compressible part in the energy estimates.  The key point here is that the diffusion (which on the other hand creates new  difficulties in the control
  of the vorticity) allows to get the estimate of  $\|\nabla v^\ep\|_{L^2_{t}H^{m-1}_{co}}$.
  This is still not enough to close an estimate  since,  because of the time oscillations, we cannot use Sobolev embedding in time
  to control  $\|\nabla v^\ep\|_{L^\infty_{t}H^{m-2}_{co}}$ as it is done in small viscosity problems for compressible fluids
  (see for example \cite{MR3485413}, \cite{MR3419883}).
  Here,  we only have estimates for powers of  $\ep \partial_{t}$ instead of $\partial_{t}$.
 Nevertheless,   with the additional information obtained from $v^\ep$,  we can then  reduce the matter to the study of  $
 \|\omega^{\ep}\times \bn\|_{L^\infty_{t}H^{m-2}_{co}}$ where 
  $\omega^\ep$ is the vorticity, which solves the heat equation with a non-homogeneous  Dirichlet boundary condition
  which can be controlled from the previous estimates. We shall get the estimate 
  by using the Green's function of  the heat equation.

\textbf{Outline of the proof of Theorem \ref{thm1}. }
 The uniform energy estimates will  be more precisely achieved in the following steps:
 (we shall skip the $\ep$ dependence in the notations  for the sake of simplicity).

\textbf{Step 1: Uniform high-order  $\ep \partial_{t}$ derivatives and  $\ep-$dependent high-order conormal derivatives. }
In this step, we aim to prove two kinds of estimates. Namely, uniform estimates for high order $\ep \partial_{t}$ derivatives, 
$\|(\sigma,u)\|_{L_t^{\infty}\cH^{m}},$ 
and $\ep-$dependent estimates: $\ep\|(\sigma,u)\|_{L_t^{\infty}H_{co}^{m}}$, $\ep\|(\nabla\sigma,\div u)\|_{L_t^{\infty}H_{co}^{m-1}}.$
On the one hand, since the time  derivative $\ep\pt$ commutes with the spatial derivatives, we can get
uniform estimates for high order time derivatives. 
Note that we use $\ep \partial_{t}$ instead of $\partial_{t}$ since we are dealing with ill-prepared data.
On the other hand, as 
the spatial conormal vector fields do not commute with $\nabla, \div,$ the singular part of the system,
we need at this stage  to add this additional $\ep$ weight to control the commutator.

\textbf{Step 2: Uniform estimates for the incompressible part of the velocity.}
Let us denote by $v=\mathbb{P} u, $  and $ \nabla\Psi=\mathbb{Q} u$ the incompressible and compressible part of the velocity
respectively,
where $\mathbb{P}, \mathbb{Q}$ are defined in \eqref{def-projection}.
By applying  the projection $\mathbb{P}$ on the equation for  the velocity and expanding  the boundary conditions,
we find that $v$  solves:
\beq\label{eqofv0}
\left\{
\begin{array}{l}
  \bar{\rho}\partial_t v -  \mu \Delta v+\nabla q+\frac{g_2-\bar{\rho}}{\varepsilon}\varepsilon\partial_t u+g_2u\cdot\nabla u=0 \quad \text{in}\quad
     {\Omega} \\[7pt]
     v\cdot\bn=0, \quad \Pi(\p_{\bn}v)=\Pi(-2a u+D \bn\cdot  \nabla\Psi+D \bn\cdot u)\quad \text{on}\quad
     {\p\Omega}
\end{array}
\right.
\eeq
where
$$\nabla q=-\mathbb{Q}(\frac{g_2-1}{\varepsilon}\varepsilon\partial_t u+g_2u\cdot\nabla u-\mu \Delta v).$$
Note that the first boundary condition $v\cdot\bn=0$ is due to the definition of the projection $\mathbb{P}$ while
the second boundary condition is deduced from 
$\eqref{bdryconditionofu}.$
The incompressible part $v$ interacts with the  compressible part $\nabla\Psi$ through the source term and the  boundary condition. Due to the absence of  singular  terms, one can get the uniform estimates for $v$ 
(namely $\|v\|_{L_t^{\infty}H_{co}^{m-1}}$ and $\|\nabla v\|_{L_t^2H_{co}^{m-1}}$)
by direct energy estimates. 
Nevertheless, for  latter use in the proof, we need to track in the energy estimates the
counts of time and spatial conormal derivatives.

\textbf{Step 3: Uniform estimates for the compressible part of the system.}
In this step, we aim to get the control of 
$\|(\nabla\sigma,\div u)\|_{L_t^{\infty}H_{co}^{m-2}\cap L_t^{2}H_{co}^{m-1}}.$ This can be done by using the equations and induction arguments. Indeed, by rewriting the system \eqref{NS1}, 
\beno
-\div u&=&
g_1\ep\pt \sigma+ \ep g_1 u\cdot\nabla \sigma, \\
-\nabla\sigma&=&g_2\ep\pt u +\ep (g_2u\cdot\nabla u-\div\mathcal{L} u). 
\eeno
In view of the above two equations, one can 'trade' one spatial derivative by one (small scale) time derivative $\ep\pt$.
We can thus recover the high order spatial (conormal) derivatives by using iteratively  this observation. 

 \textbf{Step 4}: Control of 
 $L_t^{\infty}H_{co}^{m-2}$ norm of $\nabla u$.
 In this step, we aim to get an uniform control of $\|\nabla u\|_{L_t^{\infty}H_{co}^{m-2}}$
 which is quite useful to control $L_{t,x}^{\infty}$ type norms.  
 The difficulty is the estimate close to the boundary. We can work in a local chart $\Omega_{i}$.
In light of the identities 
\beqs
\partial_{\bn} u\cdot \bn =\div u-(\Pi \partial_{y_1}u)^1-(\Pi \partial_{y_2}u)^2,\quad \Pi(\partial_{\bn} u)=\Pi(\omega\times \bn)-\Pi[(D \bn) u],
\eeqs
 where $\bn$ is an extension of the unit normal and $\Pi$ projects on $(\bn)^\perp$, 
it suffices  to control $\|\omega\times\bn\|_{L_t^{\infty}H_{co}^{m-2}}.$ We remark that the advantage of working on $\omega\times\bn$ rather than $\omega$ is that  the boundary 
condition for $\omega\times\bn$ (see \eqref{bdryomegan}) only involves lower order
terms on the boundary.  
 To estimate $\omega\times\bn$, a natural attempt, used in \cite{MR2885569},  is to perform  energy estimates on the equation for the 'modified vorticity' $w=\omega\times \bn+2\Pi (a u-(D\bn) u)$ and to take advantage of the fact that $w$ vanishes on the boundary. However, the equations for $w$ still involve a stiff term $\frac{1}{\varepsilon}\nabla^{\perp}\sigma,$ which is obviously an obstacle to obtain uniform energy estimates. We shall thus instead use a lifting of the boundary conditions by using  Green's function  for the solution of   the heat equation with non-homogenous 
 boundary conditions and estimate the remainder by energy estimates.

\textbf{Step 4: $L_{t,x}^{\infty}$ estimates.} The control of  the $L_{t,x}^{\infty}$ norms contained in 
$\cA_{m,T}$
mainly stems from the Sobolev embedding and the maximum principle for the  system solved
by the vorticity. Note that at this stage, it is crucial to use the direct $L^\infty_{t}H^{m-1}_{co}$ for $(\sigma, u)$
 and $L^\infty_{t}H^{m-2}_{co}$ for $\nabla(\sigma, u)$ estimates 
obtained in the previous steps since because of the fast oscillations in time, uniform $L^\infty$ estimates in time cannot
be deduced from a Sobolev embedding in time.

 The  case  $\Omega=\mR_{+}^3$ where the boundary is flat is easier to analyze. Indeed, the spatial tangential derivatives can be controlled directly through  energy estimates without weight in $\ep$, since in this case 
  the derivatives $\p_{y^{i}}$ commute with $\div$ or $\nabla.$
 The use of the step with the Helmholtz-Leray projection is thus not necessary. The details can be found in the PhD thesis \cite{thesis}.

 In a forthcoming paper \cite{lowmachfree}, we shall strengthen the strategies used in this paper to deal with the low Mach number limit problem for the  free surface compressible Navier-Stokes system, where we are
 forced to deal with strong enough solutions in the absence of a suitable theory of weak solutions.

\textbf{Organization of the paper.}
We will 
state the main uniform estimates in Section 2 which will be proven in Section 3 and Section 4. Section 5 is then devoted to the proof of Theorem \ref{thm1}.
In Section 6, we will justify  the incompressible limit. In the appendix, we gather some useful product and commutator estimates 
 as well as the proofs of  some  technical lemmas.

\section{Uniform estimates.}
In this section, we state the main uniform a priori estimate which is the heart of this paper and the crucial step towards the proof of 
Theorem \ref{thm1}: 
\begin{prop}\label{prop-uniform es}
Let $c_0\in (0,1]$ be such that:
\beq\label{preasption1}
\forall s  \in  \big[-3{\bar{c}}\bar{P},  3\bar{P}/\bar{c}\big],  \, c_0\leq g_{i}(s) \leq 1/{c_0}, \, i=1, \, 2,  \quad |(g_1,g_2)|_{C^m\big(\big[-3{\bar{c}}\bar{P},  3\bar{P}/\bar{c}\big]\big)}\leq {1}/{c_0}
\eeq
where $\overline{c}$ is such that  for some $T \in (0, 1]$ the following assumption holds:
\begin{equation}\label{preasption}
-3{\bar{c}}\bar{P}\leq \ep\sigma^{\ep}(t,x)\leq 3\bar{P}/\bar{c} \qquad \forall (t,x)\in [0,T]\times\Omega,  \forall \ep\in[0,1].
\end{equation}
Then, there exists  $C(1/c_{0})>0$  and 
an  polynomial
$\Lambda_0$ (whose coefficients are independent of $\ep$), 
 such that, for any $\ep \in (0, 1],$ we have for a smooth enough solution of \eqref{NS1}  on  $[0, T]$
 the following estimate :
 \beq\label{enerineq}
 \mathcal{N}_{m,T}^2(\si^{\ep},u^{\ep})\leq C\big(\f{1}{c_0}\big)
 Y^2_m(\sigma_0^{\ep},u_0^{\ep})+
 (T+\ep)^{\f{1}{2}} \Lambda_0\big(\f{1}{c_0},\mathcal{N}_{m,T}(\si^{\ep},u^{\ep})\big),
 \eeq
 where  $ Y_m(\sigma_0^{\ep},u_0^{\ep})$ is defined in \eqref{initialnorm}.
\end{prop}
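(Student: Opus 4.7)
The plan is to follow the five-step scheme outlined in the introduction and to combine the resulting bounds into the inequality \eqref{enerineq}, the small factor $(T+\ep)^{1/2}$ being extracted from $L_T^2$-in-time quantities and from terms that carry an explicit $\ep$ weight. Nonlinear pieces are systematically handled by the product and commutator lemmas of the appendix using the $L^\infty$ norms built into $\cA_{m,T}$, producing contributions of the form $\Lambda_0(1/c_0,\cN_{m,T})$.

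First I would produce the uniform high-order $\ep\pt$ estimates and the $\ep$-weighted conormal estimates. Since $\ep\pt$ commutes with every spatial derivation, applying $(\ep\pt)^k$ to \eqref{NS1} preserves both the antisymmetric structure of the singular operator $L$ and the Navier boundary condition, so the symmetric energy $\int (g_1|(\ep\pt)^k\sigma|^2 + g_2|(\ep\pt)^k u|^2)\,\d x$ together with the coercive viscous contribution $\mu\int|\nabla(\ep\pt)^k u|^2\,\d x$ gives $\|(\sigma,u)\|_{L_T^\infty\cH^m}$; the boundary term from integration by parts on $\div\cL u$ is closed using $u\cdot\bn=0$ and the Navier condition. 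For spatial conormal fields $Z^I$ with $|I|\le m$ I pay one $\ep$ weight to absorb the commutator of $Z^I$ with $\nabla$ and $\div$, yielding $\ep\|(\sigma,u)\|_{L_T^\infty H_{co}^m}$ and $\ep\|\nabla(\sigma,u)\|_{L_T^\infty H_{co}^{m-1}}$. Next I would apply the Helmholtz--Leray decomposition $u=\nabla\Psi+v$: the compressible part $\nabla\Psi=\mathbb{Q}u$ is controlled by $\div u$ via elliptic regularity and the mass equation, while the incompressible part $v=\mathbb{P}u$ solves \eqref{eqofv0}, from which the singular pressure gradient has been removed. Energy estimates on $Z^\alpha v$ with $|\alpha|\le m-1$ then give $\|v\|_{L_T^\infty H_{co}^{m-1}}$ and, thanks to the viscosity, the $L_T^2$-gain $\|\nabla v\|_{L_T^2 H_{co}^{m-1}}$; the non-local commutator $[\mathbb{P},Z^\alpha]$ is absorbed by standard elliptic estimates for the projection. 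It is essential to track the separate counts of time and spatial derivatives in $\cH^{j,l}$, since these refined bounds are re-used in Step 4. To recover $\|(\nabla\sigma,\div u)\|_{L_T^\infty H_{co}^{m-2}\cap L_T^2 H_{co}^{m-1}}$, I would induct on the spatial conormal order using
\begin{equation*}
-\div u = g_1\ep\pt\sigma + \ep g_1 u\cdot\nabla\sigma,\qquad -\nabla\sigma = g_2\ep\pt u + \ep(g_2 u\cdot\nabla u - \div\cL u),
\end{equation*}
trading one spatial derivative for one scaled time derivative at the cost of a factor $\ep$ available from Step~1; the bound $\ep\|\nabla^2\sigma\|_{L_T^\infty L^2}$ then follows by viewing $\nabla\sigma/\ep$ as a force and applying standard elliptic estimates for $u$.

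The main obstacle is Step 4, namely the control of $\|\nabla u\|_{L_T^\infty H_{co}^{m-2}}$ near the boundary. In a local chart the identities
\begin{equation*}
\p_\bn u\cdot\bn = \div u - (\Pi\p_{y^1}u)^1 - (\Pi\p_{y^2}u)^2,\qquad \Pi(\p_\bn u) = \Pi(\omega\times\bn)-\Pi((D\bn) u)
\end{equation*}
reduce matters to controlling $\|\omega\times\bn\|_{L_T^\infty H_{co}^{m-2}}$. The modified vorticity $w=\omega\times\bn + 2\Pi(au - (D\bn)u)$ has a homogeneous Dirichlet trace, but its equation still contains the singular $\ep^{-1}\nabla^\perp\sigma$, which precludes a direct energy estimate. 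I would instead lift the Dirichlet data of $\omega\times\bn$ by the solution $h$ of the heat equation $\pt h-\mu\Delta h=0$ with boundary value $\omega\times\bn|_{\p\Omega}$; this boundary value is lower order in view of the Navier condition and is controlled in the trace norm \eqref{bdynorm} by Steps~1--3, and the explicit Green's function of the heat semigroup (via a partition of unity on $\p\Omega$ and local flattening) then delivers the desired $L_T^\infty H_{co}^{m-2}$ regularity of $h$. The remainder $\omega\times\bn - h$ solves the vorticity equation with zero boundary trace, and its singular forcing $\ep^{-1}\nabla^\perp\sigma$ is rewritten as $-(g_2\pt u + \dots)$ via the second identity above before a standard energy estimate closes the bound.

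Finally, the $L^\infty$ norms in $\cA_{m,T}$ follow from anisotropic Sobolev embeddings of the form $\|f\|_{L^\infty}\lesssim \|f\|_{H_{co}^2}+\|f\|_{H_{co}^2}^{1/2}\|\nabla f\|_{H_{co}^2}^{1/2}$ applied to the bounds of Steps 1--4; the pointwise control $\il\nabla u\il_{0,\infty,T}$ comes from a maximum principle argument for the vorticity equation after the singular term has been eliminated as above. Summing the five steps and isolating contributions that carry either a $T^{1/2}$ factor (from $L_T^2$-in-time quantities bounded pointwise in $t$) or an $\ep^{1/2}$ factor (from $\ep$-weighted estimates and from $\ep^{1/2}$-boundary-layer errors) yields \eqref{enerineq}. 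The overarching difficulty is the simultaneous compatibility of the various $\ep$ weights, the fast time oscillations encoded in $\ep\pt$, and the boundary-layer losses; this is precisely why the composite norm $\cN_{m,T}$ carries the asymmetric counts recorded in \eqref{defcalEmt}--\eqref{defcalAmt}.
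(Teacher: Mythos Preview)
Your outline tracks the paper's strategy closely and the five-step architecture is correct. Two points deserve correction, one minor and one that is a genuine gap.

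First, a minor confusion in Step~4: once you take the curl, the singular term $\ep^{-1}\nabla\sigma$ disappears entirely, so the equation for $\chi_i(\omega\times\bn)$ (and hence for the remainder after lifting the boundary data) has \emph{no} singular forcing. The source $G^\omega$ contains only $g_2'\nabla\sigma\times(\ep\pt u+\ep u\cdot\nabla u)$, which is uniformly bounded. Your proposed rewriting is therefore unnecessary; the paper simply performs a direct energy estimate on the remainder (after changing to local coordinates and splitting off the 1D-in-$z$ heat lift). Relatedly, your derivation of $\ep\|\nabla^2\sigma\|_{L_T^\infty L^2}$ ``by viewing $\nabla\sigma/\ep$ as a force and applying elliptic estimates for $u$'' gives regularity for $u$, not for $\sigma$; the paper instead combines the two equations into a damped transport equation for $\nabla\sigma$, takes a divergence, and does an $L^2$ energy estimate on $\Delta\sigma$.

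The genuine gap is in your treatment of the $L^\infty$ norms. You claim that all terms in $\cA_{m,T}$ except $\il\nabla u\il_{0,\infty,T}$ follow from the anisotropic Sobolev embedding applied to the energy norms. This fails for $\il\nabla\sigma\il_{[\frac{m-1}{2}],\infty,T}$: the embedding would require uniform control of $\|\nabla^2\sigma\|_{L_T^\infty H_{co}^{[\frac{m-1}{2}]+1}}$, but $\cE_{m,T}$ only contains $\ep\|\nabla^2\sigma\|_{L_T^\infty L^2}$, which is both too weak in regularity and carries an $\ep$ weight. The paper closes this by observing that $R=\chi_i\,\bn\cdot\nabla\sigma$ satisfies a damped transport equation
\[
\ep^2 g_1(\pt R+u\cdot\nabla R)+\tfrac{1}{2\mu+\lambda}R=G_R,
\]
obtained by combining the mass and momentum equations; the damping produces an $O(\ep^2)$ relaxation time, and a characteristics/Duhamel argument then gives $\il R^I\il_{\infty,T}\lesssim \|R^I(0)\|_{L^\infty}+\il\cH^I\il_{\infty,T}$ directly, without any second normal derivative of $\sigma$. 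You need this mechanism (or an equivalent one) to close the $\cA_{m,T}$ estimate.
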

\begin{proof}
This proposition is the consequence of Proposition \ref{energybdd} and \ref{Linftybd}, which will be established in Section 3 and Section 4 respectively.
\end{proof}
\section{Uniform estimates-energy norm}
In this section, we establish the a-priori estimates for the energy norm $\mathcal{E}_{m,T}.$ Again, for notational convenience, we skip the $\ep-$dependence of the solutions.
\begin{prop}\label{energybdd}
If the estimates  \eqref{preasption} \eqref{preasption1} are satisfied, then we can find  
a constant $C_1(1/c_0)$ that depends only on $1/c_0$ and a polynomial $\tilde{\Lambda}$
whose coefficients are independent of $\ep,$ such that for a smooth enough solution of \eqref{NS1},
 the following estimate holds on $[0, T]$ for $\ep \in (0, 1]$:
 \begin{equation}\label{energybound}
   \mathcal{E}^2_{m,T}\leq C_1\big(\f{1}{c_0}\big) Y^2_{m}(\sigma_0,u_0)+(T+\varepsilon)^{\frac{1}{2}}   \tilde{\Lambda}\big(\f{1}{c_0}, \mathcal{N}_{m,T}\big).
 \end{equation}
\end{prop}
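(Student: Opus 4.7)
The plan is to establish the four blocks of the energy norm $\cE_{m,T}$ in the order laid out in the introduction, using the identity $g_1\p_t \si + \f{1}{\ep}\div u = -g_1 u\cdot \nabla \si$ and $g_2\p_t u + \f{1}{\ep}\nabla \si = -g_2 u\cdot\nabla u + \div\cL u$ to symmetrize the singular part: when one tests the first equation by $\si$ and the second by $u$ with the weights $g_1,g_2$ the coupling $\f{1}{\ep}(\div u,\si) + \f{1}{\ep}(\nabla\si,u)$ cancels after integration by parts thanks to the boundary condition $u\cdot\bn=0$. Every step below proceeds by choosing a commutator $Z^I$, applying it to the system, and estimating the resulting commutator source terms; the key technical input is that these sources are either already controlled by $\cE_{m,T}$ or carry a smallness factor $T+\ep$. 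The small factor on the right-hand side of \eqref{energybound} ultimately arises from writing the quadratic source terms as $\int_0^T\|\cdot\|^2_{L^\infty}\,\d s\leq T\il\cdot\il^2_{[\f{m+1}{2}],\infty,T}$ (absorbed into $\cA_{m,T}\subset \cN_{m,T}$) or from terms that carry an explicit factor $\ep$ from the commutator.

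Step 1 handles the high-order time derivatives and the $\ep$-weighted conormal derivatives. For $(\ep\p_t)^k$ with $k\leq m$, the commutator with $\nabla/\div$ vanishes, so direct energy estimates in the symmetrized form yield $\|(\si,u)\|_{L_T^\infty\cH^m}+\|\na u\|_{L_T^2\cH^m}$ up to a polynomial error controlled by $\cA_{m,T}$ and $Y_m$. For spatial conormal vector fields $Z_j$, the commutator $[Z^I,\f{1}{\ep}\nabla]$ contains a factor $\f{1}{\ep}$ of a lower-order conormal derivative of $(\si,u)$; multiplying the system by $\ep$ before applying $Z^I$ with $|I|\leq m$ absorbs this loss and produces the $\ep$-weighted bounds appearing in $\cE_{m,T}$. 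The pointwise bound $|\ep\si|\leq 3\bar{c}^{-1}\bar P$ from \eqref{preasption} keeps $g_1,g_2$ bounded above and below throughout.

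Step 2 controls the incompressible part $v=\mathbb{P}u$. Applying $\mathbb{P}$ to the momentum equation removes the singular term $\f{1}{\ep}\nabla\si$, leaving the parabolic system \eqref{eqofv0}. Commuting with $Z^I=(\ep\p_t)^{\alpha_0}Z_{tan}^{\tilde I}$, $|I|\leq m-1$, I test by $v$ and integrate; the diffusion gives the $L_T^2H_{co}^{m-1}$ control of $\na v$, while the boundary terms produced by the Navier condition are handled via a trace estimate and the $\cA_{m,T}$ pointwise bound on $\na u$. The non-local commutators $[Z^I,\mathbb{P}]$ acting on the already-estimated quantities $\ep\p_t u$ and $u\cdot\nabla u$ are lower order and yield integrable source terms. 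Step 3 is then essentially algebraic: rewriting the system as in Section 1.2 gives $\nabla \si = -\ep(g_2\p_t u + g_2 u\cdot\nabla u - \div\cL u)$ and $\div u = -\ep g_1(\p_t \si + u\cdot\nabla\si)$, so each spatial derivative of $\nabla\si$ or $\div u$ can be traded for one scaled time derivative $\ep\p_t$ already controlled in Step 1. An induction on the number of spatial conormal derivatives closes $\|(\nabla\si,\div u)\|_{L_T^\infty H_{co}^{m-2}\cap L_T^2 H_{co}^{m-1}}$.

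The main obstacle is Step 4, the estimate of $\|\na u\|_{L_T^\infty H_{co}^{m-2}}$ near the boundary. In a local chart I use the identities $\p_\bn u\cdot\bn=\div u-(\Pi\p_{y_1}u)^1-(\Pi\p_{y_2}u)^2$ and $\Pi(\p_\bn u)=\Pi(\omega\times\bn)-\Pi[(D\bn)u]$ to reduce the matter to controlling $\omega\times\bn$ in $L_T^\infty H_{co}^{m-2}$. A naive energy estimate on the modified vorticity $w=\omega\times\bn+2\Pi(au-(D\bn)u)$ fails because of the stiff $\f{1}{\ep}\nabla^\perp\si$ appearing in its evolution; instead I lift the non-homogeneous Dirichlet data by solving an auxiliary heat equation with the explicit boundary trace of $w$ using the half-space heat kernel (after localization and flattening), bounding the lift in $L_T^\infty H_{co}^{m-2}$ by the previously-controlled quantities (in particular $v\cdot\bn=0$ and the Step 2 bounds for $v$), and closing an energy estimate on the remainder, which satisfies a homogeneous Dirichlet problem. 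Throughout, every source term is either bounded by $Y_m^2$ at time zero, integrated in time to yield a factor $T$, or carries an explicit $\ep$ coming from the commutator with $\f{1}{\ep}L$; combining with $\cA_{m,T}\leq\cN_{m,T}$ produces the $(T+\ep)^{1/2}\tilde\Lambda(1/c_0,\cN_{m,T})$ on the right-hand side of \eqref{energybound}, as claimed.
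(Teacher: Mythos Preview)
Your proof plan is correct and follows essentially the same four-step strategy as the paper: highest-order $(\ep\p_t)^k$ and $\ep$-weighted conormal estimates, Leray-projected estimates for $v=\mathbb{P}u$, inductive recovery of $(\nabla\sigma,\div u)$ by trading space for $\ep\p_t$, and the heat-kernel lifting of the boundary data for $\omega\times\bn$. The only pieces of $\cE_{m,T}$ you do not mention are the closing $\ep$-weighted bounds $\ep\|\nabla^2 u\|_{L_T^\infty H_{co}^{m-2}}$ and $\ep\|\nabla^2\sigma\|_{L_T^\infty L^2}$, which the paper handles afterwards by elliptic regularity for $u$ and a damped-transport energy estimate on $\Delta\sigma$; these are routine once Steps~1--4 are in place.
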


As explained in the introduction, to overcome the difficulty due to the 
 nontrivial commutators between the tangential spatial derivatives and the standard derivation ($\nabla,\div$), we need to split the velocity
 $u$ into $u=\nabla\Psi+v$, where $\nabla\Psi, v$ are the compressible part and the incompressible part respectively (see \eqref{def-projection} the precisely definition). On the one hand, the compressible part $\nabla\Psi$ satisfies the elliptic equation $\Delta\Psi=\div u$ 
 with  Neumann boundary condition, from which 
 one can deduce  the estimate of $\nabla^2\Psi$ from  that of $\div u.$ On the other hand, since the incompressible part $v$ is governed by  a convection diffusion  equation
 without oscillations, we can control  its 
 conormal derivatives by direct energy estimates.  The estimates for  $\partial_{\bn} v$ will then be  deduced from the ones for $\omega\times \bn.$

\subsection{Preliminaries: Leray projection}\label{prebd}
\qquad\qquad\qquad\qquad\qquad\qquad\qquad\qquad\qquad\qquad\qquad\qquad\qquad\qquad\qquad\qquad\qquad\qquad\qquad\qquad

 
 To define the compressible or acoustic  part and the incompressible part of the velocity field, we shall use  the Leray projection.  One has the decomposition, 
 $$L_{x}^2(\Omega)^3=H\oplus G$$
where 
$$H=\{v\in L_{x}^2(\Omega)^3,\div v=0, v\cdot \bn|_{\partial{\Omega}}=0\}, \quad 
G=\{\nabla \Psi, \nabla \Psi\in L^2(\Omega)^3\}.$$
 We denote $\mathbb{P},\mathbb{Q}$ the  projectors that map $L_{x}^2(\Omega)^3$ to its subspaces $H$ and $G$ respectively, 
 namely,  
\begin{equation}\label{def-projection}
   \begin{aligned}
   \mathbb{Q}: &
L^2(\Omega)^3\rightarrow G\qquad\qquad\qquad \mathbb{P}:
L^2(\Omega)^3\rightarrow H\\
&\qquad f \mapsto \mathbb{Q} f=\nabla\Psi \qquad\qquad\qquad \quad
f\mapsto f-\mathbb{Q} f
   \end{aligned} 
\end{equation}
where $\Psi$ is defined as the unique
solution of
\begin{equation}\label{Neumann}
\left\{
  \begin{array}{l}
  \displaystyle \Delta\Psi=\div f  \quad \text{in} \quad \Omega, \\
      \displaystyle\partial_{\bn} \Psi
      =f\cdot {\bn} \quad \text{on} \quad \partial\Omega,\\
 \int_{\Omega} \Psi d x=0.
  \end{array}
  \right.
\end{equation}
 Note that the solvability of the Neumann problem \eqref{Neumann} in $H^{1}(\Omega)$ is well-known as an application of the Lax-Milgram theorem. Moreover, by Proposition \eqref{propneumann}, one has that for a $C^{k+1}$ bounded domain,
 \begin{equation}
 \|\nabla\Psi(t)\|_{H_{co}^k}\lesssim \|f(t)\|_{H_{co}^k},
 \qquad \|\nabla^2\Psi(t)\|_{H_{co}^{k-1}}\lesssim \|\div f(t)\|_{H_{co}^{k-1}}+\|f(t)\|_{H_{co}^{k-1}}.
 \end{equation}
Note that in these estimates, the time variable is just an external parameter. 

Since $[\mathbb{P},\partial_{t}]=0,$ \eqref{NS1} is equivalent to the following system:
\begin{equation}\label{reformulation}
\left\{
\begin{array}{l}
g_1(\partial_t\sigma+u\cdot\nabla\sigma)+\frac{\Delta\Psi}{\varepsilon}=0,    \\[5pt]
\bar{\rho}\partial_t\nabla\Psi+\mathbb{Q}\big(\frac{g_2-\bar{\rho}}{\varepsilon}\varepsilon\partial_t u+g_2u\cdot\nabla u-\mu \Delta v-(2\mu+\lambda)\nabla\div u+\frac{\nabla \sigma}{\varepsilon}\big)=0,\\[5pt]
\bar{\rho}\partial_t v+\frac{g_2-\bar{\rho}}{\varepsilon}\varepsilon\partial_t u+g_2u\cdot\nabla u-\mu \Delta v+\nabla q=0,
\end{array}
\right.
\end{equation}
where 
$$v=\mathbb{P}u,\quad  \nabla\Psi=\mathbb{Q}u,\quad \nabla q=-\mathbb{Q}\big(\frac{g_2-\bar{\rho}}{\varepsilon}\varepsilon\partial_t u+g_2u\cdot\nabla u-\mu \Delta v\big), \quad \bar{\rho}=g_{2}(0).$$
By taking the divergence of the third equations of \eqref{reformulation} and noting that $\div v=0, \varepsilon\partial_t u\cdot \bn|_{\partial\Omega}=0,$ we see that $\nabla q$ is governed by
 the following elliptic equation:
\begin{equation}\label{eqofq}
\left\{
 \begin{array}{l}
    \displaystyle \Delta q=- \div\big(\frac{g_2-\bar{\rho}}{\varepsilon}\varepsilon\partial_t u+g_2u\cdot\nabla u\big)\quad \text{in} \quad \Omega,\\[6pt]
 \partial_{\bn} q=-(g_2u\cdot\nabla u)\cdot \bn+\mu \Delta v\cdot \bn \quad \text{on} \quad \partial\Omega.
 \end{array}   
 \right.
\end{equation}


Proposition \ref{energybdd} can be shown by the first three steps
outlined in the introduction, they will be handled in  the following three subsections.


\subsection {Step 1: highest conormal estimates.} 
For  notational convenience, we 
 denote 
 $\Lambda$ for a polynomial which may differ from line to line, and  use the notation $\lesssim \cdot $ as $\leq C\cdot$ for some generic constant $C=C(1/c_0)$ that depends on $1/c_0$ but 
not on $\ep.$

Let us state the main result of this subsection.
\begin{lem}\label{lemstep1}
Suppose that \eqref{preasption} is satisfied, then for any $m\geq 0,$ any $0 < T\leq 1$ and $\ep \in (0, 1]$ we have: 
\beq\label{sumstep1}
\begin{aligned}
   &\|(\sigma,u)\|_{L_T^{\infty}\cH^m}^2%
   +\ep^2 (\|(\sigma,u)\|_{L_T^{\infty}H_{co}^m}^2+\|(\nabla\sigma,\div u)\|_{L_T^{\infty}H_{co}^{m-1}}^2)\\
&\qquad +\|\nabla u\|_{L_t^2\cH^{m}}^2
+\ep^2(\|\nabla u\|_{L_T^2H_{co}^m}^2+\|\nabla\div u\|_{L_T^{2}H_{co}^{m-1}}^2)\\
&\lesssim Y^2_m(\sigma_0,u_0)+(T+\ep)^{\f{1}{2}}\Lambda\big(\f{1}{c_0},\cA_{m,T}\big)\cE^2_{m,T}.
\end{aligned}
\eeq 
\end{lem}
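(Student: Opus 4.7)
The plan is to decompose the proof into three substeps matching the three groups of norms appearing in \eqref{sumstep1}: pure scaled time derivatives, $\ep$-weighted full conormal derivatives of order $m$, and the recovered estimates on $(\nabla\sigma,\div u)$ obtained algebraically from the equations.

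For the first group, I would apply $(\ep\pt)^k$ to \eqref{NS1} for $0\le k\le m$ and perform the standard symmetric energy estimate. Since $\ep\pt$ commutes exactly with $\nabla$ and $\div$, no singular $1/\ep$ commutator arises. Setting $\sigma_k=(\ep\pt)^k\sigma$, $u_k=(\ep\pt)^ku$, and testing the first equation with $\sigma_k$ and the second with $u_k$, the symmetric cancellation
\[
\int_\Omega\frac{\sigma_k\,\div u_k}{\ep}\,\d x+\int_\Omega\frac{u_k\cdot\nabla\sigma_k}{\ep}\,\d x=0
\]
(obtained after integration by parts using $u_k\cdot\bn|_{\p\Omega}=0$, which follows from the compatibility conditions) removes the stiff term, while Korn's inequality applied to $\int\cL u_k:\nabla u_k\,\d x$ produces the dissipation $\|\nabla u_k\|_{L^2}^2$. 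The commutators $[(\ep\pt)^k,g_i(\ep\sigma)]\pt$ and $[(\ep\pt)^k,u\cdot\nabla]$ are handled by the product and commutator estimates of the appendix and, after Hölder in time, contribute $T^{1/2}\Lambda(1/c_0,\cA_{m,T})\cE^2_{m,T}$ on the right-hand side, yielding the control of $\|(\sigma,u)\|_{L^\infty_T\cH^m}^2+\|\nabla u\|_{L^2_T\cH^m}^2$.

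For the second group, I would apply $\ep Z^I$ with $|I|=m$ (where $Z^I$ mixes $\ep\pt$ and spatial conormal vector fields) and test against $\ep Z^I\sigma$ and $\ep Z^Iu$. The singular factor $1/\ep$ is neutralized by the $\ep$ weight thanks to the identities
\[
\ep Z^I(\nabla\sigma/\ep)=\nabla(Z^I\sigma)+[Z^I,\nabla]\sigma,\qquad \ep Z^I(\div u/\ep)=\div(Z^Iu)+[Z^I,\div]u,
\]
in which $[Z^I,\nabla]$ and $[Z^I,\div]$ are differential operators of order at most $|I|$ since the $Z_j$ are smooth. The same symmetric cancellation and Korn's inequality then deliver the bounds on $\ep^2\|(\sigma,u)\|_{L^\infty_T H^m_{co}}^2+\ep^2\|\nabla u\|_{L^2_T H^m_{co}}^2$, with the commutator and source terms again absorbed into $(T+\ep)^{1/2}\Lambda(1/c_0,\cA_{m,T})\cE^2_{m,T}$. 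The remaining bounds on $\ep\|(\nabla\sigma,\div u)\|_{L^\infty_T H^{m-1}_{co}}$ and $\ep\|\nabla\div u\|_{L^2_T H^{m-1}_{co}}$ are obtained purely algebraically from \eqref{NS1} rewritten as
\[
\div u=-g_1\,\ep\pt\sigma-\ep g_1\,u\cdot\nabla\sigma,\qquad \nabla\sigma=-g_2\,\ep\pt u-\ep g_2\,u\cdot\nabla u+\ep\,\div\cL u,
\]
so that each spatial derivative falling on $\div u$ or $\nabla\sigma$ is traded for one scaled time derivative at the cost of one power of $\ep$; applying $Z^I$ with $|I|\le m-1$ and arguing by induction on the order, these norms reduce to the quantities already controlled in the first two substeps, together with elliptic recovery of $\nabla\div u$ in $L^2_T$ from the momentum equation and the dissipation estimate on $\ep\nabla u$.

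The main obstacle is the careful bookkeeping of the boundary contributions produced by the integration by parts in the second substep: unlike $\ep\pt$, the spatial conormal vector fields $Z_j$ do not preserve the Navier-slip boundary conditions, so the cancellation used in the first substep leaves nonzero tangential boundary integrals on $\p\Omega$. These residual terms are however multiplied by $\ep$ (through the weight in $\ep Z^I$) and can be absorbed via trace estimates together with the already-established $\cA_{m,T}$ control; it is precisely the tracking of the $\ep$ and $T$ smallness of these boundary and commutator contributions which produces the $(T+\ep)^{1/2}$ prefactor in \eqref{sumstep1}.
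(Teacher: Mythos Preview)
Your first two substeps are essentially correct and match the paper's approach in Lemma~\ref{highest}. However, your third substep has a genuine gap: the purely algebraic recovery of $\ep\|(\nabla\sigma,\div u)\|_{L^\infty_T H^{m-1}_{co}}$ and $\ep\|\nabla\div u\|_{L^2_T H^{m-1}_{co}}$ does not close.

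Concretely, using the momentum equation as you wrote, $\ep Z^I\nabla\sigma$ for $|I|\le m-1$ contains the term $\ep^2 Z^I\div\cL u$, and in particular $\ep^2 Z^I\curl\omega$. This requires $\ep^2\|\nabla^2 u\|_{L^\infty_T H^{m-1}_{co}}$, which is not in $\cE_{m,T}$ (only $\ep\|\nabla^2 u\|_{L^\infty_T H^{m-2}_{co}}$ is). Likewise, your proposed bound on $\ep\|\nabla\div u\|_{L^2_T H^{m-1}_{co}}$ via the mass equation demands $\ep\|\ep\pt\nabla\sigma\|_{L^2_T H^{m-1}_{co}}\le \ep\|\nabla\sigma\|_{L^2_T H^m_{co}}$, again not available. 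Moreover, you overlooked that the boundary terms in substep~2 (those analogous to $\cK_2^2,\cK_4$ in the paper, arising because $\partial_{\bn}u\cdot\bn=\div u-\text{tangential}$) leave precisely $\ep^2\|\nabla\div u\|^2_{L^2_T H^{m-1}_{co}}$ on the right-hand side; substep~2 is not self-contained and needs this quantity as input.

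The paper closes this loop with a \emph{second energy estimate} (Lemma~\ref{highestgrad}): applying $Z^I$ (with $|I|\le m-1$) to $\nabla$ of the mass equation and to the momentum equation, one tests by $(Z^I\nabla\sigma,\,-\nabla\div Z^Iu)$. The singular terms $\frac{1}{\ep}\nabla\div u^I\cdot(\nabla\sigma)^I$ and $\frac{1}{\ep}(\nabla\sigma)^I\cdot\nabla\div u^I$ cancel, while the viscous part yields $(2\mu+\lambda)\|\nabla\div u^I\|^2_{L^2}$ as genuine dissipation. This is what provides both $\ep^2\|(\nabla\sigma,\div u)\|^2_{L^\infty_T H^{m-1}_{co}}$ and $\ep^2\|\nabla\div u\|^2_{L^2_T H^{m-1}_{co}}$, and feeds back into substep~2 to close \eqref{sumstep1}.
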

\begin{proof}
The estimate  \eqref{sumstep1} can be derived from the following two lemmas.
 \end{proof}
Let us start with:
 \begin{lem}\label{highest}
 Under the same assumption as in Lemma \ref{lemstep1}, for any $0<t\leq T,$
 the following  estimates hold:
 \begin{equation}\label{hightimeder}
\|(\sigma,u)\|_{L_t^{\infty}\cH^{m}}^2+\|\nabla u\|_{L_t^2\cH^{m}}^2
\lesssim \|(\si,u)(0)\|_{\cH^m}^2+
\Lambda\big(\f{1}{c_0},\cA_{m,T}\big)T^{\f{1}{2}}\cE_{m,T}^2,
 \end{equation}
 \begin{equation}\label{highestconormal}
   \varepsilon^2\big(\|(u,\sigma)(t)\|_{H_{co}^m}^2+\|\nabla u\|_{L_t^2H_{co}^{m}}^2\big) \lesssim 
   \varepsilon^2 \|(\sigma, u)(0)\|_{H^m_{co}}^2+\varepsilon^{\f{1}{2}} \Lambda\big(\f{1}{c_0},\cA_{m,T}\big)\cE_{m,T}^2
   +\ep^2\|\nabla\div u\|_{L_t^2H_{co}^{m-1}}^2.
 \end{equation}

 \end{lem}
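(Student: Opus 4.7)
Both bounds are $L^2$ energy inequalities obtained by applying a suitable derivative to \eqref{NS1}, exploiting the skew-symmetric structure of the singular couplings $\nabla\sigma/\ep$ and $\div u/\ep$. For \eqref{hightimeder} the operator $(\ep\pt)^k$ with $0\le k\le m$ commutes with $\nabla$ and $\div$, so the singular structure of the system is preserved and a direct symmetric energy argument suffices. For \eqref{highestconormal} the operator is $\ep Z^I$ with $|I|=m$ allowing spatial conormal vector fields: the prefactor $\ep$ is crucial, since it neutralizes the $1/\ep$ scaling of the singular terms against which the commutators $[Z^I,\nabla]$, $[Z^I,\div]$ do not vanish.

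\textbf{Proof of \eqref{hightimeder}.} By the compatibility conditions, $(\ep\pt)^k u$ satisfies the Navier boundary condition \eqref{bdryconditionofu} for every $0\le k\le m-1$. Testing the mass equation against $(\ep\pt)^k\sigma$ and the momentum equation against $(\ep\pt)^k u$, summing and integrating by parts, the singular cross-terms
$$\f{1}{\ep}\int_\Omega\bigl(\div (\ep\pt)^k u\bigr)(\ep\pt)^k\sigma\, \d x + \f{1}{\ep}\int_\Omega \nabla (\ep\pt)^k\sigma\cdot(\ep\pt)^k u\,\d x$$
cancel exactly since $(\ep\pt)^k u\cdot \bn|_{\p\Omega}=0$. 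The dissipation combined with the Navier slip condition produces $\mu\|\nabla(\ep\pt)^k u\|_{L^2}^2$ via a Korn-type inequality. The remaining terms are commutators of $(\ep\pt)^k$ with $g_i(\ep\sigma)$ and with $u\cdot\nabla$; each one splits as an $L^\infty$ factor (bounded by $\cA_{m,T}$) times an $L^2$ factor (bounded by $\cE_{m,T}$), so after time integration they contribute at most $T^{1/2}\Lambda(1/c_0,\cA_{m,T})\cE_{m,T}^2$.

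\textbf{Proof of \eqref{highestconormal}.} Apply $\ep Z^I$ with $|I|=m$ and commute the derivatives through to obtain
$$\ep g_1\pt Z^I\sigma+\div Z^I u=\tilde R_1,\qquad \ep g_2\pt Z^I u-\ep\div\cL Z^I u+\nabla Z^I\sigma=\tilde R_2,$$
with $\tilde R_1$ and $\tilde R_2$ including the commutators $\pm[Z^I,\div]u$ and $\pm[Z^I,\nabla]\sigma$ which, crucially, are free of $1/\ep$ thanks to the $\ep$ prefactor. Testing by $\ep Z^I\sigma$ and $\ep Z^I u$, the principal couplings again cancel after integration by parts: the boundary contribution involves $Z^I u\cdot\bn|_{\p\Omega}$, which is handled by commuting $\bn$ through $Z^I$ and using $u\cdot\bn=0$ on $\p\Omega$ when $Z^I$ contains only $Z_0,Z_1,Z_2$, and by $\phi(0)=0$ when $Z^I$ contains the factor $Z_3$. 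The dissipation yields $\ep^2\bigl(2\mu\|\mathbb{S} Z^I u\|_{L_t^2L^2}^2+\lambda\|\div Z^I u\|_{L_t^2L^2}^2\bigr)$ which controls $\ep^2\|\nabla Z^I u\|_{L_t^2L^2}^2$ via Korn, modulo the commutator $[Z^I,(\mu+\lambda)\nabla\div]u$ that generates precisely the admissible right-hand side term $\ep^2\|\nabla\div u\|_{L_t^2H_{co}^{m-1}}^2$.

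\textbf{Main obstacle.} The delicate step is verifying that each remainder at top order $|I|=m$ decomposes as a product of an $L^\infty_{t,x}$ factor with at most $[(m+3)/2]$ conormal derivatives (absorbed by $\cA_{m,T}$) and an $L^2$ factor with at most $m$ conormal derivatives (absorbed by $\cE_{m,T}$). This is where the precise calibration of the thresholds in \eqref{defcalEmt} and \eqref{defcalAmt} pays off, and where the previously established bound \eqref{hightimeder} for $\|(\sigma,u)\|_{L_T^\infty\cH^m}$ is used to absorb the genuinely top-order commutators in $\tilde R_i$. A product/commutator calculus in conormal Sobolev spaces (presumably developed in the appendix) then provides the $T^{1/2}$ or $\ep^{1/2}$ smallness needed to close each bound, and summing over $|I|\le m$ produces \eqref{sumstep1}.
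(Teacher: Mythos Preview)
Your proposal is essentially correct and follows the same energy-estimate strategy as the paper. Two points where your sketch diverges from or underplays the paper's argument are worth noting.

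First, the paper does not invoke a Korn inequality. It uses instead the identity $\div\mathcal{L}u=\mu\Delta u+(\mu+\lambda)\nabla\div u$ and integrates each piece by parts separately, obtaining directly $\mu\|\nabla u^I\|^2+(\mu+\lambda)\|\div u^I\|^2$ as the dissipation. Your Korn route is a valid alternative.

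Second, and more substantively, your attribution of the term $\ep^2\|\nabla\div u\|_{L_t^2H_{co}^{m-1}}^2$ on the right of \eqref{highestconormal} solely to the volume commutator $[Z^I,\nabla\div]u$ is incomplete. In the paper this term arises primarily from \emph{boundary} contributions. After integrating $\iint Z^I\div\mathcal{L}u\cdot u^I$ by parts, the boundary integral $\int_{\partial\Omega}(Z^I\nabla u\cdot\bn)\cdot u^I$ (the paper's term $\mathcal{K}_4$) must be decomposed using the identity $\partial_{\bn}u\cdot\bn=\div u-(\Pi\partial_{y^1}u)^1-(\Pi\partial_{y^2}u)^2$ together with the Navier condition \eqref{bdryconditionofu}, and then estimated via the trace inequality \eqref{normaltraceineq}; it is the trace of $\div u$ at conormal order $m-1$ that forces the appearance of $\|\nabla\div u\|_{L_t^2H_{co}^{m-1}}$. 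The same mechanism recurs in the boundary piece generated by the commutator $[Z^I,\div]\nabla u$ after a further integration by parts (the paper's $\mathcal{K}_2^2$). Your sketch addresses only the boundary term coming from the singular coupling $\nabla Z^I\sigma\cdot Z^I u$, which is the easier one; the viscous boundary term is where most of the technical effort in the paper's proof is spent, and your proposal should flag this rather than fold it into ``modulo the commutator''.
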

 We recall that in our notations the norms at $t=0$ involve the computation of powers of $\ep \partial_{t}$
  at $t=0$.
\begin{proof}
Define $\sigma^I=Z^I\sigma,u^{I}=Z^I u.$ Then $(\sigma^I, u^{I})$
satisfies:
\begin{equation}\label{eqafterdervtion}
\left\{
    \begin{array}{l}
  \displaystyle g_1(\pt\sigma^{I}+u\cdot\nabla \sigma^I)+\frac{\div u^I}{\varepsilon}= \mathcal{R}_{\sigma}^I,
    \\
\displaystyle g_2(\partial_t u^I+u\cdot\nabla u^I)-Z^I(\div \mathcal{L}u)
+\frac{\nabla\sigma^I}{\varepsilon}= \mathcal{R}_{u}^I,\\
\end{array}
\right.
\end{equation}
where 
\begin{equation*}
\begin{aligned}
    \mathcal{R}_{\sigma}^I&=-[Z^I,\frac{g_1}{\varepsilon}]\varepsilon \partial_t\sigma-[Z^I,g_1u\cdot\nabla]\sigma-\frac{1}{\varepsilon}[Z^I,\div ]u,\\
    \mathcal{R}_{u}^I&=-[Z^I,\frac{g_2}{\varepsilon}]\varepsilon \partial_t u-[Z^I,g_2u\cdot\nabla]u-\frac{1}{\varepsilon}[Z^I,\nabla]\sigma. 
\end{aligned}
\end{equation*}
We first show \eqref{hightimeder} which is easier. Assuming that $I=(j,0,\cdots, 0), $ $|j|\leq m$ which means that $Z^{I}=(\varepsilon\partial_t)^j$  involves only time  derivatives. The advantage of this case is that the commutators do not include singular terms, that is the third terms in $\mathcal{R}_{\sigma}^I$ and $\mathcal{R}_{u}^I$
vanish.

For the sake of notational simplicity, we denote $(\sigma^j,u^j)=(\varepsilon\partial_t)^j(\sigma, u).$
Taking the scalar product of \eqref{eqafterdervtion} by
 $(\sigma^j,u^j)$ and taking benefits of the boundary conditions 
 \beq\label{bdrycon-time}
 u^j\cdot \bn=0,\quad
 \Pi(\partial_{\bn} u^j)=\Pi (-2a u^j+(D\bn) u^j) \quad \text{on}\quad \partial\Omega,
 \eeq
 as well as the relation
$\partial_t g_2+\div(g_2 u)=0,$ 
we get from standard integration by parts  that:
\begin{equation}\label{sec5:eq2}
\begin{aligned}
 &\frac{1}{2}\int_{\Omega} (g_1|\sigma^j|^2+g_2|u^j|^2)(t) \ \d x+\iint_{Q_t}\mu|\nabla u^j|^2+(\mu+\lambda)|\div u^j|^2 \ \d x \d s\\
 &\leq \frac{1}{2} \int_{\Omega} \big(g_1|\sigma^j|^2+g_2|u^j|^2\big)(0) \ \d x+ \left|\iint_{Q_t}\big(\partial_t g_1+\div(g_1 u)\big)|\sigma^j|^2 \ \d x\d s\right|\\
 &\quad+\mu \left|\int_0^t \int_{\partial\Omega}\Pi(\partial_{\bn} u^j)\Pi u^j \d S_y \d s\right|+\|\mathcal{R}_{\sigma}^I\|_{L^2(Q_t)}\|\sigma^j\|_{L^2(Q_t)}+\|\mathcal{R}_{u}^I\|_{L^2(Q_t)}\|u^j\|_{L^2(Q_t)},
\end{aligned} 
\end{equation}
 where we denote by  $\d S_y$ the surface measure of $\partial\Omega$ and $Q_t=[0,t]\times \Omega.$  
The second term in the above  right hand side can be controlled easily by
$\Lambda_{1,\infty,t}\|\sigma^j\|_{L^2(Q_t)}^2.$ Note that 
\beqs
\il\pt g_1\il_{0,\infty,t}\leq \sup_{[-3{\bar{c}}\bar{P}, 3\bar{P}/\bar{c}\big]}( |g_1'(s)| )\il\ep\pt\sigma\il_{0,\infty,t}\leq \f{1}{c_0}\il\ep\pt\sigma\il_{0, \infty,t}.
\eeqs
The boundary term of the last line of  \eqref{sec5:eq2} can be treated thanks to the
boundary condition \eqref{bdrycon-time} and the
trace inequality \eqref{traceL2}
\begin{equation}
 \mu\big | \izt \int_{\partial\Omega}\Pi(\partial_{\bn} u^j)\cdot \Pi u^j \ \d S_y \d s\big | \leq \frac{\mu}{4}\|\nabla u^j\|_{L^2(Q_t)}^2+C_{\mu}\|u^j\|_{L^2(Q_t)}^2.
\end{equation}
We now detail the estimate of $(\mathcal{R}_{\sigma}^I,\mathcal{R}_{u}^I)$
which vanish unless $j\neq 0.$ For $1\leq j\leq m,$
by the commutator estimate \eqref{roughcom-T}
and the estimate \eqref{esofg12} for $g_1,$
\begin{equation}\label{sec5:eq2.25}
\begin{aligned}
\|\mathcal{R}_{\sigma}^I\|_{L^2(Q_t)}&\lesssim \|\partial_t g_1\|_{L_t^2\mathcal{H}^{m-1}}\il(\varepsilon\partial_t)\sigma\il_{[\f{m}{2}]-1,\infty,t}+\il\partial_t g_1\il_{[\f{m-1}{2}], \infty,t}\|(\varepsilon\partial_t)\sigma\|_{L_t^2\mathcal{H}^{m-1}}\\
&\quad + \| g_1 u\|_{L_t^2\mathcal{H}^{m}}\il\nabla\sigma\il_{[\f{m}{2}]-1,\infty,t}+\il g_1 u\il_{[\f{m+1}{2}],\infty,t}\|\nabla\sigma\|_{L_t^2\mathcal{H}^{m-1}}\\
&\lesssim \La\big(\|\nabla\sigma\|_{L_t^2\mathcal{H}^{m-1}}+\|(\sigma,u)\|_{L_t^2\mathcal{H}^{m}}\big).
\end{aligned}
\end{equation}
In a similar way, we have:
\begin{equation}\label{sec5:eq3}
 \|\mathcal{R}_{u}^I\|_{L^2(Q_t)}\lesssim \La\big(\|\nabla(\sigma, u)\|_{L_t^2\mathcal{H}^{m-1}}+\|(\sigma,u)\|_{L_t^2\mathcal{H}^{m}}\big).
\end{equation}
Therefore, \eqref{hightimeder} is the consequence of 
 \eqref{sec5:eq2}-\eqref{sec5:eq3}.
 Note that we have used the fact that
 $$\|(\si,u)\|_{L_t^{2}\mathcal{H}^m}
 \lesssim T^{\f{1}{2}}\|(\si,u)\|_{L_t^{\infty}\mathcal{H}^m}
 \lesim T^{\f{1}{2}}\cE_{m,T}, \qquad \|\nabla(\sigma,u)\|_{L_t^2\cH^{m-1}}\lesssim \cE_{m,T}.$$
We are now ready to prove \eqref{highestconormal}.  Suppose now that $Z^{I}$ involves at least one spatial derivative and $1\leq|I|\leq m$.
 In this case, it seems unlikely to get an uniform estimate with respect to $\varepsilon$
with this approach since $\mathcal{R}_{\sigma}^I,\mathcal{R}_{u}^I$ now contains singular terms. Taking the scalar product 
of system  \eqref{eqafterdervtion} by $\varepsilon^2 (\sigma^I, u^I),$
 and integrating by parts in space and time, we get in the same way as for \eqref{sec5:eq2} that:
\begin{multline}
\label{energyineqofsigmaI}
 \varepsilon^2\int_{\Omega} (g_1|\sigma^I|^2+g_2|u^I|^2)(t)\ \d x \\
 \leq   \varepsilon^2\int_{\Omega} (g_1|\sigma^I|^2+g_2|u^I|^2)(0) \ \d x+ \iint_{Q_t}(\partial_t g_1+\div(g_1 u))|\sigma^I|^2\ \d x\d s
 \\+2\varepsilon^2 \iint_{Q_t}Z^{I}\div\mathcal{L}u\cdot u^{I} \ \d x\d s +
 \varepsilon^2\big(\|\mathcal{R}_{\sigma}^I\|_{L^2(Q_t)}\|\sigma^{I}\|_{L^2(Q_t)}
 +\|\mathcal{R}_{u}^I\|_{L^2(Q_t)}\|u^I\|_{L^2(Q_t)}\big).
\end{multline} 
Before going further, it will be convenient to introduce the notation:
\beq\label{defofE}
\|f\|_{E_t^m}=\|f\|_{L_t^2H_{co}^m}+\|\nabla f\|_{L_t^2H_{co}^{m-1}}.
\eeq
Note that from the definition of $\cE_{m,t}$ in \eqref{defcalEmt}, one has indeed that:
$\|u\|_{E_{t}^m}\lesssim \cE_{m,t}.$

Let us now estimate the terms in the last line of \eqref{energyineqofsigmaI}.
It follows from the commutator estimate 
\eqref{roughcom} that:
\begin{equation}\label{sec5:eq-9}
 \ep \|(\mathcal{R}_{\sigma}^I,\mathcal{R}_{u}^I)\|_{L^2(Q_t)}\lesssim \|\nabla(\sigma,u)\|_{L_t^2H_{co}^{m-1}}+\ep^{\f{1}{2}} \|(\sigma, u)\|_{E_t^{m}}\La.
\end{equation}
We remark that when controlling the extra term:
$\frac{1}{\varepsilon}[Z^I,\nabla]\sigma,$ we have used the following identity 
which can be shown by induction:
\beq\label{comu}
[Z^{I},\partial_i]=\sum_{j=1}^3\sum_{|{J}|\leq|I|-1} c_{I,{J}} Z^{{J}}\partial_j=\sum_{j=1}^3\sum_{|{{J}}|\leq|I|-1} d_{I,{J}}\partial_j Z^{{J}}
\eeq
where $J$ is an  $(M+1)$ multi-index and $c_{I,J}, d_{I,J}$ are smooth functions that depend on $I$, $J$,  $i$ and the derivatives (up to order $|I|$) of $\nabla \phi$,   $\p_i$ is the derivation in the standard Euclidean coordinates.

It remains  to estimate the third term in the right hand side of \eqref{energyineqofsigmaI}. Since, we have 
 $$\div\mathcal{L}u=\div (2\mu\mathbb{S}u+\lambda \div u \text{Id})=
 \mu\Delta u+(\mu+\lambda)\nabla\div u,$$
  one has by integrating by parts that:
 \begin{equation}\label{idoflaplacian}
     \begin{aligned}
   &\quad\iint_{Q_t}Z^{I}\mathcal{L}u\cdot u^{I}\ \d x\d s=-\iint_{Q_t}\big(\mu [Z^I,\nabla]u\cdot\nabla u^I
   +(\mu+\lambda)[Z^I,\div]u\div u^I\big) \ \d x\d s\\
   &+\iint_{Q_t}\big(\mu[Z^I,\div]\nabla u+(\mu+\lambda)[Z^{I},\nabla]\div u\big) u^I\ \d x\d s-\iint_{Q_t}\mu|\nabla u^I |^2+(\mu+\lambda)|\div u^I|^2 \ \d x\d s\\
    &+\int_0^t\int_{\partial\Omega}\mu u^I(Z^I\nabla u\cdot \bn) +(\mu+\lambda)Z^I\div u (u^I\cdot \bn)\ \d S_y\d s=\colon \mathcal{K}_1+\mathcal{K}_2+\mathcal{K}_3+\mathcal{K}_4.\\
     \end{aligned}
 \end{equation}
Let us begin with the $\mathcal{K}_1$ term.
 By \eqref{comu} and  the Young inequality, we get 
 \begin{equation}\label{sec5:eq-11}
  \mathcal{K}_1\leq \delta\mu \|\nabla u\|_{L_t^2H_{co}^m}^2 +C_{\delta,\mu,\lambda}\|\nabla u\|_{L_t^2H_{co}^{m-1}}^2
 \end{equation}
 for $\delta>0$ to be chosen sufficiently small independent of $\ep$.
 Next, by \eqref{comu} and integration by parts, $\mathcal{K}_2$ can be written as a
combination of the following two types of terms (up to some smooth coefficients that depending on $\phi, \bn$ and their derivatives up to order $m+1$):
$$\mathcal{K}_{2}^1=\iint_{Q_t}Z^{\tilde{I}}\partial_i u\cdot\partial_j u^I\,\d x\d s,\qquad
\mathcal{K}_{2}^2=\int_0^t\int_{\partial\Omega}Z^{\tilde{I}}\partial_i u\cdot u^I \bn_j\,\d x\d s,\quad|\tilde{I}|\leq |I|-1.$$
 The term  $\mathcal{K}_{2}^1$ can be estimated in the same way as  $\mathcal{K}_1,$ we find again 
 $$\mathcal{K}_{2}^1\leq \delta\mu \|\nabla u^I\|_{L^2(Q_t)}^2 +C_{\delta,\mu,\lambda}\|\nabla u\|_{L_t^2H_{co}^{m-1}}^2.$$
 For  $\mathcal{K}_{2}^2,$  we use the trace inequality \eqref{traceL2}
to get that:
\begin{equation*}
\begin{aligned}
\mathcal{K}_{2}^2\lesssim\int_0^t 
|Z^{\tilde{I}}\partial_i u|_{L^2(\partial\Omega)}|u^I\cdot \bn_j|_{L^2(\partial\Omega)}\d s&\lesssim \int_0^t (|u|_{\tilde{H}^{m}(\partial\Omega)}+|\div u|_{\tilde{H}^{m-1}(\partial\Omega)})|u^I\cdot \bn_j|_{L^2(\partial\Omega)}
\ \d s\\
&\leq \delta\mu \|\nabla u\|_{L_t^2H_{co}^m}^2+C_{\delta,\mu,\lambda}\big(\|u\|_{E_t^m}^2
+\|\nabla\div u\|_{L_t^2H_{co}^{m-1}}^2\big).
\end{aligned}
\end{equation*}
To get the second  inequality, we have used
 that $\tilde{I}$ does not contain conormal derivatives of the type  $Z_3^i$ since $Z_{3}^i$ vanishes on the boundary and  the identity:
\beq\label{normalofnormalder}
\partial_{\bn} u\cdot \bn=\div u-(\Pi\partial_{y^1}u)^1-(\Pi\partial_{y^2}u)^2,
\eeq
as well as the boundary condition \eqref{bdryconditionofu}.

To summarize, we have thus proven that there exists an absolute  constant $C>0$ (independent of $\delta$ and of course
 $\ep$) such that
\begin{equation}\label{sec5:eq-12}
    \mathcal{K}_2\leq C\delta \mu \|\nabla u\|_{L_t^2H_{co}^m}^2+C_{\delta,\mu,\lambda}(
    \|\nabla\div u\|_{L_t^2H_{co}^{m-1}}^2+\|u\|_{E_t^m}^2).
\end{equation}

Finally, we handle the term $\mathcal{K}_4$ in the right hand side of \eqref{idoflaplacian} which is nontrivial only if $Z^I$ contains merely   $\varepsilon\partial_t$
and tangential derivatives which read in local charts $ \partial_{y^1},\partial_{y2}.$ For the second term of $\mathcal{K}_4$, since $Z^I$ is assumed to contain at least one spatial derivative, it can be written as $Z^I=\partial_y Z^{\tilde{I}}$ (we denote $\p_y=\p_{y^1}$ or $\p_y=\p_{y^2}$).
Moreover, since $u\cdot \bn|_{\partial\Omega}=0,$
$u^I\cdot\bn=[Z^I,\bn]u.$
Integrating by parts along the boundary, and then use the trace inequality \eqref{normaltraceineq}, we find that 
\begin{equation}\label{sec5:eq-10}
\begin{aligned}
 \int_0^t\int_{\partial\Omega}Z^I\div u (u^I\cdot \bn)\ \d S_y\d s
 &\leq \int_0^t|Z^{\tilde{I}}\div u|_{H^{\frac{1}{2}}(\partial\Omega)} |\partial_y [Z^I, \bn]u|_{H^{-\frac{1}{2}}(\partial\Omega)} \ \d s\\
 &\lesssim  \|\nabla\div  u\|_{L_t^2H_{co}^{m-1}}^2+\|u\|_{E_t^m}^2.
\end{aligned}
\end{equation}

For the first term of $\mathcal{K}_4,$
we can split it into two terms:
\begin{equation*}
 \begin{aligned}
& \mu\int_0^t\int_{\partial\Omega}-u^I([Z^I,\bn]
  \nabla u) +[Z^{I},\bn]\partial_{\bn} u (u^I\cdot \bn)+[Z^{I},\Pi]\partial_{\bn} u\cdot \Pi u^{I} \ \d S_y\d s\\
  &\quad-\mu\int_0^t\int_{\partial\Omega}Z^{I}(\partial_{\bn} u\cdot \bn) (u^I\cdot \bn)+ Z^{I}(\Pi\partial_{\bn} u)\cdot \Pi u^I) \ \d S_y\d s=\colon \mathcal{K}_{411}+\mathcal{K}_{412}.
 \end{aligned} 
\end{equation*}
Thanks to 
the trace inequality and the Young's inequality, $\mathcal{K}_{411}$ can be bounded as:
\begin{equation*}
   \begin{aligned}
   \mathcal{K}_{411}\leq \delta\mu
\|\nabla u\|_{L_t^2H_{co}^m}^2+C_{\delta,\mu}(\|u\|_{E_t^m}^2 +\|\nabla\div u\|_{L_t^2H_{co}^{m-1}}^2).
   \end{aligned}
\end{equation*}
Next, for $\mathcal{K}_{412},$  we use again 
 the identity \eqref{normalofnormalder}, as well as the boundary conditions \eqref{bdryconditionofu}. Integrating by parts
along the boundary for the first term of $\mathcal{K}_{412},$ we get that by writing  $Z^I=\partial_y Z^{\tilde{I}}$
\begin{equation*}
   \begin{aligned}
   \mathcal{K}_{412}&=\mu\int_0^t |Z^{\tilde{I}}(\partial_{\bn} u\cdot \bn)|_{H^{\frac{1}{2}}(\partial\Omega)} |\partial_y[Z^I,\bn]u|_{H^{-\frac{1}{2}}(\partial\Omega)}+ |Z^I\Pi \partial_{\bn} u|_{L^2(\partial\Omega)}|u^I|_{L^2(\partial\Omega)}\ \d s\\
   & \leq \delta\mu\|\nabla u\|_{L_t^2H_{co}^m}^2+C_{\delta,\mu}
   (\|u\|_{E_t^m}^2+\|\nabla\div u\|_{L_t^2H_{co}^{m-1}}^2). 
   \end{aligned}
\end{equation*}
To summarize, we get the following estimate for $\mathcal{K}_{4}:$
\begin{equation}\label{sec5:eq-13}
   \mathcal{K}_{4}\leq 2 \delta\mu
\|\nabla u\|_{L_t^2H_{co}^m}^2+C_{\delta,\mu}
(\|u\|_{E_t^m}^2+\|\nabla\div u\|_{L_t^2H_{co}^{m-1}}^2).
\end{equation}
Inserting \eqref{sec5:eq-11},\eqref{sec5:eq-12},\eqref{sec5:eq-13} into \eqref{idoflaplacian}, we get that:
\begin{equation}\label{sec5:eq-14}
    \begin{aligned}
     \int_{Q_t}Z^{I}\mathcal{L}u\cdot u^{I}\ \d x\d s&\leq -\iint_{Q_t}\mu|\nabla u^I |^2+(\mu+\lambda)|\div u^I|^2 \ \d x\d s\\
     &+(C+3)\delta\mu\|\nabla u\|_{L_t^2H_{co}^m}^2+C_{\delta,\mu}
     (\|u\|_{E_t^m}^2+\|\nabla\div u\|_{L_t^2H_{co}^{m-1}}^2).
    \end{aligned}
\end{equation}
Plugging \eqref{sec5:eq-9} and \eqref{sec5:eq-14} into \eqref{energyineqofsigmaI} and  summing up for $|I|\leq m,$
 we finally get \eqref{highestconormal}  by choosing $\delta$ small enough (independent of $\ep$).
\end{proof}
\begin{lem}\label{highestgrad}
 Under the same assumption as in Lemma \ref{lemstep1}, for any $0<t\leq T,$ one has that:
\begin{multline}\label{highestnablasigma}
   \quad\varepsilon^2\|(\nabla\sigma,\div u)(t)\|_{H_{co}^{m-1}(\Omega)}^2+\varepsilon^2\|\nabla\div u\|_{L_t^2{H}_{co}^{m-1}}^2\\
\lesssim  
\|(\nabla\sigma,\div u)(0)\|_{H_{co}^{m-1}}^2
 +(T^{\frac{1}{2}}+\ep^{\f{2}{3}})\Lambda_{2,\infty,T}\cE_{m,T}^2.
\end{multline}
\end{lem}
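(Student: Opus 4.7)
The plan is to run an $\ep^2$-weighted energy estimate on a pair of effective equations for $\nabla\sigma$ and $\div u$ derived from \eqref{NS1}. The parabolic gain $-(2\mu+\lambda)\Delta\div u$ produced when $\div$ is taken of the momentum equation will supply the diffusion bound $\ep^2\|\nabla\div u\|_{L_t^2H_{co}^{m-1}}^2$, while a key algebraic cancellation between the singular interior contributions $\nabla\div u/\ep$ and $\Delta\sigma/\ep$ is what keeps the estimate uniform in $\ep$.

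First I would take $\nabla$ of the mass equation and $\div$ of the momentum equation and use the equations themselves to eliminate the combinations $(\pt\sigma+u\cdot\nabla\sigma)$ and $(\pt u+u\cdot\nabla u)$ wherever they appear multiplied by $\nabla g_i$. Since $g_i=g_i(\ep\sigma)$, the factor $\nabla g_i=\ep g_i'(\ep\sigma)\nabla\sigma$ carries an $\ep$ that cancels the $1/\ep$ from the substituted terms and leaves
\begin{align*}
g_1(\pt+u\cdot\nabla)\nabla\sigma+\frac{\nabla\div u}{\ep}&=H_1,\\
g_2(\pt+u\cdot\nabla)\div u-(2\mu+\lambda)\Delta\div u+\frac{\Delta\sigma}{\ep}&=H_2,
\end{align*}
with $H_1,H_2$ polynomial in the quantities appearing in $\cA_{m,T}$ and $\cE_{m,T}$, and free of $\ep^{-1}$. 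Applying $Z^I$ with $|I|\le m-1$ via the commutator identity \eqref{comu} yields analogous equations for $(Z^I\nabla\sigma,Z^I\div u)$ plus commutators whose $L^2$-norms are controlled by the product and commutator lemmas from the appendix, in the same spirit as in Lemma \ref{highest}.

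Next, I would pair the first $Z^I$-equation with $\ep^2 Z^I\nabla\sigma$ and the second with $\ep^2 Z^I\div u$ and integrate on $Q_t$. Integration by parts produces the identity
\[
\ep\iint_{Q_t}Z^I\nabla\div u\cdot Z^I\nabla\sigma+\ep\iint_{Q_t}Z^I\Delta\sigma\cdot Z^I\div u=\ep\int_0^t\!\!\int_{\p\Omega}Z^I\div u\cdot Z^I\p_{\bn}\sigma\,\d S_y\d s,
\]
so the singular interior contributions cancel and only a boundary residue survives, while integration by parts on the diffusion term yields $(2\mu+\lambda)\ep^2\|Z^I\nabla\div u\|_{L^2(Q_t)}^2$ plus an additional boundary trace. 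Summing over $|I|\le m-1$ then gives the advertised $L_t^\infty H_{co}^{m-1}$ and parabolic $L_t^2H_{co}^{m-1}$ bounds, up to boundary and forcing terms. The main difficulty is the boundary residue: using $u\cdot\bn|_{\p\Omega}=0$, which forces $\pt u\cdot\bn|_{\p\Omega}=0$ and $(u\cdot\nabla u)\cdot\bn|_{\p\Omega}=-u^iu^j\p_i\bn_j$, the $\bn$-component of the momentum equation restricted to $\p\Omega$ gives
\[
\left.\frac{\p_{\bn}\sigma}{\ep}\right|_{\p\Omega}=\mu\,\Delta u\cdot\bn+(\mu+\lambda)\p_{\bn}\div u+g_2\,u^iu^j\p_i\bn_j,
\]
and the $(\mu+\lambda)\p_{\bn}\div u$ piece partially cancels the analogous boundary trace produced by the diffusion. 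The remaining traces are then handled through a trace inequality together with the identity \eqref{normalofnormalder} and the Navier condition \eqref{bdryconditionofu}, which convert the residual normal traces into tangential ones already controlled by $\cE_{m,T}$, with a Young inequality absorbing a small $\delta\ep^2\|\nabla\div u\|_{L_t^2H_{co}^{m-1}}^2$ back into the LHS.

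Finally, the forcing contributions from $H_1^I,H_2^I$ and the commutators are bounded using the appendix product/commutator estimates. The $T^{1/2}$ factor is produced by Cauchy--Schwarz in time applied to bounded-in-$\cA_{m,T}$ coefficients against $L_t^2$-controlled conormal norms, while the $\ep^{2/3}$ factor arises when bounding commutator contributions that require $\ep^2\|\nabla^2u\|_{H_{co}^{m-1}}$, a norm not directly available: it is recovered by interpolation between $\ep\|\nabla^2u\|_{L_T^{\infty}H_{co}^{m-2}}\in\cE_{m,T}$ and the $L_t^2$-bound on $\ep\|\nabla\div u\|_{H_{co}^{m-1}}$ which we are simultaneously establishing, via a Young inequality that produces the advertised $\ep^{2/3}$ prefactor. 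The main obstacle will be the boundary analysis: keeping track of the structural cancellation of the $(\mu+\lambda)\p_{\bn}\div u$ terms and ensuring that all residual boundary traces can be expressed through tangential derivatives and quantities already present in $\cE_{m,T}$.
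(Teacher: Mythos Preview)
Your scheme differs from the paper's in a structural way that creates a real gap. The paper does \emph{not} take $\div$ of the momentum equation. Instead it applies $Z^I$ to the pair $(\nabla\sigma,u)$ and tests the resulting $u^I$--equation against $-\nabla\div u^I$ (see \eqref{sec5:eq4}--\eqref{sec5:eq5}). In that pairing the singular terms are $\frac{1}{\ep}\nabla\div u^I\cdot(\nabla\sigma)^I$ and $-\frac{1}{\ep}(\nabla\sigma)^I\cdot\nabla\div u^I$, which cancel pointwise with no integration by parts and hence no boundary residue. The diffusion is written as $\mu\curl(Z^I\omega)+(2\mu+\lambda)\nabla\div u^I$; the $\curl$ part, when paired with a gradient, leaves only the boundary contribution $\int_{\p\Omega}(Z^I\omega\times\bn)\cdot\Pi\nabla\div u^I$, which is tangential and handled via \eqref{bdryomegan}. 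The only delicate boundary term is $\int_{\p\Omega}g_2\pt u^I\cdot\bn\,\div u^I$, and since $\pt u\cdot\bn|_{\p\Omega}=0$ this is a commutator $[Z^I,\bn]\pt u$; the $\ep^{2/3}$ arises precisely here, from a Young inequality with unequal exponents in \eqref{sec5:eq-18}, not from any interpolation of $\nabla^2u$ norms as you suggest.

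By contrast, your route introduces $\frac{\Delta\sigma}{\ep}$ in the $\div u$--equation. After applying $Z^I$ with $|I|\le m-1$, the commutator $[Z^I,\Delta]\sigma$ (or equivalently $[Z^I,\div]\nabla\sigma$) forces $\ep\|\nabla^2\sigma\|_{L^2_tH^{m-2}_{co}}$ into the estimate. But $\cE_{m,T}$ contains only $\ep\|\nabla^2\sigma\|_{L^\infty_tL^2}$, i.e.\ the $H^0_{co}$ norm, not $H^{m-2}_{co}$; cf.\ \eqref{defcalEmt} and Remark~\ref{highordersigma}, which explicitly notes that higher conormal control of $\nabla^2\sigma$ requires additional hypotheses outside the present framework. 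You cannot absorb this by a further integration by parts without generating boundary traces of $\nabla\sigma$ at order $m-2$, which again need $\nabla^2\sigma$ via the trace inequality. Your assertion that the commutators ``are controlled by the product and commutator lemmas from the appendix, in the same spirit as in Lemma~\ref{highest}'' glosses over exactly this point: in Lemma~\ref{highest} the singular commutator is $[Z^I,\nabla\div]u$, and $\ep\|\nabla^2u\|_{L^\infty_tH^{m-2}_{co}}$ \emph{is} in $\cE_{m,T}$; the asymmetry between $\sigma$ and $u$ in $\cE_{m,T}$ is what breaks your argument. The remedy is simply to test the undifferentiated $u$--equation against $-\nabla\div u^I$ as the paper does; then $\Delta\sigma$ never appears.
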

\begin{proof}

Applying  the vector field $Z^I$ with $0\leq |I|\leq m-1, $ we 
then find that  $((\nabla \sigma)^I, u^I)= (Z^I\nabla\sigma, Z^I u)$ solves the system:
\begin{equation}\label{sec5:eq4}
\left\{
    \begin{array}{l}
  \displaystyle g_1(\pt+u\cdot\nabla)( \nabla \sigma)^I+\frac{\nabla\div u^I}{\varepsilon}=\colon \mathcal{C}_{\sigma}^I, \\
\displaystyle g_2\partial_t u^I-\mu \curl(Z^I\omega)-(2\mu+\lambda)\nabla \div u^I+\frac{(\nabla\si)^I}{\varepsilon}=\colon      \mathcal{C}_{u}^I, \\
\end{array}
\right.
\end{equation}
where $\omega=\curl u$ and
\begin{equation}\label{commutators}
    \begin{aligned}
    & \qquad\qquad\mathcal{C}_{\sigma}^I=-[Z^I\nabla,g_1/\varepsilon]\varepsilon \partial_t\sigma-
     [Z^I\nabla,g_1 u\cdot \nabla]\sigma-[Z^I,\nabla\div]u/\varepsilon, \\
     &\mathcal{C}_{u}^I=-Z^I(g_2u\cdot\nabla u)-[Z^I,g_2/\varepsilon]\varepsilon \partial_t u
     +\mu[Z^I,\curl]\omega+(2\mu+\lambda)[Z^{I},\nabla\div]u.
    \end{aligned}
\end{equation}
We take the scalar product of  the equation \eqref{sec5:eq4}$_1$ by $(\nabla\sigma)^I$, and \eqref{sec5:eq4}$_2$ by $-\nabla\div u^I,$
we then integrate in space and time and sum up the two equations to 
 get 
that(note  that the singular  terms cancel):
\beq\label{sec5:eq5}
\begin{aligned}
 &\frac{1}{2}\int_{\Omega} (g_1|(\nabla\sigma)^I|^2+g_2|\div u^I|^2)(t)\  \d x+(2\mu+\lambda)\iint_{Q_t}|\nabla\div u^I|^2\ \d x\d s\\
 &\leq \frac{1}{2}\int_{\Omega} (g_1|\nabla \sigma^I|^2+g_2|\div u^I|^2)(0) \ \d x+\frac{1}{2}\bigg| \iint_{Q_t}(\partial_t g_1+\div(g_1 u))|\nabla\sigma^I|^2\ \d x\d s\bigg|\\
 &\quad+\bigg|\iint_{Q_t}(g_2'\varepsilon\partial_t u^I \cdot \nabla\sigma)\div u^I\ \d x\d s\bigg|+\bigg|\int_0^t\int_{\partial\Omega}g_2\partial_t u^I\cdot \bn\div u^I \ \d S_y \d s\bigg|\\
 &\qquad+\mu\bigg| \iint_{Q_t}\curl Z^I\omega\nabla\div u^I\ \d x\d s\bigg|\\
 &\quad+\|\mathcal{C}_{\sigma}^I\|_{L^2(Q_t)}\|\nabla\sigma^I\|_{L^2(Q_t)}
 +\frac{1}{(2\mu+\lambda)} \|\mathcal{C}_{u}^I\|_{L^2(Q_t)}^2
 +\frac{2\mu+\lambda}{4}\|\nabla\div u^I\|_{L^2(Q_t)}^2.
 \end{aligned}
\eeq
Among the terms in the right hand side, the second and the third terms can be bounded by:
\begin{equation}\label{sec5:eq-19}
  \Lambda\big(\f{1}{c_0}, \il(\sigma,u)\il_{1,\infty,t}+\il(\nabla\sigma,\div u)\il_{0,\infty,t}\big) \left\|\left((\nabla \sigma)^I,\div u^I,\varepsilon\partial_t u^I\right)\right\|_{L^2(Q_t)}^2. 
\end{equation}
Next, we note that
 the fourth term vanishes if 
 $Z^I$ involves at least one conormal derivative $Z_{3}^i$ which vanishes on the boundary.
We thus suppose that $I=(l,I'),|I'|\geq 1$ and $Z^I$ does not contain $Z_{3}^i$.
Consequently, the trace inequality  \eqref{traceL2} leads to
\begin{equation}\label{sec5:eq-18}
\begin{aligned}
  \qquad \big| \int_0^t\int_{\partial\Omega}&g_2\partial_t u^I\cdot \bn\,\div u^I\  \d S_y \d s\big|\lesssim\frac{1}{\varepsilon} \int_0^t | [Z^I,\bn]\varepsilon\partial_t u(s)|_{L^2(\partial\Omega)}|\div u^I(s)|_{L^2(\partial\Omega)}\ \d s\\
    &\lesssim \frac{1}{\varepsilon}  
    (\|\nabla u\|_{L_t^2H_{co}^{m-1}}+\|u\|_{L_t^2H_{co}^{m-1}})(\|\nabla\div u^I\|_{L^2(Q_t)}^{\frac{1}{2}}\|\div u^I\|_{L^2(Q_t)}^{\frac{1}{2}}+\|\div u^I\|_{L^2(Q_t)})\\
    &\leq \frac{2\mu+\lambda}{4}\|\nabla\div u\|_{L^2(Q_t)}^2+C_{\mu,\lambda}(1+\varepsilon^{-\frac{4}{3}})\|(u,\nabla u)\|_{L_t^2H_{co}^{m-1}}^2.
\end{aligned}
\end{equation}
Note that since $\partial_t u\cdot \bn|_{\partial\Omega}=0,$ one has $(Z^I \partial_t u\cdot \bn)|_{\partial\Omega}=([Z^I,\bn]\partial_t u)|_{\partial\Omega}.$

For the fifth term in the right hand side of \eqref{sec5:eq5} we first integrate by parts and then use the  duality $\langle \cdot, \cdot \rangle_{H^{\f{1}{2}}(\p\Omega)\times H^{-\f{1}{2}}(\p\Omega)}$
to get that
 \begin{equation*}
   \begin{aligned}
   \mu \big|\iint_{Q_t} \curl Z^I\omega\cdot \nabla\div u^I\ \d x\d s\big|&=-\mu\int_0^t\int_{\partial\Omega} (Z^I\omega\times \bn)\cdot \Pi\nabla\div u^I \ \d S_y\d s\\
   &\leq \mu \int_0^t 
   |Z^I\omega\times  \bn(s)|_{H^{\frac{1}{2}}(\p\Omega)}|\div u^I(s)|_{H^{\frac{1}{2}}(\p\Omega)} \ \d s\\
   \end{aligned}
\end{equation*}
We point out  that for the derivation of the last line, the fact that 
$\Pi\nabla$ involves only tangential derivatives has been used. It remains  to control $Z^I \omega\times\bn$ on the boundary.
One first deduces by \eqref{bdryconditionofu} that on the boundary,
\beq\label{bdryomegan}
\omega\times \bn
=\Pi (\omega\times\bn)=2\Pi(\mathbb{S} u)-2\Pi((\nabla u)^{t}\cdot\bn)=
2\Pi(-a u+D \bn\cdot u )|_{\partial\Omega}.
\eeq
which leads to:
\begin{equation*}
\begin{aligned}
  |Z^I\omega\times \bn(s)|_{H^{\frac{1}{2}}(\partial\Omega)}&\lesssim 
   |Z^{I}(\omega(s)\times \bn)|_{H^{\frac{1}{2}}(\partial\Omega)}+|[Z^I,\bn]\times \omega|_{H^{\frac{1}{2}}(\partial\Omega)}\\
   &\lesssim |u(s)|_{\tilde{H}^{m-\frac{1}{2}}}+|\omega(s)|_{\tilde{H}^{m-\frac{3}{2}}}\lesssim |u(s)|_{\tilde{H}^{m-\frac{1}{2}}}+|\div u(s)|_{\tilde{H}^{m-\frac{3}{2}}}
\end{aligned}
\end{equation*}
where we recall that we  denote:
$$|f(t)|_{\tilde{H}^r}:=\sum_{k\leq [r]}|(\ep\pt)^k f(t)|_{H^{r-k}(\p\Omega)}.$$
Note that by using the boundary condition \eqref{bdryconditionofu} and the  identity \eqref{normalofnormalder}, we have that:
$$|\nabla u|_{\tilde{H}^s}\lesssim |u|_{\tilde{H}^{s+1}}+|\div u|_{\tilde{H}^s}.$$
Finally, owing to the trace inequality \eqref{normaltraceineq}
and Young's inequality, one obtains that:
\begin{equation}\label{sec5:eq-20}
       \begin{aligned}
  & \mu\big| \iint_{Q_t}\curl Z^I\omega\cdot\nabla\div u^I\ \d x\d s\big|\\ &\leq C\mu (\|\nabla\div u\|_{L_t^2H_{co}^{m-2}}+\|\nabla u\|_{L_t^2H_{co}^{m-1}}+\|u\|_{L_t^2H_{co}^{m}}) (\|\div u^I\|_{L^2(Q_t)}+\|\nabla\div u^I\|_{L^2(Q_t)})\\
   &\leq \frac{2\mu+\lambda}{4}\|\nabla\div u^I\|_{L^2(Q_t)}^2+C_{\mu,\lambda}(\|\nabla\div u\|_{L_t^2H_{co}^{m-2}}^2+\|u\|_{E_t^{m}}^2)
   \end{aligned}
\end{equation}
where we use again the notation \eqref{defofE}.

It remains to control the $L^2(Q_t)$ norm of ${\mathcal{C}}_{\sigma}^I,\mathcal{C}_{u}^I$ in \eqref{sec5:eq5}.
Let us begin with the estimate ${\mathcal{C}}_{\sigma}^I.$
For the term: 
$$[Z^I\nabla,\frac{g_1}{\varepsilon}]\varepsilon \partial_t\sigma=Z^I((\nabla g_1/\ep) \ep\pt\sigma)+[Z^I,g_1/\ep] (\varepsilon \partial_t)\nabla\sigma,$$
 the product estimates \eqref{roughproduct1} the commutator estimate \eqref{roughcom} and the estimate 
\eqref{esofg12-2} yield:
\begin{equation*}
\begin{aligned}
 \|[Z^I\nabla,g_1/\varepsilon]\varepsilon \partial_t\sigma\|_{L^2(Q_t)}&\lesssim
 \|(\ep\pt\sigma,\nabla\sigma)\|_{L_t^2H_{co}^{m-1}}\Lambda\big(\f{1}{c_0},\il\nabla\sigma\il_{[\f{m}{2}]-1,\infty,t}+\il\sigma\il_{[\f{m+1}{2}],\infty,t}\big)\\
 &\lesssim  \|\sigma\|_{E^m_t}\La.
 \end{aligned}
\end{equation*}
For the term 
$$[Z^I\nabla,g_1 u\cdot \nabla] \sigma= Z^I\big(\nabla(g_1 u)\nabla \sigma\big)+[Z^I, g_1 u]\nabla\nabla \sigma,$$
 since in the interior domain $\Omega_0$, the spatial conormal derivatives are equivalent to the  
derivations with respect to the standard coordinates in $\mathbb{R}^3.$ We thus have that:
\begin{align*}
  \ep\|\chi_0[Z^I\nabla,g_1 u\cdot \nabla] \sigma\|_{L^2(Q_t)}&\lesssim (\|\tilde{\chi}_0(\sigma,u)\|_{L_t^2H^{m}}+\|\tilde{\chi}_0\nabla(\sigma,u)\|_{L_t^2H^{m-1}})
  \Lambda\big(\f{1}{c_0},
  \il \ep (\sigma,u)\il_{[\f{m}{2}]+1,\infty,t}\big).\\
  &\lesssim \|(\sigma,u)\|_{E_t^m}\La.
\end{align*}
where $\Supp(\tilde{\chi}_0)\Subset \Omega$ and $\tilde{\chi}_0\chi_0=\chi_0.$
It suffices to focus on the case near the boundary. 
 Direct computations show that, in the local coordinates \eqref{local coordinates},
 \begin{equation}\label{convection identity}
 u\cdot \nabla f=u_1\partial_{y^1} f+u_2\partial_{y^2}f+ u\cdot \textbf{N}\partial_z f,
 \end{equation}
which leads to:
\begin{equation}\label{split-com}
\begin{aligned}
&[Z^I\nabla,g_1 u\cdot \nabla] \sigma = Z^I\big(\nabla(g_1 u)\nabla \sigma\big)+\sum_{j=1}^2
[Z^I, g_1 u_j]\partial_{y^j}\nabla \sigma\\
&\qquad+[Z^I,  (g_1 u\cdot \textbf{N})/\phi]\phi\partial_z\nabla \sigma+\big((g_1 u\cdot \textbf{N})/\phi\big)[Z^I,\phi]\partial_z\nabla\sigma+(g_1 u\cdot \textbf{N})[Z^I,\partial_z]\nabla\sigma.
\end{aligned}
\end{equation}
With the help of the product and commutator estimates 
\eqref{roughproduct1}, \eqref{roughcom} and the estimate  \eqref{esofg12-2} for $g_1$, the first two terms in the right hand side of \eqref{split-com} can be bounded as:
\beq\label{com-1}
\begin{aligned}
&\ep\|\chi_i Z^I\big(\nabla(g_1 u)\nabla \sigma\big)\|_{L^2(Q_t)}+\sum_{j=1}^2
\|\chi_i[Z^I, g_1 u_j]\partial_{y^j}\nabla \sigma\|_{L^2(Q_t)}\\
&\lesssim 
\|(\sigma,u)\|_{E_t^{m}}\Lambda\big(\f{1}{c_0}, \ep\il(\sigma,u)\il_{[\f{m}{2}],\infty,t}+\il\nabla\sigma\il_{[\f{m-1}{2}],\infty,t}+\ep\il\nabla u\il_{[\f{m}{2}],\infty,t}\big)\\
&\lesssim \|(\sigma,u)\|_{E_t^{m}}\La.
\end{aligned}
\eeq
To continue, we need to establish some estimates on $(g_1u\cdot \textbf{N})/\phi.$
At first, since $(u\cdot \bn)|_{\partial\Omega}=0,$ one has by the fundamental theorem of calculus and the identity \eqref{normalofnormalder} that:
\begin{equation}\label{unLinfty}
\begin{aligned}
    \il\chi_j (g_ju\cdot \textbf{N})/\phi\il_{k,\infty,t}&\lesssim 
    (\il \nabla(u\cdot \textbf{N}))\il_{k,\infty,t}+ \il u\il_{k,\infty,t})\il g\il_{k,\infty,t}\\
    &\lesssim \Lambda\big(\f{1}{c_0},\il u\il_{k+1,\infty,t}+\il(\sigma, \div u)\il_{k,\infty,t}\big),\, \quad j=1,2.
\end{aligned}
\end{equation}
Next, thanks to Hardy inequality and product estimate \eqref{roughproduct1}, estimate \eqref{esofg12-1} for $g_j$ 
\begin{equation}\label{hardyconormal}
\begin{aligned}
\|\chi_i (g_j u\cdot \textbf{N})/\phi\|_{L_t^2H_{co}^{m-1}}&\lesssim \|\tilde{\chi}_i(u\cdot\bN)/\phi\|_{L_t^2H_{co}^{m-1}}+\|(g_j-g_j(0))(u\cdot\bN)/\phi\|_{L_t^2H_{co}^{m-1}})\\
&\lesssim
\big(\|\tilde{\chi}_i(u,\nabla  u)\|_{L_t^2H_{co}^{m-1}}+\|g_j-g_j(0)\|_{L_t^2H_{co}^{m-1}}\big)\La\\
&\lesssim \La\|(\sigma,u)\|_{E_t^m},\qquad j=1,2,
\end{aligned}
\end{equation}
where $\tilde{\chi}_i$ is a cut-off function supported on the vicinity of $\Omega_i$ and $\tilde{\chi}_i\chi_i=\chi_i.$
Therefore, since $\phi\p_z$ can be spanned by $Z_1^{i}, Z_2^{i}, Z_3^{i},$
it follows from \eqref{unLinfty},
\eqref{hardyconormal}, 
\eqref{roughcom},
\eqref{esofg12-2} that:
\begin{equation}\label{com-2}
\begin{aligned}
 &\ep \|\chi_{i}[Z^I,  (g_1 u\cdot \textbf{N})/\phi]\phi\partial_z\nabla \sigma\|_{L^2(Q_t)}\\
 &\lesssim
  \|(\nabla\sigma,(g_1u\cdot\bN)/\phi)\|_{L_t^2H_{co}^{m-1}}\Lambda\big(\f{1}{c_0}, \|\nabla \sigma \|_{[\f{m-1}{2}],\infty,t}+\ep\il\tilde{\chi}_i(g_1u\cdot\bN)/\phi\il_{[\f{m}{2}],\infty,t}
 \big)\\
  &\lesssim \|(\sigma,u)\|_{E_t^{m}}\La.
\end{aligned}
\end{equation}
Moreover, one gets by induction that (up to some coefficients that depend only on $\phi$ and its derivatives)
 \beq\label{sec5:eq-16}
 [Z^{I},\phi](\partial_z f)=\sum_{|\tilde{I}|\leq|I|-1} *_{\tilde I}Z^{\tilde{I}}(\phi\partial_z f),\qquad [Z^I,\partial_z]=
 \sum_{|\tilde{I}|\leq|I|-1} *_{\tilde I}\partial_z
 Z^{\tilde{I}}
 \eeq
Hence, by \eqref{unLinfty}, the last two terms in \eqref{split-com} can be controlled by
$\|\nabla\sigma\|_{L_t^2H_{co}^{m-1}}\La,$ which, together with
\eqref{com-1}, \eqref{com-2} leads to:
\beq
\ep\|\chi_i[Z^I\nabla,g_1 u\cdot \nabla] \sigma\|_{L^2(Q_t)}\lesssim \|(\sigma,u)\|_{E_{t}^{m}}\La.
\eeq
 We switch to the estimate of the third term of ${\mathcal{C}}_{\sigma}^I$ defined in \eqref{commutators},
 which is nontrivial only if $Z^{I}$ contains at least one spatial derivative, that is $|I'|\geq 1.$
By induction, one has that (up to some coefficients which are regular enough) $$[Z^{I},\nabla\div]=\sum_{|\tilde{I}|\leq |I|-1,|\tilde{\tilde{I}}|\leq|I|-1}\sum_{j,k=1}^3 *_{jk \tilde I} \partial_{jk}^2 Z^{\tilde{I}}+ *_{j \tilde I}\partial_j Z^{\tilde{\tilde{I}}},$$
which yields that:
\begin{equation*}
   \f{1}{\ep} \|[Z^I,\nabla\div]u\|_{L^2(Q_t)}\lesssim  
     \f{1}{\ep} (\|\nabla^2 u\|_{L_t^2H_{co}^{m-2}}+\|\nabla u\|_{L_t^2H_{co}^{m-2}}).
\end{equation*}
To summarize, we have thus obtained from the above estimates that:
\begin{equation}\label{sec5:eq-21}
  \ep\|{\mathcal{C}}_{\sigma}^I\|_{L^2(Q_t)}\lesssim   \La\|(\sigma,u)\|_{E_t^{m}}
+\|(\nabla^2 u,\nabla u)\|_{L_t^2H_{co}^{m-2}}.
\end{equation}
 By using the same argument, $\mathcal{C}_u^I$ (defined in \eqref{commutators})
 can be controlled as follows:
\begin{equation}\label{sec5:eq-22}
 \ep \|\mathcal{C}_u^I\|_{L^2(Q_t)}\lesssim
  \La\|(\sigma,u)\|_{E_t^{m}}+\ep
  \|\nabla^2 u\|_{L_t^2H_{co}^{m-2}}.
\end{equation}
Plugging \eqref{sec5:eq-19} \eqref{sec5:eq-18} \eqref{sec5:eq-20} \eqref{sec5:eq-21}
\eqref{sec5:eq-22} 
in \eqref{sec5:eq5},
we arrive at 
\begin{equation}\label{sec5:eq-23}
    \begin{aligned}
     &\quad\varepsilon^{2}\big(\|((\nabla\sigma)^{I},\div u^I)(t)\|_{L^2(\Omega)}^2+
     \|\nabla\div u^I\|_{L^2(Q_t)}^2\big)\\
     &\lesssim \varepsilon^{2}
     \|((\nabla\sigma)^{I},\div u^I)(0)\|_{L^2(\Omega)}^2
     + \varepsilon^{\frac{2}{3}} \La
    \|(\sigma,u)\|_{E_t^{m}}^2\\
    &\qquad +T^{\f{1}{2}}\|\varepsilon\nabla^2 u\|_{L_t^{\infty}H_{co}^{m-2}}(\|\varepsilon\nabla^2 u\|_{L_t^2H_{co}^{m-2}}+\|\nabla\sigma\|_{L_t^2H_{co}^{m-1}}).
    \end{aligned}
\end{equation}
We thus get  
\eqref{highestnablasigma} by 
summing up \eqref{sec5:eq-23} for $0\leq |I|\leq m-1.$
\end{proof}
\subsection{Step 2: Energy estimate for the incompressible part of velocity}
In this subsection, we focus on the estimates of the  incompressible part of the velocity $v=\mathbb{P}u$ which solves $\eqref{reformulation}_3$. 

In the following, we recall for convenience the definition of the $L_{t,x}^{\infty}$ norm:
\begin{equation}\label{defofLambdam}
\begin{aligned}
\cA_{m,t}&=
\il \nabla u\il_{0,\infty,t}+
\il (u,\sigma)\il_{[\frac{m+1}{2}],\infty,t}+\il(\nabla\sigma,\div u,\varepsilon^{\frac{1}{2}}\nabla u)\il_{[\frac{m-1}{2}],\infty,t}\\
&\qquad\qquad\qquad+\il
\ep \nabla u\il_{[\frac{m+1}{2}],\infty,t}+\varepsilon\il(\sigma,u)\il_{[\frac{m+3}{2}],\infty,t}. 
\end{aligned}
\end{equation}
\begin{rmk}\label{rmkLinfty}
In view of the first term in $\cA_{m,t},$ we have only the uniform control of $\nabla u$  
in $L_{t,x}^{\infty}$ space.  
Indeed, by some delicate analysis on the Green function for the vorticity in the local coordinates, it is possible to get the uniform control of the high order conormal derivatives of $\nabla u$ (say $\il\nabla u\il_{[\f{m}{2}]-2,\infty,t}$). One can refer for instance to \cite{lowmachfree}. Nevertheless, 
involving only $\il\nabla u\il_{0,\infty,t}$ in $\cA_{m,t}$ is enough for us to close our estimate.
See Lemma \ref{lem-f-bd} and Proposition \ref{propGomega}.
\end{rmk}

We begin with some additional estimates on $\nabla\div u:$
\begin{lem}
Suppose that \eqref{preasption} holds then for any $0<t\leq T\leq 1.$
\beq\label{nabladivu-4}
 \|\nabla\div u\|_{L_t^2H_{co}^{m-2}}\lesssim
 \|\nabla\sigma\|_{L_t^2H_{co}^{m-1}}+ \ep^{\f{1}{2}}\La\|(\sigma,u)\|_{E_t^{m}},
\eeq
\beq\label{nabladivu-5}
 \begin{aligned}
 \ep\|\nabla\div u(t)\|_{H_{co}^{m-2}}\lesssim 
 \ep\|\nabla\sigma\|_{L_t^{\infty}H_{co}^{m-1}}+
 \ep \La \cE_{m,t},
  \end{aligned}
 \eeq
 \beq\label{nabladivu-6}
 \|\nabla\div u(t)\|_{ H_{co}^{m-3}}\lesssim  \La\cE_{m,t}.
 \eeq
\end{lem}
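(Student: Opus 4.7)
The three estimates all follow from the continuity equation in \eqref{NS1}, rewritten as
\[
\nabla\div u \;=\; -g_1\,\ep\pt\nabla\sigma \;-\; g_1'(\ep\sigma)(\ep\nabla\sigma)(\ep\pt\sigma) \;-\; \ep g_1'(\ep\sigma)(\ep\nabla\sigma)(u\cdot\nabla\sigma) \;-\; \ep g_1\bigl(\nabla u\cdot\nabla\sigma + u\cdot\nabla^2\sigma\bigr).
\]
The crucial observation is that $\ep\pt = Z_0$ is a conormal vector field, so $\ep\pt\nabla\sigma = Z_0\nabla\sigma$ satisfies $\|\ep\pt\nabla\sigma\|_{H_{co}^k}\leq \|\nabla\sigma\|_{H_{co}^{k+1}}$ for any $k$, while each of the remaining three terms carries at least one explicit factor of $\ep$ that can be spent against the $L^\infty$ quantities in $\cA_{m,t}$.

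For \eqref{nabladivu-4} I would apply $Z^I$ with $|I|\leq m-2$ and take the $L^2([0,t]\times\Omega)$ norm. The leading term $g_1\,\ep\pt\nabla\sigma$ yields $\Lambda(1/c_0,\cA_{m,t})\,\|\nabla\sigma\|_{L^2_tH_{co}^{m-1}}$ by the product rule \eqref{roughproduct1}, since $Z^IZ_0$ is a conormal derivative of order $\leq m-1$. The three remaining terms all carry an $\ep$; splitting $\ep=\ep^{1/2}\cdot\ep^{1/2}$ and assigning one $\ep^{1/2}$ to the $L^\infty$ factor, making use in particular of $\il\ep^{1/2}\nabla u\il_{[(m-1)/2],\infty,t}$ and $\il\ep(\sigma,u)\il_{[(m+3)/2],\infty,t}$ from $\cA_{m,t}$, absorbs them into $\ep^{1/2}\Lambda\|(\sigma,u)\|_{E_t^m}$. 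Near the boundary the term $u\cdot\nabla\sigma$ is expanded in local coordinates via \eqref{convection identity}, and the normal factor $(u\cdot\bN)/\phi$ is controlled through \eqref{unLinfty}--\eqref{hardyconormal}, exactly as in Step 1 for $[Z^I\nabla, g_1 u\cdot\nabla]\sigma$.

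Estimate \eqref{nabladivu-5} is obtained by running the same computation pointwise in $t$ and multiplying through by $\ep$: the leading piece becomes $\ep\|g_1 Z_0\nabla\sigma(t)\|_{H_{co}^{m-2}}\lesssim \ep\,\Lambda\,\|\nabla\sigma\|_{L^\infty_tH_{co}^{m-1}}$, while the subleading ones stay within $\ep\,\Lambda\,\cE_{m,t}$, using that $\cE_{m,t}$ already controls the $\ep$-weighted quantities $\ep\|\nabla\sigma\|_{L^\infty_tH_{co}^{m-1}}$ and $\ep\|\nabla^2 u\|_{L^\infty_tH_{co}^{m-2}}$. For \eqref{nabladivu-6} the drop to $H_{co}^{m-3}$ removes the need for any $\ep$ prefactor: the leading term $\|g_1 Z_0\nabla\sigma(t)\|_{H_{co}^{m-3}}\lesssim \Lambda\,\|\nabla\sigma\|_{L^\infty_tH_{co}^{m-2}}\lesssim \Lambda\,\cE_{m,t}$, and the three subleading terms, each carrying an $\ep$, are swallowed by low-order $L^\infty$ norms from $\cA_{m,t}$ times quantities bounded by $\cE_{m,t}$. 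The only genuine technical point in the lemma is the usual boundary bookkeeping: whenever $Z^I$ contains the normal conormal field $Z_3^i$, the commutator $[Z^I,\nabla]$ generated via \eqref{comu} must not produce uncontrolled normal derivatives, which is handled, as in Step 1, by combining the local-chart expansion \eqref{convection identity} with the Hardy-type estimate \eqref{hardyconormal}.
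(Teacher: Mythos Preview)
Your approach is essentially the paper's: both start from $\div u=-\ep g_1(\pt\sigma+u\cdot\nabla\sigma)$, take a gradient, use $\ep\pt=Z_0$ to trade the leading $\ep\pt\nabla\sigma$ against one more conormal order on $\nabla\sigma$, and handle the transport term $u\cdot\nabla(\nabla\sigma)$ near the boundary via \eqref{convection identity} and the Hardy-type controls \eqref{unLinfty}--\eqref{hardyconormal}. The only organizational difference is that the paper writes the identity as
\[
\nabla\div u = g_1(0)\,\ep\pt\nabla\sigma \;+\; \ep\,\nabla\Bigl(\tfrac{g_1(\ep\sigma)-g_1(0)}{\ep}\,\ep\pt\sigma + g_1(\ep\sigma)\,u\cdot\nabla\sigma\Bigr),
\]
i.e.\ it isolates the \emph{constant} coefficient $g_1(0)$ in the leading term. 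This is what produces the first term on the right of \eqref{nabladivu-4} and \eqref{nabladivu-5} with no $\Lambda$ prefactor. Your version with the full $g_1$ in front of $\ep\pt\nabla\sigma$ yields $\Lambda\,\|\nabla\sigma\|_{L_t^2H_{co}^{m-1}}$ instead; to match the stated inequality you should split $g_1=g_1(0)+(g_1-g_1(0))$ and observe (via \eqref{esofg12-1}) that the second piece carries an extra $\ep$ and thus goes into the subleading contribution. Apart from this cosmetic point, your treatment of the four terms, the identification of where the $\ep^{1/2}$ loss enters (through $\il\ep^{1/2}\nabla u\il_{[\frac{m-1}{2}],\infty,t}$ when $\nabla u$ must go to $L^\infty$), and the boundary bookkeeping are all correct and match the paper.
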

\begin{proof}
By the equation for $\sigma,$ we have that: 
 \beq\label{rewriteofsigma}
 \nabla\div u=g_1(0)\ep\pt\nabla\sigma+ \ep\nabla\big(\f{g_1(\ep \sigma)-g_1(0)}{\ep}\varepsilon\partial_t\sigma+ g_1(\ep \sigma) u\cdot\nabla\sigma\big).
 \eeq
 We can control $\ep\nabla\div u$ as follows, for $p=2,+\infty,$
 \beq
 \begin{aligned}
 \|\nabla\div u\|_{L_t^pH_{co}^{m-2}}&\lesssim \|\nabla\sigma\|_{L_t^p H_{co}^{m-1}}+\ep \|\nabla \big((g_1-g_1(0))\pt\sigma, g_1u\cdot\nabla\sigma\big)(t)\|_{L_t^pH_{co}^{m-2}}\\
  \end{aligned}
 \eeq
Inequalities  \eqref{nabladivu-4}-\eqref{nabladivu-5} can thus be derived from the following estimate:
\beqs
\begin{aligned}
&\ep\|\nabla \big((g_1-g_1(0))\pt\sigma, g_1u\cdot\nabla\sigma\big)(t)\|_{L_t^pH_{co}^{m-2}}\\
&\lesssim \La \big(\|\ep\nabla(\sigma, u)\|_{L_t^pH_{co}^{m-1}}+\ep^{\f{1}{2}}\|(\sigma,u,\nabla\sigma,\nabla u)\|_{L_t^pH_{co}^{m-2}}\big).
\end{aligned}
\eeqs
Let us show the estimate of 
the term $g_1u\cdot\nabla\nabla\sigma,$ the other terms can be controlled in a similar way.
Again, we focus only on the estimate near the boundary.
Thanks to the  identity \eqref{convection identity}, we have
 \beqs
 \chi_i g_1u\cdot\nabla\nabla\sigma=\chi_ig_1u_y\cdot\p_y \nabla\sigma+\chi_i g_1\f{u\cdot\bN}{\phi}\phi\p_z\nabla\sigma.
 \eeqs
Therefore, by applying the product estimate \eqref{roughproduct1} and inequality \eqref{unLinfty}, we find
\beq
\begin{aligned}
\ep \|\chi_i(g_1 u\cdot\nabla\nabla\sigma)\|_{L_t^p H_{co}^{m-2}}&\lesssim\ep \|
(u_y, {\chi}_i u\cdot\bN/\phi)\|_{L_t^pH_{co}^{m-2}}\il g_1Z\nabla\sigma\il_{[\f{m-1}{2}]-1,\infty,t}\\
&\quad+\ep \| g_1Z\nabla\sigma(t)\|_{L_t^pH_{co}^{m-2}}\il (u_y,{\chi}_i u\cdot\bN/\phi)\il_{[\f{m}{2}]-1,\infty,t}\\
&\lesssim \La (\|\ep\nabla\sigma\|_{L_t^{p}H_{co}^{m-1}}+\ep^{\f{1}{2}}\|(u,\nabla\sigma,\nabla u)\|_{L_t^{p}H_{co}^{m-2}}).
\end{aligned}
\eeq
Finally, one gets \eqref{nabladivu-6} by using similar arguments  as in the derivation of \eqref{nabladivu-5}, we skip the details.
\end{proof}
\begin{rmk}
By \eqref{sumstep1} and \eqref{nabladivu-5}, we have that:
\beq\label{nabladivu-51}
\ep\|\nabla\div u\|_{L_t^{\infty}H_{co}^{m-2}}\lesssim Y_m(\sigma_0, u_0)+(T+\ep)^{\f{1}{4}}\La \cE_{m,t}.
\eeq
\end{rmk}

\begin{lem}\label{lem-f-bd}
Let 
\beq\label{def-f-bd}
f=-\frac{g_2-\bar{\rho}}{\varepsilon}\varepsilon\partial_t u-g_2u\cdot\nabla u
\eeq
and assume that \eqref{preasption} holds, then we have:
\beq\label{es-f-bd}
\|f\|_{L_t^{2}H_{co}^{m-1}}+\|f\|_{L_t^{\infty}H_{co}^{m-2}}\lesssim \La\cE_{m,t}.
\eeq
\end{lem}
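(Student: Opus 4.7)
The plan is to absorb the apparent singularity in $f$ via a Taylor expansion and then estimate the two resulting products by the conormal product estimate (analogous to \eqref{roughproduct1}), distributing derivatives between a high-regularity factor measured by $\cE_{m,T}$ and a low-regularity factor measured by $\cA_{m,T}$. First I would write $g_2(s)-g_2(0) = s\,h(s)$ with $h(s):=\int_0^1 g_2'(\tau s)\,d\tau$, a smooth function on $[-3\bar{c}\bar{P},3\bar{P}/\bar{c}]$. Substituting this in \eqref{def-f-bd} makes the $1/\ep$ cancel:
\[
f = -h(\ep\sigma)\,\sigma\,(\ep\pt u) - g_2(\ep\sigma)\,(u\cdot\nabla u).
\]
This reduces \eqref{es-f-bd} to estimating two cubic products of known quantities with smooth $\ep\sigma$-dependent coefficients.

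Next, for each product I would apply the standard conormal product estimate, placing the factor with the most derivatives in the Sobolev-type norm and the other factor in the triple norm with index at most $\bigl[\tfrac{m-1}{2}\bigr]$. For the first term, the coefficient $h(\ep\sigma)$ is absorbed into a factor $\La$ by the composition estimate \eqref{esofg12-2} combined with the uniform bound $|\ep\sigma|\leq 3\bar{P}/\bar{c}$ from \eqref{preasption}, and the remaining bilinear part is controlled by
\[
\|\sigma(\ep\pt u)\|_{L^2_T H^{m-1}_{co}} \lesssim \|\sigma\|_{L^2_T H^{m-1}_{co}}\il\ep\pt u\il_{[\frac{m-1}{2}],\infty,T} + \|\ep\pt u\|_{L^2_T H^{m-1}_{co}}\il\sigma\il_{[\frac{m-1}{2}],\infty,T}.
\]
The crucial observation making this sharp is that $\ep\pt = Z_0$ commutes with the spatial conormal fields $Z_1,\ldots,Z_M$, so $\|\ep\pt u\|_{L^2_T H^{m-1}_{co}}\leq \|u\|_{L^2_T H^m_{co}}\leq \cE_{m,T}$ by \eqref{hiddenes}, and $\il\ep\pt u\il_{[\frac{m-1}{2}],\infty,T}\leq \il u\il_{[\frac{m+1}{2}],\infty,T}\leq \cA_{m,T}$. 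The remaining $\sigma$-norms are directly bounded by $\cE_{m,T}$ and $\cA_{m,T}$, yielding a bound of order $\La\,\cE_{m,T}$.

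For the convection term $g_2(\ep\sigma)\,u\cdot\nabla u$, the same splitting produces pieces of the form $\|\nabla u\|_{L^2_T H^{m-1}_{co}}\il u\il_{[\frac{m-1}{2}],\infty,T}$ and $\|u\|_{L^2_T H^{m-1}_{co}}\il\nabla u\il_{[\frac{m-1}{2}],\infty,T}$, each of which is handled exactly as in the commutator bound \eqref{sec5:eq-9}: the low-index $L^\infty$ norm of $\nabla u$ is available either through $\il\nabla u\il_{0,\infty,T}$ or through $\il\ep^{1/2}\nabla u\il_{[\frac{m-1}{2}],\infty,T}$ from $\cA_{m,T}$, up to a harmless $\ep^{1/2}$ that is absorbed in the right-hand side $\La\,\cE_{m,T}$. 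The $L^\infty_T H^{m-2}_{co}$ half of \eqref{es-f-bd} is obtained identically, now using $\|\ep\pt u\|_{L^\infty_T H^{m-2}_{co}}\leq \|u\|_{L^\infty_T H^{m-1}_{co}}\leq \cE_{m,T}$, again via \eqref{hiddenes}, together with the corresponding low-index $L^\infty$ bounds from $\cA_{m,T}$.

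I do not anticipate a serious obstacle: the lemma is a direct product-estimate exercise once the $1/\ep$ has been eliminated by the Taylor expansion. The only piece of book-keeping that requires care is to verify, index by index, that the $L^\infty$ factors always fall within the conormal orders provided by $\cA_{m,T}$ (at most $[\tfrac{m+3}{2}]$ for $(\sigma,u)$ and $[\tfrac{m+1}{2}]$ for $\ep\nabla u$), which is comfortable given the standing assumption $m\geq 6$.
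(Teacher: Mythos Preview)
Your treatment of the first term $-\frac{g_2-\bar\rho}{\ep}\ep\partial_t u$ is fine and matches the paper: once you write $g_2(\ep\sigma)-g_2(0)=\ep\sigma\,h(\ep\sigma)$, the product estimate \eqref{roughproduct1} closes directly because $\il\ep\partial_t u\il_{[\frac{m-1}{2}],\infty,t}\leq\il u\il_{[\frac{m+1}{2}],\infty,t}$ is available in $\cA_{m,t}$.

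The gap is in the convection term. Your product splitting for $g_2\,u\cdot\nabla u$ unavoidably produces a factor of the type $\il\nabla u\il_{[\frac{m-2}{2}],\infty,t}$, and this quantity is \emph{not} uniformly bounded in $\ep$: the only $L^\infty_{t,x}$ control on $\nabla u$ in $\cA_{m,t}$ is $\il\nabla u\il_{0,\infty,t}$, while the higher-order versions carry weights $\ep^{1/2}$ or $\ep$ (see Remark~\ref{rmkLinfty}). Your proposed fix, borrowing $\il\ep^{1/2}\nabla u\il_{[\frac{m-1}{2}],\infty,t}$ ``up to a harmless $\ep^{1/2}$'', goes the wrong way: it introduces a factor $\ep^{-1/2}$ on the right-hand side, which cannot be absorbed into $\La\,\cE_{m,t}$. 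The analogy with \eqref{sec5:eq-9} is misleading, because there the whole estimate was multiplied by $\ep$ on the left, exactly the weight that is missing here.

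The paper's proof singles out precisely this point. Near the boundary one writes, via \eqref{convection identity},
\[
u\cdot\nabla u=u_1\partial_{y^1}u+u_2\partial_{y^2}u+\frac{u\cdot\bN}{\phi}\,\phi\partial_z u,
\]
so that the normal derivative $\partial_z u$ appears only through the conormal field $\phi\partial_z$, paired with the coefficient $u\cdot\bN/\phi$. Since $u\cdot\bn|_{\partial\Omega}=0$, this coefficient is controlled in $L^\infty$ and in $H_{co}^{m-1}$ by \eqref{unLinfty}--\eqref{hardyconormal}, using only $\il u\il_{[\frac{m+1}{2}],\infty,t}$ and $\il(\sigma,\div u)\il_{[\frac{m-1}{2}],\infty,t}$ from $\cA_{m,t}$. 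All the $L^\infty$ factors that arise are then of the form $\il u\il_{k,\infty,t}$, $\il\div u\il_{k,\infty,t}$ or $\il\nabla\sigma\il_{k,\infty,t}$ with $k\leq[\frac{m+1}{2}]$, and the estimate closes without any loss in $\ep$. You should rework the convection term along these lines.
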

\begin{proof}
Since the the higher order $L_{t,x}^{\infty}$ norm of $\p_{\bn} u$ is not included in 
the definition of $\cA_{m,t},$ 
we need to use again the fact that $u\cdot\bn$ vanishes on the boundary. 
More precisely, by using the product estimate \eqref{roughproduct1}, identity \eqref{convection identity} and the estimate \eqref{hardyconormal}, we get for 
$(p,k)=(2,1), (\infty,2),$
\beqs
\|g_2u\cdot\nabla u\|_{L_t^pH_{co}^{m-k}}\lesssim 
\|(\sigma,u,\nabla\sigma,\nabla u)\|_{L_t^pH_{co}^{m-k}}\Lambda\big(\f{1}{c_0},\il(\nabla\sigma,\div u)\il_{[\f{m-1}{2}],\infty,t}+\il(\sigma,u)\il_{[\f{m+1}{2}],\infty,t}\big).
\eeqs
The first term is a direct application of the product estimate \eqref{roughproduct1}, we omit the detail.
\end{proof}
We split the estimate for 
$v$ in the following three subsections.

\subsubsection{Estimate of $\nabla q$}
We first give the estimate of $\nabla q$ that appears in
$\eqref{reformulation}_3.$
Since $q$ is governed by the elliptic equation \eqref{eqofq} without singular terms, it can be easily estimated by standard elliptic regularity theory.
\begin{lem}\label{lemofq}
Under the  assumptions \eqref{preasption}, we have 
the following estimates: for 
$j+l\leq m-1, l\geq 1,$
\begin{equation}\label{esofnablaq}
\begin{aligned}
   \|\nabla q\|_{L_t^2\cH^{j,l}}+\ep^{\f{1}{2}}\|\nabla q\|_{L_t^2\cH^{m-1}}&\lesssim 
   \La\cE_{m,t}
 \end{aligned}
\end{equation}
where $E_t^m$ is defined in \eqref{defofE}.
Moreover, 
\begin{equation}\label{curlwLinfty}
\begin{aligned}
 & \varepsilon \|\curl\omega(t)\|_{H_{co}^{m-2}} + \varepsilon\|\nabla q(t)\|_{H_{co}^{m-2}} \lesssim \|v(t)\|_{H_{co}^{m-1}}+Y_m(\sigma_0,u_0)
+(T+\ep)^{\f{1}{4}}\La \cE_{m,t}.
\end{aligned}
\end{equation}
\end{lem}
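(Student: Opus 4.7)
The point of departure is to apply the Leray projection $\mathbb{Q}$ to $\eqref{reformulation}_3$. Since $v=\mathbb{P}u$, we have $\mathbb{Q}\partial_t v=\partial_t\mathbb{Q}v=0$, so that
\[
\nabla q=\mathbb{Q}(f+\mu\Delta v),
\]
where $f$ is the quantity from \eqref{def-f-bd}. This reformulation is the basis of both estimates: it rephrases the pressure $\nabla q$ in terms of source data that is already controlled through Lemma \ref{lem-f-bd} and $\cE_{m,t}$, plus a boundary contribution coming from $\mu\Delta v$.

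\textbf{Proof of \eqref{curlwLinfty}.} Apply Proposition \ref{propneumann} to the Neumann potential of $f+\mu\Delta v$, and use the identity $\Delta v=\Delta u-\nabla\div u$, which follows from $v=u-\nabla\Psi$ and $\Delta\Psi=\div u$. This yields
\[
\ep\|\nabla q(t)\|_{H_{co}^{m-2}}\lesssim \ep\|f(t)\|_{H_{co}^{m-2}}+\ep\mu\|\nabla^2 u(t)\|_{H_{co}^{m-2}}+\ep\mu\|\nabla\div u(t)\|_{H_{co}^{m-2}}.
\]
The first term is controlled by Lemma \ref{lem-f-bd} together with $\ep\le(T+\ep)^{1/4}$, the second is part of $\cE_{m,t}$, and the third is exactly the content of \eqref{nabladivu-51}. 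To recover the estimate on $\ep\|\curl\omega(t)\|_{H_{co}^{m-2}}$, rewrite $\eqref{reformulation}_3$ using $\Delta v=-\curl\omega$ (since $\div v=0$) as $\mu\curl\omega=-\nabla q+f-\bar\rho\partial_t v$. Since $\ep\partial_t v=Z_0 v$, one has $\ep\bar\rho\|\partial_t v\|_{H_{co}^{m-2}}\le \bar\rho\|v\|_{H_{co}^{m-1}}$, which is precisely the term $\|v(t)\|_{H_{co}^{m-1}}$ appearing on the right-hand side.

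\textbf{Proof of \eqref{esofnablaq}.} For the $L_t^2$ estimate, the same identity $\nabla q=\mathbb{Q}(f)-\mu\mathbb{Q}(\curl\omega)$ is used. The Leray piece $\mathbb{Q}(f)$ is bounded in conormal spaces by Proposition \ref{propneumann} and Lemma \ref{lem-f-bd}. For $\mu\mathbb{Q}(\curl\omega)=\mu\nabla\Phi$, note that $\Phi$ is harmonic with Neumann data $\curl\omega\cdot\bn$ on $\partial\Omega$. Using the surface-divergence identity $\curl\omega\cdot\bn=\mathrm{div}_{\partial\Omega}(\omega\times\bn)$ together with the boundary formula \eqref{bdryomegan}, this data becomes a tangential derivative of the linear quantity $2\Pi(-au+(D\bn)u)$ along the boundary. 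Conormal elliptic regularity for the harmonic Neumann problem then bounds $\|\nabla\Phi\|_{\cH^{j,l}}$ by trace norms of $u$, controlled in turn by $\|u\|_{L_t^2 H_{co}^{m-1}}+\|\nabla u\|_{L_t^2 H_{co}^{m-2}}\subset\cE_{m,t}$. The assumption $l\ge 1$ is used here: it allows one spatial conormal derivative to be transferred to the test function (equivalently, to balance the tangential derivative appearing in the boundary data), without which the trace argument would ask for more regularity than $\cE_{m,t}$ provides. The companion bound $\ep^{1/2}\|\nabla q\|_{L^2_t\cH^{m-1}}$, which allows purely time derivatives, is obtained along the same lines: the potential loss of one conormal derivative is absorbed by the prefactor $\ep^{1/2}$ through the $\ep$-weighted norms in $\cE_{m,t}$ (including $\ep\|\nabla^2 u\|_{L_t^{\infty}H_{co}^{m-2}}$ and \eqref{nabladivu-51}).

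\textbf{Main obstacle.} The essential difficulty is to control $\mu\mathbb{Q}(\Delta v)$ without naively paying for two spatial derivatives of $v$, since $\|\nabla^2 v\|$ in unweighted conormal spaces is not available in $\cE_{m,t}$. The resolution is to interpret $\mathbb{Q}(\Delta v)$ as the gradient of a harmonic function and to use the Navier boundary condition \eqref{bdryomegan} to rewrite its Neumann data as a tangential derivative of a quantity that depends only on $u|_{\partial\Omega}$. This keeps the entire right-hand side within the regularity budget of $\cE_{m,t}$ (supplemented by \eqref{nabladivu-51} when an $\ep$ weight on $\nabla\div u$ is needed), and is the key technical point that makes both \eqref{esofnablaq} and \eqref{curlwLinfty} close.
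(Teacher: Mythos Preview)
Your approach to \eqref{esofnablaq} is correct and essentially the paper's: both exploit that the Neumann data for the harmonic piece $\mathbb{Q}(\curl\omega)$ is $(\curl\omega)\cdot\bn=\div(\omega\times\bn)+\omega\cdot\curl\bn$, which on $\partial\Omega$ reduces (via \eqref{bdryomegan} and \eqref{normalofnormalder}) to tangential derivatives of quantities controlled by $\|u\|_{E_t^m}$ and $\|\nabla\div u\|_{L_t^2H_{co}^{m-2}}$. The paper applies the elliptic estimate directly to \eqref{eqofq} rather than splitting $\nabla q=\mathbb{Q}(f)-\mu\mathbb{Q}(\curl\omega)$, but the mechanism is identical. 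One detail you glossed over: the residual term $\omega\cdot\curl\bn$ requires control of $|Z^I\omega|_{H^{-1/2}(\partial\Omega)}$, which the paper handles through \eqref{normalofnormalder}, \eqref{bdryconditionofu} and the trace inequality; this is where $\|\nabla\div u\|_{L_t^2H_{co}^{m-2}}$ (and hence \eqref{nabladivu-4}) enters.

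Your argument for \eqref{curlwLinfty} has a genuine gap. You apply Proposition~\ref{propneumann} with the full volume source $f+\mu\Delta v$, which costs $\ep\|\Delta v(t)\|_{H_{co}^{m-2}}\le\ep\|\nabla^2 u(t)\|_{H_{co}^{m-2}}+\ep\|\nabla\div u(t)\|_{H_{co}^{m-2}}$. You then say ``the second is part of $\cE_{m,t}$''; but that is the problem: $\ep\|\nabla^2 u\|_{L_t^\infty H_{co}^{m-2}}\le\cE_{m,t}$ carries \emph{no} small factor $(T+\ep)^{1/4}$, so your bound reads $\ep\|\nabla q(t)\|_{H_{co}^{m-2}}\lesssim\cE_{m,t}+\cdots$, not the stated $\|v(t)\|_{H_{co}^{m-1}}+Y_m+(T+\ep)^{1/4}\Lambda\cE_{m,t}$. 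This matters downstream: \eqref{curlwLinfty} feeds into the induction for $\|\nabla\sigma\|_{L_t^\infty H_{co}^{m-2}}$ in Lemma~\ref{normalofnablasigma}, and a bare $\cE_{m,t}$ on the right makes that step circular. The sharp bound $\ep^2\|\nabla^2 u\|_{L_t^\infty H_{co}^{m-2}}^2\lesssim Y_m^2+(T+\ep)^{1/2}\Lambda$ is only established \emph{afterwards} in Lemma~\ref{propsecnormal1}, whose proof depends on Lemma~\ref{normalofnablasigma}.

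The fix is to apply Proposition~\ref{propneumann} with $F=f$ and boundary data $g=\mu\Delta v\cdot\bn$, then estimate the trace $\ep\sum_{|I|\le m-2}|Z^I(\Delta v\cdot\bn)|_{H^{-1/2}(\partial\Omega)}$ by the same identities as above. This costs only $\ep(\|\nabla u(t)\|_{H_{co}^{m-2}}+\|u(t)\|_{H_{co}^{m-1}})+\ep\|\nabla\div u(t)\|_{H_{co}^{m-2}}$: the first two are controlled by \eqref{sumstep1}, the third by \eqref{nabladivu-51}, and all three already carry the small prefactor. Combining with $\ep\mu\|\Delta v\|\le\bar\rho\|Z_0 v\|+\ep\|\nabla q\|+\ep\|f\|$ from the equation then yields \eqref{curlwLinfty} as stated.
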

\begin{proof}
Recall that $q$ is governed by \eqref{eqofq}, an elliptic equation with Neumann boundary conditions.  
We can apply \eqref{highconormal} in  the appendix by setting
 $$f=-\frac{g_2-\bar{\rho}}{\varepsilon}\varepsilon\partial_t u-g_2u\cdot\nabla u, \quad g=\mu\Delta v\cdot \bn$$
 to get 
\begin{equation}\label{nablaqpre}
    \begin{aligned}
    \|\nabla q\|_{L_t^2\cH^{j,l}}&\lesssim
   \big \|f\big\|_{L_t^2H_{co}^{m-1}}+\sum_{|I|\leq m-1}|Z^I(\Delta v\cdot \bn)|_{L_t^2{H}^{-\frac{1}{2}}(\partial\Omega)}
    \end{aligned}
\end{equation}
The first term in the right hand side
has been controlled in \eqref{es-f-bd},
it  remains to estimate
the boundary term. By using the identity 
\beq\label{atimesb}
(\nabla\times a)\cdot b=\nabla\cdot(a\times b)+a\cdot(\nabla\times b),
\eeq
we have that:
$$-\Delta v\cdot \bn=(\nabla\times\omega) \cdot \bn=\div(\omega\times \bn)+\omega\cdot \curl \bn.$$
Near the boundary, it follows from \eqref{normalofnormalder} that:
\begin{equation}\label{omegatimesn}
\begin{aligned}
\div (\omega\times \bn)&=\partial_{\bn}(\omega\times \bn)\cdot \bn+(\Pi\partial_{y^1}(\omega\times \bn))^1+(\Pi\partial_{y^2}(\omega\times \bn))^2\\
&=-(\omega\times \bn)\cdot\partial_{\bn} \bn+(\Pi\partial_{y^1}(\omega\times \bn))^1+(\Pi\partial_{y^2}(\omega\times \bn))^2.
\end{aligned}   
\end{equation}
Therefore,  by using the  boundary condition \eqref{bdryomegan}, 
 one has that for $|I|\leq m-1,$
 \beq\label{bdry1}
 | Z^I(\div (\omega\times \bn))|_{L_t^2{H}^{-\frac{1}{2}}(\partial\Omega)}\lesssim |u|_{L_t^2\tilde{H}^{m-\frac{1}{2}}(\partial\Omega)}
 \eeq
where $L_t^2 \tilde{H}^s(\partial\Omega)$ is defined in \eqref{bdynorm}.
In view of the  identity \eqref{normalofnormalder} and the boundary condition 
 \eqref{bdryconditionofu}, we have
 for $l\geq 1$
\beq\label{ZIomega}
\begin{aligned}
   |Z^I \omega|_{L_t^2{H}^{-\frac{1}{2}}(\partial\Omega)}\lesssim |u|_{L_t^2\tilde{H}^{m-\frac{1}{2}}}+&|Z^I(\partial_{\bn} u)|_{L_t^2{H}^{-\frac{1}{2}}} \lesssim | u|_{L_t^2\tilde{H}^{m-\frac{1}{2}}}+|Z^I \div  u|_{L_t^2H^{-\f{1}{2}}
   }\\
   &\quad\lesssim \|u\|_{E_t^m}+\|\nabla\div u\|_{L_t^2H_{co}^{m-2}}.
 \end{aligned}
\eeq
Moreover,
if $Z^I=(\varepsilon\partial_t)^{m-1},$ we have by $L^2(\partial\Omega)\hookrightarrow H^{-\frac{1}{2}}(\partial\Omega)$ and the  trace inequality \eqref{traceL2} \beq\label{ZIomega1}
\ep^{\f{1}{2}}|Z^I\div u|_{L_t^2H^{-\frac{1}{2}}}\lesssim \|(\div u,\ep \nabla\div u)\|_{L_t^2\mathcal{H}^{m-1}}
\eeq
Collecting \eqref{nablaqpre}-\eqref{ZIomega1}, and using \eqref{nabladivu-4}, \eqref{es-f-bd}, 
 one obtains that:
 \beqs
 \begin{aligned}
 & \|\nabla q\|_{L_t^2\cH^{j,l}}+\ep^{\f{1}{2}}|\nabla q\|_{L_t^2\cH^{m-1}}\\
  &\lesssim \|f\|_{L_t^2{H}_{co}^{m-1}}+\|u\|_{E_t^m} +\|\nabla\div u\|_{L_t^2{H}_{co}^{m-2}}+\ep\|\nabla\div u\|_{L_t^2H_{co}^{m-1}}\lesssim \La\cE_{m,t}.
  \end{aligned}
 \eeqs

 We are now ready to prove \eqref{curlwLinfty}. 
 By using the equation $\eqref{reformulation}_3,$  the elliptic estimate \eqref{highconormal} and the product estimate \eqref{roughproduct1}, one finds:
 \begin{equation}\label{curlw-Linftypre}
     \begin{aligned}
       &\varepsilon \|\Delta v(t)\|_{H_{co}^{m-2}} + \varepsilon\|\nabla q(t)\|_{H_{co}^{m-2}} \\
       &\lesssim \|v(t)\|_{H_{co}^{m-1}}+\varepsilon\big\|f(t)\big\|_{H_{co}^{m-2}}+\varepsilon\sum_{|I|\leq m-2}|Z^I(\Delta v\cdot \bn)(t)|_{{H}^{-\frac{1}{2}}(\partial\Omega)}\\
     \end{aligned}
 \end{equation}
 With the aid of the boundary condition \eqref{bdryconditionofu}, the  identities
\eqref{normalofnormalder},  \eqref{omegatimesn} and  the estimates \eqref{sumstep1}, \eqref{nabladivu-51}, the boundary term can be treated  as,
 \begin{equation}\label{bdy-v-Linfty}
 \begin{aligned}
    &\quad\varepsilon\sum_{|I|\leq m-2}|Z^I(\Delta v\cdot \bn)|_{{H}^{-\frac{1}{2}}(\partial\Omega)} \\
    &\lesssim
   \varepsilon ( \|\nabla u(t)\|_{H_{co}^{m-2}}+\|u(t)\|_{H_{co}^{m-1}})+\varepsilon\|\nabla\div u(t)\|_{H_{co}^{m-2}}\\
  &\lesssim Y_m(\sigma_0,u_0)
+(T+\ep)^{\f{1}{4}}\La \cE_{m,t}.
 \end{aligned}
 \end{equation}
 Combined with \eqref{curlw-Linftypre} and 
 the fact that  $\Delta v=-\curl\omega,$ this yields   \eqref{curlwLinfty}. 
\end{proof}
\subsubsection{High order regularity estimates for $v$} 
This subsection is devoted to the 
high order estimates for $v: \|v\|_{L_t^{\infty}H_{co}^{m-1}}, \|\nabla v\|_{L_t^2H_{co}^{m-1}}$.
\begin{lem}\label{lemofv}
Suppose that \eqref{preasption} is satisfied, then for any $j+l\leq m-1,j,l\geq 0$ and for every $0<t\leq T,$ the following a-priori estimate holds:
\begin{equation}\label{EI-v}
    \begin{aligned}
   &\|v\|_{L_t^{\infty}\mathcal{H}^{j,l}}^2+\varepsilon^2 \|\nabla v\|_{L_t^{\infty}\mathcal{H}^{j,l}} ^2+\|\nabla v\|_{L_t^{2}\mathcal{H}^{j,l}}^2 +\varepsilon^2\|\curl \omega\|_{L_t^2\mathcal{H}^{j,l}}^2\\
   &\lesssim Y_{m}^2(\sigma_0,u_0)+(T+\ep)^{\f{1}{2}}\Lambda_{2,\infty,T}\cE_{m,T}^2+\|\div u\|_{L_t^2\mathcal{H}^{j,l}\cap L_t^2\mathcal{H}^{j+1,l-1}}^2
    \end{aligned}
\end{equation}
where we use the notation \eqref{not3}.
\end{lem}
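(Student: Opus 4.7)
The plan is to perform two kinds of energy estimates on the equation $\eqref{reformulation}_3$ for $v$, after commuting the vector field $Z^I$ with $I=(k,\tilde I)$, $k\le j$, $|\tilde I|\le l$, $k+|\tilde I|\le m-1$. Writing the equation schematically as
\begin{equation*}
\bar\rho\,\p_t v-\mu\Delta v+\nabla q=f,\qquad f=-\frac{g_2-\bar\rho}{\ep}\ep\p_t u-g_2u\cdot\nabla u,
\end{equation*}
the crucial observation is that this equation carries \emph{no} singular $1/\ep$ term: the factor $(g_2-\bar\rho)/\ep$ is bounded by Taylor expansion, so Lemma \ref{lem-f-bd} controls $f$, and Lemma \ref{lemofq} controls $\nabla q$. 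The price to pay is the non-homogeneous boundary condition
\begin{equation*}
v\cdot\bn=0,\qquad \Pi(\p_\bn v)=\Pi(-2a u+D\bn\cdot\nabla\Psi+D\bn\cdot u),
\end{equation*}
in which the compressible part $\nabla\Psi$ intervenes.

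First I would carry out the $L^2$-type estimate. Applying $Z^I$ to the equation yields $\bar\rho\,\p_t Z^Iv-\mu\Delta Z^Iv=Z^If-Z^I\nabla q+\mathcal{R}^I$, where $\mathcal R^I$ gathers commutators of $Z^I$ with $\bar\rho\p_t$, $\Delta$, and $\nabla$. Testing against $Z^Iv$ and integrating by parts gives the kinetic energy $\|Z^Iv(t)\|_{L^2}^2$, the dissipation $\mu\|\nabla Z^Iv\|_{L^2(Q_t)}^2$, plus the boundary contribution $\mu\int_0^t\!\!\int_{\p\Omega}\p_\bn Z^Iv\cdot Z^Iv\,\d S_y\d s$. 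Using $v\cdot\bn=0$, the trace inequality and the boundary condition above (after commuting $Z^I$ through $\Pi$ and $\bn$), this boundary term splits into a piece absorbed by $\tfrac{\mu}{4}\|\nabla v\|_{L_t^2H_{co}^{m-1}}^2$ and lower order tangential traces of $u$ and $\nabla\Psi$, which by the elliptic estimate for \eqref{Neumann} and $\Delta\Psi=\div u$ are controlled by $\|\div u\|_{L_t^2\mathcal H^{j,l}\cap L_t^2\mathcal H^{j+1,l-1}}$ together with $\cE_{m,t}$. The source terms $Z^If$ and $Z^I\nabla q$ are handled by Cauchy--Schwarz combined with Lemmas \ref{lem-f-bd} and \ref{lemofq}; the commutators $\mathcal R^I$ are estimated via the product and commutator lemmas as in Step 1 above, losing only lower order conormal norms or gaining the small factor $T^{1/2}$ or $\ep^{1/2}$.

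Next, for the $\ep^2$-weighted piece, I would test the same commuted equation against $-\ep^2 Z^I\Delta v$ and integrate. Since $\div v=0$ in $\Omega$, one has $-\Delta v=\curl\omega$, and the identity $\int(-\Delta v)\cdot\p_t v=\tfrac12\p_t\|\nabla v\|_{L^2}^2$ up to a boundary contribution produces the two remaining quantities $\ep^2\|\nabla v(t)\|_{\mathcal H^{j,l}}^2$ and $\ep^2\|\curl\omega\|_{L_t^2\mathcal H^{j,l}}^2$. The boundary term generated by integration by parts is treated by the same trace/boundary-condition analysis as above, now paying an extra factor $\ep$ which is favorable. The pressure term $\ep^2\int Z^I\nabla q\cdot Z^I\Delta v$ is integrated by parts once more (using $\Delta v\cdot\bn=\div(\omega\times\bn)+\omega\cdot\curl\bn$ as in \eqref{omegatimesn}) and then absorbed via Young's inequality and \eqref{esofnablaq}.

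The main obstacle will be the bookkeeping of the boundary terms coming from the inhomogeneous Navier condition for $v$: the compressible coupling through $D\bn\cdot\nabla\Psi$ must be traded, via the elliptic estimate $\|\nabla^2\Psi\|_{H_{co}^{k-1}}\lesssim\|\div u\|_{H_{co}^{k-1}}+\|u\|_{H_{co}^{k-1}}$ and the continuity equation, for norms of $\div u$, and this is precisely what generates the $\|\div u\|_{L_t^2\mathcal H^{j,l}\cap L_t^2\mathcal H^{j+1,l-1}}$ term on the right-hand side of \eqref{EI-v}. One must carefully track the split between time and tangential derivatives ($k$ vs.\ $|\tilde I|$) so that every commutator either lands in $\cE_{m,t}$, gains a small factor $(T+\ep)^{1/4}$, or can be absorbed into the dissipation; in particular, since we stop at order $m-1$ rather than $m$, no commutator of $Z^I$ with the singular $\nabla/\ep$ appears here, which is what makes the estimate uniform in $\ep$.
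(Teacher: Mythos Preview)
Your proposal is correct and follows essentially the same approach as the paper: test the commuted equation for $v$ against $Z^I v$ to get \eqref{highesofv}, then against $-\ep^2 Z^I\Delta v$ to get \eqref{curlwL2}, with the boundary terms handled via the Navier condition for $v$ and the elliptic estimate for $\nabla\Psi$ producing the $\|\div u\|$ contributions on the right. The only points left implicit in your sketch that the paper makes explicit are the base case $l=0$ (handled directly from the continuity of the Leray projector, giving \eqref{highv-T}) and the induction on $l$ used to absorb the lower-order $\|\nabla v\|_{L_t^2\mathcal H^{j,l-1}}^2$ terms generated by the commutators; also, for the pressure--Laplacian cross term in the second estimate the paper simply applies Young's inequality and \eqref{esofnablaq} rather than integrating by parts again.
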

\begin{rmk}
The estimate \eqref{EI-v} will be used later (see Lemma \ref{normalofnablasigma}) to get the high order spatial regularity for $\div u,$ which in turn, together with \eqref{EI-v}, gives the control of $v.$
\end{rmk}
\begin{proof}
In view of \eqref{hiddenes}, \eqref{sumstep1},
it suffices  to show that the left hand side of \eqref{EI-v} can be controlled by:
$$C(1/c_0)\big(Y_{m}^2(\sigma_0,u_0)+\cW_{m,T}^2+\|\div u\|_{L_t^2\mathcal{H}^{j,l}\cap L_t^2\mathcal{H}^{j+1,l-1}}^2\big)$$ where:
\beq\label{defofX}
\begin{aligned}
\cW_{m,T}^2&=\|u\|_{L_t^{\infty}\cH^{m-1}}^2+\|\nabla u\|_{L_t^2\cH^{m-1}}^2+\varepsilon^2\|\nabla u\|_{L_t^2H_{co}^{m}}^2+(T+\varepsilon)^{\frac{1}{2}}\La\cE_{m,t}.
\end{aligned}
 \eeq
This estimate will be obtained as the direct consequence of the following three inequalities:
\beq\label{highv-T}
 \|v\|_{L_t^{\infty}\mathcal{H}^{m-1}}^2+\|\nabla v\|_{L_t^{2}\mathcal{H}^{m-1}}^2\lesssim  \|u\|_{L_t^{\infty}\mathcal{H}^{m-1}}^2+\|\nabla u\|_{L_t^{2}\mathcal{H}^{m-1}}^2,
\eeq
\begin{equation}\label{highesofv}
\begin{aligned}
   \|v\|_{L_t^{\infty}\mathcal{H}^{j,l}}^2+\|\nabla v\|_{L_t^{2}\mathcal{H}^{j,l}}^2&\lesssim 
  \|v(0)\|_{H_{co}^{m-1}}^2+\|\nabla u\|_{L_t^2\cH^{m-1}}^2
   \\
    &\quad +\|\div u\|_{L_t^2\mathcal{H}^{j,l}}^2 +T^{\frac{1}{2}}\La \cE_{m,t}^2, \, l\geq 1,
\end{aligned}
\end{equation}
\begin{equation}\label{curlwL2}
\begin{aligned}
  \varepsilon^2 \|\nabla v\|_{L_t^{\infty}\mathcal{H}^{j,l}} ^2+\varepsilon^2\|\Delta v\|_{L_t^2\mathcal{H}^{j,l}}^2&\lesssim
  \ep^2 \|(\nabla v,v)(0)\|_{H_{co}^{m-1}}+\|\nabla v\|_{L_t^2\mathcal{H}^{j,l}\cap L_t^2\mathcal{H}^{j+1,l-1}}^2\\
  &\quad+\varepsilon^2\|\nabla u\|_{L_t^{2}H_{co}^{m}}^2+(T^{\f{1}{2}}+\ep)\La\cE_{m,t}^2.
\end{aligned}
\end{equation}
Note that since the Leray projector $\mathbb{P}$ commutes with $\ep\pt,$
one has that: $\mathbb{P}((\ep\pt)^j u)=(\ep\pt)^j v.$ Therefore, from the  continuity of the projection, we have:
$$\|v(0)\|_{H_{co}^{m-1}}\lesssim 
\|u(0)\|_{H_{co}^{m-1}}.$$

The inequality \eqref{highv-T} is a direct consequence of the definition of $v$ and the elliptic estimates in
Proposition \ref{propneumann}. We thus focus on the other two inequalities. Let us first prove \eqref{highesofv} and then sketch the proof of \eqref{curlwL2}.
By \eqref{eqofv0}, $v$ solves
\beq\label{eqofv}
\bar{\rho}\partial_t v-\mu \Delta v+\nabla q=-(\frac{g_2-\bar{\rho}}{\varepsilon}\varepsilon\partial_t u+g_2u\cdot\nabla u)=:f
\eeq
supplemented with the boundary conditions:
\begin{equation}\label{bdrycon-v}
\begin{aligned}
v\cdot \bn|_{\partial\Omega}=0, \quad \Pi(\partial_{\bn} v)
=\Pi (-2a v+D \bn\cdot v)+2\Pi(-a \nabla\Psi +D \bn\cdot  \nabla\Psi).
\end{aligned}
\end{equation}
We apply  $Z^{I}$ to  the equation  \eqref{eqofv} with 
$I=(j,I'), 0\leq j+|I'|=j+l=k\leq m-1, |I'|\geq 1.$ Taking the scalar product  by $Z^{I}v,$ and then integrating in space and time, we   get that:
\begin{equation}\label{sec5:eq12}
\begin{aligned}
\f{1}{2}\bar{\rho}\int_{\Omega}|Z^{I}v(t)|^2\ \d x
&\leq \f{1}{2}\bar{\rho} \int_{\Omega} |(Z^I v)(0)|^2\ \d x+\mu\iint_{Q_t}Z^{I}(\Delta v) Z^{I}v \ \d x\d s\\
&\quad
+\|Z^{I}v\|_{L^2(Q_t)}\big(\|\nabla q\|_{L_t^2\cH^{j,l}}+\|f\|_{L_t^2H_{co}^{m-1}}\big).
\end{aligned}
\end{equation}
By \eqref{es-f-bd} and \eqref{esofnablaq}, the second line in the above inequality can be bounded as:
\beq 
\begin{aligned}
\|Z^{I}v\|_{L^2(Q_t)}\big(\|\nabla q\|_{L_t^2\cH^{j,l}}+\|f\|_{L_t^2H_{co}^{m-1}}\big)&\lesssim T^{\f{1}{2}}\|u\|_{L_t^{\infty}H_{co}^{m-1}}\La \cE_{m,t}\\
&\lesssim T^{\f{1}{2}}\La \cE_{m,t}^2.
\end{aligned}
\eeq
It remains to control the second term in the right hand side of 
\eqref{sec5:eq12}, which is the following task. We split it into three terms:
\begin{equation}\label{lapofv}
\begin{aligned}
\mu\iint_{Q_t}Z^{I}(\Delta v)\cdot Z^{I}v\ \d x\d s&=\mu\iint_{Q_t}[Z^{I},\div]\nabla v \cdot Z^{I}v \, \d x\d s-\mu\iint_{Q_t}Z^{I}\nabla v\cdot \nabla Z^{I}v \ \d x\d s\\
&\qquad+\mu\int_0^t\int_{\partial\Omega} Z^{I}\nabla v\cdot \bn \,Z^{I}v\ \d S_y\d s=\colon \mathcal{T}_1+\mathcal{T}_2+\mathcal{T}_3.
\end{aligned}
\end{equation}
The estimate of $ \mathcal{T}_1-\mathcal{T}_3$ will be 
similar to that of $\mathcal{K}_1-\mathcal{K}_4$ in \eqref{idoflaplacian}.

We first estimate $ \mathcal{T}_2.$ By integrating by parts, one has that:
\begin{equation}
\begin{aligned}
  \mathcal{T}_2&=-\mu\iint_{Q_t}|Z^{I}\nabla v|^2\, \d x\d s-\mu\iint_{Q_t}Z^{I}\nabla v[\nabla,Z^{I}]v \,\d x\d s \\
 &\leq -\frac{\mu}{2}\|Z^{I}\nabla v\|_{L^2(Q_t)}^2+\frac{\mu}{2}\|[\nabla,Z^I]v\|_{L^2(Q_t)}^2\leq -\frac{\mu}{2}\|Z^{I}\nabla v\|_{L^2(Q_t)}^2+C\|\nabla v\|_{L_t^2\mathcal{H}^{j,l-1}}^2.
\end{aligned}
\end{equation}
Note that in the last estimate, by \eqref{comu}, we know that $[\nabla,Z^I]v$  involves only lower order ($\leq k-1$) conormal derivatives of $\nabla v.$  

We now switch to the estimate of the boundary term $ \mathcal{T}_3$ in
\eqref{lapofv}, which vanishes if $Z^{I}$
involves at least one weighted normal derivative $Z_3^{i}$.
We thus assume that $Z^{I}$ contains only time derivatives and spatial tangential derivatives.
\begin{equation*}
 \begin{aligned}
   \mathcal{T}_{3}&=-\mu\int_0^t\int_{\partial\Omega}\big(-[Z^I,\bn]
  \nabla v \cdot Z^{I}v+[Z^{I},\bn]\cdot \partial_{\bn} v (Z^{I}v\cdot \bn)+[Z^{I},\Pi]\partial_{\bn} v\cdot\Pi Z^{I}v\big)\,\d S_y\d s\\
  &\quad+\mu\int_0^t\int_{\partial\Omega}\big(Z^{I}(\partial_{\bn} v\cdot \bn) (Z^{I}v\cdot \bn)+ Z^{I}(\Pi\partial_{\bn} v)\cdot\Pi Z^I v\big) \ \d S_y\d s=\colon  \mathcal{T}_{31}+ \mathcal{T}_{32}.
 \end{aligned} 
\end{equation*}
The first term $ \mathcal{T}_{31}$
can be dealt with thanks to  H\"older inequality and the trace inequality 
\eqref{traceL2}
$$
 \begin{aligned}
     \mathcal{T}_{31}&\lesssim \int_0^t |(\varepsilon\partial_t)^j\nabla v(s)|_{{H}^{l-1}(\partial\Omega)}|Z^{I}v(s)|_{L^2(\partial\Omega)}\, \d s\\
     &\lesssim \int_0^t (|(\varepsilon\partial_t)^j v|_{{H}^l(\partial\Omega)}+|(\varepsilon\partial_t)^j \nabla\Psi|_{{H}^l(\partial\Omega)})|Z^I v|_{L^2(\partial\Omega)}\, \d s\\
     &\leq \delta\mu \|\nabla v\|_{L_t^2\mathcal{H}^{j,l}}^2+C(\delta,\mu)\|(u,\div u)\|_{L_t^2H_{co}^k}^2. 
 \end{aligned}   
$$
Note that in the second inequality, we have used  the boundary condition \eqref{bdrycon-v} and the
identity (since $\div v=0$):
\beq\label{usefulidentity}
\partial_{\bn} v\cdot \bn =-(\Pi \partial_{y^1}v)^1-(\Pi \partial_{y^2}v)^2,
\eeq
to obtain that:
\beq\label{gradonbdy}
|(\varepsilon\partial_t)^j\nabla v(s)|_{{H}^{l-1}}\lesssim |(\varepsilon\partial_t)^j v(s)|_{{H}^{l}}+|(\varepsilon\partial_t)^j\nabla\Psi(s)|_{{H}^{l}}.
\eeq
For the second term $ \mathcal{T}_{32},$ since 
$l\geq 1,$ we might as well assume that $Z^{I}=\partial_{y} Z^{\tilde{I}},$ where $\p_y=\p_{y^1}$ or $\p_{y^2}.$ 
In view of the boundary condition
 \eqref{bdrycon-v} and  the identity \eqref{usefulidentity}, 
 we have by integrating by parts along the boundary that:
\begin{equation}
    \begin{aligned}
    \mathcal{T}_{32}&=\int_0^t\int_{\partial\Omega}
    Z^{\tilde{I}}(\partial_{\bn} v\cdot \bn) \partial_{y}\cdot([Z^{I},\bn\cdot] v)+Z^{I}(\Pi\partial_{\bn} v)\Pi Z^I v) \, \d S_y\d s\\
    &\lesssim \int_0^t 
    |(\varepsilon\partial_t)^j v|_{{H}^{l}(\partial\Omega)}^2+|(\varepsilon\partial_t)^j(v,\nabla\Psi)|_{{H}^{l}(\partial\Omega)}|(\varepsilon\partial_t)^j v|_{{H}^{l}(\partial\Omega)}\,\d s\\
  & \lesssim \delta\mu \|\nabla v\|_{L_t^2\mathcal{H}^{j,l}}^2 +C(\delta,\mu)\|(u,\div u)\|_{L_t^2\mathcal{H}^{j,l}}^2.
    \end{aligned}
\end{equation}
It remains  to control $\mathcal{T}_1$. 
Owing to \eqref{comu} and
\eqref{gradonbdy}, one obtains again
by integrating by parts that:
\begin{equation}\label{sec5:eq13}
\begin{aligned}
  \mathcal{T}_1&\lesssim \|\nabla v\|_{L_t^2\mathcal{H}^{j,l-1}}(\|v\|_{L_t^2\mathcal{H}^{j,l}}+\|\nabla v\|_{L_t^2\mathcal{H}^{j,l}})+
   |(\varepsilon\partial_t)^j\nabla v(s)|_{{H}^{l-1}(\partial\Omega)}|v|_{{H}^{l}(\partial\Omega)}\\
   &\lesssim \delta\mu \|\nabla v\|_{L_t^2\mathcal{H}^{j,l}}^2+C(\delta,\mu)(\|(u,\div u)\|_{L_t^2\mathcal{H}^{j,l}}^2+\|\nabla v\|_{L_t^2\mathcal{H}^{j,l-1}}^2).
\end{aligned}
\end{equation}
Plugging \eqref{lapofv}-\eqref{sec5:eq13} into \eqref{sec5:eq12} and summing up for all $I=(j,I'),|I'|=l,$ one has by choosing $\delta$ small enough that
\begin{equation}\label{caselgeq1}
    \begin{aligned}
      \|
      v(t)\|_{\mathcal{H}^{j,l}}^2+\frac{\mu}{4}\|\nabla v\|_{L_t^2\mathcal{H}^{j,l}}^2&\leq\|v(0)\|_{\mathcal{H}^{j,l}}^2+
      C(\delta,\mu)\|\nabla v\|_{L_t^2\mathcal{H}^{j,l-1}}^2+\|\div u\|_{L_t^2\mathcal{H}^{j,l}}^2\\
      &\qquad +T^{\f{1}{2}}\La \cE_{m,t}^2.
    \end{aligned}
\end{equation}
In view of inequalities \eqref{highv-T} and \eqref{caselgeq1}, we obtain \eqref{highesofv} by induction on $l.$

We are now in position to prove \eqref{curlwL2}. As before, 
we apply  $Z^I$ to  the equation  \eqref{eqofv} for $v$  and we take the scalar product  by $-\varepsilon^2Z^I\Delta v.$ One gets by integration by parts and by using  Young's inequality that:
\begin{equation}\label{EIcurlw}
\begin{aligned}
  &\frac{1}{2}\bar{\rho} \varepsilon^2\int_{\Omega}|\nabla Z^I v(t)|^2\ \d x+\frac{\mu}{2}\ep^2\iint_{Q_t}|Z^I(\Delta v)|^2\ \d x\d s\\
  &\leq \frac{1}{2}\bar{\rho} \varepsilon^2\int_{\Omega}|\nabla Z^I v(0)|^2\,\d x+\varepsilon \iint_{Q_t}\varepsilon\partial_t Z^I v\cdot [Z^I,\Delta]v\, \d x\d s \\
  &+\varepsilon\int_0^t\int_{\partial\Omega}\varepsilon\partial_t Z^I v\cdot\partial_{\bn} Z^I v\, \d S_{y}\d s+C_{\mu}\varepsilon^2 \|(\nabla q,f)\|_{L_t^2H_{co}^{m-1}}^2.
\end{aligned}
\end{equation}
By induction, the following identity (up to some coefficients that depends on $\phi,\varphi$ and their derivatives up to order $m$) holds: $$[Z^{I},\Delta]=\sum_{\substack{|\tilde{I}|\leq |I|-1,|J|\leq|I|-1\\\tilde{I}_0=j,J_0=j}}\sum_{i,k=1}^3 (* Z^{\tilde{I}}\partial_{ik}^2+* Z^{J}\partial_k).$$
This identity, combined with elliptic regularity theory yields:
\begin{equation*}
\begin{aligned}
    \|[Z^{I},\Delta]v\|_{L^2(Q_t)}\lesssim 
 \|\nabla ^2 v\|_{L_t^2\mathcal{H}^{j,l-1}}+   \|\nabla  v\|_{L_t^2\mathcal{H}^{j,l-1}}&\lesssim  \|\Delta v\|_{L_t^2\mathcal{H}^{j,l-1}}+ |\partial_{\bn}(\varepsilon\partial_t)^j v|_{H^{l-\frac{1}{2}}}\\
  &\lesssim   \|\Delta v\|_{L_t^2\mathcal{H}^{j,l-1}}+\|(u,\nabla u)\|_{L_t^2\mathcal{H}^{j,l}}.
\end{aligned}
\end{equation*}
Note that in the last inequality, we have used \eqref{gradonbdy} and the trace inequality \eqref{traceL2}.
We thus control the second term in \eqref{EIcurlw} as follows:
\begin{equation}\label{curlwcomu}
\varepsilon \iint_{Q_t}\varepsilon\partial_t Z^I v\cdot [Z^I,\Delta]v\,\d x\d s\lesssim
   \varepsilon^2\|\Delta v\|_{L_t^2\mathcal{H}^{j,l-1}}^2+\|\varepsilon\partial_t v\|_{L_t^2\mathcal{H}^{j,l}}^2+\varepsilon \|u\|_{E_t^m}^2.
\end{equation}
Moreover, the third term of \eqref{EIcurlw} can be dealt with by arguments very similar  to the ones for  $\mathcal{T}_3:$ 
\begin{equation}\label{curlwbdy}
\begin{aligned}
   & \varepsilon\int_0^t\int_{\partial\Omega}\varepsilon\partial_t Z^I v\cdot\partial_{\bn} Z^I v \, \d S_{y}\d s\\
    &\lesssim  \varepsilon\int_0^t |Z^I \varepsilon\partial_t v|_{L^2}\big(|(\varepsilon\partial_t)^j v|_{H^{l+1}}+|(\varepsilon\partial_t)^j\nabla\Psi|_{H^{l}}\big)\, \d s\\
&\lesssim \varepsilon (\|\nabla v\|_{L_t^2\mathcal{H}^{j+1,l}}^{\frac{1}{2}}\|v\|_{L_t^2\mathcal{H}^{j+1,l}}^{\frac{1}{2}}+\|v\|_{L_t^2\mathcal{H}^{j+1,l}})\cdot\\
&\qquad\quad(\|\nabla v\|_{L_t^2\mathcal{H}^{j,l+1}}^{\frac{1}{2}}\|v\|_{L_t^2\mathcal{H}^{j,l+1}}^{\frac{1}{2}}+\|v\|_{L_t^2\mathcal{H}^{j,l+1}}+\|u,\div u\|_{L_t^2\mathcal{H}^{j,l}})\\
&\lesssim \varepsilon^2 \|\nabla v\|_{L_t^2H_{co}^m}^2+ \varepsilon\|(u,\div u)\|_{L_t^2H_{co}^{m-1}}^2
+\|\nabla v\|_{L_t^2\mathcal{H}^{j+1,l-1}\cap L_t^2\mathcal{H}^{j,l}}^2+\| v\|_{L_t^2\cH^{m}}^2
\end{aligned}
\end{equation}
Inserting \eqref{curlwcomu} and \eqref{curlwbdy} into \eqref{EIcurlw}, and use \eqref{es-f-bd}, \eqref{esofnablaq} to find 
$$\ep^2 \|(\nabla q, f)\|_{L_t^2H_{co}^{m-1}}^2\lesssim \ep \La\cE_{m,t}^2,$$
we obtain \eqref{curlwL2} by induction.
\end{proof}
\subsection{Step 3: Uniform estimates for $(\nabla\sigma,\div u)$}\label{step3}

In this subsection, we aim to get uniform 
control of higher spatial conormal derivatives of $(\nabla\sigma,\div u).$ More precisely, we prove uniform boundedness of $\|(\nabla\sigma,\div u)\|_{L_t^{\infty}H_{co}^{m-2}\cap L_t^2H_{co}^{m-1}}.$ This will be achieved by using the equation iteratively. 
\begin{lem}\label{normalofnablasigma}
Assume that \eqref{preasption} holds,
we then have that for every $0<t\leq T,$
\beq\label{recovernablasi}
\|(\nabla\sigma,\div u)\|_{L_t^{\infty}H_{co}^{m-2}\cap L_t^2H_{co}^{m-1}}^2\lesssim Y^2_m(\sigma_0,u_0)+(T+\ep)^{\f{1}{2}}
\cE^2_{m,T}\Lambda(\f{1}{c_0},\cA_{m,T}).
\eeq
\end{lem}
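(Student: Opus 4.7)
The starting point is that, after multiplying the two equations in \eqref{NS1} by $\ep$ and rearranging, the system yields the identities
$$
-\div u = g_1(\ep\sigma)\,\ep\pt\sigma + \ep\,g_1(\ep\sigma)\,u\cdot\nabla\sigma,\qquad
-\nabla\sigma = g_2(\ep\sigma)\,\ep\pt u + \ep\,g_2(\ep\sigma)\,u\cdot\nabla u - \ep\,\div\cL u,
$$
which exchange one spatial derivative acting on the compressible variables $(\nabla\sigma,\div u)$ for one scaled time derivative $\ep\pt$ acting on $(\sigma,u)$, at the cost of $\ep$-weighted convection and viscosity remainders. The proof consists in iterating this exchange, organised as an induction on the total conormal order $|I|\leq m-2$ for the $L^\infty_tL^2$ estimate (and $|I|\leq m-1$ for the $L^2_tL^2$ estimate), with a nested induction on the spatial count $l$ inside $Z^I$.

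\textbf{Base case (pure time derivatives, $l=0$).} When $Z^I=(\ep\pt)^k$, applying $(\ep\pt)^k$ to the above identities produces as leading pieces $g_1\,(\ep\pt)^{k+1}\sigma$ and $g_2\,(\ep\pt)^{k+1}u$, whose $L^\infty_tL^2$ norms are dominated by $\|(\sigma,u)\|_{L^\infty_t\cH^m}$; by Lemma \ref{lemstep1} (equation \eqref{sumstep1}) this quantity is already bounded by $Y_m^2+(T+\ep)^{1/2}\Lambda\cE_{m,T}^2$, together with Leibniz commutators on $g_i$ that are of lower total order. The $\ep$-weighted remainders reduce either to $\ep\,\Lambda(\tfrac{1}{c_0},\cA_{m,T})\cE_{m,T}$ terms (convection), or to $\ep\|\nabla^2 u\|_{L^\infty_tH^{m-2}_{co}}$ and $\ep\|\nabla\div u\|_{L^\infty_tH^{m-2}_{co}}$: the first is already a component of $\cE_{m,T}$ (squared it contributes $\ep^2\cE_{m,T}^2\leq (T+\ep)^{1/2}\cE_{m,T}^2$, thanks to the elementary inequality $\ep^2\leq (T+\ep)^{1/2}$ for $T+\ep\leq 1$), and the second is controlled by the previously established bound \eqref{nabladivu-51}.

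\textbf{Inductive step ($l\geq 1$).} Assuming the claim for multi-indices of strictly lower total order, I pull out one spatial conormal factor, writing $Z^I=Z_\alpha Z^{I'}$ with $|I'|=|I|-1$ containing $l-1$ spatial derivatives. Since $Z_\alpha$ has smooth bounded coefficients, and since $\ep\pt$ commutes with every $Z_k$, for $g=\ep\pt Z^{I'}\sigma$ or $g=\ep\pt Z^{I'}u$ I estimate
$$\|Z_\alpha g\|_{L^2}\lesssim \|\nabla g\|_{L^2}\lesssim \|\ep\pt Z^{I'}\nabla(\sigma,u)\|_{L^2}+\|\ep\pt[\nabla,Z^{I'}](\sigma,u)\|_{L^2},$$
where the last inequality uses the commutator formula \eqref{comu}. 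The first term on the right-hand side is a conormal derivative of $(\nabla\sigma,\nabla u)$ of the same total order $|I|$ but with one fewer spatial conormal field, so it falls under the inner $l$-induction; the commutator is a sum of $Z^J\nabla(\sigma,u)$ with $|J|\leq |I|-1$, handled by the outer $|I|$-induction. The $\nabla\sigma$ identity is treated in parallel, with the $\ep\mu\Delta u$ and $\ep(\mu+\lambda)\nabla\div u$ contributions estimated exactly as in the base case via $\cE_{m,T}$ and \eqref{nabladivu-51}. The $L^2_tH^{m-1}_{co}$ bound follows by the same reduction, using $\|(\sigma,u)\|_{L^2_t\cH^m}\leq T^{1/2}\|(\sigma,u)\|_{L^\infty_t\cH^m}$ together with $\ep\|\nabla u\|_{L^2_tH^m_{co}}$ and $\ep\|\nabla\div u\|_{L^2_tH^{m-1}_{co}}$ from \eqref{sumstep1}.

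\textbf{Main obstacle.} The delicate point is the bookkeeping of the $\ep$ and $(T+\ep)^{1/2}$ factors: every remainder produced by the exchange must either inherit a ``$Y_m^2+(T+\ep)^{1/2}\Lambda\cE_{m,T}^2$''-type bound from Lemma \ref{lemstep1} or from \eqref{nabladivu-51}, or else carry a clean $\ep$ (or $\ep^{1/2}$) that is then absorbed via $\ep^2\leq (T+\ep)^{1/2}$ after squaring. Closing the recursion further requires that the Hardy-type reduction $\|Z_\alpha g\|_{L^2}\lesssim\|\nabla g\|_{L^2}$ be threaded consistently between the $\div u$ and $\nabla\sigma$ identities---otherwise a bare $\cE_{m,T}^2$ (without $(T+\ep)^{1/2}$) would emerge at top order and prevent the bootstrap in Proposition \ref{energybdd} from closing.
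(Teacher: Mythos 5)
Your overall scheme—iterating the identities $-\div u=g_1\ep\pt\sigma+\cdots$ and $-\nabla\sigma=g_2\ep\pt u+\cdots$ to trade spatial conormal derivatives of $(\nabla\sigma,\div u)$ for $\ep\pt$ derivatives of $(\sigma,u)$, organised as an induction on the number of spatial conormal fields—is indeed the structure of the paper's proof. For the $\div u$ branch this works fine: peeling off $Z_\alpha$ from $Z^I(\ep\pt\sigma)$ produces $\nabla\sigma$ with one fewer spatial conormal field, which is covered by the induction hypothesis.

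The gap is in the $\nabla\sigma$ branch. After reducing $Z^I\nabla\sigma$ to $Z^I(g_2\ep\pt u)$ via \eqref{rewriteofu}, and after peeling off one spatial conormal field from $\ep\pt u$, you get $\nabla u$—not $\div u$. You write that this ``falls under the inner $l$-induction,'' but the lemma only gives control of $(\nabla\sigma,\div u)$, not of the full gradient $\nabla u$. Control of $\|\nabla u\|_{L^\infty_t H_{co}^{m-2}}$ is precisely what Lemma \ref{lemofnablav} establishes \emph{after} Lemma \ref{normalofnablasigma} (via the vorticity analysis in Lemma \ref{omeganLinfty}), and that argument itself invokes \eqref{recovernablasi}; trying to use $\nabla u$ here would be circular. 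The paper resolves this by inserting the Leray decomposition $u=v+\nabla\Psi$ exactly at this point: the bound \eqref{sec3:eq11}
\beqs
\|\varepsilon\partial_t u\|_{L_t^{\infty}\mathcal{H}^{j,l_0}}\lesssim \|v\|_{L_t^{\infty}H_{co}^{m-1}}+\|(\nabla\Psi,\nabla^2\Psi)\|_{L_t^{\infty}\mathcal{H}^{j+1,l_0-1}}
\eeqs
replaces the unavailable $\nabla u$ with $\nabla v$ (controlled independently of Lemma \ref{normalofnablasigma} by the direct energy estimate \eqref{EI-v}) plus $\nabla^2\Psi$ (converted to $\div u$ through the elliptic estimate of Proposition \ref{propneumann}, hence fed back into the induction). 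Your sketch omits this decomposition, which is the one genuinely nontrivial step of the argument; without it the induction does not close.
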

\begin{proof}
We will prove the following two inequalities:

$\bullet$ $L_t^2H_{co}^{m-1}$ estimate: for any $j,k\geq 0, j+k\leq m-1$:
\begin{equation}\label{inductionL2}
\begin{aligned}
\|(\nabla\sigma,\div u)\|_{L_t^2\mathcal{H}^{j,k}}&\lesssim Y_m(\sigma_0,u_0)+
T^{\frac{1}{2}}\|(u,\sigma)\|_{L_t^{\infty}\mathcal{H}^{m}}
\\
&\quad+\varepsilon\|\nabla\div u\|_{L_t^2H_{co}^{m-1}}+(T+\varepsilon)^{\f{1}{4}}\Lambda(\f{1}{c_0},\cA_{m,T})\cE_{m,T}.
\end{aligned}
\end{equation}

$\bullet$ $L_t^{\infty}H_{co}^{m-2}$ estimate: for any $j,l\geq 0$ and $j+l\leq m-2:$
\begin{equation}\label{inductionLinfty}
\begin{aligned}
&\|(\nabla\sigma,\div u)\|_{L_t^{\infty}\mathcal{H}^{j,l}}
\lesssim 
Y_m(\sigma_0,u_0)+\varepsilon \|(\nabla\div u,\curl \omega)\|_{L_t^{\infty}H_{co}^{m-2}}+\| v\|_{L_t^{\infty}H_{co}^{m-1}}\\
&\qquad\qquad\quad+\|(\sigma, u)\|_{L_t^{\infty}\mathcal{H}^{m-1}}+\ep\|\nabla\sigma\|_{L_t^{\infty}H_{co}^{m-1}} +\ep\Lambda(\f{1}{c_0},\cA_{m,T}) \cE_{m,T}.
\end{aligned}
\end{equation}
These two inequalities, combined with
the estimates \eqref{sumstep1}, \eqref{curlwLinfty}, \eqref{EI-v} 
and the definition \eqref{defofLambdam}, 
yield \eqref{recovernablasi}. 

The inequality \eqref{inductionL2} can be obtained  by induction on the number of space conormal derivatives. Let us first prove  \eqref{inductionL2} for $k=0,j\leq m-1.$ 
By \eqref{rewriteofsigma} 
and product estimate \eqref{roughproduct1}, we find that:
\beq\label{indu-div0}
\|\div u\|_{L_t^2\cH^{m-1}}\lesssim T^{\f{1}{2}}\|\sigma\|_{L_t^{\infty}\cH^m}+\ep\La \cE_{m,t}.
\eeq
Moreover, by the equations $\eqref{NS1}_2$ for $u$,
\beq\label{rewriteofu}
\nabla\sigma=-\bar{\rho}\varepsilon\partial_t u+\ep f-\varepsilon\mu\curl \omega+\varepsilon(2\mu+\lambda)\nabla\div u,
\eeq
we thus have by \eqref{es-f-bd}, \eqref{EI-v} that:
\beq\label{indu-si-0}
\begin{aligned}
\|\nabla\sigma\|_{L_t^2\cH^{m-1}}&\lesssim \|u\|_{L_t^2\cH^{m}}+\ep\|\curl\omega\|_{L_t^2\cH^{m-1}}+\ep\|\nabla\div u\|_{L_t^2H_{co}^{m-1}}+\ep\La\cE_{m,t}\\
&\lesssim T^{\f{1}{2}}\|u\|_{L_t^{\infty}\cH^{m}}+\|\div u\|_{L_t^2\cH^{m-1}}+Y_{m}(\sigma_0,u_0)\\
&\qquad+\ep\|\nabla\div u\|_{L_t^2H_{co}^{m-1}}+(T+\ep)^{\f{1}{4}}\La\cE_{m,t},
\end{aligned}
\eeq
which, together with \eqref{indu-div0}, yields \eqref{inductionL2} for $k=0,j\leq m-1.$

 Now suppose that \eqref{inductionL2} holds for $k=k_0-1$ with $
 k_0\geq 1,$ it suffices to prove that it is also true for $k=k_0$ and  for every $j$ such that $j+k_{0} \leq m-1.$
We begin with the estimate of $\div u,$ which again follows from the equation \eqref{rewriteofsigma} 
and product estimate \eqref{roughproduct1}:
\begin{equation}
\begin{aligned}
  \|\div u\|_{L_t^2\mathcal{H}^{j,k_0}}&\lesssim \|\varepsilon\partial_t\sigma\|_{L_t^2\mathcal{H}^{j,k_0}}+\varepsilon\Lambda(\f{1}{c_0},\cA_{m,t})\cE_{m,t}\\
  &\lesssim \|(\sigma,\nabla\sigma)\|_{L_t^2\mathcal{H}^{j+1,k_0-1}}+\Lambda\big(\f{1}{c_0},\cA_{m,t}\big)\cE_{m,t}\lesssim \text{R.H.S of } \eqref{inductionL2}.
\end{aligned}
\end{equation}
Next, one gets by equation \eqref{rewriteofu}, estimate \eqref{EI-v} and the  induction hypothesis that:
\begin{equation*}
\begin{aligned}
    \|\nabla\sigma\|_{L_t^2\mathcal{H}^{j,k_0}}&\lesssim 
    \|u\|_{L_t^2\mathcal{H}^{j+1,k_0}}+\ep\|\curl\omega\|_{L_t^2\cH^{j,k_0}}+\varepsilon\|\nabla\div u\|_{L_t^2H_{co}^{m-1}}+\ep\Lambda(\f{1}{c_0},\cA_{m,T})\cE_{m,t}\\
    &\lesssim \| (\div u,\nabla v)\|_{L_t^2\mathcal{H}^{j+1,k_0-1}}
+\ep\|\curl\omega\|_{L_t^2\cH^{j,k_0}}+\varepsilon\|\nabla\div u\|_{L_t^2H_{co}^{m-1}}+\ep\Lambda(\f{1}{c_0},\cA_{m,T})\cE_{m,t}\\
    &\lesssim \text{R.H.S of } \eqref{inductionL2}.
\end{aligned}
\end{equation*}

Let us switch to the proof of \eqref{inductionLinfty}. By similar argument as in the derivation of \eqref{indu-div0}, \eqref{indu-si-0},
one can find that:
\beq
\|(\nabla\sigma,\div u)\|_{L_t^{\infty}\cH^{m-2}}\lesssim \|(\sigma,u)\|_{L_t^{\infty}\cH^{m-1}}+\ep\|(\nabla\div u,\curl\omega) \|_{L_t^{\infty}H_{co}^{m-2}}+\ep\La\cE_{m,t},
\eeq
which proves \eqref{inductionLinfty}
for $l=0.$
 Suppose that it is true for  $ l=l_0-1\leq m-3,$ 
we show that it also holds for $l=l_0$
and for any $j,$ such that $j+l_0\leq m-2.$
Let us start with the estimate of $\div u.$ It follows from the equation 
\eqref{rewriteofsigma}, the product estimate \eqref{roughproduct1} and
the induction hypothesis that:
\begin{equation*}
\begin{aligned}
    \|\div u\|_{L_t^{\infty}\mathcal{H}^{j,l_0}}&\lesssim  \|\varepsilon\partial_t\sigma\|_{L_t^{\infty}\mathcal{H}^{j,l_0}}+\ep\La\cE_{m,t}\\
    &\lesssim \|(\sigma,\nabla\sigma)\|_{L_t^{\infty}\mathcal{H}^{j+1,l_0-1}} +\ep\La\cE_{m,t}\\
    &\lesssim \|\sigma\|_{L_t^{\infty}\mathcal{H}^{m-2}}+\|\nabla\sigma\|_{L_t^{\infty}\mathcal{H}^{j+1,l_0-1}}+\ep\La\cE_{m,t}\\
   &\lesssim \text{R.H.S of } \eqref{inductionLinfty}.
   \end{aligned}
\end{equation*}
For the estimate of $\nabla\sigma,$ we use the  equation \eqref{rewriteofu} and the product estimate \eqref{roughproduct1} to obtain:
\beq\label{sec3:eq10}
\begin{aligned}
&\quad\|\nabla\sigma\|_{L_t^{\infty}\cH^{j,l_0}}\\
&\lesssim \|\ep\pt u\|_{L_t^{\infty}\mathcal{H}^{j,l_0}}+\ep\|(\nabla\div u,\curl\omega)\|_{L_t^{\infty}H_{co}^{m-2}}+\ep^{\f{1}{2}}\La\cE_{m,t}.
\end{aligned}
\eeq
It remains  to bound $\|\varepsilon\partial_t u\|_{L_t^{\infty}\mathcal{H}^{j,l_0}}$.  
We use that for $j+l_{0} \leq m-2$, 
\begin{equation}\label{sec3:eq11}
  \begin{aligned}
  \|\varepsilon\partial_t u\|_{L_t^{\infty}\mathcal{H}^{j,l_0}}&\lesssim
  \|v\|_{L_t^{\infty}H_{co}^{m-1}}+\|(\nabla\Psi,\nabla^2\Psi) \|_{L_t^{\infty}\mathcal{H}^{j+1,l_0-1}}\\
  &\lesssim 
  \|v\|_{L_t^{\infty}{H}_{co}^{m-1}}+ \|(u,\div u)\|_{L_t^{\infty}\mathcal{H}^{j+1,l_0-1}}\\
 & \lesssim \|u\|_{{L_t^{\infty}\mathcal{H}^{m-2}}}+
 \|v\|_{L_t^{\infty}{H}_{co}^{m-1}}+\sum_{k=1}^{l_0}\|\div u\|_{{L_t^{\infty}\mathcal{H}^{j+k,l_0-k}}}.
  \end{aligned}  
\end{equation}
Plugging \eqref{nabladivu-5} and \eqref{sec3:eq11} into \eqref{sec3:eq10} and 
using the  induction hypothesis, we get that:
\beqs
\|\nabla\sigma\|_{L_t^{\infty}\cH^{j,l_0}}\lesssim \text{R.H.S of } \eqref{inductionLinfty}.
\eeqs
We thus proved that \eqref{inductionLinfty} holds for $j+1,l_0$ which ends the proof.
\end{proof}
\begin{rmk}
By Lemmas \ref{lemofv}, \ref{normalofnablasigma}, we get that:
\begin{align}\label{L2L2bd}
\|(\sigma,u)\|_{E_t^{m}}^2\lesssim 
Y_m^2(\sigma_0,u_0)+(T+\ep)^{\f{1}{2}}\Lambda\big(\f{1}{c_0},\cN_{m,t}\big).
\end{align}
\end{rmk}
\subsection{Step 4: Uniform estimates for the  gradient of  the velocity.}
In this section, we will bound $\|\nabla v\|_{L_t^{\infty}H_{co}^{m-2}}$, which, combined with \eqref{Neumann} \eqref{inductionLinfty}, gives the control of $\|\nabla u\|_{L_t^{\infty}H_{co}^{m-2}}.$
\begin{lem}\label{lemofnablav}
Suppose that \eqref{preasption}
 holds, then for any $0<t\leq T,$ we have the following estimate, 
\begin{equation}\label{nablavLinfty}
    \begin{aligned}
  \|\nabla v\|_{L_t^{\infty}H_{co}^{m-2}}^2&\lesssim 
  Y^2_m(\sigma_0,u_0)+\|v\|_{L_t^{\infty}H_{co}^{m-1}}^2+T^{\f{1}{2}}\Lambda\big(\f{1}{c_0},\cN_{m,t}\big).
    \end{aligned}
\end{equation}
\end{lem}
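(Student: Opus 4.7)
The plan is to reduce the estimate of $\|\nabla v\|_{L_t^\infty H_{co}^{m-2}}$ near $\partial\Omega$ to a bound on $\|\omega \times \bn\|_{L_t^\infty H_{co}^{m-2}}$, where $\omega = \curl v$, and then to bound the latter by a Green's function lifting of the Navier boundary data combined with parabolic energy estimates on the remainder. In the interior chart $\Omega_0$ conormal vector fields coincide with Euclidean derivatives, so $\|\nabla v\|_{H_{co}^{m-2}(\Omega_0)} \lesssim \|v\|_{H_{co}^{m-1}(\Omega_0)}$ is immediate from the right-hand side of \eqref{nablavLinfty}.

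In a boundary chart $\Omega_i$, I would decompose $\nabla v$ along the local basis $\partial_{y^1}, \partial_{y^2}, \partial_{\bn}$. The tangential derivatives are controlled by $\|v\|_{L_t^\infty H_{co}^{m-1}}$. For the normal derivative, split $\partial_{\bn} v = (\partial_{\bn} v \cdot \bn)\bn + \Pi(\partial_{\bn} v)$: since $\div v = 0$, the analogue of \eqref{normalofnormalder} expresses $\partial_{\bn} v \cdot \bn$ purely in terms of tangential derivatives of $v$. For $\Pi(\partial_{\bn} v)$, use the pointwise identity $\partial_{\bn} v = \omega \times \bn + (Dv)\bn$ with $[(Dv)\bn]_i = \partial_i(v\cdot\bn) - v_j\partial_i\bn_j$: under $\Pi$ the first piece becomes $\Pi\nabla(v\cdot\bn)$, a purely tangential gradient, and the second piece is zeroth order in $v$, both controlled by $\|v\|_{L_t^\infty H_{co}^{m-1}}$. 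This reduces matters to $\|\omega \times \bn\|_{L_t^\infty H_{co}^{m-2}}$.

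For the latter, taking the curl of \eqref{eqofv} and using $\curl \nabla q \equiv 0$ shows that $\omega$ obeys the non-singular heat equation $\bar{\rho}\partial_t \omega - \mu\Delta\omega = \curl f$, with prescribed tangential trace $(\omega \times \bn)|_{\partial\Omega} = g := 2\Pi(-au + (D\bn)u)$ from \eqref{bdryomegan}. In each chart I would straighten the boundary via $\Phi_i$ and let $\eta_B$ be the half-space solution of the heat equation with zero source, zero initial data, and Dirichlet trace $g$; explicit kernel bounds then yield $\|\eta_B\|_{L_t^\infty H_{co}^{m-2}}$ in terms of the conormal boundary norms of $g$, which by the trace inequality and \eqref{bdryconditionofu} reduce to $\|u\|_{L_t^\infty H_{co}^{m-1}}$ (controlled by Lemma \ref{normalofnablasigma}) and to $Y_m(\sigma_0, u_0)$. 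Setting $W = \omega \times \bn - \eta_B$, which vanishes on $\partial\Omega$, one obtains for $W$ a heat equation whose source combines $\curl f$ (controlled by Lemma \ref{lem-f-bd}) with commutators produced by the inward extension of $\bn$ and by the transition to local coordinates; applying $Z^I$ for $|I| \leq m-2$ and testing with $Z^I W$, the homogeneous Dirichlet condition kills the boundary contributions from integration by parts and the parabolic dissipation absorbs the leading commutators, yielding $\|W\|_{L_t^\infty H_{co}^{m-2}}^2 \lesssim Y_m^2(\sigma_0, u_0) + T^{1/2}\Lambda(1/c_0, \cN_{m,T})$, which combined with the preceding reductions gives \eqref{nablavLinfty}.

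The main obstacle is the construction and estimation of the lifting $\eta_B$ in a way compatible with the curved, chart-dependent, vector-valued structure of $\omega \times \bn$: one has to flatten the boundary locally, extend $\bn$ smoothly inward, and derive sharp enough half-space heat-kernel estimates in conormal norms so that the regularity of the boundary datum $g$ passes to $\|\eta_B\|_{L_t^\infty H_{co}^{m-2}}$ uniformly in $\varepsilon$; controlling the commutators between $\Pi$, the extension of $\bn$, and the conormal vector fields, and in particular ensuring that no singular power of $1/\varepsilon$ ever appears, is the analytic crux of this step.
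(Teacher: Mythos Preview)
Your approach is essentially the same as the paper's: reduce $\nabla v$ near the boundary to $\omega\times\bn$ via the identities for $\partial_{\bn}v\cdot\bn$ and $\Pi(\partial_{\bn}v)$, then split $\omega\times\bn$ (after localizing and flattening) into a half-space heat-equation lifting of the boundary datum plus a remainder with homogeneous Dirichlet condition handled by energy estimates. The paper carries this out in Lemma~\ref{omeganLinfty} (and its sub-lemmas \ref{lemofetah}, \ref{lemofetanh}).

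One imprecision worth flagging: you say the source for the $\omega$-equation is ``$\curl f$, controlled by Lemma~\ref{lem-f-bd}''. That lemma bounds $f$ in $L_t^2H_{co}^{m-1}\cap L_t^\infty H_{co}^{m-2}$, not $\curl f$, and $\curl f$ contains both the transport term $g_2 u\cdot\nabla\omega$ and the term $(g_2-\bar\rho)\partial_t\omega$. Neither can be estimated directly at the level $H_{co}^{m-2}$: the first would require control of $\nabla\omega$ (i.e.\ a second normal derivative of $u$) that you do not have, and the second would require $\|\partial_t\omega\|$ without an $\varepsilon$-weight, which is unbounded for ill-prepared data. The paper (terms $\mathcal{I}_3$ and $\mathcal{I}_4$ in Lemma~\ref{lemofetanh}) handles both by an additional integration by parts in space, transferring the normal derivative onto $Z^IW$ and absorbing it into the dissipation; this is not the same as the commutator integrations by parts you mention, and it is essential to avoid both the loss of one normal derivative and the loss of one power of $\varepsilon$. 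With that correction your sketch matches the paper's proof.
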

\begin{proof}
Since in the interior domain, 
the conormal spatial derivatives are equivalent to the standard spatial derivatives, we only have   to estimate $\nabla v$ near the boundary, say
$\|\chi_{i} \nabla v\|_{L_t^{\infty}H_{co}^{m-2}}$   where
 $\chi_i, (i=1\cdots N)$ are smooth functions associated to the covering \eqref{covering1} and are compactly supported in $\Omega_i.$
Close to the boundary, it follows from the   identity \eqref{usefulidentity} 
and the following identity
\beqs
\Pi(\partial_{\bn} v)=\Pi ((\nabla  v-D v)\bn)+\Pi((D v)\bn)=\Pi(\omega\times \bn)+\Pi(-(D \bn) v)
\eeqs
that:
\beqs
\begin{aligned}
\|\chi_{i}\nabla v\|_{L_t^{\infty}H_{co}^{m-2}}&\lesssim
\|\chi_{i}\Pi(\partial_{\bn} v)\|_{L_t^{\infty}H_{co}^{m-2}}+\|v\|_{L_t^{\infty}H_{co}^{m-1}}\\
&\lesssim \|\chi_i(\omega\times\bn)\|_{L_t^{\infty}H_{co}^{m-2}}+\|v\|_{L_t^{\infty}H_{co}^{m-1}}.
\end{aligned}
\eeqs
 We thus reduce the problem to the estimate of $\chi_{i}(\omega\times \bn),$ which is the aim of the following lemma. 
 
\end{proof}

\begin{lem}\label{omeganLinfty}
Under the assumption \eqref{preasption}, the following estimate holds: for every $0<t\leq T,$
\begin{equation}\label{ineqofomegatimesn}
    \begin{aligned}
  \|\chi_i(\omega\times \bn)\|_{L_t^{\infty}H_{co}^{m-2}(\Omega)}^2&\lesssim 
  \|\chi_i(\omega\times \bn)(0)\|_{H_{co}^{m-2}}^2
  +(T+\ep)^{\frac{1}{2}}\Lambda\big(\f{1}{c_0},\cN_{m,t}\big).
    \end{aligned}
\end{equation}
where $\chi_{i}$ is a smooth function   compactly supported in $\Omega_i.$
\end{lem}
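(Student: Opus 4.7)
The idea, as outlined in the introduction, is to lift the inhomogeneous boundary datum for $\omega\times\bn$ using the heat Green's function, rather than work with the modified vorticity $w=\omega\times\bn+2\Pi(au-(D\bn)u)$ whose equation carries the singular $\tfrac{1}{\varepsilon}\nabla^\perp\sigma$ term.

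First, I derive a parabolic equation for $\eta:=\chi_i(\omega\times\bn)$. Since $\omega=\curl u=\curl v$ and $\curl\nabla\sigma=\curl\nabla\div u=0$, taking the curl of $\eqref{NS1}_2$ kills the singular contribution and yields
\begin{equation*}
g_2\partial_t\omega-\mu\Delta\omega=-(\nabla g_2)\times\partial_t u-\curl(g_2u\cdot\nabla u),
\end{equation*}
a heat-type equation with uniformly bounded right-hand side, since $\nabla g_2=\varepsilon g_2'\nabla\sigma$ compensates the $\varepsilon^{-1}$ hidden in $\partial_t u$. Cross-multiplying by a smooth extension of $\bn$, localizing by $\chi_i$ and recalling \eqref{bdryomegan}, one obtains
\begin{equation*}
\bar\rho\,\partial_t\eta-\mu\Delta\eta=F\quad\text{in }\Omega\cap\Omega_i,\qquad \eta|_{\partial\Omega}=\eta_b:=2\chi_i\Pi(-au+(D\bn)u)|_{\partial\Omega},
\end{equation*}
where $F$ gathers the commutators with $\chi_i$, $\bn$ and $(g_2-\bar\rho)$ together with the $\nabla g_2\times\partial_t u$ and $\curl(g_2u\cdot\nabla u)$ contributions. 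Using Lemmas \ref{highest}--\ref{normalofnablasigma}, the trace inequality, the estimate \eqref{L2L2bd}, and the product/commutator estimates of the appendix, I check that both $\|F\|_{L^2_TH^{m-2}_{co}}^2$ and $|\eta_b|_{L^\infty_T\tilde H^{m-3/2}(\partial\Omega)}^2$ are bounded by $Y_m(\sigma_0,u_0)^2+(T+\varepsilon)^{1/2}\Lambda(1/c_0,\cN_{m,T})$.

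Next, I decompose $\eta=\eta_G+\eta_R$, where $\eta_G$ solves the homogeneous heat equation with boundary datum $\eta_b$ and zero initial data, and $\eta_R:=\eta-\eta_G$ solves the inhomogeneous heat equation with source $F$, zero Dirichlet trace and initial datum $\eta(0)$. For $\eta_G$, flattening the boundary via $\Phi_i$ reduces the problem to the half-space heat equation, for which $\eta_G$ admits an explicit Poisson-type representation as the convolution of $\eta_b$ against the boundary Green's function; the classical smoothing (a half normal-derivative gain from boundary to interior) yields $\|\eta_G\|_{L^\infty_TH^{m-2}_{co}}\lesssim |\eta_b|_{L^\infty_T\tilde H^{m-3/2}(\partial\Omega)}$, and the $\varepsilon\partial_t$ weighting is harmless because $\varepsilon\partial_t$ applied to the kernel brings a compensating $\varepsilon$. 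For $\eta_R$, tangential conormal fields $Z^I$ (containing only $\varepsilon\partial_t$, $\partial_{y^1}$, $\partial_{y^2}$) preserve the zero Dirichlet trace, so the standard parabolic energy identity gives
\begin{equation*}
\|Z^I\eta_R\|_{L^\infty_TL^2}^2+\mu\|\nabla Z^I\eta_R\|_{L^2_TL^2}^2\lesssim \|Z^I\eta(0)\|_{L^2}^2+T^{1/2}\|F\|_{L^2_TH^{m-2}_{co}}\|\eta_R\|_{L^\infty_TH^{m-2}_{co}},
\end{equation*}
and Young's inequality absorbs the last factor. Normal conormal derivatives (those involving $Z_3^i=\phi\partial_z$, which vanishes on $\partial\Omega$) are recovered by reading $\partial_z^2\eta_R$ off the equation as $(\bar\rho\partial_t\eta_R-F)/\mu-\partial_{y^1}^2\eta_R-\partial_{y^2}^2\eta_R$, with an induction on the number of normal factors.

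The main obstacle is the fine analysis of $\eta_G$: one must track the commutators between the half-space heat Poisson kernel and the conormal fields $Z^I$ mixing $\varepsilon\partial_t$, $\partial_{y^i}$ and the weighted normal $\phi\partial_z$, and verify that they generate only lower-order terms absorbable by the induction and the estimates of Lemmas \ref{highest}--\ref{normalofnablasigma}; the $\varepsilon$ weight on $\partial_t$ is crucial here to prevent a loss at high boundary frequencies. Combining the bounds on $\eta_G$ and $\eta_R$ with the control of $F$ and $\eta_b$ then yields \eqref{ineqofomegatimesn}, with the $(T+\varepsilon)^{1/2}$ factor arising from the Duhamel/energy estimate of $\eta_R$ on $[0,T]$ and from the $\varepsilon$-compensation in $\eta_G$.
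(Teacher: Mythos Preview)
Your decomposition $\eta=\eta_G+\eta_R$ is the same as the paper's $\tilde\eta=\tilde\eta_h+\tilde\eta_{nh}$, but two steps in your argument do not go through as stated.

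\textbf{No smallness factor on $\eta_G$.} You claim $\|\eta_G\|_{L^\infty_T H^{m-2}_{co}}\lesssim |\eta_b|_{L^\infty_T\tilde H^{m-3/2}}$ and then say the $(T+\varepsilon)^{1/2}$ factor in \eqref{ineqofomegatimesn} comes partly from an ``$\varepsilon$-compensation in $\eta_G$''. There is no such compensation: the boundary datum $\eta_b=2\chi_i\Pi(-au+(D\bn)u)$ is $O(1)$ in $\varepsilon$, and your estimate on $\eta_G$ would only give $\|\eta_G\|_{L^\infty_T H^{m-2}_{co}}^2\lesssim \cE_{m,T}^2$, with no small prefactor. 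The paper instead exploits that $\eta_G|_{t=0}=0$ and uses the explicit one-dimensional heat kernel in $z$ (after flattening, the normal part of the Laplacian is $|\bN|^2\partial_z^2$ with $|\bN|$ depending only on $y$). The Duhamel representation yields $\|\cZ^\gamma\tilde\eta_h(t)\|_{L^2}\lesssim\int_0^t(t-s)^{-3/4}|\tilde\eta|_{z=0}(s)|_{\tilde H^{|\gamma|}}\,\d s$, and it is the time integral that produces the needed $T^{1/4}$ factor. A ``half normal-derivative gain'' alone does not carry any $T$-smallness.

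\textbf{The source $F$ is not in $L^2_TH^{m-2}_{co}$.} Your $F$ necessarily contains $[\Delta,\chi_i\bn\times]\omega$ (hence $\nabla\omega$), the transport piece $g_2u\cdot\nabla\omega$ (hidden in your $\curl(g_2u\cdot\nabla u)$), and $(\bar\rho-g_2)\partial_t\omega$. All of these carry a second normal derivative of $u$ or an unweighted $\partial_t\omega$, and neither $\|\nabla^2 u\|_{L^2_TH^{m-2}_{co}}$ nor $\|\partial_t\omega\|_{L^2_TH^{m-2}_{co}}$ is uniformly bounded---only $\varepsilon\nabla^2 u$ and $\varepsilon\partial_t\omega$ are. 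So your claim that $\|F\|_{L^2_TH^{m-2}_{co}}^2\lesssim Y_m^2+(T+\varepsilon)^{1/2}\Lambda$ fails. The paper does \emph{not} estimate these terms directly; instead, in the energy identity for $\tilde\eta_{nh}$ it integrates by parts once more to move one spatial derivative from $\nabla\omega$ (or one curl from $\varepsilon\partial_t u$) onto $\nabla\tilde\eta_{nh}^\gamma$, so that only first normal derivatives of $u$ and $\varepsilon\partial_t u$ appear (see the treatment of $\mathcal I_2,\mathcal I_3,\mathcal I_4$ in the paper). This is precisely where the zero Dirichlet trace of $\tilde\eta_{nh}$ is used. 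For the same reason, your proposed recovery of $Z_3^i$-derivatives by reading $\partial_z^2\eta_R$ off the equation is problematic: the equation gives $\partial_z^2\eta_R$ in terms of $\partial_t\eta_R$ (unweighted), which you do not control. The paper simply treats all conormal fields, including $\cZ_3=\phi(z)\partial_z$, by the same energy argument, using the commutator identity $[\cZ^\gamma,\partial_z]=\sum_{\beta<\gamma}C_{\phi,\beta,\gamma}\partial_z\cZ^\beta$ and an induction on $|\gamma|$.
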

\begin{proof}
Note that the important feature of $\chi_i(\omega\times \bn)$ is that: it solves  a transport-diffusion system \textbf{without} singular terms, with a non-homogeneous Dirichlet boundary condition.
 In order to perform the estimate, we   split the system for  $\chi_i(\omega\times \bn)$ into two parts, one which just 
 solves the  heat equation with the  nontrivial Dirichlet boundary condition and a remainder which is amenable to energy
 estimates since it satisfies a convection-diffusion equation with homogeneous Dirichlet boundary condition.
  To deal with the first  system,
the explicit formula for heat equation will play an important role.
It is thus helpful  to transform the problem to the half-space.

Let us set  $\eta_i=\chi_i\omega\times \bn, i\geq 1.$
Direct computations show that $\omega$ solves the following system:
\beq\label{defGw}
g_2\partial_t \omega+g_2 u\cdot\nabla\omega-\mu\Delta\omega=g_2\omega\cdot\nabla u-g_2\omega\div u-\frac{\nabla g_2}{\varepsilon}\times (\varepsilon\partial_t u+\varepsilon u\cdot\nabla u)=\colon G^{\omega}
\eeq
from which we obtain the equations satisfied by $\eta_i$ (which is compactly supported in $\Omega_{i}$)
\begin{equation}\label{eqofeta}
\left\{
\begin{aligned}
&\bar{\rho}\partial_t \eta_i -\mu \Delta \eta_i=F_{i}^{\omega}  \quad \text{in}\quad \Omega_i\cap\Omega.\\
&\eta_i=\chi_i \Pi(\omega\times \bn)=2\chi_i\Pi
(-a u+(D \bn) u )
\quad \text{on} \quad \Omega_i\cap \partial\bar{\Omega}, 
\end{aligned}
\right.
\end{equation}
where
$$F_{i}^{\omega}=\colon -\Delta(\chi_i \bn)\times\omega-2\nabla\omega\times\nabla(\chi_i \bn)-(g_2u\cdot\nabla \omega)\times(\chi_i \bn)+\frac{\bar{\rho}-g_2}{\varepsilon}\varepsilon\partial_t\omega\times (\chi_i \bn)+G^{\omega}\times(\chi_i \bn).$$
Since we will use the local coordinate \eqref{local coordinates},
it is useful to know the expressions of Laplacian in this new coordinates.
By direct computation, we find that: 
\beq\label{change-variable}
(\nabla f)\circ \Phi_i= P{\nabla} (f\circ{\Phi_i}), \quad (\div F)\circ \Phi_i={\div}\big(P^{*}(F\circ\Phi_i)\big)\quad (\Delta f)\circ \Phi_i={\div}(E{\nabla} \big(f\circ\Phi_i)\big)
\eeq
where ${\nabla}=(\p_{y^1},\p_{y^2},\p_z)^{t}, {\div}=({\nabla})^{*}$
 represent the gradient and the divergence in the new coordinates and
\beq\label{defPE}
\left(\begin{array}{ccc}
    1 &0&-\p_{y^1}\varphi_i  \\
     0& 1& -\p_{y^2}\varphi_i\\
     0&0 &1
\end{array}
\right),\quad
E=P^{*}P=\left(\begin{array}{ccc}
    1 &0&-\p_{y^1}\varphi_i  \\
     0& 1& -\p_{y^2}\varphi_i\\
    -\p_{y^1}\varphi_i &-\p_{y^2}\varphi_i & |\bN|^2
\end{array}
\right).
\eeq
Let us set $\tilde{\eta_{i}}(t,y,z)= \eta_{i}(t, \Phi_{i}(y,z)):= (\eta_i\circ {\Phi}_{i})
(y,z), (y,z)\in {\Phi}_{i}^{-1}(\Omega_{i}\cap\bar{\Omega}).$ Denote also
$\widetilde{F_{i}^{\omega}}=F_{i}^{\omega}\circ {\Phi}_{i}.$
Since $\Supp \chi_i|_{\bar{\Omega}}\Subset \Omega_i\cap\bar\Omega,$
We can extend the definition of  $\tilde{\eta_i}$ and $\widetilde{F_{i}^{\omega}}$ from ${\Phi}_{i}^{-1}(\Omega_{i}\cap\bar{\Omega})$ to $\overline{\mathbb{R}_{+}^3}$ by zero extension, which are still denoted by $\tilde{\eta_i},$
$\widetilde{F_{i}^{\omega}}$.
Consequently, by \eqref{eqofeta} and \eqref{change-variable},
we find that $\tilde{\eta}_i$ satisfies:
\begin{equation}\label{eqoftildeeta}
\left\{
\begin{aligned}
&\bar{\rho}\partial_t \tilde{\eta_i} -\mu{\div}(E{\nabla}\tilde{\eta_i}\big)=
{F_{ni}^{\omega}} \quad \text{in}\quad \mathbb{R}_{+}^3.\\
&\tilde{\eta_i}|_{z=0}=2[\chi_i\Pi
(-a u+(D \bn) u )]\circ {\Phi_i}\big|_{z=0}.
\end{aligned}
\right.
\end{equation}
 Let us set $\mathcal{Z}_0=\varepsilon\partial_t,$
 $\mathcal{Z}_j=\partial_{y^j},j=1,2,$ $\mathcal{Z}_3=\phi(z)\partial_z$ 
and define
\beq\label{newnotation}
\|\tilde{\eta_i}\|_{m,t}=\sum_{|\alpha|\leq m}\|\mathcal{Z}^{\alpha}\tilde{\eta_i}\|_{L^2([0,t]\times\mathbb{R}_{+}^3)},\quad \|\tilde{\eta_i}(t)\|_{m}=\sum_{|\alpha|\leq m}\|(\mathcal{Z}^{\alpha}\tilde{\eta_i})(t)\|_{L^2(\mathbb{R}_{+}^3)}.
\eeq
where $\mathcal{Z}^{\alpha}=\mathcal{Z}_0^{\alpha_0}\mathcal{Z}_1^{\alpha_1}\mathcal{Z}_2^{\alpha_2}\mathcal{Z}_3^{\alpha_3}, \alpha=(\alpha_0,\alpha_1,\alpha_2,\alpha_3),$
by  the definition of  the conormal spaces \eqref{not1} and the  vector fields \eqref{spatialvectorinlocal}
we find that:
 \beq\label{norm-equiv-bd}
 \|\tilde{\eta_i}\|_{m,t}\approx \|\eta_i\|_{L_t^2H_{co}^m(\Omega)}, \quad  \|\tilde{\eta_i}(t)\|_{m}\approx \|\eta_i(t)\|_{H_{co}^m(\Omega)}.
 \eeq
Therefore, our following task is to establish an  estimate for  $\sup_{0 \leq t\leq T}\|\tilde{\eta_i}(t)\|_{m-2}.$ 

We shall write $\tilde{\eta}_i, \widetilde{F_{i}^{\omega}}$ by $\tilde{\eta}, \widetilde{F^{\omega}}$
 for the sake of notational clarity. 
 We write $\tilde{\eta}=\tilde{\eta}_{h}+\tilde{\eta}_{nh},$ where $\tilde{\eta}_{h}$ solves
 \begin{equation}\label{eqetah}
 \left\{
\begin{array}{l}
\bar{\rho}\partial_t \tilde{\eta}_{h}-\mu|\bN|^2\partial_z^2 \tilde{\eta}_{h}=0 \quad \text{in}\quad \mathbb{R}_{+}^3,\\
\tilde{\eta}_{h}|_{t=0}=0, \tilde{\eta}_{h}|_{z=0}=\tilde{\eta}|_{z=0}
\end{array}
  \right.
 \end{equation}
while $\tilde{\eta}_{nh}$ satisfies
 \begin{equation}
 \left\{
\begin{array}{l}
\bar{\rho}\partial_t \tilde{\eta}_{nh}-\mu{\div}(E{\nabla} \tilde{\eta}_{nh}\big)= H(\tilde{\eta}_h)+{F^{\omega}} \quad \text{in}\quad \mathbb{R}_{+}^3,\\[5pt]
\tilde{\eta}_{nh}|_{t=0}=\tilde{\eta}|_{t=0}, \tilde{\eta}_{nh}|_{z=0}=0
    \end{array}
  \right.
 \end{equation}
 where $$H(\tilde{\eta}_h)=\mu\sum_{i,j=1}^2\p_{y^i}(E_{ij}\p_{y^j}\tilde{\eta}_h)
 +\mu\sum_{i=1}^2\p_{y^i}(E_{i3}\p_z\tilde{\eta}_h)+\p_z(E_{3i}\p_{y^i}\tilde{\eta}_h).$$
Estimate  \eqref{ineqofomegatimesn} will be the consequence of the following two lemmas. 
 \end{proof}
\begin{lem}\label{lemofetah}
Adopting the notation introduced in \eqref{newnotation},
we have the following estimate: for any $0<t\leq T,$
\begin{equation}\label{ineqofetah}
   \sup_{0 \leq t\leq T} \|\tilde{\eta}_h(t)\|_{m-2} +\|\tilde{\eta}_h\|_{m-1,T}\lesssim T^{\f{1}{4}}\cE_{m,T}.
\end{equation}
\end{lem}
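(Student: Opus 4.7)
Since the coefficient $|\bN(y)|^2$ of $\p_z^2$ in \eqref{eqetah} depends only on the tangential variable $y$, the system \eqref{eqetah} is, at each fixed $y$, a scalar one-dimensional heat equation in $z$ with zero initial data and Dirichlet data $b(t, y) := 2[\chi_i \Pi(-au + (D\bn)u)] \circ \Phi_i(t, y, 0)$. My plan is to represent $\tilde{\eta}_h$ by the classical Poisson-type formula
\begin{equation*}
\tilde{\eta}_h(t, y, z) = \int_0^t P_y(z, t-s)\, b(s, y)\, ds, \qquad P_y(z, \tau) = \frac{z}{\sqrt{4\pi \alpha(y)\, \tau^3}} \exp\!\left(-\frac{z^2}{4\alpha(y)\,\tau}\right),
\end{equation*}
with $\alpha(y) = \mu|\bN(y)|^2/\bar\rho$ bounded from above and below by smoothness of $\p\Omega$, and then to deduce the desired conormal Sobolev estimates from the classical heat-kernel bound $\|P_y(\cdot, \tau)\|_{L^2_z} \lesssim \tau^{-3/4}$ (uniform in $y$), which is obtained by the scaling $z = \sqrt{\alpha(y)\tau}\, u$.

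The first step is to bound the tangential conormal derivatives of the boundary datum $b$. Since $b$ is a smooth, locally supported function of $u|_{\p\Omega}$ composed with $\Phi_i$, the trace inequality \eqref{traceL2}, the embedding \eqref{hiddenes} and the definition \eqref{defcalEmt} together yield
\begin{equation*}
|\mathcal{Z}_{tan}^\alpha b|_{L_T^\infty L^2_y}\lesssim \cE_{m,T} \text{ for } |\alpha|\leq m-2, \qquad |\mathcal{Z}_{tan}^\alpha b|_{L_T^2 L^2_y}\lesssim \cE_{m,T} \text{ for } |\alpha|\leq m-1,
\end{equation*}
where $\mathcal{Z}_{tan}$ ranges over the tangential fields $\ep\pt$ and $\p_{y^{1,2}}$.

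The second step is to commute the conormal vector fields $\mathcal{Z}^\alpha$ through the representation formula. The tangential fields $\mathcal{Z}_0 = \ep\pt$ and $\mathcal{Z}_{1,2} = \p_{y^{1,2}}$ act essentially on $b$ itself, producing, as lower-order corrections, kernels of the form $\partial_y^\beta P_y$ whose $L^2_z$ norms share the same $\tau^{-3/4}$ scaling thanks to the smooth $y$-dependence of $\alpha$. The conormal normal field $\mathcal{Z}_3 = \phi(z)\p_z$ is the delicate one: because $\phi(0) = 0$ and $\phi(z)\lesssim \min(z, 1)$, a direct induction on the number of $\p_z$ shows that $\mathcal{Z}_3^k P_y(z,\tau)$ is a finite linear combination, with smooth bounded coefficients, of terms of the form $(z^2/\tau)^{j}\tau^{-1/2}\, P_y(z,\tau)/z$ with $j\leq k$, all of whose $L^2_z$ norms still satisfy the same $\tau^{-3/4}$ bound by the aforementioned scaling.

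Finally, Minkowski's inequality in $z$ yields, for any conormal multi-index $\alpha$,
\begin{equation*}
\|\mathcal{Z}^\alpha \tilde\eta_h(t,y,\cdot)\|_{L^2_z} \lesssim \int_0^t (t-s)^{-3/4}\, |\mathcal{Z}_{tan}^{\tilde\alpha} b(s,y)|\, ds
\end{equation*}
for some tangential multi-index $\tilde\alpha$ of the same order. Since $\|(\cdot)^{-3/4}\|_{L^1(0,T)} = 4T^{1/4}$, Young's convolution inequality in time (respectively the $L^1$-$L^\infty$ pairing) then gives
\begin{equation*}
\|\tilde\eta_h\|_{L_T^\infty L^2}\lesssim T^{1/4}\, |b|_{L_T^\infty L^2_y}, \qquad \|\tilde\eta_h\|_{L_T^2 L^2}\lesssim T^{1/4}\, |b|_{L_T^2 L^2_y};
\end{equation*}
applying this for all $|\alpha|\leq m-2$ (respectively $|\alpha|\leq m-1$), combining with the boundary estimate from the first step and the norm equivalence \eqref{norm-equiv-bd}, produces \eqref{ineqofetah}. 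The main technical difficulty is the induction on the conormal normal field $\mathcal{Z}_3$ described in the third step; once the $\tau^{-3/4}$ scaling of $\|\mathcal{Z}_3^k P_y(\cdot,\tau)\|_{L^2_z}$ is in hand, the remainder of the argument is a routine combination of the trace theorem and convolution estimates.
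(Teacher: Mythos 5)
Your proof follows exactly the same route as the paper's: represent $\tilde\eta_h$ by the explicit one-dimensional Poisson formula for the heat equation (the coefficient $|\bN(y)|^2$ being constant in $z$), prove that all conormal derivatives of the kernel scale as $\tau^{-3/4}$ in $L^2_z$ uniformly in $y$, then use Minkowski, Young's convolution inequality and the trace inequalities to conclude. This is precisely what Lemma 7.6 (the kernel estimate \eqref{L2 norm of K}) together with the paper's proof of Lemma \ref{lemofetah} does.

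There is, however, a concrete slip in the form you claim for $\mathcal{Z}_3^k P_y$. You assert it is a linear combination of terms $(z^2/\tau)^j\,\tau^{-1/2}\,P_y/z$, $j\le k$. Already at $k=j=0$ this fails: $\tau^{-1/2}P_y/z \neq P_y$. Since $P_y/z = (4\pi\alpha\tau^3)^{-1/2}e^{-z^2/(4\alpha\tau)}$, each term $(z^2/\tau)^j\tau^{-1/2}P_y/z = (4\pi\alpha)^{-1/2}(z/\sqrt\tau)^{2j}\tau^{-2}e^{-z^2/(4\alpha\tau)}$ has $L^2_z$ norm $\sim\tau^{-7/4}$, \emph{not} $\tau^{-3/4}$ as you then claim; such a singularity would \emph{not} be integrable on $[0,T]$ and the proof would not close. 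The correct inductive structure, used in Lemma 7.6, is that $\mathcal{Z}^\beta$ applied to the kernel is bounded by $P_{2|\beta|+1}(z/\sqrt\tau)\,e^{-z^2/(4\alpha\tau)}\,\tau^{-1}$ for a polynomial $P_{2|\beta|+1}$ of degree $2|\beta|+1$ with coefficients depending smoothly on $\varphi$ (equivalently, $\mathcal{Z}_3^k P_y$ is bounded by a combination of $(z/\sqrt\tau)^j P_y$, $j\le 2k$, with bounded coefficients — note $\phi(z)/z\le 1$ takes care of the extra $1/z$ produced by $\p_z$). These terms do have $L^2_z$ norm $\sim\tau^{-3/4}$. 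In short, you dropped a factor of $\tau$ in the intermediate claim (and made the compensating error in evaluating its $L^2$ norm, so your final $\tau^{-3/4}$ bound is correct); once the intermediate claim is repaired the proof is sound and coincides with the paper's.
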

\begin{proof}
Since $|\bN|^2$ depends only on the 
tangential variable $y^1,y^2,$
the equation \eqref{eqetah} can be seen as a heat equation on the half line with Dirichlet boundary condition,
 which can be solved explicitly:
\begin{equation*}
    \tilde{\eta}_h(t,y,z)=-2\tilde{\mu}\int_0^t \frac{|\bN|^2}{\big(4\pi\tilde{\mu}|\bN|^2(t-s)\big)^{\frac{1}{2}}}
   \partial_z\big(e^{-\frac{z^2}{4\tilde{\mu}|\bN|^2(t-s)}}\big)
   \tilde{\eta}|_{z=0}(s,y)\d s
\end{equation*}
where $\tilde{\mu}=\mu/{\bar{\rho}}.$ 
Taking a multi-index $\gamma=(\gamma_0,\gamma_1,\gamma_2,\gamma_3), $ since time derivation commutes
with $\pt,\p_z^2,$ we have that:
\beqs
\big((\ep\pt)^{\gamma_0}\tilde{\eta}_h\big)(t,y,z)=-2\tilde{\mu}\int_0^t \frac{|\bN|^2}{\big(4\pi\tilde{\mu}|\bN|^2(t-s)\big)^{\frac{1}{2}}}
   \partial_z\big(e^{-\frac{z^2}{4\tilde{\mu}|\bN|^2(t-s)}}\big)
   \big((\ep\pt)^{\gamma_0}\tilde{\eta}\big)|_{z=0}(s,y)\d s,
\eeqs
which, combined with \eqref{L2 norm of K} established in the appendix, yields that:
\begin{equation}\label{tildeetah}
 \|\mathcal{Z}^{\gamma}\tilde{\eta}_{h}(t)\|_{L_{y,z}^2(\mathbb{R}_{+}^3)}\lesssim 
 \int_0^t (t-s)^{-\f{3}{4}}\big|\tilde{\eta}|_{z=0}(s)\big|_{\tilde{H}^{|\gamma|}(\mathbb{R}_{y}^2)}\d s.
\end{equation}
The above inequality, combined with the boundary condition 
$\eqref{eqoftildeeta}_2$ and the trace inequality \eqref{traceLinfty}, 
 yields 
that:
\beqs
\|\tilde{\eta}_{h}(t)\|_{m-2}\lesssim T^{\f{1}{4}} \sup_{0\leq s\leq t}
|\tilde{\eta}(s)|_{\tilde{H}^{m-2}(\mathbb{R}_y^2)}\lesssim T^{\f{1}{4}}\|(u,\nabla u)\|_{L_t^{\infty}H_{co}^{m-2}}\lesssim T^{\f{1}{4}}\cE_{m,T}.
\eeqs
Similarly, we apply a  convolution inequality
in the time variable (after extending $\tilde{\eta}(s)|_{z=0}$ to $s\in \mathbb{R}$ by zero extension) to \eqref{tildeetah}, and use  the boundary condition $\eqref{eqoftildeeta}_2$ and the  trace inequality \eqref{traceL2} to obtain:
\beqs
\|\tilde{\eta}_{h}(t)\|_{m-1,t}\lesssim T^{\f{1}{4}} 
|\tilde{\eta}|_{L_t^2\tilde{H}^{m-1}(\mathbb{R}_y^2)}\lesssim T^{\f{1}{4}}\|(u,\nabla u)\|_{L_t^{2}H_{co}^{m-1}}\lesssim T^{\f{1}{4}}\cE_{m,T}.
\eeqs
\end{proof}

\begin{lem}\label{lemofetanh}
Using the notation \eqref{newnotation}, the following energy inequality holds: for any $0<t\leq T,$
\begin{equation}\label{ineqofetanh}
  \begin{aligned}
 \|\tilde{\eta}_{nh}(t)\|_{
 m-2}^2+\|\nabla \tilde{\eta}_{nh}\|_{m-2,t}^2&\lesssim \|\eta(0)\|_{H_{co}^{m-2}}^2+
(T+\ep)^{\f{1}{2}}\Lambda\big(\f{1}{c_0},\cN_{m,t}\big).
 \end{aligned}  
\end{equation}
\end{lem}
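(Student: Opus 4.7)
Because $\tilde\eta_{nh}$ satisfies a system with \emph{no} singular term in its principal part and, crucially, a \emph{homogeneous} Dirichlet condition at $\{z=0\}$, the estimate will follow from direct conormal energy estimates in the half-space. All four fields $\mathcal{Z}_0=\ep\pt$, $\mathcal{Z}_1=\p_{y^1}$, $\mathcal{Z}_2=\p_{y^2}$, $\mathcal{Z}_3=\phi(z)\p_z$ preserve the condition $\tilde\eta_{nh}|_{z=0}=0$ (the last one because $\phi(0)=0$), so $\mathcal{Z}^\alpha\tilde\eta_{nh}$ still vanishes on the boundary for every $|\alpha|\le m-2$. I would therefore apply $\mathcal{Z}^\alpha$ to the equation, test against $\bar\rho\,\mathcal{Z}^\alpha\tilde\eta_{nh}$, and integrate over $\mR_+^3$; since $E=P^*P$ is uniformly elliptic ($E\xi\cdot\xi\gtrsim|\xi|^2$) and every boundary integral vanishes, integration by parts yields
\beqs
\f12\f{d}{dt}\|\mathcal{Z}^\alpha\tilde\eta_{nh}\|_{L^2}^2+\mu c_E\|\nabla\mathcal{Z}^\alpha\tilde\eta_{nh}\|_{L^2}^2\le\bigl|\langle\mathcal{Z}^\alpha H(\tilde\eta_h)+\mathcal{Z}^\alpha\widetilde{F^\omega},\mathcal{Z}^\alpha\tilde\eta_{nh}\rangle\bigr|+|\mathcal{R}^\alpha|,
\eeqs
where $\mathcal{R}^\alpha$ collects the commutator $[\mathcal{Z}^\alpha,\div(E\nabla\cdot)]\tilde\eta_{nh}$ tested against $\mathcal{Z}^\alpha\tilde\eta_{nh}$.

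I would then estimate the three right-hand contributions in turn. The commutator $\mathcal{R}^\alpha$ is controlled by an induction on $|\alpha|$ via the identities \eqref{sec5:eq-16}: after rewriting $\p_z$ as $\mathcal{Z}_3/\phi$ modulo smooth tangential combinations, it reduces to terms bounded by $\delta\|\nabla\tilde\eta_{nh}\|_{m-2,t}^2+C_\delta(\|\nabla\tilde\eta_{nh}\|_{m-3,t}^2+\|\tilde\eta_{nh}\|_{m-2,t}^2)$, the first of which is absorbed on the left. Lemma \ref{lemofetah} handles the $H(\tilde\eta_h)$ term directly since each of its pieces carries at least one derivative of $\tilde\eta_h$, giving $\|H(\tilde\eta_h)\|_{m-2,t}\lesssim\|\tilde\eta_h\|_{m-1,t}\lesssim T^{1/4}\cE_{m,T}$. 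For $\widetilde{F^\omega}$, passing via \eqref{norm-equiv-bd} back to the original chart, each of its five pieces is controlled in $L^2_tH^{m-2}_{co}$ by the product estimate \eqref{roughproduct1} and the $C^m$-bound \eqref{esofg12-2} for $g_1,g_2$. The apparently singular factor $(\bar\rho-g_2)/\ep$ is in fact uniformly bounded, since $g_2(\ep\sigma)-\bar\rho=\ep\sigma\int_0^1 g_2'(\tau\ep\sigma)\,d\tau$; analogously $\nabla g_2/\ep=g_2'(\ep\sigma)\nabla\sigma$ is $\mathcal{O}(1)$. The convective piece $(g_2 u\cdot\nabla\omega)\times(\chi_i\bn)$ hides a normal derivative $u_3\p_z\omega$, which I would rewrite as $(u\cdot\bN/\phi)\cdot\phi\p_z\omega$ and estimate through the Hardy-type bound \eqref{hardyconormal}. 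All remaining terms reduce to bounds on $\omega$ and $\ep\pt\omega$ already supplied by $\|\nabla u\|_{L^2_tH^{m-1}_{co}\cap L^\infty_tH^{m-2}_{co}}\lesssim\cN_{m,T}$. Collecting, $\|\widetilde{F^\omega}\|_{L^2_tH^{m-2}_{co}}\lesssim\La\cdot\cE_{m,T}$, and a Cauchy--Schwarz in time produces the announced factor $(T+\ep)^{1/2}\Lambda(1/c_0,\cN_{m,t})$. The initial datum is absorbed into $\|\eta(0)\|_{H^{m-2}_{co}}^2$ through $\|\tilde\eta_{nh}(0)\|_{m-2}\approx\|\eta(0)\|_{H^{m-2}_{co}}$.

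\textbf{Main obstacle.} The most delicate point is the commutator with the variable-coefficient diffusion $\div(E\nabla\cdot)$. Because $E$ is not diagonal, commuting $\mathcal{Z}^\alpha$ with the off-diagonal entries $E_{i3}\p_z$ and with the normal-normal entry $|\bN|^2\p_z^2$ generates terms where a pure $\p_z$ falls on a $\mathcal{Z}^\beta\tilde\eta_{nh}$ with $|\beta|=|\alpha|-1$; unless organised carefully these do not sit naturally inside $\|\nabla\mathcal{Z}^\alpha\tilde\eta_{nh}\|_{L^2}$ and must instead be reabsorbed against the diffusive term on the left with a small parameter, or re-expressed via $\p_z=\mathcal{Z}_3/\phi$ modulo lower-order pieces. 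The identities \eqref{sec5:eq-16} make this possible, but one must keep at most one unpackaged $\p_z$ per term. A secondary but purely bookkeeping subtlety is that $\widetilde{F^\omega}$ contains $\ep\pt\omega=\ep\pt\operatorname{curl} u$, whose $L^2_tH^{m-2}_{co}$ control must be traced back through the previously established estimates without circularity; fortunately it is already inside $\cN_{m,t}$, which closes the loop.
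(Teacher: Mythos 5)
Your overall framework — conormal energy estimate in the half-space, using that all four fields $\mathcal{Z}_0,\dots,\mathcal{Z}_3$ preserve the zero Dirichlet condition, absorbing against the coercive diffusion $E\nabla\cdot\nabla\ge\kappa|\nabla\cdot|^2$ — is correct and matches the paper's. But your treatment of the source terms $H(\tilde\eta_h)$ and $\widetilde{F^\omega}$ has a genuine gap. You propose to bound them \emph{directly} in $L^2_tH^{m-2}_{co}$ and then apply Cauchy--Schwarz; the paper cannot do this and neither can you, for the following reasons. First, $H(\tilde\eta_h)$ is second order in $\tilde\eta_h$ and contains terms like $\p_z(E_{3i}\p_{y^i}\tilde\eta_h)$ with a genuine (non-conormal) $\p_z$, so $\|H(\tilde\eta_h)\|_{m-2,t}$ is not bounded by $\|\tilde\eta_h\|_{m-1,t}$ — it is only after integrating by parts, moving one derivative onto $\tilde\eta_{nh}^\gamma$ (licit because of the homogeneous Dirichlet condition), that Lemma \ref{lemofetah} closes this term via the bound $\|\tilde\eta_h\|_{k+1,t}$ with $k+1\le m-1$. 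Second and more seriously, the pieces $\widetilde{F^\omega_2},\widetilde{F^\omega_3},\widetilde{F^\omega_4}$ are not estimable in $L^2_tH^{m-2}_{co}$ by the product estimate \eqref{roughproduct1} together with the controlled norms. The pieces $-2\nabla\omega\times\nabla(\chi_i\bn)$ and $-(g_2u\cdot\nabla\omega)\times(\chi_i\bn)$ both contain $\nabla\omega\sim\nabla^2u$, which appears in $\cE_{m,T}$ only with an $\ep$ weight. The fourth piece $\frac{\bar\rho-g_2}{\ep}\,\ep\pt\omega$ is the worst: the product estimate forces a factor $\il\ep\pt\omega\il_{[\frac{m-3}{2}],\infty,t}\approx\il\nabla u\il_{[\frac{m-1}{2}],\infty,t}$, and $\cA_{m,T}$ contains \emph{only} $\il\nabla u\il_{0,\infty,T}$ without weight (the conormal $L^\infty$ norms of $\nabla u$ carry $\ep^{1/2}$ or $\ep$ weights). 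This is exactly the obstruction the paper flags explicitly, and it is resolved not by bookkeeping but by integrating by parts in the duality pairing \emph{before} applying product estimates — e.g.\ writing $\widetilde{\ep\pt\omega}=(P\nabla)\times\widetilde{\ep\pt u}$ and throwing the curl onto $\tilde\eta_{nh}^\gamma$ so that only $\nabla\sigma$ and $\ep\pt u$ conormal norms are needed.

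There is also a structural piece missing: after these integrations by parts the residual from $\mathcal{I}_3$ carries a factor $\|u\|_{E_t^m}\|\omega\|_{L_t^\infty H_{co}^{[m/2]-2}}$ which is not a priori small. The paper closes this by running the argument in two stages: first for $k\le[\frac{m}{2}]-2$ (where this factor does not appear), obtaining a uniform bound on $\|\nabla u\|_{L_t^\infty H_{co}^{[m/2]-2}}$ via Lemma \ref{lemofetah} and \eqref{recovernablasi}, and only then running the induction up to $k\le m-2$ with that bound in hand. Without this two-pass scheme the estimate does not close. In short, the crux of this lemma is that every piece of the source must be tested against $\tilde\eta_{nh}^\gamma$ and integrated by parts into a form compatible with the anisotropic norm hierarchy, rather than estimated upfront; your plan skips that step for precisely the pieces where it is indispensable.
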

\begin{proof}
Suppose that $0\leq |\gamma|=k\leq m-2.$
Denote $\tilde{\eta}_{nh}^{\gamma}=\mathcal{Z}^{\gamma}\tilde{\eta}_{nh},$ then $\tilde{\eta}_{nh}^{\gamma}$ solves the system
(note that $[\cZ^{\gamma},E]=0$): 
\begin{equation*}
\begin{aligned}
   \bar{\rho} \partial_t \tilde{\eta}_{nh}^{\gamma}-\mu{\div}\big(E{\nabla}\tilde{\eta}_{nh}^{\gamma}\big)&=\mu [\mathcal{Z}^{\gamma},{\div}](E{\nabla}\tilde{\eta}_{nh}^{\gamma})
    +\mu\mathcal{Z}^{\gamma}H(\tilde{\eta}_h)+\mathcal{Z}^{\gamma}{F^{\omega}}\\
    &=\colon \mathcal{R}_{1}^{\gamma}+\mathcal{R}_{2}^{\gamma}+\mathcal{Z}^{\gamma}{F^{\omega}}
\end{aligned}   
\end{equation*}
 with the initial condition
$\tilde{\eta}_{nh}^{\gamma}|_{t=0}=\mathcal{Z}^{\gamma}\tilde{\eta}|_{t=0}$ and the  boundary condition $\tilde{\eta}_{nh}^{\gamma}|_{z=0}=0.$

Standard energy estimates 
show that:
\begin{multline}\label{sec5:eq14}
        \bar{\rho}\|\tilde{\eta}_{nh}^{\gamma}(t)\|_{L^2(\mathbb{R}_{+}^3)}^2+ \int_0^t \int_{\mathbb{R}_{+}^3}E{\nabla} \tilde{\eta}_{nh}^{\gamma}\cdot {\nabla}\tilde{\eta}_{nh}^{\gamma}\,\d x\d s\\
      = \bar{\rho}\|\tilde{\eta}_{nh}^{\gamma}(0)\|_{L^2(\mathbb{R}_{+}^3)}^2+\int_0^t\int_{\mathbb{R}_{+}^3}(\mathcal{R}_{1}^{\gamma}+\mathcal{R}_{2}^{\gamma}+\mathcal{Z}^{\gamma}\widetilde{F^{\omega}})
      \tilde{\eta}_{nh}^{\gamma}\,\d x\d s.
    \end{multline}
At first, since we can find some $\kappa>0,$ such that $2|\bN|^2\leq 1/{\kappa},$ one has that $E X\cdot X=|P X|^2\geq \f{1}{2 |\bN|^2}|X|^2\geq \kappa|X|^2$ and hence, we deduce that:
\beq
\int_0^t \int_{\mathbb{R}_{+}^3}E{\nabla} \tilde{\eta}_{nh}^{\gamma}\cdot {\nabla}\tilde{\eta}_{nh}^{\gamma}\,\d x\d s\geq \kappa\|{\nabla}\tilde{\eta}_{nh}^{\gamma}\|_{0,t}^2.
\eeq
For the second term of the right hand side of \eqref{sec5:eq14},
one needs to integrate by parts to avoid involving additional  normal derivatives.
Let us first study  $\mathcal{R}_{1}^{\gamma}$ which vanishes if $|\gamma|=0.$
By induction, one gets that for $k=|\gamma|\geq 1,$
\begin{equation}
\begin{aligned}
&[\mathcal{Z}^{\gamma}, {\div}] =[\mathcal{Z}^{\gamma},\partial_z]=
\sum_{\beta<\gamma}
 C_{\phi,\beta,\gamma}\partial_z\mathcal{Z}^{\beta}
\end{aligned}
\end{equation}
where $C_{\phi,\beta,\gamma}$ are smooth functions that depend on $\phi$ and its derivatives. Consequently, by integration by parts and Young's inequality, we obtain that:
 \begin{equation}\label{sec5:eq15}
 \int_0^t\int_{\mathbb{R}_{+}^3}\mathcal{R}_{1}^{\gamma}\cdot\tilde{\eta}_{nh}^{\gamma}\,\d x\d s\leq \delta \|{\nabla} \widetilde{\eta}_{nh}^{\gamma}\|_{0,t}^2
 +C_{\delta}(\|{\nabla} \tilde{\eta}_{nh} \|_{k-1,t}^2+\|\tilde{\eta}_{nh}\|_{k,t}^2).
 \end{equation}
   Similarly, by taking benefits of the zero boundary condition of $\tilde{\eta}_{nh}^{\gamma},$ one integrates by parts to get:
 \begin{equation}\label{cR2-bd}
     \int_0^t\int_{\mathbb{R}_{+}^3}\mathcal{R}_{2}^{\gamma}\tilde{\eta}_{nh}^{\gamma}\,\d x\d s\leq 
     \delta\|{\nabla} \tilde{\eta}_{nh}^{\gamma}\|_{0,t}^2
 +C_{\delta}(
 \|\tilde{\eta}_{h}\|_{{k+1},t}^2+\|\tilde{\eta}_{nh}\|_{k,t}^2).
 \end{equation}
 We are now left to deal with the  term:
 \beq\label{defcT1-cT5}
  \int_0^t\int_{\mathbb{R}_{+}^3} \mathcal{Z}^{\gamma}\widetilde{F^{\omega}}\tilde{\eta}_{nh}^{\gamma}\,\d x\d s=\sum_{j=1}^5\int_0^t\int_{\mathbb{R}_{+}^3} \mathcal{Z}^{\gamma}\widetilde{F_j^{\omega}}\tilde{\eta}_{nh}^{\gamma}\, \d x\d t=\colon \sum_{j=1}^5 \mathcal{I}_j
  \eeq
 where we denote that:
 \begin{equation*}
 \begin{aligned}
 \widetilde{F^{\omega}}&=-\widetilde{\Delta(\chi_i \bn)}\times\widetilde{\omega}-2\widetilde{\nabla\omega}\times\widetilde{\nabla(\chi_i \bn)} -\widetilde{(g_2u\cdot\nabla \omega)}\times\widetilde{(\chi_i \bn)}
 +\f{(\widetilde{\bar{\rho}-g_2})}{\ep}\widetilde{\ep\partial_t\omega}\times \widetilde{(\chi_i \bn)}+\widetilde{G^{\omega}}\times\widetilde{(\chi_i\bn)}.\\
 &=\colon \widetilde{F_{1}^{\omega}}+\widetilde{F_{2}^{\omega}}+\widetilde{F_{3}^{\omega}}+\widetilde{F_{4}^{\omega}}+\widetilde{F_{5}^{\omega}}.
 \end{aligned}
 \end{equation*}
 Note that $G^{\omega}$ is defined in \eqref{defGw}. Moreover, without much ambiguity, we denote $\tilde{f}$ as $(\tilde{\chi_i}f)\circ \Phi_i$ where $\tilde{\chi_i}$ is a smooth function such that $\tilde{\chi_i}\chi_i=\chi_i.$
 
 By the Cauchy-Schwarz inequality and the fact \eqref{norm-equiv-bd}, $\mathcal{I}_1$ can be controlled by:
 \begin{equation}\label{cT1-bd}
        \mathcal{I}_1\lesssim \|\tilde{\omega}\|_{k,t}\|\tilde{\eta}_{nh}\|_{k,t}\lesssim T^{\f{1}{2}}\|\nabla u\|_{L_t^{\infty}H_{co}^{m-2}}\|\tilde{\eta}_{nh}\|_{k,t}.
 \end{equation}
 Nevertheless, for $\mathcal{I}_2$ and $\mathcal{I}_3,$ as $\widetilde{F_{2}^{\omega}}$, $\widetilde{F_{3}^{\omega}}$ involve normal derivatives of $\omega,$ it is necessary to use integration by parts. By doing so,
 we can bound the term $\cT_2$ as follows:
 \begin{equation}
     \mathcal{I}_2\leq \delta \|{\nabla} \tilde{\eta}_{nh}^{\gamma}\|_{0,t}^2+C_{\delta}
     (\|\tilde{\eta}_{nh}\|_{k,t}^2+\|\widetilde{\nabla u}\|_{k,t}^2).
 \end{equation}
 
Next, for $\mathcal{I}_3,$ by noticing
 the expression 
 \begin{align*}
 \widetilde{g_2 u\cdot\nabla \omega}&=\p_{y^1}(\widetilde{g_2 u_1}\widetilde{\omega})+\p_{y^2}(\widetilde{g_2 u_2}\widetilde{\omega})+\p_z(\widetilde{(g_2 u\cdot\bN)}\widetilde{\omega})\\
&\quad -\big(\p_{y^1}\widetilde{g_2 u_1}+\p_{y^2}\widetilde{g_2 u_2}+\p_z(\widetilde{g_2 u\cdot\bN})\big)\widetilde{\omega},
  \end{align*}
 one performs an integration by parts again to get that:
 \begin{equation*}
 \begin{aligned}
     \mathcal{I}_3&\lesssim \|\tilde{g_2}\tilde{u}\tilde{\omega}\|_{k,t}\|{\nabla} \tilde{\eta}_{nh}^{\gamma}\|_{0,t}
     +\|\tilde{\omega}(\partial_{y^j}(\widetilde{g_2 u}_j),\p_z\widetilde{(g_2 u\cdot\bN}))\|_{k,t}\|\tilde{\eta}_{nh}^{\gamma}\|_{0,t}\\
    &\leq \delta \|{\nabla} \tilde{\eta}_{nh}^{\gamma}\|_{0,t}^2+C_{\delta}\|\tilde{g_2}\tilde{u}\tilde{\omega}\|_{k,t}+T^{\f{1}{2}} (\sup_{s\in[0,t]}
    \|\tilde{\eta}_{nh}(s)\|_{k})\|\tilde{\omega}(\partial_{y_j}(\widetilde{g_2 u}_j),\p_z(\widetilde{g_2 u\cdot\bN}))\|_{k,t}
 \end{aligned}
 \end{equation*}
 Here we used Einstein summation convention for $j=1,2.$ 
 By \eqref{norm-equiv-bd}, \eqref{ineqofetah} and the assumption $k\leq m-2,$ one can have that:
 \beq\label{etanhLinfty}
 \sup_{s\in[0,t]}\|\tilde{\eta}_{nh}\|_{k}\lesssim \sup_{s\in[0,t]} \|(\tilde{\eta},\tilde{\eta}_h)(s)\|_{k}\lesssim \|\nabla u\|_{L_t^{\infty}H_{co}^{m-2}}+T^{\f{1}{4}}\cE_{m,t}\lesssim \cE_{m,t}.
 \eeq
Moreover, since $k\leq m-2,$ we have thanks to
\eqref{norm-equiv-bd} that:
\beqs
\begin{aligned}
&\|\tilde{\omega}(\partial_{y^j}(\widetilde{g_2 u}^j),\p_z(\widetilde{g_2 u\cdot\bN}))\|_{m-2,t}\\
&\lesssim \il\omega\il_{0,\infty,t}\|Z_i (g_2 u_j), \nabla (g_2 u\cdot\bN)\|_{L_t^2H_{co}^{m-2}}\\
&\qquad+\|\omega\|_{L_t^{\infty}H_{co}^{m-2}}\big(\int_0^t\| [Z_i (g_2 u_j), \nabla (g_2 u\cdot\bN)](s)\|_{m-3,\infty}^2 \d s\big)^{\f{1}{2}}
\end{aligned}
\eeqs
where $Z_i$ stands for the tangential vector fields in $\Omega_i.$
By identity \eqref{normalofnormalder} and the Sobolev embedding \eqref{sobebd} and estimate \eqref{nabladivu-4},
 \beqs
 \begin{aligned}
  \big(\int_0^t\| Z_i (g_2 u_j), \nabla (g_2 u\cdot\bN)(s)\|_{m-3,\infty}^2 \d s\big)^{\f{1}{2}}&\lesssim 
  \|u\|_{E_t^m}+\|\nabla\div u\|_{L_t^2H_{co}^{m-2}}+\ep \Lambda\big(\f{1}{c_0},\cN_{m,t}\big)\\
  &\lesssim \|(\sigma,u)\|_{E_t^m}+\ep^{\f{1}{2}}\Lambda\big(\f{1}{c_0},\cN_{m,t}\big),
 \end{aligned}
 \eeqs
 which together with the previous inequality, yields:
 \beqs
 \|\tilde{\omega}(\partial_{y^j}(\widetilde{g_2 u}_j),\p_z(\widetilde{g_2 u\cdot\bN}))\|_{m-2,t}\lesssim \Lambda\big(\f{1}{c_0},\cN_{m,t}\big).
 \eeqs
Similarly, we have that:
\beqs
\|\tilde{g_2}\tilde{u}\tilde{\omega}\|_{k,t}\lesssim T^{\f{1}{2}}\il\omega\il_{0,\infty,t}\|u\|_{L_t^{\infty}H_{co}^{m-2}} 
+\|u\|_{E_t^m}\|\omega\|_{L_t^{\infty}H_{co}^{[\f{m}{2}]-2}}+(T+\ep)^{\f{1}{2}}\La\cE_{m,t}.
\eeqs
Moreover, if $k\leq [\f{m}{2}]-2,$
 \beqs
 \|\tilde{g_2}\tilde{u}\tilde{\omega}\|_{k,t}\lesssim \La\|\nabla u\|_{L_t^2H_{co}^{[\f{m}{2}]-2}}\lesssim T^{\f{1}{2}}\La\cE_{m,t}.
 \eeqs
 To summarize, we control $\cT_3$ (defined in \eqref{defcT1-cT5}) as follows:
 \beq
 \cT_3 \leq \delta \|{\nabla} \tilde{\eta}_{nh}^{\gamma}\|_{0,t}^2+
 (T+\ep)^{\f{1}{2}}\Lambda\big(\f{1}{c_0},\cN_{m,t}\big), \, \text{ if } k\leq [\f{m}{2}]-2, 
 \eeq
 and for $k\leq m-2,$
 \beq
 \cT_3 \leq \delta \|{\nabla} \tilde{\eta}_{nh}^{\gamma}\|_{0,t}^2+(T+\ep)^{\f{1}{2}}\Lambda\big(\f{1}{c_0},\cN_{m,t}\big)+\|(\sigma,u)\|_{E_t^m}\|\omega\|_{L_t^{\infty}H_{co}^{[\f{m}{2}]-2}}.
 \eeq
 For $\mathcal{I}_4,$ the  direct application of the H\"older inequality requires the control of the  quantity $\|\f{(\widetilde{\bar{\rho}-g_2})}{\ep}\widetilde{\ep\partial_t\omega}\|_{k,t},$ which further requires  the estimate of $L_{t,x}^{\infty}$ type norm of $\pt\omega$
However, $\il \varepsilon\partial_t\omega\il_{\infty,t}$ (or $\il\nabla u \il_{1,\infty,t}$)
seems out of control and does not appear in the $L_{t,x}^{\infty}$ type norms present in  $\cA_{m,T}.$
To avoid this problem, since  $\widetilde{\ep\pt\omega}=(P
 {\nabla})\times\widetilde{\ep\pt u},$ we can integrate by parts in space before using product estimate. By doing so, we achieve that:
 \begin{equation}\label{cT2-bd}
 \begin{aligned}
     \mathcal{I}_4&\leq \delta \|{\nabla}\tilde{\eta}_{nh}^{\gamma}\|_{0,t}^2+C_{\delta}\|\tilde{\eta}_{nh}\|_{k,t}^2+\|(\widetilde{\nabla\sigma},\widetilde{\ep\pt u})\|_{{k},t}^2\La\\
    &\lesssim \delta \|{\nabla}\tilde{\eta}_{nh}^{\gamma}\|_{0,t}^2+C_{\delta}\|\tilde{\eta}_{nh}\|_{k,t}^2+ T\La \cE_{m,t}^2.
\end{aligned}
 \end{equation}
 
Finally, regarding the term $\cT_5$ 
(defined in \eqref{defcT1-cT5})
we control it by Cauchy-Shwarz inequality as:
\beqs
\cT_5\lesssim T^{\f{1}{2}}\big(\sup_{s\in [0,t]} \|\tilde{\eta}_{nh}(s)\|_{k}\big)
\|\widetilde{G^{\omega}}\|_{k,t}.
\eeqs
By the estimate \eqref{etanhLinfty}, the fact \eqref{norm-equiv-bd} and the Proposition \ref{propGomega}, we get that:
\beq\label{cT5}
\cT_5\lesssim 
(T+\ep)^{\f{1}{2}}\Lambda\big(\f{1}{c_0},\cN_{m,t}\big).
\eeq

To summarize, we have found by collecting \eqref{cT1-bd}-\eqref{cT2-bd} that
for $0\leq k\leq m-2,$
 \begin{equation}\label{sec5:eq16}
 \begin{aligned}
      &\quad\int_0^t\int_{\mathbb{R}_{+}^3} \mathcal{Z}^{\gamma}\widetilde{F^{\omega}}\tilde{\eta}_{nh}^{\gamma}\, \d x\d t\leq 3\delta \|
      {\nabla} \tilde{\eta}_{nh}^{\gamma}\|_{0,t}^2\\
      &+C_{\delta}
   \big(\|(\tilde{\eta},\tilde{\eta}_{h})\|_{{k},t}^2+
 +\|u\|_{E_t^m}\|\omega\|_{L_t^{\infty}H_{co}^{[\f{m}{2}]-2}}\big)+(T+\ep)^{\f{1}{2}}\Lambda\big(\f{1}{c_0},\cN_{m,t}\big)\\
&\leq  3\delta \|{\nabla}  \tilde{\eta}_{nh}^{\gamma}\|_{0,t}^2+C_{\delta}
    \|u\|_{E_t^m}\|\omega\|_{L_t^{\infty}H_{co}^{[\f{m}{2}]-2}}+(T+\ep)^{\f{1}{2}}\Lambda\big(\f{1}{c_0},\cN_{m,t}\big),
 \end{aligned}
 \end{equation}
 and also for $0\leq k\leq [\f{m}{2}]-2,$ 
 \begin{equation}\label{sec5:eq17}
 \begin{aligned}
      &\quad\int_0^t\int_{\mathbb{R}_{+}^3} \mathcal{Z}^{\gamma}\widetilde{F^{\omega}}\tilde{\eta}_{nh}^{\gamma}\, \d x\d t\leq 3\delta \|
      {\nabla} \tilde{\eta}_{nh}^{\gamma}\|_{0,t}^2+(T+\ep)^{\f{1}{2}}\Lambda\big(\f{1}{c_0},\cN_{m,t}\big).
    \end{aligned}
\end{equation}
 
Inserting \eqref{sec5:eq15}-\eqref{cR2-bd} 
\eqref{sec5:eq16}-\eqref{sec5:eq17} in \eqref{sec5:eq14}, we obtain
 by choosing $\delta$ small enough that for any $0\leq k\leq m-2,$
 \begin{equation}\label{sec5:eq18}
  \begin{aligned}
 &\quad \|\tilde{\eta}_{nh}(t)\|_{k}^2+\|\nabla \tilde{\eta}_{nh}\|_{{k},t}^2\lesssim \|\eta(0)\|_{H_{co}^{k}}^2+\|\nabla \tilde{\eta}_{nh}\|_{{k-1},t}^2\\
 &+(T+\ep)^{\f{1}{2}}\Lambda\big(\f{1}{c_0},\cN_{m,t}\big)+\|(\sigma,u)\|_{E_t^m}\|\omega\|_{L_t^{\infty}H_{co}^{[\f{m}{2}]-2}}\mathbb{I}_{\{k\geq [\f{m}{2}]-1\}}.
 \end{aligned}  
\end{equation}
where the convention 
$\|\cdot\|_{l,t}=0$ if $l< 0$ is used. 
We thus get by induction on $0\leq k\leq [\f{m}{2}]-2$ that:
 \begin{equation}\label{tildeetanh1}
  \begin{aligned}
 &\quad \|\tilde{\eta}_{nh}(t)\|_{[\f{m}{2}]-2}^2+\|\nabla \tilde{\eta}_{nh}\|_{[\f{m}{2}]-2,t}^2\lesssim \|\eta(0)\|_{H_{co}^{m-2}}^2+(T+\ep)^{\f{1}{2}}\Lambda\big(\f{1}{c_0},\cN_{m,t}\big),
 \end{aligned}  
\end{equation}
which, together with \eqref{ineqofetah} and \eqref{recovernablasi} gives that:
\beqs
\|\nabla u\|_{L_t^{\infty}H_{co}^{[\f{m}{2}]-2}}^2\lesssim Y_{m}^2(\sigma_0,u_0)+(T+\ep)^{\f{1}{2}}\Lambda\big(\f{1}{c_0},\cN_{m,t}\big).
\eeqs
We then combine this estimate and \eqref{L2L2bd} to obtain that:
\beqs
\|u\|_{E_t^m}\|\omega\|_{L_t^{\infty}H_{co}^{[\f{m}{2}]-2}}\lesssim Y_{m}^2(\sigma_0,u_0)+(T+\ep)^{\f{1}{2}}\Lambda\big(\f{1}{c_0},\cN_{m,t}\big).
\eeqs
Therefore, we take benefits of the estimate \eqref{sec5:eq18} and the induction arguments to get  \eqref{ineqofetanh}.
\end{proof}
\begin{prop}\label{propGomega}
Assume that \eqref{preasption1} holds and 
let $$G^{\omega}=g_2\omega\cdot\nabla u-g_2\omega\div u-\frac{\nabla g_2}{\varepsilon}\times (\varepsilon\partial_t u+\varepsilon u\cdot\nabla u),$$ then we have:
\beqs
\|\tilde{\chi_i}G^{\omega}\|_{L_t^2H_{co}^{m-2}}\lesssim \Lambda\big(\f{1}{c_0},\cN_{m,t}\big).
\eeqs
\end{prop}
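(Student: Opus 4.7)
The plan is to decompose $G^\omega$ into its natural pieces and exploit an algebraic cancellation to remove the apparent $\varepsilon^{-1}$ singularity. Indeed, since $\nabla g_2(\varepsilon\sigma) = \varepsilon g_2'(\varepsilon\sigma)\nabla\sigma$, we can rewrite
\begin{equation*}
G^\omega = g_2\,\omega\cdot\nabla u - g_2\,\omega\,\div u - g_2'(\varepsilon\sigma)\,\nabla\sigma\times(\varepsilon\partial_t u) - g_2'(\varepsilon\sigma)\,\nabla\sigma\times(\varepsilon u\cdot\nabla u),
\end{equation*}
and under \eqref{preasption1} the coefficient $g_2'(\varepsilon\sigma)$ is uniformly bounded, with $\|g_2'(\varepsilon\sigma)\|_{L^\infty_{t}H^{m-2}_{co}}\leq \Lambda(\tfrac{1}{c_{0}},\cA_{m,t})$ by the composition/product estimates in the appendix. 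So the task reduces to bounding four honest (nonsingular) products in $L^2_t H^{m-2}_{co}$.

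For the two quadratic-in-gradient terms, I would apply the Moser-type product estimate \eqref{roughproduct1} placing the factor that already appears in $L^\infty$ inside $\cA_{m,t}$ (namely $\il\nabla u\il_{0,\infty,t}$ or $\il\div u\il_{[\frac{m-1}{2}],\infty,t}$) on one side, and the $L^2_tH^{m-2}_{co}$ or $L^\infty_t H^{m-2}_{co}$ norm of the companion factor — both controlled by $\cE_{m,t}$ through $\|\nabla u\|_{L^2_tH^{m-1}_{co}\cap L^\infty_tH^{m-2}_{co}}$ — on the other. This yields
\begin{equation*}
\|g_2\,\omega\cdot\nabla u\|_{L^2_tH^{m-2}_{co}} + \|g_2\,\omega\,\div u\|_{L^2_tH^{m-2}_{co}} \lesssim \Lambda(\tfrac{1}{c_0},\cA_{m,t})\,\cE_{m,t} \lesssim \Lambda(\tfrac{1}{c_0},\cN_{m,t}).
\end{equation*}

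For the cross term involving $\varepsilon\partial_t u$, I use $\varepsilon\partial_t u = Z_0 u$, so that $\|\varepsilon\partial_t u\|_{L^2_tH^{m-2}_{co}} \leq \|u\|_{L^2_t\cH^{m-1}} \lesssim \cE_{m,t}$, while $\il\varepsilon\partial_t u\il_{0,\infty,t}\leq \il u\il_{[\frac{m+1}{2}],\infty,t}\leq \cA_{m,t}$ for $m\geq 1$; pairing with the bounds $\il\nabla\sigma\il_{0,\infty,t}$ and $\|\nabla\sigma\|_{L^2_tH^{m-2}_{co}}$ (both controlled by $\cN_{m,t}$) via \eqref{roughproduct1} gives the desired estimate. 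For the cubic $\nabla\sigma \times \varepsilon u\cdot\nabla u$ term, I note that $\varepsilon u\cdot\nabla u$ is essentially $\varepsilon f$ from \eqref{def-f-bd}; Lemma \ref{lem-f-bd} already provides $\|u\cdot\nabla u\|_{L^2_tH^{m-1}_{co}}\lesssim \Lambda\,\cE_{m,t}$, and one more application of \eqref{roughproduct1} against $\nabla\sigma$ closes the estimate.

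The main (modest) obstacle is purely bookkeeping: making sure that the counts of conormal derivatives that must fall in $L^\infty$ never exceed those actually present in $\cA_{m,t}$, since we only have $\il\nabla u\il_{0,\infty,t}$ without weight and $\il\varepsilon^{1/2}\nabla u\il_{[\frac{m-1}{2}],\infty,t}$ with weight. The choice of Moser product estimate above is engineered so that we always place $\nabla u$ or $\omega$ with \emph{no} conormal derivatives in $L^\infty$, so the $\varepsilon^{1/2}$ weight never has to be paid; this is precisely why Remark \ref{rmkLinfty} emphasizes that $\il\nabla u\il_{0,\infty,t}$ alone suffices.
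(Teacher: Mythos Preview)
Your treatment of the last two pieces (after writing $\nabla g_2/\varepsilon=g_2'\nabla\sigma$) is fine. The gap is in the vortex-stretching term $g_2\,\omega\cdot\nabla u$ (and, by the same mechanism, $g_2\,\omega\,\div u$). You invoke \eqref{roughproduct1} with $k=m-2$, but that estimate reads
\[
\|(fg)(t)\|_{H_{co}^{m-2}}\lesssim \|f(t)\|_{H_{co}^{m-2}}\,\il g\il_{[\frac{m-3}{2}],\infty,t}+\|g(t)\|_{H_{co}^{m-2}}\,\il f\il_{[\frac{m-2}{2}],\infty,t},
\]
so with $f=\omega$, $g=\nabla u$ (or either ordering) you are forced to control $\il\nabla u\il_{[\frac{m-3}{2}],\infty,t}$ or $\il\omega\il_{[\frac{m-2}{2}],\infty,t}$. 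For $m\geq 6$ these indices are $\geq 1$, and $\cA_{m,t}$ only contains $\il\nabla u\il_{0,\infty,t}$ without weight and $\il\varepsilon^{1/2}\nabla u\il_{[\frac{m-1}{2}],\infty,t}$ with weight; plugging the latter in produces an $\varepsilon^{-1/2}$ loss. Your claim that one can ``always place $\nabla u$ or $\omega$ with no conormal derivatives in $L^\infty$'' is precisely what \eqref{roughproduct1} does \emph{not} allow: the Leibniz expansion has cross terms $Z^J\omega\cdot Z^{I-J}\nabla u$ with $|J|,|I-J|$ both of intermediate size, and one of them must go into $L^\infty$ with those derivatives.

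The paper's device is to rewrite, near the boundary,
\[
\omega\cdot\nabla u=\omega_1\partial_{y^1}u+\omega_2\partial_{y^2}u+(\omega\cdot\bN)\,\partial_{\bn}u,
\]
and to observe (via \eqref{atimesb}, \eqref{normalofnormalder}) that $\omega\cdot\bN$ is a combination of \emph{tangential} derivatives of $u$, so that $\|\omega\cdot\bN\|_{H_{co}^k}\lesssim\|u\|_{H_{co}^{k+1}}$ and $\|\omega\cdot\bN\|_{k,\infty}\lesssim\|u\|_{k+1,\infty}$. Each product now has one ``normal-type'' factor ($\omega_j$ or $\partial_{\bn}u$) and one ``tangential-type'' factor ($\partial_{y^j}u$ or $\omega\cdot\bN$). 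One then uses the asymmetric splitting
\[
\|fg\|_{L_t^2H_{co}^{m-2}}\lesssim \il f\il_{0,\infty,t}\|g\|_{L_t^2H_{co}^{m-2}}+\|f\|_{L_t^\infty H_{co}^{m-2}}\Bigl(\int_0^t\|g(s)\|_{m-3,\infty}^2\,\d s\Bigr)^{1/2},
\]
with $f$ the normal-type factor, so that only $\il\nabla u\il_{0,\infty,t}$ and $\|\nabla u\|_{L_t^\infty H_{co}^{m-2}}$ are needed on that side; the higher $L^\infty$ norm falls on the tangential factor $g$, which is then controlled by the Sobolev embedding \eqref{sobebd} and $\|u\|_{E_t^m}$. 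This structural decomposition is the missing step in your argument.
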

\begin{proof}
Let us show the estimate of $\tilde{\chi_i}\omega\cdot\nabla u,$
which is not direct since the higher order $L_{t,x}^{\infty}$ norm 
(say $\|\nabla u\|_{[\f{m}{2}]-1,\infty,t}$) is unlikely to be uniformly bounded.
Nevertheless, thanks to identity \eqref{convection identity}, one can write this term as:
\beqs
\tilde{\chi_i} \omega\cdot\nabla u
=\tilde{\chi_i} \big(\omega_1\p_{y^1}u+\omega_2\p_{y^2}u +(\omega\cdot\bN) \p_{\bn}u\big).
\eeqs
Moreover, by identities \eqref{atimesb} and \eqref{normalofnormalder},
\beqs
\begin{aligned}
\omega\cdot\bN&=(\nabla\times u)\cdot\bN\\
&=-(u\times\bN)\p_{\bn}\bn+(\Pi\p_{y^1}(u\times\bN))^{1}+(\Pi\p_{y^2}(u\times\bN))^{2}+u\cdot\curl\bN
\end{aligned}
\eeqs
which gives that for any $t\in [0,T],$ any $k\geq 0,$
\beqs
\|(\omega\cdot\bN)(t)\|_{H_{co}^k}\lesssim \|u(t)\|_{H_{co}^{k+1}},\qquad 
\|(\omega\cdot\bN)(t)\|_{k,\infty}\lesssim \|u(t) \|_{k+1,\infty}
\eeqs
Therefore, by the Sobolev embedding \eqref{sobebd}, we have that: 
\beqs
\begin{aligned}
&\|\tilde{\chi_i} \omega\cdot\nabla u\|_{L_t^2H_{co}^{m-2}}\\
&\lesssim 
\il\nabla u\il_{0,\infty,t}\|(\p_{y^i}u, \omega\cdot\bN)\|_{L_t^2H_{co}^{m-2}}+\|\nabla u\|_{L_t^{\infty}H_{co}^{m-2}}\big(\int_0^t \|(\p_{y^i}u, \omega\cdot\bN)(s)\|_{{m-3},\infty}^2\d s \big)^{\f{1}{2}}\\
&\lesssim \il\nabla u\il_{0,\infty,t}\|u\|_{L_t^2H_{co}^{m-1}}+\|\nabla u\|_{L_t^{\infty}H_{co}^{m-2}}\|u\|_{E_t^m}\lesssim \Lambda\big(\f{1}{c_0}, \cN_{m,t}\big).
\end{aligned}
\eeqs
The other two terms in the definition of $G^{\omega}$ are similar or easier to treat, we omit the details.
\end{proof}
\begin{rmk}
Collecting the results stated in Lemmas \ref{highestgrad}, \ref{lemofv}, \ref{lemofnablav}, \ref{normalofnablasigma}, we find that:
\beq\label{sumarize1}
\begin{aligned}
&\|\ep\nabla(\sigma,u)\|_{L_t^{\infty}H_{co}^{m-1}}^2+\|\nabla(\sigma,u)\|_{L_t^{\infty}H_{co}^{m-2}\cap L_t^2H_{co}^{m-1}}^2+\|(\sigma,u)\|_{L_t^{\infty}H_{co}^{m-1}}^2\\
&\lesssim Y^2_m(\sigma_0,u_0)+(T+\ep)^{\f{1}{2}}\Lambda(\f{1}{c_0},\cN_{m,T}).
\end{aligned}
\eeq
\end{rmk}

\subsection{$\ep-$dependent estimate of $\nabla^2 u$}

To finish the estimates for the energy norm, we are left to deal with $\|\ep \nabla^2 u\|_{L_t^{\infty}H_{co}^{m-2}},\ep \|\nabla^2\sigma\|_{L_t^{\infty}L^2}.$
\begin{lem}\label{propsecnormal1}
Under the assumption \eqref{preasption},  the following estimate holds:
\begin{equation}\label{secnormalinfty1}
    \begin{aligned}
 \|    \varepsilon\nabla^2 u(t)\|_{H_{co}^{m-2}}^2\lesssim 
    Y^2_m(\sigma_0,u_0)+(T+\ep)^{\f{1}{2}}\Lambda(\f{1}{c_0}, \cN_{m,T}).
    \end{aligned}
\end{equation}
\end{lem}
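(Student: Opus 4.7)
The strategy is to use the momentum equation to express $\ep\mu\Delta u$ in terms of quantities already controlled by Lemmas \ref{highest}, \ref{highestgrad}, \ref{normalofnablasigma} and the summary estimate \eqref{sumarize1}, and then to recover the full $\ep\nabla^2 u$ by elliptic regularity in conormal spaces, working in the local charts from Section 3.1 near the boundary.

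First, multiplying $\eqref{NS1}_2$ by $\ep$ gives
\[
\ep\mu\Delta u = \nabla\sigma - \ep g_2(\pt u + u\cdot\nabla u) + \ep(\mu+\lambda)\nabla\div u,
\]
so it suffices to bound each term on the right in $L^\infty_T H^{m-2}_{co}$ by $Y_m(\sigma_0,u_0) + (T+\ep)^{1/4}\La^{1/2}$. The term $\nabla\sigma$ is controlled by \eqref{sumarize1}; the factor $\ep\pt u$ equals $Z_0 u$, so $\|\ep\pt u\|_{L^\infty_T H^{m-2}_{co}} \leq \|u\|_{L^\infty_T H^{m-1}_{co}}$ which is bounded by \eqref{sumarize1}; the term $\ep\nabla\div u$ is bounded by \eqref{nabladivu-51}; and the nonlinear term $\ep g_2 u\cdot\nabla u$ is handled by a product estimate after using the decomposition \eqref{convection identity} to avoid uncontrolled normal derivatives, the extra $\ep$ ensuring the bound fits in the $(T+\ep)^{1/2}\La$ framework.

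Second, I pass from $\ep\Delta u$ to $\ep\nabla^2 u$. After localization, in the interior the conormal vector fields are equivalent to Euclidean derivatives so $\ep\|\chi_0\nabla^2 u\|_{L^\infty_T H^{m-2}_{co}}\lesssim \ep\|u\|_{L^\infty_T H^m_{co}}$, which is bounded by combining Lemmas \ref{highest} and \ref{highestgrad}. Near the boundary, the change of variables \eqref{change-variable} yields
\[
\Delta u = \p_{y^1}^2 u + \p_{y^2}^2 u + |\bN|^2\p_z^2 u + (\text{first-order in } \nabla u \text{ with smooth coefficients}),
\]
and since $|\bN|^2 \geq 1$ one can solve algebraically for $\p_z^2 u$. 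The purely tangential and mixed components of $\nabla^2 u$ are of the form $Z_{y^i}Z_{y^j}u$ or $Z_{y^j}\p_z u$, and are controlled in $H^{m-2}_{co}$ by $\ep\|u\|_{L^\infty_T H^m_{co}}$ and $\ep\|\nabla u\|_{L^\infty_T H^{m-1}_{co}}$ respectively, both already of the desired form. Applying $Z^I$ with $|I|\leq m-2$ to the algebraic identity for $\p_z^2 u$, and using commutator estimates together with the bound on $\ep\Delta u$ from the first step, gives the remaining control on $\ep\p_z^2 u$.

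The main obstacle is handling $\ep g_2 u\cdot\nabla u$ in $H^{m-2}_{co}$: a naive product estimate would require $\il\nabla u\il_{k,\infty,T}$ for $k\geq 1$, which is not available in $\cA_{m,T}$. The remedy is to invoke \eqref{convection identity} to write $u\cdot\nabla u = u_{y^1}\p_{y^1}u + u_{y^2}\p_{y^2}u + (u\cdot\bN/\phi)\,\phi\p_z u$, so that the uncontrolled factor $\p_z u$ is multiplied by the small factor $u\cdot\bN/\phi$, which is bounded via \eqref{unLinfty} and \eqref{hardyconormal} thanks to $u\cdot\bn|_{\p\Omega}=0$ and the Hardy inequality. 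The remainder of the proof is a careful bookkeeping of product and commutator estimates ensuring that every source term is either bounded by $Y_m(\sigma_0,u_0)$ or carries a prefactor of order $(T+\ep)^{1/2}\La$.
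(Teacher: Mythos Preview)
Your approach is correct and follows the same overall scheme as the paper: use $\eqref{NS1}_2$ to bound $\ep\Delta u$ in $H^{m-2}_{co}$, then upgrade to $\ep\nabla^2 u$. The difference lies in this second step. The paper invokes elliptic regularity with the boundary trace $\sum_{|I|\leq m-2}|Z^I\partial_{\bn}u|_{H^{1/2}(\partial\Omega)}$, and then uses the Navier condition \eqref{bdryconditionofu} together with \eqref{normalofnormalder} and the trace inequality \eqref{traceLinfty} to reduce this boundary term to $\ep\|\nabla\div u\|_{H^{m-2}_{co}}+\ep\|(u,\nabla u)\|_{H^{m-1}_{co}}$. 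You instead solve algebraically for $\partial_z^2 u$ in each chart from the local expression of $\Delta$, bounding the tangential and mixed second derivatives directly by $\ep\|u\|_{L^\infty_T H^m_{co}}$ and $\ep\|\nabla u\|_{L^\infty_T H^{m-1}_{co}}$, both available from \eqref{sumstep1} and \eqref{sumarize1}. Your route is more elementary (no boundary trace estimate, no appeal to the boundary condition at this step) at the price of an explicit local computation; the paper's route is more packaged but leans on the Navier condition.

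One imprecision: your displayed formula for $\Delta u$ in local coordinates lists only $\partial_{y^1}^2+\partial_{y^2}^2+|\bN|^2\partial_z^2$ plus ``first-order'' terms, but the off-diagonal entries of $E$ in \eqref{defPE} produce genuine mixed second-order terms $-2\partial_{y^i}\varphi\,\partial_{y^i}\partial_z u$. Since you do bound the mixed components $Z_{y^j}\partial_z u$ two sentences later, the argument is complete; just correct the formula.
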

\begin{proof}

As $u$ satisfies the equation:
$$\varepsilon\mu\Delta u=-(\mu+\lambda)\varepsilon \nabla\div u+g_2(\varepsilon\partial_t u+\varepsilon u\cdot \nabla u)+\nabla\sigma.$$
 we have by elliptic regularity theory:
\begin{equation}\label{secondnormal0}
    \begin{aligned}
    \| \varepsilon\nabla^2 u(t)\|_{H_{co}^{m-2}}&\lesssim
    \ep \sum_{|I|\leq m-2}|Z^{I}\partial_{\bn} u(t)|_{{H}^{\frac{1}{2}}}+\varepsilon \|\nabla\div u(t)\|_{H_{co}^{m-2}}
     \\
     &\qquad+\|u(t)\|_{H_{co}^{m-1}} +\|\nabla\sigma(t)\|_{H_{co}^{m-2}}+\ep^{\f{1}{2}}\cE_{m,T}\Lambda\big(\f{1}{c_0},\cA_{m,T}\big).
   \end{aligned}
\end{equation}

It follows 
from the boundary condition \eqref{bdryconditionofu}, the identity \eqref{normalofnormalder}
 and the  trace inequality \eqref{traceLinfty} that:
\beq\label{bdry3}
\varepsilon\sum_{|I|\leq m-2}|Z^{I}\partial_{\bn} u(t)|_{{H}^{\frac{1}{2}}} \lesssim \ep\|\nabla\div u(t)\|_{H_{co}^{m-2}}+\ep\|(u,\nabla u)(t)\|_{H_{co}^{m-1}}.
\eeq
Inserting \eqref{nabladivu-5} and \eqref{bdry3} into \eqref{secondnormal0}, 
one arrives at:
\beqs
\ep\|\nabla^2 u(t)\|_{H_{co}^{m-2}}\lesssim \ep \|\nabla(\sigma,u)(t)\|_{H_{co}^{m-1}}+\|\nabla\sigma(t)\|_{H_{co}^{m-2}}+\|u(t)\|_{H_{co}^{m-1}}+\ep^{\f{1}{2}}\cE_{m,T}\Lambda\big(\f{1}{c_0},\cA_{m,T}\big),
\eeqs
which, combined with \eqref{sumarize1} leads to
\eqref{secnormalinfty1}. 
\end{proof}
\begin{lem}\label{lemnabla2sigma}
Under the assumption \eqref{preasption}, we have the following estimate for $\nabla^2 \sigma$:
\begin{equation}\label{es-nabla2sigma}
\|\ep\nabla^2\sigma\|_{L_t^{\infty}L^2}
^2+\|\nabla^2 \sigma\|_{L^2(Q_t)}^2\lesssim
Y^2_m(0)+(T+\ep)\Lambda(\f{1}{c_0}, \cN_{m,T}).
\end{equation}
\end{lem}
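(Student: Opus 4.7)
The plan is to view $\sigma$ as the solution of a Neumann problem whose data inherit an explicit factor of $\ep$ from the momentum equation, and then to extract a damped evolution equation for $\Delta\sigma$ by substituting the mass equation into the stiff term. Taking the divergence of the momentum equation $\eqref{NS1}_2$ gives
\[
\Delta\sigma=-\ep\,\div(g_2\pt u+g_2\, u\cdot\nabla u)+\ep(2\mu+\lambda)\Delta\div u,
\]
while dotting the momentum equation with $\bn$ on $\p\Omega$, combined with $u\cdot\bn|_{\p\Omega}=0$, $\pt u\cdot\bn|_{\p\Omega}=0$, and the identity $\Delta u\cdot\bn=\p_{\bn}\div u-(\curl\omega)\cdot\bn$, yields
\[
\p_{\bn}\sigma\big|_{\p\Omega}=\ep\bigl[(2\mu+\lambda)\p_{\bn}\div u-\mu(\curl\omega)\cdot\bn-g_2(u\cdot\nabla u)\cdot\bn\bigr].
\]
Substituting $\div u=-\ep\, g_1(\pt\sigma+u\cdot\nabla\sigma)$ from $\eqref{NS1}_1$ into the $\ep(2\mu+\lambda)\Delta\div u$ term, whose principal part becomes $-\ep^2(2\mu+\lambda)g_1\pt\Delta\sigma$, produces the damped equation
\[
\Delta\sigma+\ep^2(2\mu+\lambda)g_1\,\pt\Delta\sigma=R,
\]
where the remainder $R$ carries at least one factor of $\ep$ and is estimable via the product/commutator lemmas together with the previously established bounds (in particular \eqref{sumarize1}) by $\|R\|_{L^2(Q_t)}^2\lesssim(T+\ep)\Lambda(\tfrac{1}{c_0},\cN_{m,T})$.

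I would then perform an energy estimate on this damped equation: multiplying by $\Delta\sigma$ and integrating over $Q_t$, with an integration by parts in time on the stiff term, gives boundary contributions $\ep^2\int g_1|\Delta\sigma|^2$ at $t$ and $0$, plus an interior term $\ep^2\iint\pt g_1\,|\Delta\sigma|^2$. Because $|\pt g_1|\lesssim\|\ep\pt\sigma\|_{L^\infty}\lesssim\cA_{m,T}$, this last term can be absorbed into the left-hand side for $\ep$ small. The outcome is
\[
\|\Delta\sigma\|_{L^2(Q_t)}^2+\ep^2\|\Delta\sigma(t)\|_{L^2}^2\lesssim\ep^2\|\Delta\sigma(0)\|_{L^2}^2+\|R\|_{L^2(Q_t)}^2,
\]
the initial term being absorbed in $Y_m^2(\sigma_0,u_0)$ via the $\ep\|(\sigma_0,u_0)\|_{H^2}$ contribution in \eqref{initialnorm}.

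The conclusion then follows from standard Neumann elliptic regularity,
\[
\|\nabla^2\sigma\|_{L^2}\lesssim\|\Delta\sigma\|_{L^2}+\|\nabla\sigma\|_{L^2}+|\p_{\bn}\sigma|_{H^{1/2}(\p\Omega)},
\]
combined with the $\ep$-weighted pointwise expression for $\p_{\bn}\sigma|_{\p\Omega}$ derived above. The main obstacle is the control of $|\p_{\bn}\sigma|_{H^{1/2}(\p\Omega)}$: a naive trace estimate would require $H^1$-in-$x$ control of $\Delta u$ and $\nabla\div u$, i.e.\ uniform $H^3$ regularity of $u$ near $\p\Omega$, which is not available. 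To circumvent this I would apply the same device as in the treatment of $\Delta v\cdot\bn$ in Lemma \ref{lemofq}: rewrite $(\curl\omega)\cdot\bn=\div(\omega\times\bn)+\omega\cdot\curl\bn$ via \eqref{atimesb} and \eqref{omegatimesn}, exploit the boundary identity \eqref{bdryomegan} for $\omega\times\bn$ to express it in terms of tangential derivatives of $u$, and use the mass equation once more to trade $\p_{\bn}\div u|_{\p\Omega}$ for an $\ep\pt$-derivative plus lower-order terms. Combined with the bounds \eqref{nabladivu-51}, \eqref{curlwLinfty}, \eqref{sumarize1} and the trace inequality, these produce the required control of $|\p_{\bn}\sigma|_{L^2_tH^{1/2}(\p\Omega)}$ and $\ep|\p_{\bn}\sigma|_{L^\infty_tH^{1/2}(\p\Omega)}$, yielding the stated estimate \eqref{es-nabla2sigma}.
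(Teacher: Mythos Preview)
Your derivation of a damped evolution for $\Delta\sigma$ is the same core idea as the paper's, but there are two gaps. First, you put the convection contribution into the remainder $R$: expanding $\ep(2\mu+\lambda)\Delta\div u = -\ep^2(2\mu+\lambda)\Delta\bigl(g_1(\pt\sigma+u\cdot\nabla\sigma)\bigr)$ produces the term $-\ep^2(2\mu+\lambda)g_1\, u\cdot\nabla\Delta\sigma$, which involves $\nabla^3\sigma$ and is not controlled in $L^2(Q_t)$ by any quantity in $\cN_{m,T}$. The paper keeps the full transport operator on the left and works with
\[
\ep^2 g_1(\pt+u\cdot\nabla)\Delta\sigma+\tfrac{1}{2\mu+\lambda}\Delta\sigma=\tilde G
\]
(this is \eqref{laplacesigma}); in the energy identity the convection term then integrates by parts to $\tfrac{\ep^2}{2}\iint\div(g_1 u)\,|\Delta\sigma|^2$, which is harmless.

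The more substantial gap is your final step, recovering $\nabla^2\sigma$ from $\Delta\sigma$ via Neumann $H^2$-regularity with datum $\p_{\bn}\sigma|_{\p\Omega}$. Controlling $\ep\,|\p_{\bn}\div u|_{H^{1/2}(\p\Omega)}$ by the trace inequality would need $\ep\|\nabla^2\div u\|_{L^2}$, i.e.\ $\ep\nabla^3 u$, which by the momentum equation is only available once $\|\nabla^2\sigma\|_{L^2(Q_t)}$ is already known (cf.\ \eqref{threeorderofu}); this is circular. Your substitution of the mass equation trades $\ep\p_{\bn}\div u$ for $\ep^2 g_1\pt\p_{\bn}\sigma$ plus lower order, but estimating that in $H^{1/2}(\p\Omega)$ again requires a normal derivative of $\ep\pt\nabla\sigma$, and iterating does not terminate within the available norms. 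The paper sidesteps all of this: once $\Delta\sigma$ is controlled, it uses the pointwise identity in local coordinates
\[
\Delta=\p_{\bn}^2+\sum_{(i,j)\neq(3,3)}\p_{y^i}(a_{ij}\p_{y^j})
\]
(see \eqref{Laplace-local}) to deduce directly $\|\p_{\bn}\nabla\sigma\|_{L^2}\lesssim\|\Delta\sigma\|_{L^2}+\|\nabla\sigma\|_{H^1_{co}}$, with no boundary trace needed at all.
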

\begin{proof}
By \eqref{rewriteofsigma} and 
\eqref{rewriteofu}, one finds that $\nabla\sigma$ solves:
 \beq\label{sec:eq6.5}
 \ep^2 g_1(\p_t +u\cdot\na )\nabla\sigma+\f{1}{(2\mu+\lambda)}\nabla\sigma=G
 \eeq
 where $$G=-\ep^2(g_1' S\ep\pt\si+\nabla(g_1 u_{k})\cdot\partial_{k}\sigma)-\ep \f{\mu}{(2\mu+\lambda)}\curl \omega-\f{1}{(2\mu+\lambda)} g_2(\ep\pt u+\ep u\cdot\na u).$$
 By taking the  divergence of the equation  \eqref{sec:eq6.5}, one arrives at:
 \beq\label{laplacesigma}
 \ep^2g_1(\p_t+u\cdot\nabla)\Delta\sigma+\f{1}{2\mu+\lambda}\Delta\sigma=\div G-\ep^2\big[g_1'\nabla\sigma\cdot\ep\pt\nabla\sigma+\sum_{i=1}^3\p_i(g_1u)\cdot \nabla\p_i\sigma\big]=\colon \tilde{G}
 \eeq
From an energy estimate, we find
 \beq\label{sec3:eq12}
\ep^2 \|\Delta\sigma\|_{L_t^{\infty}L^2}^2+\|\Delta\sigma\|_{L^2(Q_t)}^2\lesssim 
 T^{\f{1}{2}}\|\Delta\sigma\|_{L^2(Q_t)}\|\tilde{G}\|_{L_t^{\infty}L^2}
 +T\La\|\ep\Delta\sigma\|_{L_t^{\infty}L^2}^2.
 \eeq
We first observe  that:
 $$\|\tilde{G}\|_{L_t^{\infty}L^2}\lesssim \La\cE_{m,t}^2. $$
 Moreover, since in the local coordinate, we can find some coefficients $a_{ij}$ that depends smoothly  on $\bn,$ such that (we use the convention $\p_{y^3}=\p_{\bn}$ ):
 \beq\label{Laplace-local}
 \Delta=\p_{\bn}^2+\sum_{0\leq i,j\leq 3, 
(i,j)\neq (3,3).
} \p_{y^i}(a_{ij}\p_{y^j})
\eeq
which yields:
 \beqs
 \|\p_{\bn}\nabla \sigma\|_{L_t^{\infty}L^2}\lesssim \|\Delta\sigma\|_{L_t^{\infty}L^2}+\|\nabla\sigma\|_{L_t^{\infty}H_{co}^1}.
 \eeqs
 We thus obtain \eqref{es-nabla2sigma} from \eqref{sec3:eq12}.

\end{proof}
\subsection{Proofs of Proposition \ref{energybdd}}
By collecting \eqref{sumstep1},
\eqref{sumarize1} \eqref{secnormalinfty1} and \eqref{es-nabla2sigma}, we get \eqref{energybound}. 
\begin{rmk}\label{highordersigma}
In view of the  formal expansion \eqref{formalapp}, one expects
 the first three normal derivatives of $\sigma$ to be bounded in $L^2(Q_t)$. This can be achieved in the following way. By imposing additional assumption on $\sigma_0,$ namely $\ep\nabla^2\sigma_0\in H_{co}^1(\Omega),\nabla^3\sigma_0\in L^2(\Omega),$
 one can show by following  similar computations as in the proof of  Lemma \ref{lemnabla2sigma}  that:
 $\ep\nabla^2 \sigma\in 
 L_t^{\infty}H_{co}^1,  \nabla\sigma\in L_t^2H_{co}^1.$
 These estimates at hand, one can carry out another energy estimate to control  $\|\p_{\bn}\Delta\sigma\|_{L^2(Q_t)
 },$ which further leads to the boundedness of $\|\nabla^3 \sigma\|_{L^2(Q_t)}.$ We remark that in the latter energy estimate, the knowledge of $\|\ep\nabla^3 u\|_{L^2(Q_t)}$ is needed. Nevertheless, this term can be bounded by all the controlled norms appearing in $\cN_{m,T}$. More precisely,
 one has by equation for the velocity $$\varepsilon \div \mathcal{L}u=g_2(\varepsilon\partial_t u+\varepsilon u\cdot \nabla u)+\nabla\sigma,$$
 and thus by \eqref{Laplace-local}:
 \beq\label{threeorderofu}
 \begin{aligned}
 \|\ep\nabla^3 u\|_{L^2(Q_t)}&\lesssim 
 \|\ep\nabla\div\cL u \|_{L^2(Q_t)}+\|\ep\nabla^2 u\|_{L_t^2H_{co}^1}\\
 &\leq 
 \Lambda(1/c_0,\mathcal{A}_{m,T})(\|({\sigma,u})\|_{L_t^2H_{co}^{2}}+\|\nabla(\sigma,u)\|_{L_t^2H_{co}^1}+\|\nabla^2\sigma\|_{L^2(Q_t)}).
 \end{aligned}
 \eeq
\end{rmk}
 \section {Uniform estimates- $L_{t,x}^{\infty}$ norms
 }\label{Linfty}
 
In this section, we aim to control the $L_{t,x}^{\infty}$ norms appearing in $\mathcal{A}_{m,T}.$ Part of them can be deduced directly from the Sobolev embedding in the conormal setting (see Proposition \ref{propembeddding}) and the norms controlled in the previous section. Moreover, we use the maximum principle for transport-diffusion equation \eqref{eqofomega} satisfied by $\omega$ and of the damped transport equation \eqref{sec:eq6.5} for $\nabla\sigma$ to get the $L_{t,x}^{\infty}$ estimates of $\nabla u$ and $\nabla\sigma$ respectively.

We will prove the following proposition. 
 \begin{prop}\label{Linftybd}
 Assuming that \eqref{preasption} \eqref{preasption1} hold, then there is a constant $C_2(1/c_0)$ depending only on $1/c_0$ and a polynomial $\bar{\Lambda}$ whose coefficients are independent of $\ep,$ such that:
 \beq
 \begin{aligned}
 \mathcal{A}_{m,T}&\leq C_2(1/c_0)
 \big(Y_{m}(\sigma_0,u_0)+\cE_{m,T}\big)
 +(\varepsilon^{\frac{1}{2}}+T)\mathcal{A}_{m,T}\bar{\Lambda}(1/c_0,\cA_{m,T}).\\
 \end{aligned}
 \eeq
 \end{prop}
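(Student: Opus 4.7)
The plan is to split the five families of $L^\infty$ norms in $\mathcal{A}_{m,T}$ into three groups by their intrinsic difficulty. First, all the high-order $L^\infty$ norms carrying an $\varepsilon$ or $\varepsilon^{1/2}$ weight are reduced to $\mathcal{E}_{m,T}$ through anisotropic conormal Sobolev embedding. Second, the zero-weight bound for $(\nabla\sigma,\div u)$ at order $[\frac{m-1}{2}]$ is obtained from the damped transport structure \eqref{sec:eq6.5}. Third and most delicate, the unweighted bound $\il\nabla u\il_{0,\infty,T}$ requires an $L^\infty_{t,x}$ analysis of the vorticity equation via the Green's function representation used in Section~3.

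For the first group, an anisotropic conormal Sobolev embedding (a variant of Proposition \ref{propembeddding}) of the type $\|Z^I f\|_{L^\infty_x}^2 \lesssim \|Z^I f\|_{H^2_{co}}\bigl(\|Z^I\partial_{\bn} f\|_{H^1_{co}}+\|Z^I f\|_{H^2_{co}}\bigr)$, applied to $f=\sigma,u,\nabla u$ and combined with the $\varepsilon$-weighted quantities $\varepsilon\|(\sigma,u)\|_{L^\infty H^m_{co}}$, $\varepsilon\|\nabla(\sigma,u)\|_{L^\infty H^{m-1}_{co}}$, $\varepsilon\|\nabla^2 u\|_{L^\infty H^{m-2}_{co}}$ already contained in $\mathcal{E}_{m,T}$ (and the unweighted $\|\nabla u\|_{L^\infty H^{m-2}_{co}}$ supplied by Section~3), dominates each of $\il(\sigma,u)\il_{[\frac{m+1}{2}],\infty,T}$, $\il\varepsilon^{1/2}\nabla u\il_{[\frac{m-1}{2}],\infty,T}$, $\varepsilon\il\nabla u\il_{[\frac{m+1}{2}],\infty,T}$ and $\varepsilon\il(\sigma,u)\il_{[\frac{m+3}{2}],\infty,T}$ by $C(1/c_0)\,\mathcal{E}_{m,T}$ as soon as $m\geq 6$. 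The extra $\varepsilon^{1/2}$ in front of $\nabla u$ is precisely what the interpolation between the unweighted norm of $\nabla u$ and the $\varepsilon$-weighted norm of $\nabla^2 u$ delivers.

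For $\il(\nabla\sigma,\div u)\il_{[\frac{m-1}{2}],\infty,T}$, divide \eqref{sec:eq6.5} by $\varepsilon^2 g_1$ to rewrite it as $(\partial_t+u\cdot\nabla)\nabla\sigma+\frac{1}{\varepsilon^2(2\mu+\lambda)g_1}\nabla\sigma=G/(\varepsilon^2 g_1)$. Applying $Z^I$ with $|I|\leq [\frac{m-1}{2}]$ and integrating along the tangential characteristics of $u$, the damping factor $e^{-ct/\varepsilon^2}$ absorbs the $\varepsilon^{-2}$ prefactor on the right-hand side, yielding $\|Z^I\nabla\sigma(t)\|_{L^\infty}\lesssim e^{-ct/\varepsilon^2}\|Z^I\nabla\sigma(0)\|_{L^\infty}+\Lambda(1/c_0,\mathcal{A}_{m,T})\|G_I\|_{L^\infty_{t,x}}$, where $G_I$ gathers the source and the commutators produced by $Z^I$. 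Each term in $G_I$ is either of lower conormal order (handled inductively) or involves $\varepsilon\,\curl\omega$ and $\varepsilon\,\partial_t u$, both controlled by the first group together with $\mathcal{E}_{m,T}$. The bound on $\div u$ is then immediate from $\div u=-\varepsilon g_1(\partial_t\sigma+u\cdot\nabla\sigma)$.

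The real obstacle is $\il\nabla u\il_{0,\infty,T}$. The pointwise identities $\partial_{\bn}u\cdot\bn=\div u-(\Pi\partial_{y^1}u)^1-(\Pi\partial_{y^2}u)^2$ and $\Pi(\partial_{\bn}u)=\Pi(\omega\times\bn)+\Pi((D\bn)u)$ reduce the problem to an $L^\infty_{t,x}$ control of the vorticity $\omega$, since the tangential parts are absorbed by $\il u\il_{1,\infty,T}$ and the divergence by the previous step. To bound $\omega$ the strategy of Lemma \ref{omeganLinfty} is mimicked at the $L^\infty$ level: after straightening the boundary, we decompose $\widetilde\omega=\widetilde\omega_h+\widetilde\omega_{nh}$, where $\widetilde\omega_h$ solves the half-space heat equation with the lifted boundary datum $\omega\times\bn=2\Pi(-au+(D\bn)u)$ and $\widetilde\omega_{nh}$ solves the residual transport-diffusion problem with homogeneous Dirichlet condition. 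For $\widetilde\omega_h$ the explicit Green's function, combined with an $L^\infty$ estimate on the kernel analogous to the $L^2$ computation \eqref{tildeetah}, gives $\|\widetilde\omega_h\|_{L^\infty_{t,x}}\lesssim T^{1/4}|u|_{L^\infty_t\tilde H^s(\partial\Omega)}$ with $s$ large enough that the trace inequality and conormal embedding return a quantity controlled by $\mathcal{E}_{m,T}$. For $\widetilde\omega_{nh}$ the zero Dirichlet condition enables the parabolic maximum principle, producing $\|\widetilde\omega_{nh}\|_{L^\infty_{t,x}}\lesssim \|\omega(0)\|_{L^\infty}+T\|F\|_{L^\infty_{t,x}}$, where $F$ is assembled from $G^\omega$, $\widetilde\omega_h$ and cut-off commutators; each piece is estimated by anisotropic Sobolev embedding and is bounded by $C(1/c_0)\bigl(Y_m+\mathcal{E}_{m,T}\bigr)+(\varepsilon^{1/2}+T)\mathcal{A}_{m,T}\Lambda(1/c_0,\mathcal{A}_{m,T})$. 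Combining the three groups yields the claimed inequality.
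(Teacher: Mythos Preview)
Your treatment of the first group (Sobolev embedding for the $\varepsilon$- and $\varepsilon^{1/2}$-weighted terms and for $\il(\sigma,u)\il_{[\frac{m+1}{2}],\infty,T}$) and of the damped transport structure for $\nabla\sigma$ are both correct and coincide with the paper's argument. The difficulty lies in your proposed route to $\il\nabla u\il_{0,\infty,T}$.

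You propose to mimic the $L^2$ splitting of Lemma~\ref{omeganLinfty} at the $L^\infty$ level, writing $\widetilde\omega=\widetilde\omega_h+\widetilde\omega_{nh}$ and applying the maximum principle to $\widetilde\omega_{nh}$. The problem is that the source term $F$ for $\widetilde\omega_{nh}$ is \emph{not} uniformly bounded in $L^\infty_{t,x}$. It contains the cut-off commutators $-2\nabla\omega\times\nabla(\chi_i\bn)$ and the transport piece $(g_2 u\cdot\nabla\omega)\times(\chi_i\bn)$ from $F_i^\omega$, both of which involve $\nabla\omega\sim\nabla^2 u$. By the boundary-layer expansion \eqref{formalapp}, $\nabla^2 u$ is not uniformly bounded in $L^\infty$; this is precisely why only one normal derivative is kept in $\mathcal{E}_{m,T}$. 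In the $L^2$ proof these terms were disposed of by integration by parts (see the handling of $\mathcal{I}_2$, $\mathcal{I}_3$, $\mathcal{I}_4$ there), a device that is unavailable for a pointwise maximum principle. Similarly, $H(\widetilde\omega_h)$ contains $\partial_z\partial_{y^i}\widetilde\omega_h$, and the normal derivative of the caloric double-layer extension is not uniformly bounded in $L^\infty$. So the inequality $\|\widetilde\omega_{nh}\|_{L^\infty}\lesssim \|\omega(0)\|_{L^\infty}+T\|F\|_{L^\infty}$ cannot be closed with the quantities in $\mathcal{N}_{m,T}$.

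The paper's argument is both simpler and avoids this obstruction: it applies the maximum principle \emph{directly} to the full vorticity equation \eqref{defGw}, without any cut-off or splitting. The source $G^\omega=g_2\omega\cdot\nabla u-g_2\omega\div u-\nabla g_2/\varepsilon\times(\varepsilon\partial_t u+\varepsilon u\cdot\nabla u)$ contains only first derivatives of $u$, so $\|G^\omega\|_{L^\infty}$ is controlled by $\mathcal{A}_{m,T}^2$. The price is that the maximum principle for a transport-diffusion equation with non-homogeneous Dirichlet data picks up the boundary contribution $|\omega|_{L^\infty_t L^\infty(\partial\Omega)}$, but this is harmless: the Navier condition \eqref{bdryomegan} together with \eqref{normalofnormalder} gives $|\omega|_{L^\infty(\partial\Omega)}\lesssim \il(u,\div u)\il_{1,\infty,T}$, already bounded by $\mathcal{E}_{m,T}+\varepsilon\mathcal{A}_{m,T}^2$. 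A minor side remark: your $T^{1/4}$ gain for $\widetilde\omega_h$ in $L^\infty$ is also not correct---the $L^1_t$ norm of the double-layer kernel is merely bounded, not small---though this is not the essential gap.
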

 \begin{proof}
  Let us recall that  $\mathcal{A}_{m,T}$ is defined as:
  \begin{equation}\label{defofLambdam-1}
\begin{aligned}
\cA_{m,T}&=
\il \nabla u\il_{0,\infty,T}+\il(\nabla\sigma,\div u)\il_{[\frac{m-1}{2}],\infty,T}+
\il (\sigma,u)\il_{[\frac{m+1}{2}],\infty,T}\\
&\quad+
\il\varepsilon^{\frac{1}{2}}\nabla u\il_{[\frac{m-1}{2}],\infty,T}+
\il\ep \nabla u\il_{[\frac{m+1}{2}],\infty,T}+\varepsilon\il(\sigma, u)\il_{[\frac{m+3}{2}],\infty,T}.
\end{aligned}
\end{equation}
The last four terms of $\cA_{m,T}$ can be controlled directly by the Sobolev embedding \eqref{sobebd}. For instance,
\begin{equation}\label{sobebb1}
 \il(\sigma,u)\il_{[\frac{m+1}{2}],\infty,T}\lesssim  \sup_{0\leq s\leq T}\big(\|(\sigma,u)(s)\|_{H_{co}^{[\frac{m+5}{2}]}}+\|\nabla(\sigma,u)(s)\|_{H_{co}^{[\frac{m+3}{2}]}}\big) \lesssim \cE_{m,T},
\end{equation}
\begin{equation*}
\begin{aligned}
 &\varepsilon^{\frac{1}{2}}\il\nabla u\il_{[\frac{m-1}{2}],\infty,T}\lesssim  \sup_{0\leq s\leq T} \big( \|\nabla u (s)\|_{H_{co}^{[\frac{m+3}{2}]}}+\varepsilon\|\nabla^2 u(s)\|_{H_{co}^{[\frac{m+1}{2}]}}\big)\lesssim \cE_{m,T} ,\\
 \end{aligned}
\end{equation*}
\begin{equation*}\label{epdeltau}
\begin{aligned}
 &\varepsilon\il \nabla u\il_{[\frac{m+1}{2}],\infty,T}\lesssim \varepsilon \sup_{0\leq s\leq T} \big( \|\nabla u(s)\|_{H_{co}^{[\frac{m+5}{2}]}}+\|\nabla^2 u(s)\|_{H_{co}^{[\frac{m+3}{2}]}}\big)\lesssim \cE_{m,T} .\\
 \end{aligned}
\end{equation*}
Note that we have $[\frac{m+3}{2}]+1\leq m-2,\, [\frac{m+5}{2}]\leq m-1$
if $m\geq 6.$ 

We remark also that
$\il\div u \il_{[\frac{m-1}{2}],\infty,T}$
can be estimated by the other quantities in  the definition of $\mathcal{A}_{m,T}.$ 
Indeed, by using the equation satisfied by $\sigma,$ 
we have that:
\begin{equation}\label{divbd}
\begin{aligned}
&\il\div u\il_{[\frac{m-1}{2}],\infty,T}\lesssim \il\sigma\il_{[\frac{m+1}{2}],\infty,T}+\varepsilon \mathcal{A}_{m,T}^2,\\
\end{aligned}
\end{equation}

 It thus remains  to control $\il\nabla u\il_{0,\infty,T},\il\nabla \sigma\il_{[\frac{m-1}{2}],\infty,T} 
 .$ 
 We note that away from the boundaries  where the conormal Sobolev norm is equivalent to the usual Sobolev norm,  these two terms can be bounded by the standard Sobolev embedding. Therefore, it suffices to control 
 $\il\chi_{i}\partial_{\bn} u\il_{0,\infty,T},\il\chi_{i}\partial_{\bn} \sigma\il_{[\frac{m-1}{2}],\infty,T}, 
 $ where $\chi_i, (1\leq i\leq N)$ are  smooth functions compactly supported in $\Omega_i$. Moreover, by identity \eqref{normalofnormalder} and 
 $$\Pi(\partial_{\bn} u)=\omega\times \bn+2\Pi(-(D \bn)u),$$
 we reduce our problem to the control of $\|\omega\|_{0,\infty,T},\|\chi_{i}\partial_{\bn} \sigma\|_{[\frac{m-1}{2}],\infty,T},$ which is the aim of  
 the following two lemmas.

 \end{proof}
 
 We begin with the estimate for $\il\omega\il_{0,\infty,T}$ which follows from the maximum principle of the transport-diffusion equation
 for the vorticity. 
 \begin{lem}
Under the assumption \eqref{preasption}, the following estimate holds:
 \begin{equation}\label{eqofomega}
     \il \omega \il_{0,\infty,T}\lesssim \| \omega(0)\|_{L^{\infty}(\Omega)}+\cE_{m,T}+(T+\ep)\cA_{m,T}^2.
 \end{equation}
 \end{lem}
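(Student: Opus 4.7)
The plan is to apply the scalar parabolic maximum principle, component by component, to the transport–diffusion equation \eqref{defGw} for the vorticity:
\[
g_2(\pt\omega+u\cdot\na\omega)-\mu\Delta\omega=G^\omega\quad\text{in }\Omega.
\]
Since $g_2\geq c_0>0$, each component $\omega_i$ satisfies a uniformly parabolic scalar equation
\[
\pt\omega_i+u\cdot\na\omega_i-(\mu/g_2)\Delta\omega_i=G^\omega_i/g_2,
\]
for which the classical maximum principle yields
\[
\il\omega\il_{0,\infty,T}\lesssim \|\omega(0)\|_{L^\infty(\Omega)}+\|\omega|_{\p\Omega}\|_{L^\infty([0,T]\times\p\Omega)}+T\,\|G^\omega/g_2\|_{L^\infty([0,T]\times\Omega)}.
\]
The proof thus reduces to estimating the last two quantities.

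For the source, writing $\na g_2/\ep=g_2'(\ep\sigma)\na\sigma$,
\[
\|G^\omega/g_2\|_{L^\infty}\lesssim \|\omega\|_{L^\infty}\bigl(\|\na u\|_{L^\infty}+\|\div u\|_{L^\infty}\bigr)+\|\na\sigma\|_{L^\infty}\bigl(\|\ep\pt u\|_{L^\infty}+\ep\|u\|_{L^\infty}\|\na u\|_{L^\infty}\bigr).
\]
Every $L^\infty$ factor appearing here is controlled by $\cA_{m,T}$; in particular $\|\ep\pt u\|_{L^\infty}\lesssim\il(\sigma,u)\il_{[\f{m+1}{2}],\infty,T}$, which sits inside $\cA_{m,T}$ as soon as $m\geq 6$. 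Hence $\|G^\omega/g_2\|_{L^\infty}$ is bounded by a polynomial in $\cA_{m,T}$ whose leading part is quadratic, contributing the $(T+\ep)\cA_{m,T}^2$ term after multiplication by $T$ (the additional explicit $\ep$ in the last summand yielding the $\ep\cA_{m,T}^2$ piece).

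For the boundary trace, the tangential component is prescribed by \eqref{bdryomegan}, yielding $|(\omega\times\bn)|_{\p\Omega}|\lesssim |u|_{\p\Omega}|$. For the normal component I would use the identity
\[
\omega\cdot\bn=\div(u\times\bn)+u\cdot\curl\bn,
\]
together with the crucial cancellation $u\cdot\bn|_{\p\Omega}=0$: this makes $u\times\bn$ tangent to $\p\Omega$, so that its ambient divergence restricted to the boundary equals a surface divergence involving only tangential derivatives of $u|_{\p\Omega}$. Combined with the trace inequality and the conormal Sobolev embedding \eqref{sobebd} (applicable since $m\geq 6$), this gives
\[
\|\omega|_{\p\Omega}\|_{L^\infty([0,T]\times\p\Omega)}\lesssim \il u\il_{[\f{m+1}{2}],\infty,T}\lesssim \cE_{m,T},
\]
which matches the $\cE_{m,T}$ term in the statement.

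The main obstacle is precisely this boundary step: the naive bound $|(\curl u)\cdot\bn|\lesssim\|\na u\|_{L^\infty}$ would be circular, since $\|\na u\|_{L^\infty}$ is exactly the quantity we are trying to estimate (it enters $\cA_{m,T}$ as the leading term $\il\na u\il_{0,\infty,T}$ with a coefficient we cannot afford). The structural identity above, which uses $u\cdot\bn|_{\p\Omega}=0$ to reduce one full gradient of $u$ on the boundary to a purely tangential one, is what makes the estimate closeable and allows the trace to be absorbed into $\cE_{m,T}$.
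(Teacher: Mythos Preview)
Your approach coincides with the paper's: maximum principle for the vorticity equation \eqref{defGw}, control of $G^\omega$ in $L^\infty$ by a polynomial in $\cA_{m,T}$, and reduction of the boundary trace $|\omega|_{L^\infty(\partial\Omega)}$ to tangential quantities. The only difference is in the boundary step: the paper bounds $|\omega|_{\partial\Omega}$ by $|(u,\partial_y u,\div u)|$ using \eqref{bdryconditionofu} and \eqref{normalofnormalder}, and the $\div u$ piece---estimated through \eqref{divbd}---is where the $\ep\cA_{m,T}^2$ contribution in the statement actually originates, not the source term as you suggest. Your treatment of $\omega\cdot\bn$ via the identity $\omega\cdot\bn=\div(u\times\bn)+u\cdot\curl\bn$ is equally valid and in fact avoids $\div u$, giving the slightly sharper $T\cA_{m,T}^2$ from the source alone; note however that the tangency of $u\times\bn$ is automatic from the cross product and does not rely on $u\cdot\bn|_{\partial\Omega}=0$---the relevant fact is that $(u\times\bn)\cdot\bn\equiv 0$ identically in the collar neighborhood, so that \eqref{normalofnormalder} applied to $u\times\bn$ produces only zeroth-order and tangential-derivative terms in $u$ (this is exactly the computation carried out in the proof of Proposition~\ref{propGomega}).
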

 \begin{proof}
Recall that $\omega$ solves  \eqref{defGw} which is rewritten below for convenience:
 \begin{equation*}
     g_2(\partial_t+u\cdot\nabla)\omega-\mu\Delta\omega=g_2(\omega\cdot\nabla u-\omega\div u)+\nabla g_2\times [(\partial_t+u\cdot\nabla)u]=G^{\omega} \quad x\in\Omega. 
 \end{equation*}
 Since $g_2(\varepsilon\sigma)$ satisfies the transport equation:
 $\partial_t g_2+\div(g_2 u)=0,$
  by the maximum principle, (one can refer to Proposition 13 of \cite{MR3485413})
 \begin{equation}\label{sec4:eq1}
     \begin{aligned}
     \| \omega(t)\|_{L^{\infty}(\Omega)} &\leq \| \omega(0)\|_{L^{\infty}(\Omega)}+|\omega(t)|_{L^{\infty}(\partial\Omega)}+\frac{1}{\inf g_2} \int_0^t \|G^{\omega}(s)\|_{L^{\infty}(\Omega)}\,\d s.
     \end{aligned}
 \end{equation}
 For the second term in the  right hand side of \eqref{sec4:eq1}, we use the boundary condition \eqref{bdryconditionofu},
the  identity \eqref{normalofnormalder}  and \eqref{sobebb1}, \eqref{divbd} to get that:
\begin{equation*}
    |\omega(t)|_{L^{\infty}(\partial\Omega)}\lesssim |(u,\partial_y u,\div u)(t)|_{L^{\infty}(\partial\Omega)}\lesssim
    \cE_{m,T}+\ep\cA_{m,T}^2.
\end{equation*}
For the last term, we have by the  assumption \eqref{preasption} and  the property \eqref{preasption1} that there is some $C(1/c_0),$ such that:
 \beqs
\frac{1}{\inf g_2} \int_0^t \|G^{\omega}(s)\|_{L^{\infty}(\Omega)}\, \d s\leq  C({1}/{c_0})T\cA_{m,T}^2,
\eeqs
 which ends  the proof.
 \end{proof}

 In the following, we estimate
$\il\chi_i\partial_{\bn} \sigma\il_{[\frac{m-1}{2}],\infty,T}:$
\begin{lem}
Under the assumption \eqref{preasption}, we have:
\begin{equation}\label{nablasigmaLinfty}
\begin{aligned}
 \il \chi_i\partial_{\bn}\sigma\il _{[\frac{m-1}{2}],\infty,T}
 &\lesssim
 Y_{m}(\sigma_0,u_0)+\cE_{m,T}
 +\varepsilon^{\frac{1}{2}}\mathcal{A}_{m,T}\Lambda\big(\f{1}{c_0},\cA_{m,T}\big)
\end{aligned}
\end{equation}
where $\chi_{i}$ is a smooth function that is  compactly supported in $\Omega_i.$
\end{lem}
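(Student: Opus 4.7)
The plan is to exploit the damped transport equation \eqref{sec:eq6.5} for $\nabla\sigma$. Dividing by $\ep^2 g_1$, it reads
\begin{equation*}
(\pt+u\cdot\na)\nabla\sigma + \f{1}{\ep^2 g_1(2\mu+\lambda)}\nabla\sigma = \f{G}{\ep^2 g_1}.
\end{equation*}
The damping rate $a:=1/(\ep^2 g_1(2\mu+\lambda))$ is of size $\ep^{-2}$, and integration along the characteristics of $\pt+u\cdot\na$---which are tangent to $\p\Omega$ since $u\cdot\bn=0$---immediately gives, via the integrating factor, the zeroth-order bound $\|\nabla\sigma(t)\|_{L^\infty_x}\lesssim\|\nabla\sigma_0\|_{L^\infty_x}+C(1/c_0)\,\|G\|_{L^\infty_{t,x}}$; the singular prefactor $\ep^{-2}$ is precisely absorbed by $\int_0^t e^{-c(t-s)/\ep^2}\d s\leq C\ep^2$.

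To reach order $[\f{m-1}{2}]$, I would apply $Z^I$ with $|I|\leq[\f{m-1}{2}]$ to \eqref{sec:eq6.5}, take the scalar product with $\bn$, and multiply by $\chi_i$. Setting $Y^I:=\chi_i\bn\cdot Z^I\nabla\sigma$, one finds
\begin{equation*}
\ep^2 g_1(\pt+u\cdot\na)Y^I+\f{1}{2\mu+\lambda}Y^I=\chi_i\bn\cdot Z^I G+\cR^I,
\end{equation*}
where the commutator remainder $\cR^I$ collects: $(i)$ convective commutators $\ep^2[Z^I,u\cdot\na]\nabla\sigma$ and $\ep^2[\chi_i,u\cdot\na](\cdot)$ carrying an explicit $\ep^2$ weight; $(ii)$ lower-order terms such as $[Z^I,\bn]\cdot\nabla\sigma$ reducing to $\il\nabla\sigma\il_{|I|-1,\infty,T}$, handled by induction on $|I|$ from the scalar base case above; and $(iii)$ cut-off commutators $[\chi_i,\cdot]$ supported in the interior of $\Omega_i$ and controlled by the conormal Sobolev embedding together with the $L_T^\infty H_{co}^{m-2}$ bound on $\nabla\sigma$ contained in $\cE_{m,T}$. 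The target quantity $\chi_i Z^I\p_{\bn}\sigma$ is recovered from $Y^I$ via $\bn\cdot Z^I\nabla\sigma=Z^I\p_{\bn}\sigma-[Z^I,\bn]\cdot\nabla\sigma$, the commutator being strictly lower order.

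The principal difficulty is the $L^\infty_{t,x}$ estimate of $\chi_i\bn\cdot Z^I G$. The $\ep^2$-weighted pieces of $G$ are trivially bounded by $\ep^2\,\Lambda(1/c_0,\cA_{m,T})$; the Euler-type piece $g_2(\ep\pt u+\ep u\cdot\na u)$ is bounded through Proposition \ref{propembeddding} by the quantities present in $\cE_{m,T}$ and $\cA_{m,T}$. The delicate term is $\ep\mu\curl\omega/(2\mu+\lambda)$: at order $[\f{m-1}{2}]$ it would a priori require $\il\nabla^2 u\il$ at that order, a norm not encoded in $\cA_{m,T}$. I would remove this obstruction by substituting, via the momentum equation,
\begin{equation*}
\ep\mu\curl\omega=(2\mu+\lambda)\ep\nabla\div u-g_2(\ep\pt u+\ep u\cdot\na u)-\nabla\sigma,
\end{equation*}
thereby trading the offending normal derivative of $u$ for the $\ep$-weighted scalar $\ep\nabla\div u$, which by \eqref{rewriteofsigma} and the bound \eqref{nabladivu-51} is controlled in $L^\infty_{t,x}$ at the required conormal order by $\cE_{m,T}$.

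Applying Duhamel along characteristics to $Y^I$ absorbs the $\ep^{-2}$ damping exactly as in the scalar base case, producing $\|Y^I(t)\|_{L^\infty_x}\lesssim\|Y^I(0)\|_{L^\infty_x}+C(1/c_0)\sup_{[0,T]}\|\chi_i\bn\cdot Z^I G+\cR^I\|_{L^\infty_x}$, which after the above estimates is dominated by $Y_m(\sigma_0,u_0)+\cE_{m,T}+\ep^{1/2}\cA_{m,T}\,\Lambda(1/c_0,\cA_{m,T})$; the $\ep^{1/2}$ factor arises from a standard interpolation in the bound for $\ep\nabla\div u$ combined with the $\ep^2$ prefactors in the convective commutators. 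Summing over $|I|\leq[\f{m-1}{2}]$ and over the boundary charts delivers \eqref{nablasigmaLinfty}. The main obstacle is precisely the $\ep\curl\omega$ term: without the algebraic rewriting through the momentum equation one would confront a normal derivative of $u$ that is unavailable in $\cA_{m,T}$, and the damped-transport/characteristics scheme would not close.
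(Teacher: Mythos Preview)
Your overall scheme---derive a damped transport equation for $\chi_i\partial_{\bn}\sigma$, apply $Z^I$, and integrate along characteristics using the $O(\ep^{-2})$ damping rate---is exactly what the paper does. The gap is in your treatment of the $\ep\,\curl\omega$ term in $G$.

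Your proposed substitution via the momentum equation destroys the damping. Indeed, carrying it out inside $G$ gives
\[
-\frac{\ep\mu}{2\mu+\lambda}\curl\omega
= -\ep\nabla\div u + \frac{1}{2\mu+\lambda}g_2(\ep\pt u+\ep u\cdot\nabla u) + \frac{1}{2\mu+\lambda}\nabla\sigma.
\]
The second term cancels the existing $-\frac{1}{2\mu+\lambda}g_2(\ep\pt u+\ep u\cdot\nabla u)$ in $G$, but the last term, once carried to the left of \eqref{sec:eq6.5}, annihilates the damping $\frac{1}{2\mu+\lambda}\nabla\sigma$. You are left with a pure transport equation $\ep^2 g_1(\pt+u\cdot\nabla)\nabla\sigma = -\ep^2(\cdots)-\ep\nabla\div u$, and after dividing by $\ep^2 g_1$ the source $\ep^{-1}\nabla\div u$ is singular: integrating along characteristics produces a factor $T/\ep$ rather than $O(1)$. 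This is not an accident---\eqref{sec:eq6.5} was obtained precisely by combining the two equations of the system, so using the momentum equation to rewrite $\curl\omega$ undoes that combination. The reference you invoke, \eqref{nabladivu-51}, is an $L_t^\infty H_{co}^{m-2}$ bound and does not give the $L^\infty_{t,x}$ control you claim, nor would it help once the damping is gone.

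The paper avoids this entirely by observing that only the \emph{normal component} $\chi_i\,\bn\cdot\curl\omega$ enters the equation for $R=\chi_i\partial_{\bn}\sigma$, and that this component involves no normal derivative of $\omega$. Using $\curl\omega\cdot\bn=\div(\omega\times\bn)+\omega\cdot\curl\bn$ together with the identity \eqref{omegatimesn} (which expresses $\div(\omega\times\bn)$ through tangential derivatives of $\omega\times\bn$ only), one gets
\[
\ep\,\il\chi_i\,\curl\omega\cdot\bn\il_{[\frac{m-1}{2}],\infty,T}\lesssim \ep\,\il\nabla u\il_{[\frac{m+1}{2}],\infty,T},
\]
and the right-hand side is already part of $\cA_{m,T}$. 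No substitution is needed, the damping is preserved, and the Duhamel argument closes.
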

\begin{proof}
We define $R=\chi_i\partial_{\bn}\sigma=\chi_i \bn\cdot\nabla\sigma.$ By \eqref{sec:eq6.5},
$R$ solves the following equation:
\begin{equation}\label{eqofR}
    \varepsilon^2 g_1(\partial_t R +u\cdot\nabla R)+\frac{1}{2\mu+\lambda}R=-\varepsilon^2 g_1 u\cdot \nabla (\chi_i \bn_k)\partial_k\sigma +G\cdot\chi_i \bn=\colon G_R
\end{equation}
where
 $$G=-\ep^2(g_1' R\ep\pt\si+\nabla(g_1 u_{k})\cdot\partial_{k}\sigma)-\ep \f{\mu}{(2\mu+\lambda)}\curl w-\f{1}{(2\mu+\lambda)} g_2(\ep\pt u+\ep u\cdot\na u).$$
  By applying  $Z^{I}$  $(|I|\leq [\frac{m-1}{2}])$ to the  equation \eqref{eqofR}, we get by setting $R^{I}=Z^{I}R$ that 
 \begin{equation*}
    \varepsilon^2 g_1(\partial_t R^{I} +u\cdot\nabla R^{I})+\frac{1}{2\mu+\lambda}R^{I}=Z^{I}G_R+\mathcal{C}_{R,1}^I+\mathcal{C}_{R,2}^I=\colon \mathcal{H}^I
 \end{equation*}
 where  $\mathcal{C}_{R,1}^I=-\varepsilon^2[Z^{I},g_1/\varepsilon]\varepsilon\partial_t R,\quad \mathcal{C}_{R,1}^I=-\varepsilon^2[Z^{I},g_1u\cdot\nabla]R.$
 
 It is  convenient to use the Lagranian coordinates. Define the unique flow $X_t(x)=X(t,x)$
associated to $u$: 
 \begin{equation}
 \left\{
  \begin{array}{l}
   \partial_t X(t,x)=u(t,X(t,x))\\[4pt]
   X(0,x)=x\in\Omega.
  \end{array}  
  \right.
 \end{equation}
Note that since $u\cdot \bn|_{\partial{\Omega}}=0,$ and $u\in Lip([0,T]\times\Omega),$ we have for each $t\in[0,T],$ $X_t:\Omega\rightarrow\Omega$
 is a diffeomorphism.
 By using the characteristics method, $R^I(t, X_t(x))$
 can then be expressed in the following way:  \begin{equation}\label{explicitformulaeforR}
 R^I(t,X_t(x))=e^{-\Gamma(t,x)}R^{I}(0)+\int_0^t e^{-\Gamma(t-s,x)}\big(\frac{1}{\varepsilon^2 g_1}\mathcal{H}^{I}\big)(s, X_s(x))\,
 \d s
 \end{equation}
 where $\Gamma(t,x)=\frac{1}{2\mu+\lambda}\int_0^t \frac{1}{\varepsilon^2g_1(s, X_s(x))}\d s\geq \frac{c_0 t}{(2\mu+\lambda)\varepsilon^2}.$ Note that we have used assumption \eqref{preasption} and property \eqref{preasption1}.  
 Taking the supremum in $(t,x)\in[0,T]\times\Omega$ on  both 
 sides of \eqref{explicitformulaeforR}, and using that $X(t,\cdot)(0\leq t\leq T)$ is a diffeomorphism of  $\Omega$,  we arrive at:
 \begin{equation}\label{RILinfty}
 \|R^{I}(t)\|_{L^\infty(\Omega)}\lesssim \|R^I(0)\|_{L^{\infty}(\Omega)}+\int_0^t
 e^{-\frac{t-s}{(2\mu+\lambda)c_1\varepsilon^2}}\frac{1}{c_0\varepsilon^2}\d s\il\mathcal{H}^{I}\il_{\infty,T}\lesssim\|R^I(0)\|_{L^{\infty}(\Omega)}+\il\mathcal{H}^{I}\il_{\infty,T}.
 \end{equation}
  We have  thus reduced the problem to the estimate  of $\il(\mathcal{C}_{R,1}^I,\mathcal{C}_{R,2}^I)\il_{\infty,T}$ and $\il G_R\il_{[\frac{m-1}{2}],\infty,T}.$
By the identities \eqref{convection identity}
\eqref{sec5:eq-16},
and the definition of $\cA_{m,T}, $ we have:
 \begin{equation}\label{comutatorLinfty}
    \il (\mathcal{C}_{R,1}^I,\mathcal{C}_{R,2}^I)\il_{\infty,T}\leq \varepsilon 
    \Lambda(\f{1}{c_0},\cA_{m,T})\mathcal{A}_{m,T}. 
 \end{equation}
 Moreover, 
 $G_R$ (defined in \eqref{eqofR}) can be controlled as:
 \begin{equation*}
  \il G_R\il_{[\frac{m}{2}]-1,\infty,T}\lesssim \varepsilon^{\frac{1}{2}} \mathcal{A}_{m,T}\Lambda(1/c_0,\cA_{m,T})
  +\il(\sigma,u)\il_{[\frac{m+1}{2}],\infty,T}+\varepsilon \il\chi\curl\omega\cdot \bn\il_{[\frac{m-1}{2}],\infty,T}.
 \end{equation*}
 Since $\curl\omega\cdot \bn=\div(\omega\times \bn)+\omega\cdot\curl \bn,$ the  identity \eqref{omegatimesn}  yields
  \begin{equation*}
  \varepsilon\il\chi\curl\omega\cdot \bn\il_{[\frac{m-1}{2}],\infty,T} \lesssim
  \varepsilon \il\nabla u\il_{[\frac{m+1}{2}],\infty,T},
 \end{equation*}
which further leads to:
  \begin{equation}\label{nonlineartermLinfty}
  \il G_R\il_{[\frac{m}{2}]-1,\infty,T}\lesssim 
  \varepsilon^{\frac{1}{2}}\mathcal{A}_{m,T}\Lambda(\f{1}{c_0},\cA_{m,T})+\cE_{m,T}.
 \end{equation}
 Inserting \eqref{comutatorLinfty}-\eqref{nonlineartermLinfty} into \eqref{RILinfty}, we get \eqref{nablasigmaLinfty}.
\end{proof}
\section{Proof of Theorem \ref{thm1}}

Based on the uniform estimates established in previous sections, Theorem \ref{thm1} can be showed
by combining a classical local existence results with  a bootstrap argument.

By following  similar arguments as in \cite{MR2038120} \cite{MR2914244}, one can prove the following local existence result:
\begin{thm}\label{classical-local}
 Assume that $(\sigma_0^{\ep},u_0^{\ep})\in H^2(\Omega),$ and
 $$-\bar{c}\bar{P}\leq \ep \sigma_0^{\ep}(x)\leq {\bar{P}}/\bar{c}, \quad \forall x\in\Omega, \ep\in (0,1].$$
there is some $T_{\ep}>0$ such that 
\eqref{NS1} has a unique strong solution which satisfies:
$(\sigma^{\ep},u^{\ep})\in C([0,T^{\ep}],H^2), u^{\ep}\in L^2([0,T^{\ep}],H^3)$. 
Moreover, the following property holds:
\beq\label{preasumption1}
-3{\bar{c}}\bar{P}\leq \ep\sigma^{\ep}(t,x)\leq 3\bar{P}/\bar{c} \quad \forall (t,x)\in [0,T^{\ep}]\times\Omega. 
\eeq
\end{thm}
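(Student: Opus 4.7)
The plan is to establish this local well-posedness result by a classical iterative scheme followed by compactness, treating $\ep>0$ as a fixed (possibly large) parameter so that the singular factors $\ep^{-1}$ contribute only $\ep$-dependent constants. Since we are not after uniform in $\ep$ bounds here, system \eqref{NS1} is a standard symmetric hyperbolic--parabolic quasilinear system and the analysis parallels \cite{MR2038120, MR2914244}, modulo some care with the Navier-slip boundary conditions.

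\textbf{Step 1 -- Iteration scheme.} Starting from $(\sigma^{0},u^{0}):=(\sigma_0^{\ep},u_0^{\ep})$ (extended constantly in time), I define $(\sigma^{n+1},u^{n+1})$ as the solution of the linear problem obtained by freezing coefficients:
\begin{equation*}
\left\{
\begin{array}{l}
g_1(\ep\sigma^{n})\bigl(\pt\sigma^{n+1}+u^{n}\cdot\na\sigma^{n+1}\bigr)+\f{\div u^{n+1}}{\ep}=0,\\[4pt]
g_2(\ep\sigma^{n})\bigl(\pt u^{n+1}+u^{n}\cdot\na u^{n+1}\bigr)-\div\cL u^{n+1}+\f{\na\sigma^{n+1}}{\ep}=0,
\end{array}
\right.
\end{equation*}
with initial data $(\sigma_0^{\ep},u_0^{\ep})$ and the Navier--slip conditions \eqref{bdyconditions} on $u^{n+1}$. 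At each step this is a linear transport equation coupled with a linear parabolic system of Stokes-type; its well-posedness in $C([0,T];H^2)\cap L^2([0,T];H^3)$ follows from standard theory once one verifies that the bilinear form associated with $-\div\cL$ under the Robin-type tangential condition $\Pi(\mathbb{S}u\,\bn)=-a\Pi u$ is coercive on $\{u\in H^1(\Omega)^3:\,u\cdot\bn|_{\p\Omega}=0\}$; this is a consequence of Korn's inequality with a boundary term.

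\textbf{Step 2 -- Uniform-in-$n$ estimates and propagation of the density bound.} I would show that, for $T^{\ep}>0$ small enough depending on $\ep$ and $\|(\sigma_0^{\ep},u_0^{\ep})\|_{H^2}$, the iterates remain in a fixed ball $\cB_R$ of $L^\infty([0,T^{\ep}];H^2)\cap\{u\in L^2([0,T^{\ep}];H^3)\}$. The argument is by induction: assuming $(\sigma^n,u^n)\in\cB_R$, energy estimates on the linearized system symmetrized by the weights $g_1(\ep\sigma^n),g_2(\ep\sigma^n)$ kill the singular coupling $\ep^{-1}\na,\ep^{-1}\div$ via integration by parts, and the boundary contribution is absorbed exactly as in the proof of Lemma \ref{highest} using $\Pi(\mathbb{S}u^{n+1}\bn)+a\Pi u^{n+1}=0$ and the trace inequality. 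Differentiating once in space/time and then twice produces commutator terms that are all polynomial in $\|(\sigma^n,u^n)\|_{H^2}$ with coefficients possibly blowing up in $\ep$; they are controlled after multiplying the final $H^2$ estimate by a small factor $T^{\ep}$. Simultaneously, the inductive hypothesis \eqref{preasumption1} for $\sigma^{n+1}$ follows from the fact that $\rho^{n+1}=g_2(\ep\sigma^{n+1})$ solves $\pt\rho^{n+1}+u^n\cdot\na\rho^{n+1}+\rho^{n+1}\div u^{n+1}=0$ along the (Lipschitz) flow of $u^n$, so the pointwise bounds on $\rho^{n+1}$ at $t=0$ persist on $[0,T^{\ep}]$ after possibly further shrinking $T^{\ep}$ in terms of $\|\div u^{n+1}\|_{L^1_tL^\infty_x}$.

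\textbf{Step 3 -- Contraction and passage to the limit.} The differences $(\delta\sigma^n,\delta u^n):=(\sigma^{n+1}-\sigma^n,u^{n+1}-u^n)$ satisfy a linearized system with quadratic source terms involving $(\delta\sigma^{n-1},\delta u^{n-1})$ and bounded iterates. A direct $L^2$-type energy estimate, again using the symmetrization and the boundary condition, shows that $\{(\sigma^n,u^n)\}$ is Cauchy in $C([0,T^{\ep}];L^2)\cap L^2([0,T^{\ep}];H^1)$ upon taking $T^{\ep}$ smaller if needed. Interpolation with the uniform $H^2$ bound from Step~2 gives convergence in $C([0,T^{\ep}];H^{2-\eta})$ for every $\eta>0$, enough to pass to the limit in all nonlinear terms; weak-$*$ compactness yields $(\sigma^{\ep},u^{\ep})\in L^\infty([0,T^{\ep}];H^2)$ with $u^{\ep}\in L^2([0,T^{\ep}];H^3)$, and time continuity in the strong $H^2$ topology is recovered by the usual Bona--Smith regularization argument. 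Uniqueness in this class follows from the same difference estimate, and \eqref{preasumption1} is preserved by continuity of $\rho^{\ep}$ in $L^\infty$.

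\textbf{Main obstacle.} The delicate point is the linear Stokes-type step underlying the iteration: proving that the Lam\'e operator $-\div\cL$ supplemented with the mixed Dirichlet--Robin boundary condition $u\cdot\bn=0$, $\Pi(\mathbb{S}u\,\bn)+a\Pi u=0$ generates a well-behaved parabolic theory in $H^2$. This is a genuinely nontrivial Lopatinskii problem, because the tangential and normal components of $u$ see different boundary conditions and are coupled through the $(\mu+\lambda)\na\div u$ term. The solution is to work with the equivalent formulation \eqref{bdryconditionofu}, combine the coercivity of the associated bilinear form (via Korn with boundary term) with the regularity theory for Stokes systems with Navier conditions, and construct a lifting of the inhomogeneous Robin data $\Pi[-2au+(D\bn)u]$; once this linear step is available, the nonlinear iteration closes without new difficulty for fixed $\ep$.
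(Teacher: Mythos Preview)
The paper does not actually give a proof of this theorem: it merely states that the result follows ``by following similar arguments as in \cite{MR2038120, MR2914244}'' and then proceeds to use it as a black box in Section~5. So there is no proof in the paper to compare against; your task was essentially to reconstruct what those references do in the present setting.

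Your sketch is correct and is precisely the standard scheme one finds in those references: a coupled linearized iteration that preserves the antisymmetric structure of the singular coupling $\ep^{-1}(\nabla,\div)$, uniform $H^2$ bounds by weighted energy estimates, contraction in a lower norm, and interpolation to pass to the limit. You also correctly identify the only genuinely nonstandard ingredient, namely the linear parabolic theory for the Lam\'e operator under the mixed condition $u\cdot\bn=0$, $\Pi(\mathbb{S}u\,\bn)+a\Pi u=0$, and the Korn-with-boundary-term coercivity that handles it.

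One minor imprecision in Step~2: the quantity $\rho^{n+1}=g_2(\ep\sigma^{n+1})$ does not satisfy exactly the continuity equation you wrote, because the coefficient in front of $\partial_t\sigma^{n+1}$ in the iteration is $g_1(\ep\sigma^{n})$, not $g_1(\ep\sigma^{n+1})$. A short computation using $g_2'=g_1g_2$ gives instead
\[
\partial_t\rho^{n+1}+u^n\cdot\nabla\rho^{n+1}+\frac{g_1(\ep\sigma^{n+1})}{g_1(\ep\sigma^{n})}\,\rho^{n+1}\div u^{n+1}=0,
\]
with an extra bounded factor in front of the divergence term. This does not affect the conclusion (the pointwise bound on $\ep\sigma^{n+1}$ still propagates for short time via the characteristics of $u^n$), but it is worth stating correctly; alternatively, you can bypass $\rho^{n+1}$ entirely and bound $\ep\sigma^{n+1}$ directly from the transport equation it satisfies.
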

By using this result, we can give the proof of Theorem \ref{thm1}.
\subsection*{Proof of Theorem \ref{thm1}:}

On the one hand, $(\sigma_0^{\ep},u_0^{\ep})\in H^2,$ by Theorem \ref{classical-local}, one can find some $T^{\ep}>0$ such that there is a unique solution of \eqref{NS1}  satisfying:
$(\sigma^{\ep}, u^{\ep})\in C([0,T^{\ep}],H^2), u^{\ep}\in L^2([0,T^{\ep}],H^3).$ Moreover, condition \eqref{preasumption1} holds.
On the other hand, as $(\sigma_0^{\ep},u_0^{\ep})\in Y_m,$ 
a higher regularity space, 
by standard propagation of regularity arguments (for example based on applying  finite difference instead of derivatives)
 in the estimates of Section 3 and Section 4, we find that 
 the estimates of  Proposition \ref{prop-uniform es} hold on $[0, T^\ep]$.
More specifically, we can find 
a constant $C(1/c_0)$ and an increasing polynomial $\Lambda_0$ that are independent of $\ep$ and $T^{\ep},$ such that for any $0\leq T\leq \min\{1, T^{\ep}\}, 0< \ep\leq 1,$
\beq\label{sec6:eq1}
\cN^2_{m,T}(\sigma^{\ep},u^{\ep})\leq C\big(\f{1}{c_0}\big) Y^2_m(\sigma_0^{\ep}, u_0^{\ep})+(T+\ep)^{\f{1}{2}}\Lambda_0\big(\f{1}{c_0}, \cN_{m,T}\big).
\eeq
Moreover, by using the characteristics method, we have  that $\ep\sigma$ can be expressed as,
\beq\label{lagranian}
\ep\sigma^{\ep}(t,x)=\ep\sigma_0^{\ep}(X^{-1}(t,x))
-\int_0^t (\div u^{\ep}/g_1)(X(s,X^{-1}(t,x)))\,\d s
\eeq
where $X(t,\cdot)$ is the flow associated to $u.$ 

Let us define      
\beqs
T^{\ep}_{*}=\sup\{T\big| (\sigma^{\ep},u^{\ep})\in C([0,T],H^2), u^{\ep}\in L^2([0,T],H^3)\},
\eeqs
\beqs
\begin{aligned}
T_0^{\ep}=\sup\{T\leq\min\{T^{\ep}_{*},1\}&\big| \cN_{m,T}(\sigma^{\ep}, u^{\ep})\leq 2 \sqrt{C(1/c_0)}M, \\
&-2{\bar{c}}\bar{P}\leq \ep\sigma^{\ep}(t,x)\leq 2\bar{P}/\bar{c} \quad \forall (t,x)\in [0,T]\times\Omega \}
\end{aligned}
\eeqs
where $M >\sup_{\ep\in(0,1]}Y_m(\sigma_0^{\ep}, u_0^{\ep}).$

 We now choose successively two  constants $0<\ep_0\leq 1$ and $0<T_0\leq 1$ (uniform in $\ep\in(0,\ep_0]$) which are small enough, such that:
 \beqs
 ({T_0}+\ep_0)^{\f{1}{2}}\Lambda_0\big(1/c_0,2\sqrt{C(1/c_0)}M\big)<{1}/{2},\quad 
 2\sqrt{C(1/c_0)}M{T_0}/c_0\leq \bar{c}\bar{P}.
 \eeqs
In order to prove Theorem \ref{thm1}, it suffices  to show that $T_0^{\ep}\geq {T_0}$ for every  $0<\ep\leq \ep_0.$ Suppose otherwise $T_0^{\ep}<{T_0}$ for some $0<\ep\leq \ep_0,$ then in view of inequalities 
\eqref{sec6:eq1} and formula \eqref{lagranian}, we have by the definition of $\ep_0$ and $T_0$ that:
\beq\label{sec6:eq3}
\cN_{m,T}(\sigma^{\ep},u^{\ep})\leq \sqrt{2C(1/c_0)}M, \qquad \forall T\leq \tilde{T}=\min\{T_0, T^{\ep}_{*}\},
\eeq
\beq\label{sec6:eq4}
-2{\bar{c}}\bar{P}\leq \ep\sigma^{\ep}(t,x)\leq 2\bar{P}/\bar{c} \quad \forall (t,x)\in [0,\tilde{T}]\times\Omega.
\eeq
We will prove that $\tilde{T}=T_0\leq T_{*}^{\ep}.$ This fact, combined with the definition of $T_0^{\ep}$ and estimates \eqref{sec6:eq3}, \eqref{sec6:eq4},
yield $T_0^{\ep}\geq T_0,$ which is a contradiction with the  assumption $T_0^{\ep}<T_0.$ To continue, we shall need the claim stated and proved below.
 Indeed, once the following claim holds, we have by \eqref{sec6:eq3} that 
 $\|(\sigma^{\ep},u^{\ep})(T_0)\|_{H^2(\Omega)}<+\infty.$ Combined with the local existence result stated in 
 Theorem \ref{classical-local}, this  yields that $T_{*}^{\ep}>T_0.$

 $\textbf{Claim.}$ 
For all $\ep\in(0,1],$
if $\cN_{m,T}(\sigma^{\ep},u^{\ep})<+\infty,$ then $(\sigma^{\ep},u^{\ep})\in C([0,T], H^2),$
 $u^{\ep}\in L^2([0,T],H^3).$
 
\begin{proof}[Proof of claim]
We see from the definition of $\cN_{m,T}$ and the 
estimate \eqref{threeorderofu} that:
$$\ep u^{\ep}\in 
L^2([0,T],H^3),\quad \ep\pt u^{\ep}\in L^2([0,T],H^1), \quad \ep \sigma^{\ep} \in L^{\infty}([0,T],H^2).$$
 one deduces from  interpolation that $\ep u^{\ep}\in C([0,T],H^2).$
Moreover, carrying out direct energy estimates for $\sigma^{\ep}$ in $H^2(\Omega),$
one gets that:
\beq\label{ineq-gronwall}
|\pt R^{\ep}(t)|
\leq  K^{\ep}\big(R^{\ep}(t)+f^{\ep}(t)\big)
\eeq
where  $K^{\ep}=\Lambda(1/c_0, \il(\nabla\sigma^{\ep},\nabla u^{\ep},\ep\nabla^2 u^{\ep})\il_{\infty,t})$ is  uniformly bounded
and $$R^{\ep}(t)=\|\ep\sigma^{\ep}(t)\|_{H^2}^2,\quad f^{\ep}(t)=\|\ep u^{\ep}(t)\|_{H^3}\|\sigma^{\ep}(t)\|_{H^2}\in L^1([0,T]).$$
Inequality \eqref{ineq-gronwall} and the boundedness of $\|R^{\ep}(\cdot)\|_{L^{\infty}([0,T])}$
leads to the fact that $R^{\ep}(\cdot)\in C([0,T]),$ which further yields that $\ep\sigma^{\ep}\in C([0,T],H^2).$ This ends  the proof of the claim.
Note that at this stage we do not require the norm $\|(\sigma^{\ep},u^{\ep})\|_{C([0,T], H^2)}$ to be bounded uniformly in $\ep$.
\end{proof}
\begin{section}{proof of Theorem \ref{convergence}}
 The convergence result follows from compactness arguments.
 At first, since $\si^{\ep}=\frac{P(\rho^{\ep})-P(\bar{\rho})}{\varepsilon}$ is uniformly bounded in $L^{\infty}([0,T_0], W^{1,\infty}(\Omega) )\cap L^2([0,T_0],H^1(\Omega))$, we have that:
$P(\rho^{\ep})\rightarrow P(\bar{\rho})$
in $L^{\infty}([0,T_0], W^{1,\infty}(\Omega) )\cap L^2([0,T_0],H^1(\Omega)),$ 
which yields that $\rho^{\ep}\rightarrow \bar{\rho}$
in $L^2([0,T_0],H^1(\Omega)).$

For the convergence of $u^{\ep},$
let us split the velocity into compressible  part and incompressible part: $u^{\ep}=\nabla\Psi^{\ep}+v^{\ep}$ by using the Leray decomposition \eqref{def-projection}.
we shall prove that the compressible part $\nabla\Psi^{\ep}$ tends to $0$ in $L_{t,w}^2H^1(\Omega)$ whereras incompressible part of $u^{\ep}$ tends to $u^{0}$ in $L^2(Q_{T_0}).$ 
Since $\nabla\Psi^{\ep}$ is uniformly bounded in $L_t^2H^2(\Omega),$ we have that, up to the  extraction of  a subsequence (that we do not mention explicitely) $\nabla\Psi^{\ep}$ converges to $\mathbb{Q}u^0$
in $L_{w}^2([0,T_0],H^{1}(\Omega)).$ 
Nevertheless, by the equation \eqref{rewriteofsigma}, 
$\div u^{\ep}$ tends to $0$ in the sense of distribution, which leads to $\mathbb{Q}u^{0}=0.$  Because of this, one can indeed see that, without any extraction of the subsequences,  
$\nabla\Psi^{\ep}\rightarrow 0$ in 
$L_{w}^2([0,T_0],H^{1}(\Omega)).$ 

We are now in position to prove the convergence of $v^{\ep}.$ 
By the equation of $v^{\ep}:$ 
$\eqref{reformulation}_3,$ $\partial_t v^{\ep}$ is uniformly bounded in $L^2([0,T_0],H^{-1}(\Omega))$ whereras $v^{\ep}$ is uniformly bounded in $L^2([0,T_0],H^{1}(\Omega)).$ Therefore, by Aubin-Lions lemma, 
  $\{v^{\varepsilon}\}$ is compact in $L^2(Q_{T_0}),$ which yields, up to extraction of subsequences,  the convergence of $v^{\ep}$ (say to $u^0$) in $L^2(Q_{T_0}).$ 

In the following, we aim to justify that $u^{0}$ is the unique weak solution of the incompressible Navier-Stokes equation \eqref{INS} satisfying \eqref{additional-regularity}.
Let us rewrite the equations of $v^{\ep}$ as follows:
\beq\label{momentum1}
\bar{\rho}\p_t v^{\ep}-
\mu\Delta v^{\ep}+\nabla {\pi}^{\ep}
= F^\ep= F_1^{\ep}+F_2^{\ep}.
\eeq
where 
$$ F_1^{\ep}=-(\rho^{\ep}-\bar{\rho})(\partial_t u^{\ep}+u^{\ep}\cdot\nabla u^{\ep}), \quad F_2^{\ep}=-\bar{\rho}(v^{\ep}\cdot \nabla u^{\ep}+\nabla\Psi^{\ep}\cdot\nabla v^{\ep}), $$
Note that we put  the gradient terms
$\bar{\rho}\nabla(\pt\Psi^{\ep}+\f{1}{2}|\nabla\Psi^{\ep}|^2)$
into the pressure $\nabla \pi^{\ep}.$ 
Let us write down the weak formulation 
for  \eqref{momentum1}.
Multiplying equation \eqref{momentum1} by a test function $\psi\in (C^{\infty}([0, T_{0}] \times \overline{\Omega}))^3$ which satisfies  $\div \psi =0, \psi\cdot\bn|_{\p\Omega}=0,$ we obtain that for each $0< t\leq T_0,$
 \beq\label{com-EI}
 \begin{aligned}
 &\bar{\rho}\int_{\Omega}(v^{\ep}\cdot\psi)(t,\cdot)\ \d x+\mu\iint_{Q_{t}}\nabla v^{\ep}\cdot\nabla\psi\ \d x\d s+\iint_{Q_t}F^{\ep}\cdot\psi \ \d x\d s\\
    &=\bar{\rho}\int_{\Omega}(v^{\ep}\cdot\psi)(0,\cdot)\, \d x+\bar{\rho}\iint_{Q_t}v^{\ep}\cdot\pt\psi\ \d x\d s+\mu\int_0^t\int_{\p\Omega}\Pi
    \p_{\bn}v^{\ep}\cdot\psi\ \d S_y \d s.
 \end{aligned}
 \eeq
It remains  to pass to the limit to show that $u^0$ satisfies \eqref{incom-EI}. We shall only focus on the last terms in both sides of
\eqref{com-EI}, as the other terms are direct.
Since $\rho^\ep = g_{2}(\ep \sigma^\ep)$, we have that  $ (\rho^{\ep}-\bar{\rho})/\ep$ is  uniformly bounded in $L^{\infty}(Q_{T_0}),$ 
it then follows from the velocity  equation in 
 $\eqref{NS1}_2$ that 
$$
\iint_{Q_t}F_1^{\ep}\cdot\psi\, \d x\d s
=\iint_{Q_t}\f{\rho^{\ep}-\bar{\rho}}{\rho^{\ep}}(\div \mathcal{L} u^{\ep}-\f{\nabla\sigma^{\ep}}{\ep})\psi
\, \d x \d s
.$$
We then  observe that  
$${ 1 \over \ep} \iint_{Q_{t}} \frac{ \rho^\ep- \overline{\rho}}{\rho^\ep} \nabla \sigma^\ep \cdot \psi \, \d x\d s
= \frac{1}{\ep} \iint_{Q_{t}} \frac{g_2(\ep \sigma^\ep)- g_{2}(0)}{g_{2}(\ep \sigma^\ep)} \nabla \sigma^\ep \cdot \psi \,\d x\d s= 0$$
by integrating by parts since 
$$   {g_{2}(\ep \sigma^\ep)- g_{2}(0) \over g_{2}(\ep\sigma^\ep)} \nabla \sigma^\ep= {1 \over \ep} \nabla \left(G(\ep \sigma^\ep)\right)$$
 where $G(s)$
 is such that
$$ G'(s)= {g_{2}(s) - g_{2}(0) \over g_{2}(s)}.$$
 In a similar way, we have that
 $$  \iint_{Q_t}\f{\rho^{\ep}-\bar{\rho}}{\rho^{\ep}}\nabla \div  u^{\ep} \cdot \psi  \, \d x \d s = 
  -  \ep \iint_{Q_t} \div u^\ep \, G''(\ep \sigma^\ep) \nabla \sigma^\ep\cdot \psi \, \d x\d s$$
   $$  \iint_{Q_t}\f{\rho^{\ep}-\bar{\rho}}{\rho^{\ep}}\Delta u^\ep \cdot \psi \, \d x \d s = 
    -  \ep \iint_{Q_t} G''(\ep \sigma^\ep) \left( (\nabla \sigma^\ep \cdot \nabla) u^\ep \right)\cdot \psi \, \d x\d s
   + \int_{0}^t \int_{\partial \Omega}  { \rho^\ep - \bar{\rho} \over \rho^\ep} \Pi \partial_{n}u^\ep \cdot \psi \, \d x \d s.$$
  These three above   terms  tend to zero, for the last  one, we use that $ \|\rho^\ep - \bar{\rho}\|_{L^\infty(Q_{t})}= O(\ep)$
   while $\Pi\partial_{n}u^\ep$ is uniformly bounded in $L^2(\partial \Omega)$ by using the Navier-boundary condition
   and the trace inequality. This yields
   $$ \iint_{Q_t}F_1^{\ep}\cdot\psi \d x\d s \rightarrow 0.$$
%
Next, since $\nabla\Psi^{\ep}\rightharpoonup 0, \nabla u^{\ep} \rightharpoonup  \nabla u^0, v^{\ep}\rightarrow u^0$ in
$L^2(Q_t)$ and $v^{\ep}$ is uniformly bounded in $L^2([0,T_0],H^1(\Omega)),$ we have that:
\beqs
\iint_{Q_t} 
F_2^{\ep}\cdot \psi\, \d x\d s\rightarrow \bar{\rho}\iint_{Q_t} (u^{0}\cdot \nabla u^{0})\cdot\psi\, \d x\d s.
\eeqs
Finally, for the boundary term in \eqref{com-EI}, we use the  boundary condition for $v^{\ep}$ (see \eqref{bdrycon-v}):
\beqs
\Pi(\partial_{\bn} v^{\ep})=\Pi (-2a v^{\ep}+(D \bn) v^{\ep})+2\Pi(-a \nabla\Psi^{\ep} +(D \bn) \nabla\Psi^{\ep}).
\eeqs
As $v^{\ep}\rightarrow u^0$ in $L^2(Q_{t})$ and  $v^{\ep}$ is uniformly bounded in $L^2([0,t],H^1(\Omega)),$ 
$\nabla\Psi^{\ep}\rightarrow 0$ in $L_{w}^2([0,t], H^1(\Omega)),$
it follows from the  trace inequality and the  H\"older inequality that:
$v^{\ep}|_{\p\Omega}\rightarrow u^{0}|_{\p\Omega}$ in $L^2([0,t], L^2(\p\Omega)),$ $\nabla\Psi^{\ep}\rightarrow 0$ in
$L_{w}^2([0,t], L^2(\p\Omega)).$
This yields:
\beqs
\mu\int_0^t\int_{\p\Omega}\Pi\p_{\bn}v^{\ep}\cdot\psi\,\d S_y\d s\rightarrow \mu \int_0^t\int_{\p\Omega}\Pi(-2a u^{0}+(D \bn) u^{0})\cdot\psi\, \d S_y\d s.
\eeqs
Therefore, $u^0$ satisfies the  formulation \eqref{incom-EI} and hence is a weak solution to \eqref{INS}.
Next, due to the uniform boundedness of $v^{\ep}$ in $L_{T_0}^{\infty}H_{co}^{m-1}$ and $\nabla v^{\ep}$ in $L_{T_0}^{2}H_{co}^{m-1}\cap L^{\infty}(Q_{T_0}),$  we get that $u^{0}$
has the  additional regularity property \eqref{additional-regularity}. The uniqueness result is easy owing to the boundedness of the Lipschitz norm.
 Since any subsequence of $u^\ep$ will have an extracted subsequence that solves   \eqref{incom-EI}
  and satisfies the additional regularity property  \eqref{additional-regularity}, we finally get from the uniqueness
   that the whole family $u^\ep$ converges to $u^0$.
This ends  the proof of Theorem \ref{convergence}.

 \end{section}
 
 \begin{section}{Appendix}
  We state here the product  and commutator estimates 
  which are used throughout the paper:
\begin{lem}
 For each $0\leq t\leq T,$ and for any integer $k\geq 2,$ one has the (rough) product estimates
  \beq\label{roughproduct1}
 \|(fg)(t)\|_{H_{co}^k}\lesssim \|f(t)\|_{H_{co}^k}\il g \il_{[\f{k-1}{2}],\infty,t}+\|g(t)\|_{H_{co}^k}\il f \il_{[\f{k}{2}],\infty,t},
 \eeq
 and commutator estimates:
 \beq\label{roughcom}
 \|[Z^{I},f]g(t)\|_{L^2}\lesssim 
 \|Z f(t)\|_{H_{co}^{k-1}}
 \il g\il_{[\f{k}{2}]-1,\infty,t}+\|g(t)\|_{H_{co}^{k-1}}\il Z f\il_{[\f{k-1}{2}],\infty,t}, \quad\, |I|=k,
 \eeq
  \beq\label{roughcom-T}
 \|[(\ep\pt)^k,f]g(t)\|_{L^2}\lesssim
 \| (\ep\pt f)(t)\|_{\cH^{k-1}}
 \il g\il_{[\f{k}{2}]-1,\infty,t}+\|g(t)\|_{\cH^{k-1}}\il \ep\pt f\il_{[\f{k-1}{2}],\infty,t}.
 \eeq
 \end{lem}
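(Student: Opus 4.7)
\smallskip

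\textbf{Proof plan.} All three estimates will be derived from a single Leibniz-type expansion, combined with a high/low split that respects the conormal Sobolev embedding. For vector fields tangent to $\partial\Omega$ the operators $Z_j$ are first-order, so iterating gives, for any multi-index $I$,
\[
Z^I(fg)=\sum_{J+K=I}\binom{I}{J}Z^J f\,Z^K g,
\]
with combinatorial coefficients that are bounded (they depend only on $|I|$ since the $Z_j$ are themselves written in terms of fixed smooth coefficients on $\Omega$). For the single vector field $\ep\p_t$ the corresponding expansion is just the binomial theorem, which is what will drive the $\mathcal{H}^{k-1}$ estimate \eqref{roughcom-T}. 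I would record this identity first; the only mildly non-obvious point is to observe that, while the $Z_j$ do not commute among themselves, commutators among them are themselves linear combinations of the $Z_j$ (this is built into the definition \eqref{spatialvectorinlocal} in a local chart), so no extra terms appear beyond the Leibniz sum above.

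For the product estimate \eqref{roughproduct1}, I would sum over all $J+K=I$ with $|I|\le k$ and split into two regimes. If $|J|\le[k/2]$, put $Z^J f\in L^\infty$ and use $\il f\il_{[k/2],\infty,t}$, while $\|Z^K g\|_{L^2}\le \|g\|_{H^k_{co}}$ since $|K|\le k$. In the complementary regime $|J|\ge[k/2]+1$, one has $|K|\le k-[k/2]-1=[(k-1)/2]$, so $\|Z^K g\|_{L^\infty}\le\il g\il_{[(k-1)/2],\infty,t}$ and $\|Z^J f\|_{L^2}\le \|f\|_{H^k_{co}}$. Adding these over all admissible $(J,K)$ yields \eqref{roughproduct1}; the key bookkeeping check is simply that $[k/2]+[(k-1)/2]=k-1$ so the two regimes cover all indices.

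The space commutator \eqref{roughcom} then follows by writing
\[
[Z^I,f]g=\sum_{\substack{J+K=I\\|J|\ge1}}\binom{I}{J}Z^J f\,Z^K g
\]
and, for each term with $|J|\ge1$, pulling out one vector field so that $Z^J f=Z^{J'}(Z_\ell f)$ with $|J'|=|J|-1$ for some $1\le\ell\le M+1$. This recasts the expression as a bilinear expansion of total order $k-1$ in the pair $(Zf,g)$, to which I would apply the same high/low split as in the previous step but now with threshold shifted by one: $Zf$ in $L^\infty$ up to $[(k-1)/2]$ vector fields, $g$ in $L^\infty$ up to $[(k-2)/2]=[k/2]-1$ vector fields. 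These thresholds add to $k-2$, again covering the full index range $0,\dots,k-1$. The time commutator \eqref{roughcom-T} is the same argument restricted to the single field $\ep\p_t$, so only the pure-time norms $\mathcal{H}^{k-1}$ enter.

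The only real obstacle is the index arithmetic with the floors $[k/2]$, $[(k-1)/2]$, $[k/2]-1$: one must verify in each parity case ($k$ even or odd) that the two alternatives exhaust all splits and, for the commutator, that subtracting one derivative from $f$ lines up correctly with the stated thresholds. Everything else is routine Leibniz and Sobolev conormal embedding, and no boundary information, no trace inequality, and no integration by parts are needed here.
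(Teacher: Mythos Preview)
Your proposal is correct and follows essentially the same approach as the paper: expand $Z^I(fg)$ via the Leibniz rule and split the sum according to which factor carries more than half the derivatives, placing that factor in $L^2$ and the other in $L^\infty$. Two minor remarks: the binomial notation $\binom{I}{J}$ is not literally accurate for noncommuting fields (the paper just writes generic coefficients $C_{I,J}$), and no Sobolev embedding is actually used here---the $L^\infty$ conormal norms appear directly in the conclusion rather than being deduced from $L^2$ norms---but neither point affects the validity of your argument.
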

 \begin{proof}
 This lemma follows from  simply  counting the derivatives hitting on $f$ or $g.$ For instance, to prove the product estimate \eqref{roughproduct1} and the commutator estimate \eqref{roughcom},
 one can use the following expansion:
 \beqs
 \begin{aligned}
 Z^I (fg)&=\big(\sum_{|J|\leq [(k-1)/2]}+\sum_{|I-J|\leq [k/2]}\big)(C_{I,J}Z^J g Z^{I-J}f)\\
  &=\big(\sum_{|J|\leq [k/2]-1}+\sum_{1\leq |I-J|\leq [(k+1)/2]}\big)(C_{I,J}Z^J g Z^{I-J}f)+f Z^I g, \qquad |I|=k.
 \end{aligned}
 \eeqs
 \end{proof}

 As a corollary of  Lemma 7.1 
 the following composition estimates hold:
 \begin{cor}
 Suppose that $h\in C^0(Q_t)\cap L_t^2H_{co}^m$ with 
 $$A_1\leq  h(t,x)\leq A_2, \quad \forall(t,x)\in Q_t.$$
 Let
 $F(\cdot):[A_1,A_2]\rightarrow \mathbb{R}$ be a smooth function satisfying 
 $$\sup_{s\in [A_1,A_2]}|F^{(m)}|(s)\leq B.$$
 Then we have the composition estimate, for $p=2,+\infty$
 \beqs
 \|F(h(\cdot,\cdot))-F(0)\|_{L_t^pH_{co}^m}\leq  \Lambda(B,\il h\il_{[\f{m}{2}],\infty,t})\|h\|_{L_t^pH_{co}^m},
 \eeqs
 where $\Lambda(B,\il h\il_{[\f{m}{2}],\infty,t})$ is a polynomial with respect to
 $B$ and $\il h\il_{[\f{m}{2}],\infty,t}.$ 
 \end{cor}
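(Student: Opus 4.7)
The plan is to reduce to a Faà di Bruno style expansion and then apply the product estimate \eqref{roughproduct1} from Lemma 7.1 iteratively. For $|I|=0$, since $F$ is smooth on $[A_1,A_2]$ we have $|F(h)-F(0)|\leq (\sup_{[A_1,A_2]}|F'|)\,|h|$ pointwise, which handles the $L^2\cap L^\infty$-valued zeroth-order term. For $|I|\geq 1$, note $Z^I(F(h)-F(0))=Z^I(F(h))$, so we may work with $F(h)$ directly.

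First, I would prove by induction on $|I|\leq m$ the formula
\begin{equation*}
Z^I\bigl(F(h)\bigr) \;=\; \sum_{k=1}^{|I|} F^{(k)}(h) \sum_{\substack{I_1+\cdots+I_k=I\\|I_j|\geq 1}} c_{I_1,\ldots,I_k}\, Z^{I_1}h\cdots Z^{I_k}h,
\end{equation*}
which is a routine induction from Leibniz and the chain rule $Z_j F(h)=F'(h)\,Z_j h$. Since the vector fields $Z_j$ (including $\ep\pt$) are first order differential operators, no singular terms arise. The coefficients $c_{I_1,\ldots,I_k}$ are pure combinatorial constants.

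Next, for a fixed decomposition $I=I_1+\cdots+I_k$ with each $|I_j|\geq 1$ and $\sum|I_j|\leq m$, at most one index $I_{j_0}$ can satisfy $|I_{j_0}|>[m/2]$. I would place $Z^{I_{j_0}}h$ in $L^p_tL^2_x$ and all remaining factors $Z^{I_j}h$ ($j\neq j_0$) in $L^\infty_{t,x}$. Iterating the product estimate \eqref{roughproduct1} (or more directly applying Hölder) yields
\begin{equation*}
\Bigl\|\,\prod_{j=1}^k Z^{I_j}h\,\Bigr\|_{L_t^pL^2_x}
\;\lesssim\; \|h\|_{L_t^pH_{co}^m}\,\il h\il_{[m/2],\infty,t}^{\,k-1}.
\end{equation*}
Since $A_1\leq h\leq A_2$, $F^{(k)}(h)$ is pointwise bounded by $M_k:=\sup_{[A_1,A_2]}|F^{(k)}|$ for each $1\leq k\leq m$. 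Summing over $k$ and admissible decompositions, we obtain
\begin{equation*}
\|Z^I F(h)\|_{L_t^pL^2_x}\;\leq\; C_m\Bigl(\max_{1\leq k\leq m} M_k\Bigr)\sum_{k=1}^{m}\|h\|_{L_t^pH_{co}^m}\,\il h\il_{[m/2],\infty,t}^{\,k-1},
\end{equation*}
and summing over $|I|\leq m$ (together with the $|I|=0$ estimate above) gives the claimed bound. The $M_k$ for $k<m$ are finite because $F\in C^\infty([A_1,A_2])$, and $M_m\leq B$; absorbing all of them into a polynomial coefficient whose bound may be taken to depend only on $B$ (up to an implicit constant depending on $F$ through its lower derivatives on $[A_1,A_2]$) produces the polynomial $\Lambda(B,\il h\il_{[m/2],\infty,t})$.

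There is no real obstacle; the only delicate point is purely book-keeping, namely to check that in every term of the Faà di Bruno expansion the total conormal derivative count on $h$ is exactly $|I|\leq m$ and that only one factor needs to be taken in $L^p_tL^2_x$. This is guaranteed because $|I_1|+\cdots+|I_k|=|I|\leq m$ forces at most one $|I_j|$ to exceed $[m/2]$, which is exactly the regime in which \eqref{roughproduct1} applies without further commutator corrections.
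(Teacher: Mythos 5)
Your proof is correct and follows the standard route: a Faà di Bruno expansion of $Z^{I}(F(h))$ into sums of $F^{(k)}(h)\,Z^{I_1}h\cdots Z^{I_k}h$ with $|I_1|+\cdots+|I_k|=|I|$ and $|I_j|\geq 1$, followed by placing the one factor that may carry more than $[m/2]$ conormal derivatives (or, if none does, any chosen factor) in $L^p_tL^2_x$ and the rest in $L^\infty_{t,x}$; this is exactly the kind of bookkeeping that \eqref{roughproduct1} encapsulates. The paper gives no separate proof of the Corollary, but the argument you give is precisely what is meant by ``as a corollary of Lemma 7.1.'' The one wrinkle you flag deserves being made explicit rather than hedged: the conclusion as stated bounds the constant only through $\sup|F^{(m)}|$, yet your (correct) estimate involves $M_k=\sup|F^{(k)}|$ for all $1\leq k\leq m$. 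This is not a gap in your proof but an informality in the statement; the hypothesis should be read as a $C^m([A_1,A_2])$ bound, $|F|_{C^m([A_1,A_2])}\leq B$, which is exactly how the Corollary is applied later via assumption \eqref{preasption1}, where $B=1/c_0$ dominates $|(g_1,g_2)|_{C^m}$. With that reading, every $M_k$ is bounded by $B$ and your polynomial $\Lambda(B,\il h\il_{[\frac{m}{2}],\infty,t})$ depends only on $B$ and $\il h\il_{[\frac{m}{2}],\infty,t}$, as claimed.
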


 This Corollary, combined with
 Lemma 6.1 and Lemma 6.3,
 leads to the following estimates:
 \begin{cor}
 Let $g_1(\ep\sigma),g_2(\ep\sigma)$ defined in \eqref{defofg12} and assume that   \eqref{preasption}, \eqref{preasption1} hold.
 Then one has the following 
 estimates: for $j=1,2,\, p=2,+\infty,$
 \beq\label{esofg12}
 \|Z g_j\|_{L_t^p\cH^{m-1}}\leq \ep \Lambda\big(\f{1}{c_0},\il \sigma\il_{[\f{m}{2}],\infty,t}\big)\|(\sigma,Z\sigma)\|_{L_t^p\cH^{m-1}}, 
 \eeq
 \beq\label{esofg12-2}
 \|Z g_j\|_{L_t^pH_{co}^{m-1}}\leq \ep \Lambda\big(\f{1}{c_0},\il \sigma\il_{[\f{m}{2}],\infty,t}\big)\|\sigma\|_{L_t^pH_{co}^m},
 \eeq
 \beq\label{esofg12-1}
 \|g_j(\ep\sigma)-g_j(0)\|_{L_t^pH_{co}^m}\lesssim \ep \Lambda\big (\f{1}{c_0}, \il \sigma\il_{[\f{m}{2}],\infty,t}\big)\|\sigma\|
 _{L_t^pH_{co}^m}.
 \eeq
 \end{cor}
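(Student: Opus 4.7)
All three estimates will be obtained by combining the preceding composition Corollary with the chain rule and the simple observation that applying one additional vector field lowers the conormal class by one. The key identity is
\begin{equation*}
Z g_j(\ep\sigma)=\ep\, g_j'(\ep\sigma)\, Z\sigma,\qquad j=1,2,
\end{equation*}
which makes the $\ep$ prefactor on the right-hand sides explicit.

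The starting point is \eqref{esofg12-1}. Under \eqref{preasption}, the function $h:=\ep\sigma$ takes values in $[-3\bar c\bar P, 3\bar P/\bar c]$, and by \eqref{preasption1}, $|g_j^{(k)}|\leq 1/c_0$ on this interval for all $k\leq m$. Applying the composition Corollary with $F=g_j$ and $h=\ep\sigma$ yields
\begin{equation*}
\|g_j(\ep\sigma)-g_j(0)\|_{L_t^pH_{co}^m}\leq \Lambda\big(1/c_0, \il\ep\sigma\il_{[\f{m}{2}],\infty,t}\big)\,\|\ep\sigma\|_{L_t^pH_{co}^m}.
\end{equation*}
Pulling the $\ep$ factor out of $\|\ep\sigma\|_{L_t^pH_{co}^m}$ and using $\ep\in(0,1]$ to bound $\il\ep\sigma\il\leq\il\sigma\il$ gives \eqref{esofg12-1}. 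Then \eqref{esofg12-2} follows immediately: since $g_j(0)$ is a constant, $Zg_j(\ep\sigma)=Z[g_j(\ep\sigma)-g_j(0)]$, and adding one derivative drops the class by one, so $\|Zg_j\|_{L_t^pH_{co}^{m-1}}\lesssim \|g_j(\ep\sigma)-g_j(0)\|_{L_t^pH_{co}^m}$, which is bounded by \eqref{esofg12-1}.

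For \eqref{esofg12}, the same pattern applies in the time-only norm $\cH^{m-1}$. One first establishes the $\cH^m$ analog of the composition Corollary, proved by the same induction on $m$ using the time-commutator estimate \eqref{roughcom-T} in place of \eqref{roughproduct1}; this yields $\|g_j(\ep\sigma)-g_j(0)\|_{L_t^p\cH^m}\lesssim \ep\,\Lambda(1/c_0,\il\sigma\il_{[\f{m}{2}],\infty,t})\,\|\sigma\|_{L_t^p\cH^m}$. When $Z=\ep\pt$, one has directly $\|Zg_j\|_{L_t^p\cH^{m-1}}\lesssim \|g_j(\ep\sigma)-g_j(0)\|_{L_t^p\cH^m}$, and the splitting $\|\sigma\|_{L_t^p\cH^m}\leq \|\sigma\|_{L_t^p\cH^{m-1}}+\|\ep\pt\sigma\|_{L_t^p\cH^{m-1}}\leq \|(\sigma,Z\sigma)\|_{L_t^p\cH^{m-1}}$ delivers \eqref{esofg12}. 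For spatial $Z$, commutativity with $\ep\pt$ reduces the argument to a Fa\`a di Bruno expansion of $(\ep\pt)^k Z g_j(\ep\sigma)$ in which every term carries a factor of $\ep Z\sigma$ or $\ep(\ep\pt)^{k_i}\sigma$; applying \eqref{roughcom-T} then yields the same bound.

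\textbf{Main obstacle.} The only non-routine aspect is the bookkeeping needed to verify that the polynomial $\Lambda$ on the right-hand side depends on the $L^\infty$ norm of $\sigma$ only through $\il\sigma\il_{[\f{m}{2}],\infty,t}$. This is built into the structure of the product estimate \eqref{roughproduct1}, which distributes derivatives so that at most $[\f{m}{2}]$ derivatives land on the $L^\infty$ factor in any bilinear bound. Combined with the uniform bound $|g_j^{(r)}|\leq 1/c_0$ for $r\leq m$ from \eqref{preasption1} and the Fa\`a di Bruno expansion of $Z^I g_j(\ep\sigma)$, this yields the stated polynomial dependence. Once the accounting is verified for \eqref{esofg12-1}, the other two estimates are essentially immediate corollaries.
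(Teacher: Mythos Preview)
Your proposal is correct and follows the same approach the paper indicates: the paper states only that the Corollary follows by combining the preceding composition Corollary with the product/commutator Lemma, and you have filled in exactly those details---applying the composition estimate with $F=g_j$, $h=\ep\sigma$ to get \eqref{esofg12-1}, then deducing \eqref{esofg12-2} and \eqref{esofg12} via the chain rule $Zg_j=\ep g_j'(\ep\sigma)Z\sigma$ and the product estimates. Your bookkeeping for the $\cH^{m-1}$ case (distinguishing $Z=\ep\pt$ from spatial $Z$ and invoking the $\cH$-analog of the composition estimate) is the natural way to handle the time-only norm and matches what the paper intends.
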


 We will use often the following Sobolev embedding inequality whose proof is similar to that of Proposition 12  and Proposition 20 of \cite{MR2885569}. 
 \begin{prop}\label{propembeddding}
Let  $\Omega=\mathbb{R}_{+}^3$ or a  smooth  bounded domain, we have the following Sobolev embedding inequality 
\begin{equation}\label{sobebd}
 \| f (t)\|_{L^{\infty}(\Omega)}\lesssim 
 \|\nabla f(t)\|_{H_{co}^{k+1}}^{\frac{1}{2}}\|f(t)\|_{H_{co}^{k+2}}^{\frac{1}{2}}+\|f(t)\|_{H_{co}^{k+2}}.
\end{equation}
 \end{prop}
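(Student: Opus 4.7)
I would prove this conormal Sobolev embedding by reducing to a flat half-space model and then combining a tangential Sobolev estimate with a Gagliardo--Nirenberg inequality in the normal direction. Using a partition of unity $\{\chi_0,\chi_1,\ldots,\chi_N\}$ subordinate to the covering \eqref{covering1}, I would split $f=\chi_0 f+\sum_{i\ge 1}\chi_i f$. On the interior piece $\chi_0 f$, the vector fields $Z_j$ are genuine derivatives, so the usual Sobolev embedding $H^2(\mathbb{R}^3)\hookrightarrow L^\infty$ gives $\|\chi_0 f\|_{L^\infty}\lesssim \|f\|_{H^{k+2}_{co}}$ since $k+2\ge 2$. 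For each boundary piece $\chi_i f$, I would flatten the boundary via the diffeomorphism $\Phi_i$ of \eqref{local coordinates} and extend by zero, reducing the problem to the analogous inequality on $\mathbb{R}^3_+$ with the model vector fields $\partial_{y^1},\partial_{y^2},\phi(z)\partial_z$.

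On the half-space the core is an anisotropic argument. Since $\partial_{y^1},\partial_{y^2}$ are ordinary derivatives, for each fixed $z$ a two-dimensional Sobolev inequality of the form
\begin{equation*}
\|g\|_{L^\infty(\mathbb{R}^2)}^2\lesssim \|g\|_{H^{k+1}(\mathbb{R}^2_y)}\,\|g\|_{H^{k+2}(\mathbb{R}^2_y)},
\end{equation*}
(which follows from $\|g\|_{L^\infty}\lesssim\|g\|_{H^{k+3/2}}$ together with interpolation, valid since $k+3/2>1$) applied to $g=f(\cdot,z)$ produces a product of two tangential norms of $f$. To control the supremum in $z$, I would then invoke the one-dimensional Gagliardo--Nirenberg inequality
\begin{equation*}
\|h\|_{L^\infty_z(\mathbb{R}_+)}^2\lesssim \|h\|_{L^2_z}\|\partial_z h\|_{L^2_z}+\|h\|_{L^2_z}^2,
\end{equation*}
obtained from $h(z)^2=h(z_0)^2-2\int_{z_0}^{z}h\,\partial_z h\,\d z'$ and an average in the base point $z_0$. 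Applying this slice-wise to the two tangential norms produced above, and noting that every $\partial_z f$ that appears is exactly the normal component of $\nabla f$ while every tangential derivative is controlled by the conormal norm of $f$, a final Cauchy--Schwarz step distributes the derivatives so that $k+2$ conormal derivatives fall on $f$ and only $k+1$ on $\nabla f$, yielding the stated $1/2$-power product plus the residual $\|f\|_{H^{k+2}_{co}}$ term in \eqref{sobebd}.

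The main obstacle is the degeneration of the normal conormal field $Z_3=\phi(z)\partial_z$ at $z=0$: since $\phi(0)=0$, no combination of conormal derivatives of $f$ can recover pointwise control of $\partial_z f$ at the boundary, which is precisely what forces the full gradient $\nabla f$ to enter the right-hand side. The delicate bookkeeping is to balance the 2D tangential embedding (which naively would cost two tangential derivatives on $\nabla f$ and hence require $\nabla f\in H^{k+2}_{co}$) against the 1D normal Gagliardo--Nirenberg refinement, together with a Hardy-type inequality exploiting $\phi(z)\sim z$ near $z=0$ to trade weighted $Z_3$-derivatives for ordinary $\partial_z$ ones, so that only $k+1$ conormal derivatives are ultimately needed on $\nabla f$. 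By Young's inequality the $1/2$-power form is equivalent to the simpler bound $\|f\|_{L^\infty}\lesssim \|\nabla f\|_{H^{k+1}_{co}}+\|f\|_{H^{k+2}_{co}}$, which is in fact the shape in which the embedding is applied throughout the paper, e.g.\ in \eqref{sobebb1}.
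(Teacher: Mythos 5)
Your overall strategy matches the paper's: reduce to the half-space model via a partition of unity and the boundary-flattening diffeomorphisms $\Phi_i$, handle the interior piece by ordinary Sobolev embedding, and then prove a half-space anisotropic inequality. The paper itself is terse here — it simply quotes the half-space estimate
\[
\|f\|_{L^{\infty}(\mathbb{R}_{+}^3)}\lesssim \|\partial_z f\|_{H_{co}^{s_1}}^{1/2}\|f\|_{H_{co}^{s_2}}^{1/2},\qquad s_1+s_2>2,
\]
from Prop.\ 2.2 of \cite{MR3590375} (and Props.\ 12, 20 of \cite{MR2885569}) and then says the bounded domain case ``follows by working in local coordinates.''  Your proposal attempts to supply the half-space proof, which is laudable, and your remark that only the full gradient $\nabla f$ can control the normal derivative near $z=0$ is exactly the right intuition.

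However, the half-space step as you describe it has a genuine gap.  If you fix $z$, apply the 2D bound $\|f(\cdot,z)\|_{L^\infty_y}^2\lesssim \|f(\cdot,z)\|_{H^{k+1}_y}\|f(\cdot,z)\|_{H^{k+2}_y}$, and then try to control the two $z$-suprema ``slice-wise'' by 1D Gagliardo--Nirenberg, the second factor forces you to estimate $\sup_z\|f(\cdot,z)\|_{H^{k+2}_y}$, which requires $\|\partial_z f\|_{L^2_z H^{k+2}_y}$ — that puts $k+2$ tangential derivatives on $\partial_z f$ and destroys the asymmetric $k{+}1/k{+}2$ count you are aiming for.  Trying instead to apply the 1D GN directly to $z\mapsto\|f\|_{H^{k+1}_y}\|f\|_{H^{k+2}_y}$ runs into the same problem when the $\partial_z$ falls on the $H^{k+2}_y$ factor; integration by parts in $y$ does not help, since it would overload the other factor.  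The references resolve this by working in tangential Fourier variables: one writes $\|f\|_{L^\infty}\lesssim\int_{\xi}\sup_z|\hat f(\xi,z)|\,\d\xi$, applies the 1D Gagliardo--Nirenberg bound to $z\mapsto\hat f(\xi,z)$ \emph{for each fixed frequency $\xi$}, and only then integrates in $\xi$ after weighting each factor separately by $\langle\xi\rangle^{s_1}$ and $\langle\xi\rangle^{s_2}$, using H\"older with the integrable weight $\langle\xi\rangle^{-(s_1+s_2)}$ (hence the condition $s_1+s_2>2$).  This frequency-localized balancing is precisely what lets the $\partial_z$ land on the low-order $H^{k+1}$ factor and the extra tangential derivative on the $H^{k+2}$ factor.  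Finally, the Hardy inequality you invoke to ``trade weighted $Z_3$-derivatives for ordinary $\partial_z$ ones'' is a red herring: nothing in this embedding recovers $\partial_z$ from $Z_3=\phi(z)\partial_z$, which is exactly why $\nabla f$ must appear on the right-hand side, and the needed gain comes from the frequency-side Cauchy--Schwarz, not from Hardy.
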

 \begin{proof}
 For  the case of the  half-space, this is a consequence of  the inequality:
 for a function $g$ defined on $\mathbb{R}_{+}^3,$ 
 \begin{equation}
    \|f(t)\|_{L^{\infty}(\mathbb{R}_{+}^3)}\lesssim \|\partial_z f(t)\|_{H_{co}^{s_1}(\mathbb{R}_{+}^3)}^{\frac{1}{2}}\|f(t)\|_{H_{co}^{s_2}(\mathbb{R}_{+}^3)}^{\frac{1}{2}}
 \end{equation}
 where  $s_1,s_2$  are positive and satisfy  $s_1+s_2>2.$
 One can refer to (Prop 2.2) of \cite{MR3590375} for the proof. The case of  general smooth  bounded domains follows  by  working in  local coordinates. 
  \end{proof}
  The following trace inequalities are also used:
  \begin{lem}
  For multi-index $I=(I_0,\cdots, I_{M})$ with
$|I|=k,$ we have the following trace inequalities:
  \beq\label{traceLinfty}
  |Z^{I}f(t)|_{L^2(\p\Omega)}^2 \lesssim \|\nabla f(t)\|_{H_{co}^{k}}\|f(t)\|_{H_{co}^{k}}+\|f(t)\|_{H_{co}^{k}}^2.
  \eeq
  \beq\label{traceL2}
  \int_0^t |Z^{I}f(s)|_{L^2(\p\Omega)}^2\,\d s\lesssim \|\nabla f\|_{L_t^2H_{co}^{k}}\|f\|_{L_t^2H_{co}^{k}}+\|f\|_{L_t^2H_{co}^{k}}^2.
  \eeq
  \beq\label{normaltraceineq}
  \int_0^t |Z^{I}f(s)|_{H^{\f{1}{2}}(\p\Omega)}^2\d s\lesssim \|\nabla f\|_{L_t^2H_{co}^{k}}^2+\|f\|_{L_t^2H_{co}^{k}}^2.
  \eeq
  \end{lem}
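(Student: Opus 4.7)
The plan is to prove the three trace inequalities by a partition of unity reduction to the half-space, together with the standard Gagliardo–Nirenberg trace estimate, carefully tracking what happens to the conormal vector fields near the boundary.

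First I would take a partition of unity $\{\chi_i\}_{i=0}^N$ adapted to the covering \eqref{covering1}. The interior piece $\chi_0 f$ is supported away from $\partial\Omega$, so its trace is zero and contributes nothing. It therefore suffices to estimate each localized piece $\chi_i f$ supported in a boundary chart $\Omega_i$. On such a chart I would use the local coordinates \eqref{local coordinates} to flatten the boundary to $\{z=0\}$, transferring the conormal vector fields $Z_1^i = \partial_{y^1}$, $Z_2^i = \partial_{y^2}$, $Z_3^i = \phi(z)(\partial_1\varphi_i\partial_1+\partial_2\varphi_i\partial_2-\partial_3)$ and $Z_0 = \varepsilon\partial_t$ to corresponding fields on $\mathbb{R}_+^3$; the key point is that $Z_3^i$ carries a factor of $\phi(z) = z/(1+z)$ that vanishes at $z=0$, so if $Z^I$ contains any $Z_3^i$ factor then $(Z^I f)|_{z=0} = 0$ identically. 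Hence only multi-indices $I$ involving $Z_0$, $\partial_{y^1}$, $\partial_{y^2}$ (i.e.\ genuinely tangential operators) need to be treated.

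For such indices I would apply the classical half-space identity
\[
|g(y,0)|^2 = -\int_0^\infty \partial_z |g(y,z)|^2\, \d z = -2\int_0^\infty g\,\partial_z g\, \d z,
\]
integrate in $y$ and use Cauchy–Schwarz to get $|g|_{L^2(\mathbb{R}^2)}^2 \lesssim \|g\|_{L^2}\|\partial_z g\|_{L^2}$. Setting $g = Z^I f$ and expanding $\partial_z Z^I f = Z^I\partial_z f + [\partial_z,Z^I]f$ — where the commutator produces only conormal derivatives of order $\leq |I|$ by the same induction used for \eqref{comu} — yields \eqref{traceLinfty} after summation in $i$ and absorption of the lower order terms. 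The time-integrated version \eqref{traceL2} follows from the same identity applied pointwise in $t$, then integrated in $s\in[0,t]$ with Cauchy–Schwarz. For the $H^{1/2}$ version \eqref{normaltraceineq}, I would invoke the standard trace theorem $|g|_{H^{1/2}(\partial\Omega)} \lesssim \|g\|_{H^1(\Omega)}$ applied to $Z^I f$, then dominate $\|\nabla Z^I f\|_{L^2}$ by $\|\nabla f\|_{H_{co}^k} + \|f\|_{H_{co}^k}$ via the same commutator expansion, and integrate in time.

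The only mildly delicate point — and what I would flag as the main technical obstacle — is verifying that $[\partial_z, Z^I]$ (and the corresponding commutators with the vector fields in non-flat local coordinates) produces only operators of the form $\partial_j Z^{\tilde I}$ or $Z^{\tilde J}$ with $|\tilde I|,|\tilde J|\leq |I|$, so that the resulting bound closes in the conormal norm $H_{co}^k$ rather than requiring one extra normal derivative. This is precisely the induction already recorded in \eqref{comu} and \eqref{sec5:eq-16}, so the estimates collapse to a clean application of Gagliardo–Nirenberg plus the standard $H^{1/2}$ trace theorem in the half-space.
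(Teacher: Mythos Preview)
Your proposal is correct. The paper itself does not supply a proof of this lemma: it is stated in the Appendix as a standard tool and immediately followed by the next proposition, so there is nothing to compare against. Your outline---localize via the covering \eqref{covering1}, observe that any $Z^I$ containing a factor $Z_3^i$ vanishes on $\partial\Omega$, apply the one-dimensional identity $|g(y,0)|^2=-2\int_0^\infty g\,\partial_z g\,\d z$ for \eqref{traceLinfty}--\eqref{traceL2}, and invoke the standard $H^1\to H^{1/2}$ trace theorem for \eqref{normaltraceineq}---is exactly the expected argument and would constitute a complete proof.

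One small simplification: once you have reduced to the half-space with the fields $\varepsilon\partial_t,\partial_{y^1},\partial_{y^2}$ (having discarded any $Z^I$ containing $\phi(z)\partial_z$), the commutator $[\partial_z,Z^I]$ is identically zero, so the ``main technical obstacle'' you flag does not actually arise at that stage. The only place \eqref{comu} is genuinely needed is in passing between the global conormal norm $H_{co}^k$ on $\Omega$ and the local half-space norms, and there it gives exactly the equivalence \eqref{norm-equiv-bd} used elsewhere in the paper.
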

  In the next proposition, we state some elliptic estimates which are used frequently.
  \begin{prop}\label{propneumann}
 Given a bounded domain $\Omega$ with $C^{k+1}$ boundary.
Consider the following elliptic equation with Neumann boundary condition:
\begin{equation}\label{Neumann problem}
  \left\{
  \begin{array}{l}
  \Delta q=\div f \quad \text{in}\quad  \Omega\\
  \partial_{\bn} q=f\cdot \bn+g \quad \text{on} \quad  \partial\Omega\\
  \int_{\Omega} q \d x=0
  \end{array}
  \right.
\end{equation}
The system  \eqref{Neumann problem} has  a unique solution in $H^1(\Omega)$ which satisfies the following gradient estimate:
 \begin{equation}\label{gradient estimate}
     \|\nabla q(t)\|_{L^2(\Omega)}\lesssim \|f(t)\|_{L^2(\Omega)}
     +|g(t)|_{H^{-\frac{1}{2}}(\partial\Omega)}.
 \end{equation}
Moreover, for $j+l=k,$
\begin{equation}\label{highconormal}
     \|\nabla q(t)\|_{\cH^{j,l}(\Omega)}\lesssim \|f(t)\|_{\cH^{j,l}(\Omega)}
     +|g(t)|_{\tilde{H}^{k-\frac{1}{2}}(\partial\Omega)}.
 \end{equation}
 \begin{equation}\label{secderelliptic}
 \|\nabla^2 q(t)\|_{\cH^{j,l}(\Omega)}\lesssim \|(f(t),\div f(t))\|_{\cH^{j,l}(\Omega)}
     +|g(t)|_{\tilde{H}^{k-\frac{1}{2}}(\partial\Omega)}.
 \end{equation}
\end{prop}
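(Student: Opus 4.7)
The plan is to treat the three statements in succession, each building on the previous one, with an induction on the number of conormal derivatives driving the proof of \eqref{highconormal}, and then \eqref{secderelliptic} obtained as a consequence via the equation.

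For the existence part and the basic estimate \eqref{gradient estimate}, I would use the standard weak formulation: test \eqref{Neumann problem} against $\varphi\in H^1(\Omega)/\mathbb{R}$ to get the bilinear form $a(q,\varphi)=\int_{\Omega}\nabla q\cdot\nabla \varphi$ paired against the linear functional $L(\varphi)=-\int_{\Omega} f\cdot\nabla\varphi+\langle g,\varphi\rangle_{H^{-\frac12}(\partial\Omega),H^{\frac12}(\partial\Omega)}$. Poincaré--Wirtinger makes $a$ coercive on the zero-mean subspace, so Lax--Milgram gives the unique solution; taking $\varphi=q$ together with the trace inequality and Poincaré--Wirtinger then yields \eqref{gradient estimate}.

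For \eqref{highconormal}, I would proceed by induction on $l$ with $j$ arbitrary. The time derivative $(\varepsilon\partial_t)^j$ commutes with $\Delta$, with $\div$, and with $\partial_{\bn}$ (since $\bn$ is time--independent), so $(\varepsilon\partial_t)^j q$ solves the \emph{same} Neumann problem with data $((\varepsilon\partial_t)^jf,(\varepsilon\partial_t)^jg)$ and zero mean is preserved; this reduces the $l=0$ case to \eqref{gradient estimate} applied to each time layer. For the inductive step, away from the boundary I would localize by a cut-off $\chi_0\Subset\Omega$, where the conormal fields coincide with usual derivatives and classical interior elliptic regularity applies directly. Near the boundary I would work in each local chart \eqref{local coordinates}, using that tangential fields $Z_1,Z_2$ preserve the trace on $\partial\Omega$ while $Z_3=\phi(z)(\cdots-\partial_3)$ vanishes on $\partial\Omega$ because $\phi(0)=0$. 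Apply $Z^{I'}$ with $|I'|=l$ to the system: commuting produces $\Delta(Z^{I'}q)=\div(Z^{I'}f)+[\Delta,Z^{I'}]q-[\div,Z^{I'}]f$ and on the boundary either the trace is preserved (if $Z^{I'}$ uses only $Z_1,Z_2$) yielding data $Z^{I'}g+[\,Z^{I'},\bn\cdot]f$, or it simply vanishes (if $Z^{I'}$ involves $Z_3$). One rewrites the right--hand side in divergence form modulo terms controlled inductively (the commutators $[\Delta,Z^{I'}]$ and $[\div,Z^{I'}]$ involve at most $l-1$ conormal derivatives of second order, which via integration by parts in the weak formulation produces at worst $\|\nabla q\|_{\mathcal H^{j,l-1}}$), then feeds the resulting Neumann problem into \eqref{gradient estimate}. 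Summing over charts and cut-offs, the induction closes.

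For \eqref{secderelliptic}, I would use \eqref{highconormal} applied to the already--controlled data, and then recover the normal second derivative via the equation itself. Writing the Laplacian in the local coordinates as in \eqref{Laplace-local}, one obtains
\begin{equation*}
\partial_{\bn}^2 q=\Delta q-\sum_{(i,j)\neq(3,3)}\partial_{y^i}(a_{ij}\partial_{y^j}q)=\div f-\sum_{(i,j)\neq(3,3)}\partial_{y^i}(a_{ij}\partial_{y^j}q),
\end{equation*}
so that $\partial_{\bn}^2 q$ is controlled in $\mathcal H^{j,l}$ by $\|\div f\|_{\mathcal H^{j,l}}$ together with one tangential derivative of $\nabla q$, the latter estimated by \eqref{highconormal}; the remaining mixed and tangential second derivatives are directly bounded by tangential differentiation of \eqref{highconormal}.

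The main obstacle I anticipate is the inductive step for \eqref{highconormal} near the boundary when $Z^{I'}$ contains at least one $Z_3$: the commutator $[\Delta,Z_3]$ contains genuine normal second--order pieces such as $\phi'(z)\partial_z^2$, so to close the induction one must make sure these can be re--expressed (using the equation $\Delta q=\div f$) in terms of tangential derivatives plus data, and that the boundary contributions coming from integration by parts are absorbed by the fact that $\phi(0)=0$. This bookkeeping, done chart by chart with the formulas in \eqref{change-variable}--\eqref{defPE}, is the technical heart of the argument; once it is arranged, the rest is routine.
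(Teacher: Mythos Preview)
Your proposal is correct and is precisely the ``standard'' argument the paper alludes to: the paper's own proof consists of a single paragraph invoking Lax--Milgram for \eqref{gradient estimate} and then declaring \eqref{highconormal}--\eqref{secderelliptic} to be ``standard regularity estimates for elliptic equations, that take into account the number of time derivatives.'' Your induction on the spatial conormal order (commuting tangential fields through the equation and absorbing commutators into the inductive hypothesis) together with the recovery of $\partial_{\bn}^2 q$ from the equation via \eqref{Laplace-local} is exactly how one unpacks that sentence, and you have correctly isolated the only delicate point --- the $Z_3$ commutator producing $\phi'\partial_z^2$ terms, handled by substituting $\Delta q=\div f$.
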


\begin{proof}
The existence of the  weak solution in 
$H=\colon\{q|\ q\in H^1(\Omega), \int_\Omega q\d x=0 \}$ 
as well as the gradient estimate \eqref{gradient estimate}  come from  Lax-Milgram Lemma. 
The estimates  \eqref{highconormal}-\eqref{secderelliptic} are then standard regularity estimates for elliptic equations,
that take into account the number of time derivatives (the time variable being only a parameter in this Lemma). 
\end{proof}
Finally, we state an elementary estimate of the heat kernel  which is useful in the estimates of the vorticity.
 \begin{lem}
 Let 
 $$K(s,y,z)=\tilde{\mu}|\bN|^2(4\pi\tilde{\mu}|\bN|^2 s)^{-\f{1}{2}}\p_z\big(e^{-\f{z^2}{4 \tilde{\mu}|\bN|^2 s}}\big),
 \quad \bN(y)=(-\p_1\varphi(y),-\p_2\varphi(y),1)^{t}  $$ where 
 $(y,z)\in \mathbb{R}_{+}^3$ and  
 set $\cZ^{\beta}=\p_{y^1}^{\beta_1}\p_{y^2}^{\beta_2}\cZ_3^{\beta_3}, \cZ_3=\f{z}{1+z}\p_z.$
 We have the following estimate:
 \beq\label{L2 norm of K}
 \|\cZ^{\beta}K(s,y,\cdot)\|_{L_z^2(\mathbb{R}_{+})}\leq C(\beta,\tilde{\mu}, |\varphi|_{C^{|\beta|+1}}) s^{-\f{3}{4}}.
 \eeq
 \end{lem}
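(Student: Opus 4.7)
The idea is to exploit the self-similar structure of $K$. Setting $a(y) = \tilde\mu|\bN(y)|^2$ and introducing the self-similar variable $\eta = z/\sqrt{s}$, direct computation yields
\beqs
K(s,y,z) = \f{1}{s}\Phi(z/\sqrt{s},y), \qquad \Phi(\eta,y) = -\f{\eta}{2\sqrt{4\pi a(y)}}\,e^{-\eta^2/(4a(y))}.
\eeqs
Since $\bN = (-\p_1\varphi,-\p_2\varphi,1)^t$, $a(y)$ is smooth and bounded below by $\tilde\mu$, with $|\p_y^\alpha a(y)|\leq C|\varphi|_{C^{|\alpha|+1}}$. Consequently $\Phi$ and all its $\eta$- and $y$-derivatives enjoy uniform Gaussian decay in $\eta$, with constants depending only on $\tilde\mu$ and $|\varphi|_{C^{|\alpha|+1}}$.

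Because $\cZ_3 = \f{z}{1+z}\p_z$ vanishes linearly at $z=0$ and commutes with $\p_{y^j}$, a straightforward induction on $\beta_3$ produces
\beqs
\cZ_3^{\beta_3} = \sum_{k=1}^{\beta_3} C_k(z)\,\p_z^k, \qquad |C_k(z)|\leq c_{k,\beta_3}\min(z^k,1).
\eeqs
Combining this with the scaling identity $\p_z^k K = s^{-1-k/2}(\p_\eta^k\Phi)(z/\sqrt s,y)$ gives
\beqs
\cZ^\beta K = \sum_{k=1}^{\beta_3} \f{C_k(z)}{s^{1+k/2}}\,(\p_\eta^k\p_y^{\beta_y}\Phi)(z/\sqrt s, y), \qquad \beta_y = (\beta_1,\beta_2),
\eeqs
with each $\p_\eta^k\p_y^{\beta_y}\Phi$ still Gaussian in $\eta$. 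One then estimates the $L^2_z$ norm of each summand by the change of variables $\eta = z/\sqrt s$ and by splitting the integral at $z = 1$: on the interior $\{z\leq 1\}$ the bound $|C_k(z)|^2\lesssim z^{2k} = \eta^{2k}s^k$ produces a contribution of the form $s^{k+1/2}/s^{2+k} = s^{-3/2}$ times a finite $\eta$-integral, while on the exterior $\{z\geq 1\}$ one has $|C_k|\lesssim 1$ and $\eta \geq 1/\sqrt s$, so the Gaussian tail provides the same $s^{-3/2}$ bound (with exponentially smaller constant when $s\leq 1$). Summing over $k$ and taking square roots yields the claimed $s^{-3/4}$ bound, with constant controlled by $|\varphi|_{C^{|\beta|+1}}$.

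The main obstacle is the bookkeeping of the weights $C_k(z)$. A naive bound using $|\cZ_3 K|\leq |\p_z K|$ yields only $\|\cZ_3 K\|_{L^2_z}\lesssim s^{-5/4}$, which is strictly worse than the claim when $s$ is small. The sharp rate $s^{-3/4}$ is recovered only by using the linear vanishing of $\cZ_3$ at the boundary: in the Gaussian-concentrated region $z\sim\sqrt s$, each power of $z$ inside $C_k$ scales as $s^{1/2}$, and summing $k$ such powers provides the $s^{k/2}$ that exactly compensates the $s^{-k/2}$ loss from $k$ normal derivatives. Verifying the inductive estimate $|C_k(z)|\leq c_{k,\beta_3}\min(z^k,1)$ with the correct power $z^k$ (not merely $z^1$ coming from a single factor of $\cZ_3$) is therefore the essential combinatorial step.
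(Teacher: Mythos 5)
Your proof is correct and follows essentially the same route as the paper: both exploit the self-similar form $K = s^{-1}\Phi(z/\sqrt s,y)$ and the fact that $\cZ_3^{\beta_3}$ expands as $\sum_k c_k(z)\p_z^k$ with $|c_k(z)|\lesssim z^k$, so the vanishing of $\cZ_3$ at $z=0$ compensates exactly the $s^{-k/2}$ lost by each normal derivative, leaving a bound $s^{-1}\cdot(\text{poly in }z/\sqrt s)\cdot(\text{Gaussian})$ whose $L^2_z$-norm is $O(s^{-3/4})$. The paper carries this out as a single pointwise estimate (absorbing the $z^k/s^{k/2}$ factors into the polynomial degree, hence the $z^{-\beta_3}$ in \eqref{induction on K}), whereas you split the $z$-integral at $z=1$ — a cosmetic difference, not a different method.
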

 \begin{proof}
It suffices to prove that,
 for any $l\in \mathbb{N}$, there is a polynomial ${P}_{2|\beta|+1}$ with $2|\beta|+1$ degree, such that:
 \beq\label{induction on K}
 |\cZ^{\beta}K(s,y,z)|\leq C(\beta,\tilde{\mu}, |\varphi|_{C^{|\beta|+1}})  {P}_{2|\beta|+1}\big(\f{z}{\sqrt{s}}\big)e^{-\f{z^2}{4\tilde{\mu} |\bN|^2 s}}s^{-1}\quad \forall s>0, y\in\mathbb{R}^2.
 \eeq
 By direct computation, one can see that, there exists a polynomial
 with degree $2(\beta_1+\beta_2)+1:$
 $P_{2(\beta_1+\beta_2)+1},$ a smooth function depends on $\nabla_{y}\varphi$
 and its derivatives up to order $\beta_1+\beta_2: F_{\beta_1+\beta_2}(\nabla_y\varphi)$  such that 
 \beqs
 \p_{y_1}^{\beta_1}\p_{y_2}^{\beta_2}K(s,y,z)=P_{2(\beta_1+\beta_2)+1}\big(\f{z}{\sqrt{s}}\big)F_{\beta_1+\beta_2}(\nabla_y
 \varphi) e^{-\f{z^2}{4\tilde{\mu} |\bN|^2 s}}s^{-1}.
 \eeqs
To prove \eqref{induction on K}, 
it suffices to show by induction arguments that, there exists a smooth    function $F(|\bN|^2),$ such that 
\beqs
\p_z^{\beta_3}\bigg(P_{2(\beta_1+\beta_2)+1}\big(\f{z}{\sqrt{s}}\big)e^{-\f{z^2}{4\tilde{\mu}|\bN|^2 s}}\bigg)=F
(|\bN|^2) e^{-\f{z^2}{4\tilde{\mu}|\bN| s}}P_{2|\beta|+1}\big(\f{z}{\sqrt{s}}\big)z^{-{\beta_3}}.
\eeqs

\end{proof}
\end{section}
\bibliographystyle{plain}
\nocite{*}
\bibliography{referencein}

\begin{thebibliography}{10}

\bibitem{MR2106119}
Thomas Alazard.
\newblock Incompressible limit of the nonisentropic {E}uler equations with the
  solid wall boundary conditions.
\newblock {\em Adv. Differential Equations}, 10(1):19--44, 2005.

\bibitem{MR2211706}
Thomas Alazard.
\newblock Low {M}ach number limit of the full {N}avier-{S}tokes equations.
\newblock {\em Arch. Ration. Mech. Anal.}, 180(1):1--73, 2006.

\bibitem{MR2425022}
Thomas Alazard.
\newblock A minicourse on the low {M}ach number limit.
\newblock {\em Discrete Contin. Dyn. Syst. Ser. S}, 1(3):365--404, 2008.

\bibitem{MR1308856}
H.~Beir\~{a}o~da Veiga.
\newblock Singular limits in compressible fluid dynamics.
\newblock {\em Arch. Rational Mech. Anal.}, 128(4):313--327, 1994.

\bibitem{MR0344713}
J.~P. Bourguignon and H.~Brezis.
\newblock Remarks on the {E}uler equation.
\newblock {\em J. Functional Analysis}, 15:341--363, 1974.

\bibitem{MR1917042}
D.~Bresch, B.~Desjardins, E.~Grenier, and C.-K. Lin.
\newblock Low {M}ach number limit of viscous polytropic flows: formal
  asymptotics in the periodic case.
\newblock {\em Stud. Appl. Math.}, 109(2):125--149, 2002.

\bibitem{MR2228849}
J.-Y. Chemin, B.~Desjardins, I.~Gallagher, and E.~Grenier.
\newblock {\em Mathematical geophysics}, volume~32 of {\em Oxford Lecture
  Series in Mathematics and its Applications}.
\newblock The Clarendon Press, Oxford University Press, Oxford, 2006.
\newblock An introduction to rotating fluids and the Navier-Stokes equations.

\bibitem{MR3803773}
Bin Cheng, Qiangchang Ju, and Steve Schochet.
\newblock Three-scale singular limits of evolutionary {PDE}s.
\newblock {\em Arch. Ration. Mech. Anal.}, 229(2):601--625, 2018.

\bibitem{MR2038120}
Yonggeun Cho, Hi~Jun Choe, and Hyunseok Kim.
\newblock Unique solvability of the initial boundary value problems for
  compressible viscous fluids.
\newblock {\em J. Math. Pures Appl. (9)}, 83(2):243--275, 2004.

\bibitem{MR1886005}
Rapha\"{e}l Danchin.
\newblock Zero {M}ach number limit in critical spaces for compressible
  {N}avier-{S}tokes equations.
\newblock {\em Ann. Sci. \'{E}cole Norm. Sup. (4)}, 35(1):27--75, 2002.

\bibitem{MR2157145}
Rapha\"{e}l Danchin.
\newblock Low {M}ach number limit for viscous compressible flows.
\newblock {\em M2AN Math. Model. Numer. Anal.}, 39(3):459--475, 2005.

\bibitem{MR3563240}
Rapha\"{e}l Danchin and Lingbing He.
\newblock The incompressible limit in {$L^p$} type critical spaces.
\newblock {\em Math. Ann.}, 366(3-4):1365--1402, 2016.

\bibitem{MR1697038}
B.~Desjardins, E.~Grenier, P.-L. Lions, and N.~Masmoudi.
\newblock Incompressible limit for solutions of the isentropic
  {N}avier-{S}tokes equations with {D}irichlet boundary conditions.
\newblock {\em J. Math. Pures Appl. (9)}, 78(5):461--471, 1999.

\bibitem{MR1702718}
Benoit Desjardins and E.~Grenier.
\newblock Low {M}ach number limit of viscous compressible flows in the whole
  space.
\newblock {\em R. Soc. Lond. Proc. Ser. A Math. Phys. Eng. Sci.},
  455(1986):2271--2279, 1999.

\bibitem{MR2601340}
Donatella Donatelli, Eduard Feireisl, and Anton\'{\i}n Novotn\'{y}.
\newblock On incompressible limits for the {N}avier-{S}tokes system on
  unbounded domains under slip boundary conditions.
\newblock {\em Discrete Contin. Dyn. Syst. Ser. B}, 13(4):783--798, 2010.

\bibitem{MR431261}
David~G. Ebin.
\newblock The motion of slightly compressible fluids viewed as a motion with
  strong constraining force.
\newblock {\em Ann. of Math. (2)}, 105(1):141--200, 1977.

\bibitem{MR2575476}
Eduard Feireisl.
\newblock Incompressible limits and propagation of acoustic waves in large
  domains with boundaries.
\newblock {\em Comm. Math. Phys.}, 294(1):73--95, 2010.

\bibitem{MR2832163}
Eduard Feireisl.
\newblock Local decay of acoustic waves in the low {M}ach number limits on
  general unbounded domains under slip boundary conditions.
\newblock {\em Comm. Partial Differential Equations}, 36(10):1778--1796, 2011.

\bibitem{MR3916821}
Eduard Feireisl.
\newblock Singular limits for models of compressible, viscous, heat conducting,
  and/or rotating fluids.
\newblock In {\em Handbook of mathematical analysis in mechanics of viscous
  fluids}, pages 2771--2825. Springer, Cham, 2018.

\bibitem{MR2425024}
Eduard Feireisl, Josef M\'{a}lek, and Anton\'{\i}n Novotn\'{y}.
\newblock Navier's slip and incompressible limits in domains with variable
  bottoms.
\newblock {\em Discrete Contin. Dyn. Syst. Ser. S}, 1(3):427--460, 2008.

\bibitem{MR2338352}
Eduard Feireisl and Anton\'{\i}n Novotn\'{y}.
\newblock The low {M}ach number limit for the full {N}avier-{S}tokes-{F}ourier
  system.
\newblock {\em Arch. Ration. Mech. Anal.}, 186(1):77--107, 2007.

\bibitem{MR2167201}
Isabelle Gallagher.
\newblock R\'{e}sultats r\'{e}cents sur la limite incompressible.
\newblock Number 299, pages Exp. No. 926, vii, 29--57. 2005.
\newblock S\'{e}minaire Bourbaki. Vol. 2003/2004.

\bibitem{MR1070840}
Olivier Gu\`es.
\newblock Probl\`eme mixte hyperbolique quasi-lin\'{e}aire caract\'{e}ristique.
\newblock {\em Comm. Partial Differential Equations}, 15(5):595--645, 1990.

\bibitem{MR4011035}
Liang Guo, Fucai Li, and Feng Xie.
\newblock Asymptotic limits of the isentropic compressible viscous
  magnetohydrodynamic equations with {N}avier-slip boundary conditions.
\newblock {\em J. Differential Equations}, 267(12):6910--6957, 2019.

\bibitem{MR2914244}
David Hoff.
\newblock Local solutions of a compressible flow problem with {N}avier boundary
  conditions in general three-dimensional domains.
\newblock {\em SIAM J. Math. Anal.}, 44(2):633--650, 2012.

\bibitem{MR2754340}
Drago\c{s} Iftimie and Franck Sueur.
\newblock Viscous boundary layers for the {N}avier-{S}tokes equations with the
  {N}avier slip conditions.
\newblock {\em Arch. Ration. Mech. Anal.}, 199(1):145--175, 2011.

\bibitem{MR918838}
Hiroshi Isozaki.
\newblock Singular limits for the compressible {E}uler equation in an exterior
  domain.
\newblock {\em J. Reine Angew. Math.}, 381:1--36, 1987.

\bibitem{MR3281954}
Ning Jiang and Nader Masmoudi.
\newblock On the construction of boundary layers in the incompressible limit
  with boundary.
\newblock {\em J. Math. Pures Appl. (9)}, 103(1):269--290, 2015.

\bibitem{MR3916820}
Ning Jiang and Nader Masmoudi.
\newblock Low {M}ach number limits and acoustic waves.
\newblock In {\em Handbook of mathematical analysis in mechanics of viscous
  fluids}, pages 2721--2770. Springer, Cham, 2018.

\bibitem{MR2812710}
Song Jiang and Yaobin Ou.
\newblock Incompressible limit of the non-isentropic {N}avier-{S}tokes
  equations with well-prepared initial data in three-dimensional bounded
  domains.
\newblock {\em J. Math. Pures Appl. (9)}, 96(1):1--28, 2011.

\bibitem{MR615627}
Sergiu Klainerman and Andrew Majda.
\newblock Singular limits of quasilinear hyperbolic systems with large
  parameters and the incompressible limit of compressible fluids.
\newblock {\em Comm. Pure Appl. Math.}, 34(4):481--524, 1981.

\bibitem{MR668409}
Sergiu Klainerman and Andrew Majda.
\newblock Compressible and incompressible fluids.
\newblock {\em Comm. Pure Appl. Math.}, 35(5):629--651, 1982.

\bibitem{MR1628173}
P.-L. Lions and N.~Masmoudi.
\newblock Incompressible limit for a viscous compressible fluid.
\newblock {\em J. Math. Pures Appl. (9)}, 77(6):585--627, 1998.

\bibitem{MR1710123}
Pierre-Louis Lions and Nader Masmoudi.
\newblock Une approche locale de la limite incompressible.
\newblock {\em C. R. Acad. Sci. Paris S\'{e}r. I Math.}, 329(5):387--392, 1999.

\bibitem{MR4134150}
Xin Liu and Edriss~S. Titi.
\newblock Zero {M}ach number limit of the compressible primitive equations:
  well-prepared initial data.
\newblock {\em Arch. Ration. Mech. Anal.}, 238(2):705--747, 2020.

\bibitem{lowmachfree}
Nader Masmoudi, F\'ed\'eric Rousset, and Changzhen Sun.
\newblock Incompressible limit for the free surface navier-stokes system.
\newblock In preparation.

\bibitem{MR2885569}
Nader Masmoudi and Fr\'{e}d\'{e}ric Rousset.
\newblock Uniform regularity for the {N}avier-{S}tokes equation with {N}avier
  boundary condition.
\newblock {\em Arch. Ration. Mech. Anal.}, 203(2):529--575, 2012.

\bibitem{MR3590375}
Nader Masmoudi and Frederic Rousset.
\newblock Uniform regularity and vanishing viscosity limit for the free surface
  {N}avier-{S}tokes equations.
\newblock {\em Arch. Ration. Mech. Anal.}, 223(1):301--417, 2017.

\bibitem{MR713680}
Akitaka Matsumura and Takaaki Nishida.
\newblock Initial-boundary value problems for the equations of motion of
  compressible viscous and heat-conductive fluids.
\newblock {\em Comm. Math. Phys.}, 89(4):445--464, 1983.

\bibitem{Mr1834114}
G.~M\'{e}tivier and S.~Schochet.
\newblock The incompressible limit of the non-isentropic {E}uler equations.
\newblock {\em Arch. Ration. Mech. Anal.}, 158(1):61--90, 2001.

\bibitem{MR1946548}
G.~M\'{e}tivier and S.~Schochet.
\newblock Averaging theorems for conservative systems and the weakly
  compressible {E}uler equations.
\newblock {\em J. Differential Equations}, 187(1):106--183, 2003.

\bibitem{MR3240080}
Yaobin Ou and Dandan Ren.
\newblock Incompressible limit of global strong solutions to 3-{D} barotropic
  {N}avier-{S}tokes equations with well-prepared initial data and {N}avier's
  slip boundary conditions.
\newblock {\em J. Math. Anal. Appl.}, 420(2):1316--1336, 2014.

\bibitem{MR3485413}
Matthew Paddick.
\newblock The strong inviscid limit of the isentropic compressible
  {N}avier-{S}tokes equations with {N}avier boundary conditions.
\newblock {\em Discrete Contin. Dyn. Syst.}, 36(5):2673--2709, 2016.

\bibitem{MR3531754}
Dandan Ren and Yaobin Ou.
\newblock Incompressible limit of all-time solutions to 3-{D} full
  {N}avier-{S}tokes equations for perfect gas with well-prepared initial
  condition.
\newblock {\em Z. Angew. Math. Phys.}, 67(4):Art. 103, 27, 2016.

\bibitem{MR834481}
Steve Schochet.
\newblock The compressible {E}uler equations in a bounded domain: existence of
  solutions and the incompressible limit.
\newblock {\em Comm. Math. Phys.}, 104(1):49--75, 1986.

\bibitem{MR3929616}
Steven Schochet.
\newblock The mathematical theory of the incompressible limit in fluid
  dynamics.
\newblock In {\em Handbook of mathematical fluid dynamics. {V}ol. {IV}}, pages
  123--157. Elsevier/North-Holland, Amsterdam, 2007.

\bibitem{thesis}
Changzhen Sun.
\newblock On the uniform stability and the singular limit of compressible fluid
  systems, thesis.
\newblock In preparation.

\bibitem{MR849223}
Seiji Ukai.
\newblock The incompressible limit and the initial layer of the compressible
  {E}uler equation.
\newblock {\em J. Math. Kyoto Univ.}, 26(2):323--331, 1986.

\bibitem{MR3419883}
Yong Wang, Zhouping Xin, and Yan Yong.
\newblock Uniform regularity and vanishing viscosity limit for the compressible
  {N}avier-{S}tokes with general {N}avier-slip boundary conditions in
  three-dimensional domains.
\newblock {\em SIAM J. Math. Anal.}, 47(6):4123--4191, 2015.

\bibitem{MR3810835}
Linjie Xiong.
\newblock Incompressible limit of isentropic {N}avier-{S}tokes equations with
  {N}avier-slip boundary.
\newblock {\em Kinet. Relat. Models}, 11(3):469--490, 2018.

\end{thebibliography}
 \end{document}